\DeclareSymbolFont{cyrletters}{OT2}{wncyr}{m}{n}
\DeclareFontFamily{OT1}{rsfs}{}
\newcommand{\Spec}{\mathrm{Spec}}
\newcommand{\cyclo}{\mathrm{cyclo}}
\newcommand{\PGL}{\mathrm{PGL}}
\newcommand{\Out}{\mathrm{Out}}
\newcommand{\sigmastable}{compatible } 
\newcommand{\Lie}{\mathrm{Lie}}
\newcommand{\LL}{\mathbf{L}}
\newcommand{\Z}{\mathbf{Z}}
\newcommand{\Ad}{\mathrm{Ad}}
\newcommand{\SL}{\mathrm{SL}}
 \renewcommand{\AA}{\mathbf{A}}
\newcommand{\Spin}{\mathrm{Spin}}
\newcommand{\twist}{(p)}
\newcommand{\SO}{\mathrm{SO}}
\newcommand{\Frob}{\mathrm{Frob}}
\newcommand{\Aut}{\mathrm{Aut}  }
\newcommand{\Afinite}{\mathbf{A}_{\mathrm{f}}}
\newcommand{\Adele}{\mathbf{A}}
\newcommand{\C}{\mathbf{C}}
\newcommand{\Tate}{\mathrm{T}}
\newcommand{\PP}{\mathbf{P}}
\newcommand{\ab}{\mathrm{ab}}
\newcommand{\Hom}{\mathrm{Hom}}
\newcommand{\F}{\mathbf{F}}
\newcommand{\cF}{\mathscr{F}}
\newcommand{\GG}{\mathbf{G}}
\newcommand{\LT}{{}^L \hat{T}}
\newcommand{\LH}{{}^L \hat{H}}
\newcommand{\LG}{{}^L \hat{G}}
\newcommand{\CG}{{}^C \hat{G}}
\newcommand{\cG}{{}^c \hat{G}}
\newcommand{\cgroupH}{{}^c \hat{H}}
\newcommand{\CH}{{}^C \hat{H}}
\newcommand{\Hecke}{\mathscr{H}}
\newcommand{\Q}{\mathbf{Q}}
 \newcommand{\adele}{\mathbf{A}}
\newcommand{\OO}{\mathcal{O}}
\newcommand{\BB}{\mathbf{B}}
\newcommand{\TT}{\mathbf{T}}
\newcommand{\R}{\mathbf{R}}
\newcommand{\G}{\mathbf{G}}
 \newcommand{\HH}{\mathbf{H}}
\newcommand{\Sp}{\mathrm{Sp}}
\DeclareFontShape{OT1}{rsfs}{n}{it}{<-> rsfs10}{}
\DeclareMathAlphabet{\mathscr}{OT1}{rsfs}{n}{it}
\newcommand{\GL}{\mathrm{GL}}
\newtheorem{theorem}[subsection]{Theorem}
\newtheorem*{theorem*}{Theorem}
\newtheorem*{theoremFMT}{First Main Theorem}
\newtheorem*{theoremSMT}{Second Main Theorem}
\newtheorem{lemma}[subsection]{Lemma}
\newtheorem{prop}[subsection]{Proposition}
 \newtheorem*{prop*}{Proposition}
\theoremstyle{definition}
\newtheorem{definition}[subsection]{Definition}
\newtheorem*{defn*}{Definition}
\newtheorem*{lemma*}{Lemma}
\newtheorem*{corollary*}{Corollary}
\newtheorem*{conjecture*}{Conjecture}
\newcommand{\cH}{\Hecke}
\newcommand{\Fun}{\mathrm{Fun}}
\definecolor{darkgreen}{RGB}{0,150,0}
\newcommand{\Br}{\mathrm{Br}}
\newcommand{\FBr}{\widetilde{\Br}}
\newcommand{\NBr}{\mathrm{br}}
\renewcommand{\sl}{\mathfrak{sl}}
\newcommand{\so}{\mathfrak{so}}
\renewcommand{\sp}{\mathfrak{sp}}
\newcommand{\Gal}{\mathrm{Gal}}
\newcommand{\vhE}{{}^{\mathit{vh}}\!E}
\newcommand{\hvE}{{}^{\mathit{hv}}\!E}
\newcommand{\Lpsi}{{}^L \hat{\psi}}
\newcommand{\Cpsi}{{}^C \hat{\psi}}
\newcommand{\WittRing}{\Lambda}
\newcommand{\WittField}{\Lambda[p^{-1}]}
\newcommand{\Ga}{\mathbb{G}_{\mathrm{a}}}
\newcommand{\Gm}{\mathbb{G}_{\mathrm{m}}}
\newcommand{\Cent}{\mathit{Z}}
\numberwithin{equation}{subsection}
\newcommand{\git}{/\!\!/}
\newcommand{\Tcan}{\TT^{\mathrm{can}}}
\newcommand{\inverserho}{\rho^{-}}
\newcommand{\cleanup}[1]{\marginpar{\color{blue} \tiny #1}}
\begin{document} 
 
\title{  Functoriality, Smith theory, and the Brauer homomorphism}
\author{David Treumann and Akshay Venkatesh}

\begin{abstract}
 
  If $\sigma$ is an automorphism of order $p$ of the semisimple group $\mathbf{G}$,
  there is a natural correspondence between {\em mod $p$} cohomological automorphic forms
  on $\mathbf{G}$ and $\mathbf{G}^{\sigma}$. We describe this correspondence in the global and local settings.

\end{abstract}
\maketitle

\section{Introduction} \label{Secintro}

  \subsection{}
  Let $\G$ be a semisimple group over a number field $F$, with Langlands-dual
$\LG$, 
and let  $k$ be an algebraically closed field of positive characteristic $p$. 
By ``mod $p$ automorphic forms'' for $\G$ we shall mean Hecke eigenclasses
in the cohomology of congruence subgroups with $k$-coefficients. 
We  {\em make no assumption that these cohomology classes lift
to characteristic zero}, i.e. there may be  no automorphic form in the classical sense associated to this eigenclass.

Now let $\sigma$ be an order $p$ automorphism of $\mathbf{G}$, defined over $F$, with a connected fixed point subgroup $\GG^\sigma$.  The main goal of this paper is to show that there is a close relationship between 
mod $p$ automorphic forms on $\G$ and mod $p$ automorphic forms on $\G^{\sigma}$: we 
construct a homomorphism (\S \ref{nBr}) 
\[
\begin{array}{c}
\mbox{ Hecke algebra for $\G$ at $v$,} \\
\mbox{with $k$ coefficients} 
\end{array}
\stackrel{\psi_v}{\longrightarrow} 
\begin{array}{c}
\mbox{ Hecke algebra for $\G^{\sigma}$ at $v$,}\\
\mbox{with $k$ coefficients}
\end{array}
\]
which is a slight variant of the ``Brauer homomorphism'' of modular representation theory.
We prove  (see Theorem \ref{thm:realtheorem}):

\begin{theoremFMT} If a mod $p$ automorphic form for $\G^{\sigma}$  has Satake parameters
 $\{a_v\}$ then there exists a mod $p$ automorphic form for $\G$ with Satake parameters
 $\{\psi_v^*(a_v)\}$. \end{theoremFMT}

 What is the relationship
between the parameters of these forms at ramified places?   This is answered by Theorem \ref{thm:behavioratramified},
based on the the notion of ``linkage'' (Definition \ref{linkagedef}) of local representations.   Roughly speaking,
these results suggest that  local functoriality should be realized by Tate cohomology. 
 
One of course wants to compute $\psi_v$. This is accomplished by
the Theorem of \S\ref{BrauerSatake}. 
The formulation in \S\ref{BrauerSatake} is not the natural one
 from the point of view of the Langlands program. The remainder of the paper addresses this issue, which we now describe:
 
 \subsection{Torsion functoriality} 
 \label{torsionfunctoriality} Write $\HH=\GG^{\sigma}$.  
One wants to know if the ``lift'' furnished by the theorem is a ``functorial lift'' in the sense of Langlands: 
is $\psi_v$  induced by a $L$-homomorphism
 of $L$-groups $$\Lpsi:  \LH \rightarrow \LG?$$ For the precise meaning of the word ``induced'' here,
 see \S \ref{subsec:sigmadual}. 
 In the setting of our paper, it is natural to construct
 the dual groups as algebraic groups {\em over $k$}, rather than over $\C$,
 and we shall always follow that convention  (\S \ref{subsec:dgalg}). 
 In that case the Theorem amounts to functoriality for $\psi$ and we
 call such
 an $\Lpsi$ a {\em $\sigma$-dual homomorphism}.

 A basic example is that of cyclic base change.  If $\GG =\mathrm{Res}_{E/F} (\mathbf{H} \otimes_F E)$ 
where $E/F$ is a cyclic extension of degree $p$, and $\sigma$ induced by a generator of $\Aut(E/F)$, 
then we may take $\Lpsi$ to be the homomorphism obtained by restriction on the Galois group, 
and the result amounts to ``cyclic base change.'' It doesn't follow from
usual base change, even for   $\mathbf{H} = \mathrm{SL}_2$ and $F=\Q(i)$,  as our result applies to characteristic $p$ torsion classes. 
Indeed,  this result itself seems interesting: we do not know of any other examples of where functoriality can be established, in such a strong form, for torsion classes (compare \cite{CV}).

Our second main  theorem verifies that such $\Lpsi$ exists at least  in some generality:

  \begin{theoremSMT}
  Suppose that $\GG$ is simply connected and $\HH$ is semisimple\footnote{In this case, $\HH$ is automatically connected.}. Then there exists a $\sigma$-dual homomorphism $\LH \rightarrow \LG$
  with the possible exception of cases\footnote{The remaining cases could presumably be similarly treated,
but the authors were left exhausted by the existing proof.
 } where   $(\Lie(\GG), \Lie(\HH))$ contains a factor of the form
 $(\mathfrak{e}_6, \mathfrak{sl}_3^3  \mbox{ or } \mathfrak{sl}_6 \times \mathfrak{sl}_2 \mbox{ or } \mathfrak{sp}_8)$.
 \end{theoremSMT} 
 In summary, then, the two theorems together give a functoriality for  mod $p$ automorphic forms, from $\HH$ to $\GG$. 
 
We prove this second theorem using the Theorem of \S\ref{BrauerSatake}, some general arguments,  and  finally some case-by-case verification for some exceptional cases.   Because the case-by-case verifications  are lengthy,
and the remainder of the proof is already long,
we present only the most interesting cases (e.g. the order $5$ inner automorphism of $\mathrm{E}_8$) in full, and simply summarize some salient data in the remaining cases. 

  In many instances the validity of the Theorem  is related to delicate
 properties of the relevant groups in characteristic $p$. For
 example,  
when $\G$ is a split form of $\mathrm{Spin}_8$ and $\sigma$ is the pinned triality with fixed points $\mathrm{G}_2$, 
the $\sigma$-dual homomorphism $\mathrm{G}_2 \rightarrow \mathrm{PSO}_8$ is not the ``standard'' eight-dimensional representation
but (Proposition \ref{crit4} and Lemma \ref{pinnedcase})  its twist by the exceptional  isogeny $\mathrm{G}_2 \rightarrow \mathrm{G}_2$ that exists only in characteristic $3$. 
Also the Galois component of the $L$-group enters in an interesting way, especially when $p>2$:
roughly speaking, the action of $\Lpsi$ on the ``Galois component'' of $\LH$ must 
exactly compensate the 
  difference between half-sum shifts for $\HH$ and $\GG$  (a simple example is 
the inner   automorphism of $\mathrm{G}_2$ of order $3$, see Proposition \ref{crit6}).

If we relax the assumption   that  $\GG$ is  simply connected  such an $\Lpsi$ need not exist: a counterexample is given by taking $\sigma$ to be an inner automorphism of order $3$
 of $\GG=\PGL_2$. The difficulties arising here are related to the ambiguity of square roots in  the Satake transform. 
 A plausible solution is described in \S \ref{subsec:Cgroup}:
  one can replace the $L$-group by  a different extension  $\hat{G} \rightarrow \cG \rightarrow \Gal(\bar{F}/F)$ but without the ``pinned splitting'' of $\LG$,
  with respect to which the Satake isomorphism can be formulated canonically (Theorem \ref{cSatake}). 
It  tempting to speculate then that there is always a $\sigma$-dual map $\cgroupH \rightarrow \cG$.  This variant $\cG$ is a subgroup of the ``$C$-group'' discussed in \cite{BG}
and is closely related to the ideas of Deligne \cite{Deligne}.  Since this story is somewhat orthogonal to our main goals we do not examine it further in this paper.  It would also be interesting to relax the assumption that $\HH$ is connected.  In that case it appears that the First Main Theorem, suitably interpreted, remains valid, but we have not investigated if it has an $L$-group formulation.

 \subsection{Discussion}
 
In the local setting, this correspondence is related to the ``Brauer correspondence''
of modular representation theory.  
In the global setting, the correspondence   can be viewed as a kind of ``mod $p$ Eisenstein series;''
like Eisenstein series, many phenomena related to the Langlands program simplify but yet do not become trivial.

Indeed, Eisenstein series are, in a sense, dual to the operation of restricting an automorphic form to the boundary. 
Here we observe that the symmetric space for $\mathbf{G}^{\sigma}$, embedded in the symmetric space
for $\mathbf{G} $, behaves {\em with respect to  characteristic $p$ homology} like a kind of ``interior boundary.'' 
A better-known example of this phenomenon is that ``restriction to supersingular points''
 gives, for classical modular forms, a geometric construction of the Jacquet--Langlands correspondence modulo $p$.  
 
The main technical tool to prove the ``interior boundary'' property is  Smith theory, or $\Z/p$-equivariant localization.   It is related to
   the prior paper \cite{T} of the first-named author.   If we call $Y$ (resp. $X$) the locally symmetric space
   for $\G^{\sigma}$ (resp. $\G$)    and put $\Gamma = \langle \sigma \rangle$, then the inclusion
$Y \hookrightarrow X$ induces (almost) an isomorphism on equivariant cohomology
    $H^*_{\Gamma}(X) \rightarrow H^*_{\Gamma}(Y)$, and what remains is ``just'' to  
    pass from equivariant cohomology to usual cohomology,  and to understand Hecke actions.

  Given a sufficiently good chain-level understanding of the $\sigma$-action on the cohomology of $X$, for instance a \sigmastable triangulation of $X$, our method gives an explicit recipe for lifting automorphic forms on $Y$ to $X$.  The recipe can be presented as a spectral sequence (see proof of Theorem \ref{thm:smith}). 
    We do not expect it to degenerate
and indeed the differentials seem to carry interesting information.  It will be interesting to study this further.

   One cannot be too optimistic about ``lifting'' the method to characteristic zero
in any direct way. The proof uses special properties of the Frobenius at various points;
in fact the homomorphism of dual groups mentioned above need not lift to characteristic zero, 
as in the example above, or the inner examples of \cite{T}.
Nonetheless  the results here {\em could} be lifted to characteristic zero, using more machinery, in the following situation:
 \begin{itemize}
  \item[(i)] There is a $\sigma$-dual homomorphism  $\LH \rightarrow \LG$ and it  lifts to characteristic zero,
  \item[(ii)] On $\HH$ one may associate  Galois representations to mod $p$ forms;
  \item[(iii)]   On $\GG$ one has available {\em modularity lifting theorems}.   
  \end{itemize} 
then one can in fact 
deduce the corresponding functoriality {\em in characteristic zero}  (i.e., for cohomological automorphic forms)  from the above Theorem.
The recent work of Scholze and Calegari--Geraghty \cite{Scholze, CG} enlarges  the list of possibilities where (ii) and (iii) apply. 
We do not pursue this here.

We note some related  work. One inspiration for this paper was trying
to understand the ideas behind the Glauberman correspondence \cite{Glauberman}. 
In a Langlands setting the closest paper appears to be the recent work of Clozel \cite{Clozel};
it   is closely related to the  ideas of the current paper, specialized to the case of $\G$ a definite
quaternion algebra over a totally real number field; it moreover  makes intriguing use of this idea in the context of an  infinite $p$-adic tower (see \S \ref{openquestions}).  The paper \cite{T} of the first author, already mentioned,
studies a similar story in the setting of the local geometric Satake correspondence, when $\sigma$ is inner. 
L. Clozel has pointed out that the arguments of \S \ref{BrauerSatakeproof}
resemble some of the constructions in the theory of twisted endoscopy, as in \cite{KottwitzShelstad}, but
in our case these constructions are on the dual side and in characteristic $p$.  The paper of Kionke \cite{Kionke} applies the Smith inequalities to $p$-adic analytic towers of locally symmetric spaces for $\GG$.
Finally 
we draw attention to the paper of Ash \cite{Ash}, which uses Smith theory
to produce homology for $\GL_n$ over certain fields.

  \subsection{Open questions} \label{openquestions} 
  We mention five interesting open questions:
 
  \begin{itemize}
  \item[(i)] The ramified correspondence: we formulate a conjecture in \S \ref{sec:behavioratramified}
  relating local functoriality to Tate cohomology.  This is a problem solely in the representation theory of $p$-adic groups. 
  N. Ronchetti has obtained some evidence for this, 
  in the setting of cyclic base change for $\GL_n$ and supercuspidal representations. 
  \item[(ii)] Behavior in a $p$-adic tower:   In the case of a definite quaternion algebra,
  Clozel \cite{Clozel}  formulates a theory of ``automorphic forms over $\Q(\zeta_{p^{\infty}})$.''
  Can one make a similar theory for cohomological forms on an arbitrary group, using the ideas of the current paper? 
    \item[(iii)]  Generalization of second main theorem: replacing the $L$-group by the $c$-group that we define in \S \ref{Satakeparam},
    establish the existence of $\sigma$-dual homomorphisms in  all cases --- that is, without any restriction that $\GG$ be simply connected.
    (Even the situation when $\HH$ is a torus would be of interest.)  
    \item[(iv)] Spectral sequences: Study more carefully the higher differentials in the spectral sequence of Theorem \ref{thm:smith}. In our context, 
    these higher differentials cannot always be zero, and it would be interesting to understand their arithmetic importance. 
   \item[(v)]   In general the locally symmetric space $Y$ for $\GG^\sigma$ is only part, a union of connected components, of the full space of $\sigma$-fixed points of the locally symmetric space $X$ of $\GG$.  The other components appear to be locally symmetric spaces for different $F$-forms of $\GG^\sigma$.  Smith theory realizes their cohomology as a subquotient of the cohomology of $X$, but we have not investigated the compatibility with Hecke actions.
  \end{itemize}

 \subsection{Plan of the paper} 
 \S \ref{sec:notation} summarizes some of our notations, and \S \ref{sec:Tate} some basic facts about Tate cohomology for cyclic groups.
 
 \S \ref{sec:Brauer} and \S \ref{sec:cyclic}
 describe the Brauer homomorphism and describe the proof of the first main theorem.
 
 \S \ref{sec:behavioratramified} describes the situation at ramified places. The results of this section are also used in the later parts of the paper. 
 
 \S \ref{Satakeparam} consists of ``folklore results'' on the Satake transform. We have stated and proved them here because
 we do not know of a reference with characteristic $p$ coefficients. However, the formulation of Theorem \ref{cSatake} may be of independent interest. 
 
\S \ref{BrauerSatakeproof}  computes the unramified Brauer homomorphism in terms of Satake parameters. 
Rather than compute directly, we deduce the result  by applying the results of \S \ref{sec:behavioratramified} to unramified representations.

\S \ref{section:prelim} collects some preliminaries to the proof of our second main theorem. \S \ref{sec9} proves
it in many cases when $\sigma$ is an inner automorphism, \S \ref{casebash3} proves it in many cases
including when $\sigma$ is a pinned automorphism, and \S \ref{casebash4} 
handles all remaining cases by direct computation.

\subsection{Acknowledgements}

We thank Raphael Rouquier and Zhiwei Yun for sharing their ideas about the Glauberman correspondence,
and Brian Conrad and Mark Reeder for taking time to answer several questions about algebraic groups.   
Finally we thank Laurent Clozel for several helpful discussions as well as comments on the manuscript. 

{\tiny \tableofcontents}

\section{Notation} \label{sec:notation} 

\subsection{Notation used throughout} \label{twistdef}
Let $p$ be a prime number and let $k$ be an algebraic closure of the field with $p$ elements.  Let $\WittRing$ denote the ring of Witt vectors of $k$ (thus, $p$ is a uniformizer for $\WittRing$) and let $\WittField$ denote the fraction field of $\WittRing$.  For $x \in \WittRing$ let $\overline{x} \in k$ denote its reduction mod $p$.

The symbol $\sigma$ denotes a generator for a group of order $p$.  We write $\langle \sigma \rangle$ for this group.

Let $F$ be a number field, $\Gamma_F$ its absolute Galois group, $\OO_F$ its ring of integers, $\Adele$ its adele ring, $\Afinite$ its ring of finite adeles, and set $F_{\infty} = F \otimes_{\Q}\R$.  If $v$ is a place of $F$ then $F_v$ denotes the completion of $F$ at $v$. 
By $\mathrm{cyclo}: \Gamma_F \rightarrow \mathbf{F}_p^* \subset k^*$ we mean the cyclotomic character, i.e. 
the action of $\Gamma_F$ on $p$th roots of unity in $F$. 

If $W$ is a $k$-vector space we denote by $W^{\twist}$ the Frobenius-twist of $W$, i.e. the space with the same underlying vectors
but scalar multiplication modified: if $\cdot$ is the scalar multiplication in $W$ then the scalar multiplication $*$ in $W^{\twist}$ is given by
$\lambda* w = \lambda^{1/p} \cdot w$; equivalently, $W^{\twist} = W \otimes_{(k,\mathrm{Frob})} k$, so that $w \lambda \otimes \mu = w \otimes \lambda^p \mu$
for $\lambda \in k$. 

When we write homology $H_*$ or cohomology $H^* $ of a topological space, we will always understand the coefficients to be taken in $k$, unless otherwise specified.

If $X =\mathrm{Spec}(R)$ is an affine algebraic variety over $k$ and $G$ an algebraic group acting on $X$, we denote by $X \git G$ the geometric quotient, i.e.
the spectrum of $R^G$.

\subsection{Nonabelian cohomology} \label{sec:NAC}
If $\sigma$ is an order $p$ automorphism of a group $M$, we let $H^1(\sigma;M)$ denote the nonabelian cohomology of $\langle \sigma \rangle$ with coefficients in $M$, i.e. cocycles $j:\langle \sigma\rangle \to M$ modulo coboundaries. Elements of $H^1(\sigma;M)$ may be equivalently regarded as elements $j(\sigma).\sigma \subset M.\sigma \subset M \rtimes \langle \sigma \rangle$ modulo $M$-conjugacy, or elements $j(\sigma) \in M$ up to twisted conjugacy.

If $N \subset M$ is a $\sigma$-stable subgroup, then we have the ``long exact sequence'' of nonabelian cohomology \cite[\S\S I.5.4--I.5.5]{GaloisCohomology} 
\[
N^{\sigma} \hookrightarrow M^{\sigma} \to (M/N)^{\sigma} \to H^1(\sigma;N) \to H^1(\sigma;M)
\]
If $N$ is normal in $M$ and nilpotent of order prime to $p$, then $H^1(\sigma;M) \to H^1(\sigma;M/N)$ is a bijection.

\subsection{Algebraic groups and level structures}
\label{subsec:agals}

Let $\G$ be a connected reductive algebraic group over $F$.  If $v$ is a place of $F$, then $\G(F_v)$ is a locally compact topological group that we denote by $G_v$.  If $v$ is a finite place, $K_v$ will denote an open compact subgroup of $G_v$.  If $v$ is an archimedean place, $K_v$ denotes a maximal compact subgroup of $G_v$,
and finally $K_{\infty}$ denotes a maximal compact subgroup of $G_{\infty} = \G(F_{\infty})$. 

By a level structure for $\G$, we mean an open compact subgroup $K \subset \G(\Afinite)$ of the form $\prod_v K_v$.   Note for such a level structure $K_v$ is a ``standard'' maximal compact for almost all $v$ (i.e. is obtained by taking $\OO_v$-points of an integral model of $\GG$ over $\OO_F$).  

When a level $K$ is fixed then for $V$ a set of places of $F$, we denote by $G_V$ the restricted product $\prod'_{v \in V} G_v =  \{ (g_v)_{v \in V} \mid g_v \in K_v \text{ for all but finitely many $v$}\}$.  When $V$ is a set of finite places, we denote by $K_V$ the product $\prod_{w \in V} K_w$
and by $K^{(V)}$ the complementary subgroup
\begin{equation}\label{Kupperdef}  K^{(V)} :=  K_{\infty}  \prod_{w \notin V} K_w \end{equation}
where the product is taken over finite places $w$ not belonging to $V$. 

\subsection{Canonical torus}
\label{subsec:cantorus}

If $F$ is any field, and $\G$ is connected and reductive over $F$, there is a canonical algebraic torus defined over $F$ attached to $\G$.  We denote it by $\Tcan_G$.  
We will follow \cite[\S 1.1]{DeligneLusztig}. 

If $\GG$ is quasisplit we may describe $\Tcan_G$ as the limit $\varinjlim \BB/\mathrm{R}_u(\BB)$  over $F$-rational Borel subgroups of $\GG$, of the quotient torus of the Borels. In general, we pass to an extension over which $\GG$ is quasisplit, and then descend the torus thus constructed; 
 then there may be no inclusion of $\Tcan_G$ into $\GG$ defined over $F$.

Let $X^*$ denote the character lattice of $\Tcan_G \times_F \overline{F}$ and $X_*$ the dual lattice.  Then $(X^*,X_*)$ supports a canonical based root datum $\Psi(\G)$.

Automorphisms of $\overline{F}/F$ induce an action
$
\theta_G:\Gamma_F \to \Aut(\Psi(\G)) \subset \Aut(X^*)
$.  The permutation representation of $\Gamma_F$ on simple roots determines an \'etale algebra over $F$, and if $\G$ is semisimple the $F$-rational points of $\Tcan_G$ are naturally identified with the units in this algebra.

\subsection{Dual groups and $L$-groups}
\label{subsec:dgalg}
{\em By default, we will regard all dual groups as reductive algebraic groups \emph{over $k$}, i.e. in characteristic $p$. }

The dual root datum to $\Psi(\G)$ determines a pinned reductive algebraic group over $k$, which we denote by $\hat{G}$.  Recall that a ``pinning'' is data $(\hat{T},\hat{B},\{X_i\})$ where $\hat{T} \subset \hat{B} \subset \hat{G}$ are a maximal torus and Borel subgroup of $\hat{G}$, and each $X_i$ is a nonzero vector in a simple root space of $\mathrm{Lie}(\hat{B})$.  A pinning determines a splitting $\mathrm{Out}(\hat{G}) \to \mathrm{Aut}(\hat{G})$, and an identification $\mathrm{Out}(\hat{G}) \simeq \mathrm{Aut}(\Psi(\G))$.  

In fact the dual group and its pinning can be constructed over the prime field $\mathbf{F}_p$.  It follows that $\hat{G},\hat{B},\hat{T}$ can be equipped with Frobenius endomorphisms that are defined over $k$, which we will denote by $\mathrm{Frob}$.

The construction $\Psi(\G) \to \Psi(\G)^\vee \to (\hat{G},\hat{B},\{X_i\})$ is functorial, and one obtains a $\Gamma_F$ action $\Gamma_F \to \mathrm{Out}(\hat{G}) \subset \mathrm{Aut}(\hat{G})$.  We let $\LG$ denote the semidirect product $\hat{G} \rtimes \Gamma_F$.

We regard $\hat{G}$ and $\LG$ as algebraic groups over $k$ (the latter with an infinite component group), and denote their groups of $k$-points by $\hat{G}(k)$ and $\LG(k)$.

If $\alpha$ is a root in $\Psi(\G)$, it determines a coroot for $\hat{G}$. We will use the notation $\alpha_*$ for this coroot (although
a couple of times we will abuse notation and drop the subscript). Similarly, if $\alpha^{\vee}$ is a coroot for $\Psi(\G)$, then
we use $\alpha^{\vee}_*$ for the associated root in $\hat{G}$.

 Suppose $\HH$ is another algebraic group with $L$-group $\LH$.  As $k$ is algebraically closed and $\hat{H},\hat{G}$ are scheme-theoretically reduced, any algebraic morphism $\LH \to \LG$ is determined by its induced morphism on $k$-points $\LH(k) \to \LG(k)$.  For our purposes (Langlands functoriality), we may therefore usually ignore the difference between them.  (Note however that, as $k$ has positive characteristic, the map $\LH \to \LG$ can have nontrivial fibers as a map of schemes and yet induce an injection, or even an isomorphism, $\LH(k) \to \LG(k)$.)

\subsection{Local $L$-groups}
\label{subsec:llg}
In the construction of \S\ref{subsec:dgalg}, we may replace $F$ by $F_v$ for any finite place $v$, producing a group $\hat{G} \rtimes \Gamma_{F_v}$.  When $\G$ splits over an unramified extension of $F_v$, we work with the smaller group $\LG_v := \hat{G} \rtimes  \langle \Frob_v\rangle$.  Here $\langle \Frob_v\rangle$ denotes the discrete infinite cyclic group that topologically generates the unramified quotient of $\Gamma_{F_v}$.

\subsection{Parabolics and Levis in $\GG$ and $\hat{G}$}
\label{parabelly}
 
A $\GG(\overline{F})$-conjugacy class of parabolic subgroups $\PP \subset \GG \times_F \overline{F}$ distinguishes a subset $\Delta_P$ of the simple roots of $\Psi(\GG)$: the set of $\alpha$ for which $-\alpha$ is a root of $\PP$ (for a conjugation action of $\Tcan$ induced by  an arbitrary Borel subgroup $\BB \subset \PP$ and splitting of $\BB \to \Tcan$, all defined over $\overline{F}$).

We define a corresponding Levi subgroup $\hat{L}$ of $\hat{G}$ --- the subgroup generated by $\hat{T}$ and the coroot homomorphisms $\SL_2 \to \hat{G}$ corresponding to $\alpha \in \Delta_P$.

Then the abelianization $\mathbf{L}^{\ab}$ of the quotient Levi  $\mathbf{L}$ of $\PP$ is dual, as a torus, to the center of $\hat{L}$
-- with notations as above, the character group of $\mathbf{L}^{\ab}$ is identified with $X^*(\Tcan)/ \langle \alpha^{\vee}: \alpha \in \Delta_P \rangle$, whereas
the co-character group of $Z(\hat{L})$ is identified with the orthogonal complement
of $\{\alpha^{\vee}_*: \alpha \in \Delta_P \}$ in $
X_*(\hat{T}) $.

\subsection{Frobenius maps}
We will use   the notation  ``Frob'' for the Frobenius endomorphism of any $k$-group scheme
equipped with a descent to  $\mathbf{F}_p$. 
This applies, in particular, to any group of the form $\hat{G}$; the fixed
point subgroup of $\Frob$ is exactly the discrete set of $\mathbf{F}_p$-points
of $\hat{G}$.

\subsection{Class field theory}
\label{subsec:cft} \label{chartori}
We'll recall  part of the Langlands correspondence for tori \cite{Langlandstori}. 
 
Let $\TT$ be an algebraic torus over a number field $F$.  We say $\TT$ is unramified at $v$ if $T_v := \TT(F_v)$ splits over an unramified extension of $F_v$.  A homomorphism $T_v \to k^*$ is unramified if it is trivial on the maximal compact subgroup of $T_v$.  An idele class character $\TT(F) \backslash \TT(\Afinite) \to k^*$ is said to be unramified at $v$ if its restriction to $T_v$ is unramified.

Set $\LT = \hat{T} \rtimes \Gamma_F$ as in \S\ref{subsec:dgalg}.  When $\TT$ is unramified at $v$ set $\LT_v = \hat{T} \rtimes \langle \Frob_v\rangle$ as in \S\ref{subsec:llg}.  Let $A_v$ denote the 
$F_v$-points of the maximal split subtorus $\mathbf{A}_v \subset \mathbf{T}$. (We leave in the subscript $v$ because this depends on the place $v$). Then when $\TT$ is unramified at $v$ restriction gives an isomorphism (see \cite[\S 9.5]{Borel}):
\begin{equation} \label{res-iso}
\{\text{unramified characters of $T_v$}\} \stackrel{\sim}{\to} \{\text{unramified characters of $A_v$}\}
\end{equation}
There is also a natural surjective homomorphism $\hat{T} \to \hat{A}_v$ that identifies $\hat{A}_v$ with the $\Frob_v$-coinvariants of $\hat{T}$. 
Therefore, to an unramified character of $T_v$ is associated an element of the coinvariants $\hat{T}_{\Frob_v}$;  
put another way, this gives a natural bijection between unramified characters of $T_v$ and conjugacy classes of splittings $\langle \Frob_v\rangle \to \LT_v$.  
 The element of $\hat{T}_{\Frob_v}$ associated to an unramified character $\chi$ will be called the ``Langlands parameter'' of $\chi$.
 
We can describe this more directly: Since $X^*(\TT)^{\Frob_v} \rightarrow X^*(\AA)$ is an injection with finite cokernel, and $k^*$ is divisible,
we obtain a surjection $X^*(\TT)^{\Frob_v} \otimes k^* \twoheadrightarrow X^*(\AA) \otimes k^*$.  
Thus each  element of $X^*(\TT)^{\Frob_v} \otimes k^*$ gives an unramified character $\chi$, and every unramified character $\chi$ arises thus, although
possibly not uniquely.  Explicitly, if $\alpha \in X^*(\TT)^{\Frob_v}$ and $\lambda \in k^*$, the unramified
character associated to $\alpha \otimes \lambda$ is given by $t \in T_v \mapsto \lambda^{v (\alpha(t))}$.

One obtains the parameter of $\chi$ via the maps 
\begin{equation}   \label{XT} X^*(\TT) \otimes k^* =  X_*(\hat{T}) \otimes k^* = \hat{T}(k)\end{equation} 
 (i.e.,   the character $\chi$
is parameterized by the splitting  $\Frob_v \mapsto t_{\chi} \Frob_v$,
where $t_{\chi} \in \hat{T}(k)$ is the element thus produced).

\subsection{Hecke algebras}
\label{subsec:Heckealgebras}
Let $G$ be a locally compact, totally disconnected group.  If $S$ is a discrete set with a continuous left $G$-action and compact stabilizers,  let $\Fun_G(S\times S)$ (pronounced ``funguses'') denote the set of $k$-valued functions on $S \times S$ that are invariant for the diagonal action of $G$, and whose support is a union of finitely many $G$-orbits.  $\Fun_G(S\times S)$ has an algebra structure with multiplication given by
\begin{equation}
\label{eq:heckemult}
(h_1 \ast h_2)(x,z) = \sum_{y \in S} h_1(x,y)h_2(y,z)
\end{equation}
If $k[S]$ denotes the vector space spanned by $S$, there is a left action of $\Fun_G(S \times S)$ on $k[S]$ given by
\[
h \ast s = \sum_{t \in S} h(s,t) t
\]
If $S$ has finitely many $G$-orbits, $\Fun_G(S \times S)$ has a two-sided unit and the action on $k[S]$ identifies $\Fun_G(S \times S)$ with the ring of $G$-endomorphisms of $k[S]$.

The standard example is when $K \subset G$ is an open compact subgroup and $S = G/K$.  In that case, $\Fun_G(G/K \times G/K)$ can be identified with finitely supported functions on the double coset space $K \backslash G /K$, via $h(K,gK) = h(KgK)$.  We abbreviate this case by $\cH(G,K)$.   We will also use the notation $\cH(G, K; \mathbf{F}_q)$ for that subalgebra of $\cH(G, K)$ consisting
of functions valued in $\mathbf{F}_q \subset k$. 

 The theories of left- and right- $\cH(G,K)$-modules are equivalent via the anti-involution $KgK \leftrightarrow Kg^{-1}K$; nevertheless we wish to record some explicit formulas for these actions with some attention paid to the difference between left and right: 

The identification $V^K = \Hom_G(k[G/K],V)$ gives the $K$-invariants of a left $G$-module the structure of a right $\cH(G,K)$-module.  When $V$ is a left $G$-module, an explicit formula for this action is
\begin{equation}
\label{eq:vivaldi}
v \ast h = \sum_{gK \in G /K} g^{-1} v h(K,gK)\end{equation}
When $X$ is a set with a right $G$-action, the $k$-vector space $k[X]$ spanned by $X$ carries a left $G$-module structure extending linearly $g \cdot x = xg^{-1}$.  Then \eqref{eq:vivaldi} specializes to the following right $\cH(G,K)$-action on $k[X/K] \simeq k[X]^K$:
\begin{equation}
\label{eq:Heckerightact}
xK \ast h = \sum_{gK \in G/K} xgK \, h (K,gK) \qquad \text{ for $xK \in X/K$}
\end{equation}

\subsection{Hecke actions on homology and cohomology} \label{subsec:Heckeactionsonhomology}
Suppose that:
\begin{itemize}
\item[(i)] $G$, as in \S\ref{subsec:Heckealgebras}, is a locally compact, totally disconnected group, and $K \subset G$ an open compact subgroup.
\item[(ii)] $X$ is a  locally compact Hausdorff topological space with   continuous right $G$-action, such that the restriction of this action to $K$
is free and proper; 
  \end{itemize}
 
 Note that the assumptions force $K$ to be profinite.  Let $K_i \subset K $ be a collection of open normal subgroups with $\bigcap_i K_i = \{e\}$; then the natural map 
\begin{equation} \label{homeo} \pi: X \rightarrow \varprojlim X/K_n \end{equation} 
is a homeomorphism. In fact, it is easily verified to be a continuous bijection.
Now, we need to check that the image of any 
closed set  $Z \subset X$ is also closed in $\varprojlim X/K_n$.
Choose $y \notin \pi(Z)$. We want an open set containing $y$ and disjoint from $\pi(Z)$. 
We may find a  compact neighbourhood $A$ of $y$ in $X/K$ with  $\pi^{-1}A$ also compact.
  Then $\pi(Z) \cap A = \pi(Z \cap \pi^{-1}(A))$ is compact inside $\varprojlim X/K_n$.
Thus there is an
  open set $N \ni y$ that's disjoint from $\pi(Z ) \cap A$, and then
$A^{\circ} \cap N$ is the required open set.

\medskip 

 Then there is a right action of $\cH(G,K)$ on the $k$-homology of $X/K$ and a left action on the $k$-cohomology of $X/K$.  
We give an explicit construction of it:
 
 The set of singular $m$-simplices  $\Hom(\Delta^m,X)$ has a right $G$-action.  Since the $K$-action is free on $X$ it is free also on $\Hom(\Delta^m,X)$ and the quotient is naturally identified with the set of singular $m$-simplices in $X/K$: Each map $X/K_n \rightarrow X/K$ is a covering space %
 and  by \eqref{homeo} each $m$-simplex in $X/K$ lifts to $ X$
uniquely up to $\varprojlim K/K_n = K$. 
 Thus, we get a right action of $\cH(G,K)$ on \[
C_m(X/K) = k[\Hom(\Delta^m,X)/K] = k[\Hom(\Delta^m, X)]^K
\] for each $m$ by applying the discussion around \eqref{eq:Heckerightact} with $S = \Hom(\Delta^m,X)$.  As the face maps are maps of $\cH(G,K)$-modules, the action descends to a right action of $\cH(G,K)$ on $H_m(X/K;k)$.  
The identification of $H^m(X/K)$ with the dual vector space to $H_m(X/K)$ gives it a left $\cH(G,K)$-action.

\subsection{Borel-de Siebenthal theory}
\label{DTT1}
The section title refers to the general structure theory for semisimple subgroups of $\GG$ of the same rank as $\GG$.  In characteristic $0$, such subgroups are the fixed points of inner automorphisms of $\GG$.  We will recall that aspect of the theory that is important for us, when the inner automorphism has prime order $p$.

Let $\GG$ be a simply-connected algebraic group over $\overline{F}$, and let $\BB_G \supset \TT$ be a maximal torus and Borel in $\GG$, and let $\Delta(\GG) = \{\alpha_i\}$ denote the corresponding set of simple roots.  Let $\alpha_0$ be the highest root.  Let $\omega_i^{\vee}: \mathbb{G}_m \rightarrow \TT/\Cent(\G)$
denote the $i$th fundamental weight, so that  the coefficient of $\alpha_i$ in $\alpha_0$ is $\langle \omega_i^\vee,\alpha_0 \rangle$.

\begin{theorem*}
Suppose $\langle \omega_i^\vee,\alpha_0\rangle = p$.  Then there is a unique connected and semisimple subgroup $\HH \subset \GG$, containing $\TT$, with the following properties:
\begin{enumerate}
\item $-\alpha_0$ and $\alpha_j$ are roots of $\TT$ on $\HH$.  Meanwhile, $\alpha_i$ is not a root of $\HH$.
\item The set $\Delta(\HH) = \{-\alpha_0\} \cup \{\alpha_j\}_{j \neq i}$ is a system of simple roots for $\HH$, which can be realized as a subset of the extended Dynkin diagram $\tilde{\Delta}(\GG)$ by removing vertex $i$.
\end{enumerate}
\end{theorem*}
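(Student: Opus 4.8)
The plan is to realize $\HH$ as the fixed-point subgroup of the order $p$ inner automorphism of $\GG$ attached to $\omega_i^{\vee}$ --- the incarnation of Borel--de Siebenthal alluded to at the head of this section. Write $m_i := \langle \omega_i^{\vee}, \alpha_0\rangle$, so that by construction $m_i$ is the coefficient of $\alpha_i$ in $\alpha_0$ and our hypothesis reads $m_i = p$. By maximality of $\alpha_0$ in the dominance order, the coefficient of $\alpha_i$ in any root $\gamma$ of $\GG$ lies in $\{-p,\dots,p\}$. The cocharacter $\omega_i^{\vee}\colon \Gm \to \TT/\Cent(\GG)$ acts on $\mathfrak{g} := \Lie(\GG)$ through the adjoint representation and produces a $\Z$-grading $\mathfrak{g} = \bigoplus_{m = -p}^{p}\mathfrak{g}_m$, where $\mathfrak{g}_0$ is $\Lie(\TT)$ together with the root spaces $\mathfrak{g}_{\gamma}$ whose $\alpha_i$-coefficient is zero, and for $m \neq 0$ the piece $\mathfrak{g}_m$ is the sum of the $\mathfrak{g}_{\gamma}$ over roots $\gamma$ with $\alpha_i$-coefficient equal to $m$.

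Fix a primitive $p$-th root of unity $\zeta \in \overline{F}$, pick a lift $t \in \TT(\overline{F})$ of $\omega_i^{\vee}(\zeta) \in (\TT/\Cent(\GG))(\overline{F})$ (possible as $\overline{F}$ is algebraically closed), and set $\tau := \Ad(t)$. This automorphism of $\GG$ is the identity on $\TT$, acts on $\mathfrak{g}_m$ by the scalar $\zeta^m$, and satisfies $\tau^p = \Ad(t^p) = \mathrm{id}$ because $t^p \in \Cent(\GG)$; since $\mathfrak{g}_{\alpha_i}\subseteq \mathfrak{g}_1$ is nonzero, $\tau$ is nontrivial, hence has order exactly $p$. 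Put $\HH := \GG^{\tau}$. As $\GG$ is simply connected and $\tau$ is semisimple, Steinberg's theorem on fixed points of semisimple automorphisms gives that $\HH$ is connected; it is reductive, contains $\TT$ as a maximal torus, and --- since we are in characteristic zero --- $\Lie(\HH) = \mathfrak{g}^{\tau} = \mathfrak{g}_{-p}\oplus \mathfrak{g}_0 \oplus \mathfrak{g}_p$. Thus the roots of $\TT$ on $\HH$ are exactly the roots $\gamma$ of $\GG$ with $\alpha_i$-coefficient in $\{0, \pm p\}$. This already yields assertion (1): $-\alpha_0$ ($\alpha_i$-coefficient $-p$) and each $\alpha_j$ with $j \neq i$ ($\alpha_i$-coefficient $0$) are roots of $\HH$, whereas $\alpha_i$ ($\alpha_i$-coefficient $1$, and $1 \notin \{0,\pm p\}$ because $p \geq 2$) is not.

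It remains to identify $S := \{-\alpha_0\}\cup \{\alpha_j\}_{j \neq i}$ as a base of the root system $\Phi(\HH)$. These are $\ell := \mathrm{rank}(\GG)$ roots of $\HH$, linearly independent because the $\alpha_i$-coefficient of $\alpha_0$ is nonzero. Every $\gamma \in \Phi(\HH)$ is, up to one overall sign, a nonnegative integer combination of $S$: if the $\alpha_i$-coefficient of $\gamma$ is $0$, this is the consistent-sign property of root expansions in $\Delta(\GG)$; if it is $p$, then $\gamma$ is positive and, by maximality of $\alpha_0$, we have $\alpha_0 - \gamma = \sum_{j \neq i}e_j\alpha_j$ with all $e_j \geq 0$ (its $\alpha_i$-coefficient being $0$), so $-\gamma = 1\cdot(-\alpha_0) + \sum_{j \neq i}e_j\alpha_j$ is a nonnegative combination of $S$; and the case of $\alpha_i$-coefficient $-p$ follows by applying the previous case to $-\gamma$. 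Hence $S$ is a base of $\Phi(\HH)$, so $\mathrm{span}(S) = \mathrm{span}(\Phi(\HH))$ has dimension $\ell$ and $\HH$ is semisimple. The Dynkin diagram of $(\Phi(\HH), S)$, whose bonds record the Cartan integers computed in the ambient root space of $\GG$, is therefore the full subgraph of the extended diagram $\tilde{\Delta}(\GG)$ on the vertex set $\{-\alpha_0\}\cup\{\alpha_j\}_{j \neq i}$, i.e.\ $\tilde{\Delta}(\GG)$ with vertex $i$ deleted; this is assertion (2).

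For uniqueness, a connected reductive subgroup of $\GG$ containing the maximal torus $\TT$ is determined by its root subsystem $\Phi(\HH') \subseteq \Phi(\GG)$; if $\HH'$ also satisfies (1)--(2) then $S$ is a base of $\Phi(\HH')$, so $\Phi(\HH') = W_S \cdot S$ with $W_S$ generated by the reflections in the elements of $S$, and the same formula holds for $\Phi(\HH)$, whence $\Phi(\HH') = \Phi(\HH)$ and $\HH' = \HH$. I expect the only genuine work to be the sign bookkeeping showing that $S$ is a base --- resting on the dominance-maximality of $\alpha_0$ and the consistent-sign property of roots --- while the one external input is Steinberg's connectedness theorem for $\GG^{\tau}$, which is exactly where simple-connectedness of $\GG$ is used.
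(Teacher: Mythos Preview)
Your proof is correct and follows exactly the approach the paper sketches: construct $\HH$ as the centralizer of a lift $t \in \TT$ of $\omega_i^{\vee}(\zeta)$, invoke Steinberg's connectedness theorem (using that $\GG$ is simply connected), and read off the root system. The paper merely cites \cite{BdS} and \cite[\S 2]{Reeder} for this, so your write-up in fact supplies the details --- in particular the verification that $S$ is a base via the dominance-maximality of $\alpha_0$ --- that the paper leaves to those references.
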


Recall the method of proof (\cite{BdS}, \cite[\S 2]{Reeder}):    One sees that if $\HH$ exists, the quotient $\Cent(\HH)/\Cent(\G)$ must have an element of order $p$ represented by  the image under $\omega_i^\vee$ of a primitive $p$th root of unity $\zeta$.  One then constructs $\HH$ as the centralizer of a representative $s \in \TT$ for $\omega_i^\vee(\zeta)$.

The automorphism group of the extended Dynkin diagram of $\widetilde{\Delta}(\G)$ permutes the $\omega_i^\vee$ with $\langle \omega_i^\vee,\alpha_0\rangle = p$.  If $\omega_i^\vee$ and $\omega_j^\vee$ are in the same orbit, the corresponding subgroups $\HH$ are $\GG(\overline{F})$-conjugate.  This provides a complete list of $\G(\overline{F})$-conjugacy classes of \emph{inner} automorphisms of order $p$ whose fixed points are semisimple.

\subsection{Endoscopic subgroups}
\label{sec:DTT2}
If $\hat{G}$ is an algebraic group of low characteristic (for us, the dual group of a simply connected group $\GG$), there are some exceptional semisimple subgroups of $\hat{G}$, of full rank, not captured by Borel-de Siebenthal.  Because they arise in duality with the centralizers of $\GG$ it is tempting to call such subgroups ``endoscopic subgroups.''\footnote{But note that in our story these groups appear on the Galois side.  In the theory of endoscopy they appear on the automorphic side, where they are not always subgroups.}  In this section we recall a parametrization of these subgroups parallel to that of \S\ref{DTT1}. 

Let $\hat{T}_G \subset \hat{B}_G \subset \hat{G}$ be the pinned maximal torus and Borel subgroup in $\hat{G}$ over $k$, as in \S\ref{subsec:dgalg}.  Let $\Delta(\hat{G}) = \{\alpha_{i,*}^\vee\}$ denote the corresponding set of simple roots, and let $\alpha_{0,*}^\vee$ denote the highest \emph{short} root.   (We are following the notation for roots of dual groups
discussed in \S \ref{subsec:dgalg}). 
Let $\alpha_{i,*}$ and $\alpha_{0,*}$ denote the associated coroots to the roots $\alpha_{i,*}^\vee$ and $\alpha_{0,*}^\vee$ --- thus $\alpha_{0,*}^\vee$ is the highest coroot of $\hat{G}$.  Let $\langle \omega^\vee_{i,*},\alpha_{0,*}\rangle$ denote the coefficient of $\alpha_{i,*}$ in $\alpha_{0,*}$.

\begin{theorem*}\label{DTT2}
Suppose $\langle \omega_{i,*}^\vee,\alpha_{0,*} \rangle = p$.  Then there is a unique connected and semisimple subgroup
$\hat{H}_0$ of $\hat{G}$, containing $\hat{T}_G$, with the following properties:
\begin{enumerate}
\item $-\alpha_{0,*}^\vee$ and $\alpha_{j,*}^\vee$ for $j \neq i$ are roots of $\hat{T}_G$ on $\hat{H}_0$.  Meanwhile $\alpha_{i,*}^\vee$ is not a root of $\hat{H}_0$.
\item The set $\Delta(\hat{H}_0) := \{-\alpha_{0,*}^\vee\} \cup \{\alpha_{j,*}^\vee\}_{j \neq i}$ make a system of simple roots for $\hat{H}_0$, which can be realized as a subset of the extended Dynkin diagram of $\hat{G}$ by removing $\alpha_{i,*}^\vee$ from $\widetilde{\Delta}(\hat{G})$.
 \end{enumerate}
\end{theorem*}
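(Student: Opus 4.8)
The plan is to mirror, on the dual side over $k$, the Borel--de Siebenthal argument recalled in \S\ref{DTT1}, being careful about the two features that distinguish the present setting: we are working with a group $\hat{G}$ in characteristic $p$, and the role of the highest root is played by the highest \emph{short} root $\alpha_{0,*}^\vee$ of $\hat{G}$ (equivalently, $\alpha_{0,*}$ is the highest \emph{coroot}). First I would pass to a concrete $s \in \hat{T}_G(k)$ playing the role of the semisimple element in \S\ref{DTT1}: namely, choose a primitive $p$-th root of unity $\zeta \in k^*$ (which exists since $k$ is algebraically closed of characteristic $p$ --- wait, in characteristic $p$ there are no nontrivial $p$-th roots of unity, so instead one uses that $\omega_{i,*}^\vee(\Gm)$ together with the relation $\langle \omega_{i,*}^\vee, \alpha_{0,*}\rangle = p$ forces the relevant cocharacter combination to land in the center modulo $p$; concretely one takes $s$ so that $\alpha_{j,*}^\vee(s)=1$ for $j\neq i$ and $\alpha_{i,*}^\vee(s)$ is a chosen nontrivial value, exploiting that $\alpha_{0,*}^\vee(s)=1$ automatically because the coefficient of $\alpha_{i,*}$ in $\alpha_{0,*}$ is divisible by $p$). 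The upshot is that $s$ is chosen in $\hat{T}_G$ so that the roots of $\hat{G}$ killing $s$ are exactly those whose expansion in simple roots has coefficient of $\alpha_{i,*}^\vee$ divisible by $p$; since the coefficients of simple roots in any root are bounded by the corresponding coefficients in $\pm\alpha_{0,*}^\vee$ (using that $\alpha_{0,*}^\vee$ is the highest root of $\hat G$, which has coefficient $p$ at $i$ by hypothesis), these are precisely $\{\pm\alpha_{0,*}^\vee\}\cup\{\text{roots not involving }\alpha_{i,*}^\vee\}$.

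Next I would take $\hat{H}_0 := Z_{\hat{G}}(s)^\circ$, the identity component of the centralizer. Since $\hat{G}$ is reductive and $s$ is semisimple, $Z_{\hat{G}}(s)^\circ$ is connected reductive of full rank, containing $\hat{T}_G$, with root system exactly the set of roots of $\hat{G}$ vanishing on $s$ computed above. By the coefficient bound this root system is generated by $\{-\alpha_{0,*}^\vee\}\cup\{\alpha_{j,*}^\vee\}_{j\neq i}$, which are linearly independent (they sit inside the affine Dynkin diagram of $\hat{G}$ with node $i$ deleted), hence form a base; this gives (1) and (2) simultaneously, and also shows $\hat{H}_0$ is semisimple because the deleted-node subdiagram spans the full root lattice rank. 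For uniqueness I would argue as in \S\ref{DTT1}: any connected semisimple $\hat{H}$ with properties (1)--(2) has the prescribed root system relative to $\hat{T}_G$, hence equals the subgroup generated by $\hat{T}_G$ and the root subgroups for those roots, which is forced to be $Z_{\hat{G}}(s)^\circ$.

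The one genuinely delicate point is the structure theory of centralizers of semisimple elements in positive characteristic: one must know that $Z_{\hat{G}}(s)$ is smooth and that its identity component is reductive with root datum given by the $s$-fixed roots --- this holds for semisimple $s$ in any reductive group over a field (Steinberg), but I would want to check that $s$ as constructed is genuinely semisimple and that the characteristic-$p$ subtleties (e.g.\ when $\hat{G}$ has a component where $p$ is a torsion prime) do not cause $Z_{\hat{G}}(s)$ to be non-smooth or to acquire extra unipotent directions. The hypothesis $\langle\omega_{i,*}^\vee,\alpha_{0,*}\rangle = p$ with $p$ the ambient characteristic is exactly the borderline case, so this is where I expect the main obstacle: one likely has to verify by hand, using the explicit list of node-deletions in the affine diagrams of the exceptional and classical dual groups, that the resulting $\hat{H}_0$ is the expected reductive subgroup and that no pathology occurs. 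The remaining bookkeeping --- matching $\alpha_{0,*}^\vee$ as highest short root of $\hat G$ with the highest root of $\hat G$, and identifying the deleted-node subdiagram of $\widetilde{\Delta}(\hat G)$ --- is routine once the centralizer is understood.
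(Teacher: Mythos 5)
Your approach cannot work, and the obstruction is not the one you flag at the end but something more basic: the subgroups $\hat{H}_0$ in this theorem are \emph{not} centralizers of semisimple elements (nor of torus subgroup-schemes), and your mid-proof recovery from the observation that $k$ has no nontrivial $p$-th roots of unity does not succeed in producing a viable substitute.

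Two interlocking problems. First, you repeatedly treat $\alpha_{0,*}^\vee$ as the highest root of $\hat{G}$, but the theorem takes it to be the highest \emph{short} root. In the simply-laced case these agree, but the whole interest of this statement is the non-simply-laced case. For $\hat{G}$ of type $\mathrm{G}_2$ with $p=3$, the set $\Delta(\hat{H}_0)$ consists of the highest short root (negated) together with the short simple root, and the root subsystem it generates is the full set of six \emph{short} roots of $\mathrm{G}_2$ --- an $\mathrm{A}_2$ that is \emph{not closed} under root addition inside $\mathrm{G}_2$ (the sum of two short roots can be long). No reductive subgroup containing $\hat{T}_G$ arising as a centralizer --- of an element, or of any diagonalizable subgroup-scheme --- can have a non-closed root system, because such centralizers always produce closed subsystems. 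So in precisely the cases the paper singles out as requiring a special check ($\mathrm{F}_4$ and $\mathrm{G}_2$ with $p=3$), your $\hat{H}_0 := Z_{\hat{G}}(s)^\circ$ does not exist with the prescribed roots. Second, even in the simply-laced and $p=2$ cases where the root set \emph{is} closed, there is no element $s \in \hat{T}_G(k)$ with $\omega_{i,*}^\vee$-type defining property, since $k$ has characteristic $p$; the element that plays this role on the automorphic side (\S 2.12) lives in characteristic zero. So the Borel--de Siebenthal construction does not transport to the dual side by the route you chose, and the "coefficient bound" you invoke (correct only for the highest root, not the highest short root) does not rescue it.

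The paper's actual argument sidesteps centralizers entirely: it uses the Borel--Tits/Sopkina classification of \emph{quasi-closed} subsets of roots, which is a strictly larger class than closed subsets when the base field has small characteristic. In characteristic $p$, a symmetric set of roots is quasi-closed if for every pair $\alpha,\beta$ spanning a rank-two subsystem $\Psi$, the intersection with $\Psi$ is all of $\Psi$, or the long roots of $\Psi$, or (only when $p=2$) the short roots of $\Psi$. This weaker condition is exactly what permits the short-root $\mathrm{A}_2$ in $\mathrm{G}_2$ when $p=3$. The proof then shows the dual set $\{\alpha_*^\vee : \alpha \in S\}$ of the Borel--de Siebenthal set $S$ is again quasi-closed --- automatic if $\GG$ is simply laced or $p=2$, checked by hand for $\mathrm{F}_4,\mathrm{G}_2$ with $p=3$. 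If you want to repair your write-up, the key realization is that existence here is a statement about subgroup \emph{schemes} specified by (quasi-closed) root data, not about fixed-point subgroups, and you need the quasi-closedness criterion rather than a centralizer construction.
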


A case-by-case proof is given in \cite[Theorem 3.3]{T} but it does not give as much precision on the root systems, so we sketch a proof now: 

 \proof 

To specify a smooth subgroup scheme of $\hat{G}$ or $\GG$ we must specify a ``quasi-closed'' subset $S$ of roots  for $\GG$ or  $\hat{G}$   --- see \cite[\S 3]{BorelTits} 
but also \cite[Corollary 5]{Sopkina} for a reference in a form more convenient to us.  Let us for simplicity suppose $\GG$ does not have type $\mathrm{G}_2$, though that case can be verified either directly or by a similar argument.
If $\GG$ does not have type $\mathrm{G}_2$, then ``quasi-closed'' means (loc. cit.) the following: If $\alpha, \beta \in S$ and $i, j  > 0$ are such that $i \alpha + j \beta$ is a root for $G$, and furthermore the commutator coefficient $N_{\alpha\beta,ij}$ (in the notation of \cite{Sopkina}) is not divisible by $p$, then in fact $i \alpha + j \beta \in S$.  

In view of the computations
of \cite{VP} we may rephrase this condition  for symmetric $S$ (i.e. $S=-S$) as follows: if $\alpha$ and $\beta$ are in $S$ and $\Psi$ is the root system they span, then either $\Psi \cap S = \Psi$, or $\Psi \cap S$ is the set of long roots of $\Psi$, or else $\Psi \cap S$ is the set of short roots of $\Psi$.  The last possibility occurs only when $p = 2$.

To prove the Theorem, we prove that if $S$ is the quasi-closed set of roots that arise from $\HH$ as in Theorem \ref{DTT1}, the set $\{\alpha^\vee_*:\alpha \in S\}$ is also quasi-closed.  As $\HH$ is reductive, $S$ is symmetric, and according to the criterion for symmetric $S$ this is automatic if $\GG$ is simply laced, or $p = 2$.  The only remaining cases are $\mathrm{F}_4$ and $\mathrm{G}_2$ with $p = 3$, which can be verified by hand.
 \qed

 A more uniform ``Tannakian''  approach is proposed  in loc. cit., but to carry out the proposal requires a better understanding of Smith theory for perverse sheaves \cite[Conjecture 4.18]{T}.

\section{Tate cohomology} \label{sec:Tate}

\subsection{Definition of Tate cohomology}
\label{subsec:deftate}
Let $M$ be an abelian group with an action of the cyclic group $\langle \sigma \rangle$ of order $p$ generated by $\sigma$.  Set $\Tate^0(M) := \ker(1-\sigma)/\mathrm{Im}(N)$ and $\Tate^1(M) := \ker(N)/\mathrm{Im}(1-\sigma)$, where $N = 1+\sigma + \cdots + \sigma^{p-1}$ is the ``norm.''
In other words, $\Tate^i(M)$ is the cohomology of the 2-periodic chain complex whose differentials alternate between $1-\sigma$ and $N$.  Because of this each short exact sequence of $\sigma$-modules induces a long exact sequence
\[
\Tate^0(M') \to \Tate^0(M) \to \Tate^0(M'') \to \Tate^1(M') \to \Tate^1(M) \to \Tate^1(M'') \to \Tate^0(M')
\]

\subsection{Tate on smooth functions on $\ell$-spaces}
\label{subsec:Ccinfty}

If $X$ is a Hausdorff, locally compact, totally disconnected space (an ``$\ell$-space'', in the terminology of \cite{BZ}) write $C^{\infty}_c(X;\WittRing)$ or $C^{\infty}_c(X;k)$ for the space of $\WittRing$- or $k$-valued functions on $X$ that are locally constant (``smooth'') and compactly supported.  If $\sigma$ acts continuously on $X$ then we may form $\Tate^i(C^{\infty}_c(X;?))$.  These groups can be computed in terms of $X^{\sigma}$, as follows:
\begin{enumerate}
\item For $i=0$ or $1$, restricting to fixed points descends to an isomorphism
\[
\Tate^i(C_c^{\infty}(X;k)) \stackrel{\sim}{\to} C^\infty_c(X^{\sigma};k)
\]
\item The map
\[
\Tate^0(C^{\infty}_c(X;\WittRing)) \to C^{\infty}_c(X^{\sigma};k)
\]
give by restricting to fixed points and reducing mod $p$ (in either order) is an isomorphism, while $\Tate^1(C^{\infty}_c(X;\WittRing)) = 0$.
\end{enumerate}
We are going to prove a more general result in the Proposition below.

\subsection{Tate on sheaves on $\ell$-spaces}
\label{subsec:Tateonsheaves}
Let $X$ be as in \S\ref{subsec:Ccinfty}, and let $\cF$ be a sheaf of $k$- or $\WittRing$-modules on $X$.  Write $\Gamma_c(X;\cF)$ for the space of compactly supported sections of $\cF$.  For instance, if $\cF$ is the constant sheaf with stalk $k$ or $\WittRing$ then $\Gamma_c(X;\cF) = C^{\infty}_c(X;k)$ or $C^{\infty}_c(X;\WittRing)$.  The assignment $\cF \mapsto \Gamma_c(X,\cF)$ is a covariant exact functor \cite[\S 1.3]{Bernstein}. 

If $\sigma$ acts on $X$ and $\cF$ is $\sigma$-equivariant then $\sigma$ can be regarded as a map of sheaves
$\cF\vert_{X^{\sigma}} \stackrel{\sigma}{\to} \cF\vert_{X^{\sigma}}$ and we may define
\[
\begin{array}{c}
\Tate^0(\cF\vert_{X^{\sigma}}) = \ker(1-\sigma)/\mathrm{Im}(N)\ \  \mbox{(a sheaf on $X^{\sigma}$)}\\
\Tate^1(\cF\vert_{X^{\sigma}}) = \ker(N)/\mathrm{Im}(1-\sigma)   \ \ \mbox{(a sheaf on $X^{\sigma}$)}
\end{array}
\]
A compactly supported section of $\cF$ can be restricted to a compactly supported section of $\cF\vert_{X^{\sigma}}$. This map preserves the $\sigma$-actions inducing a map
\begin{equation}
\label{eq:steve}
\Tate^i(\Gamma_c(X;\cF)) \to \Gamma_c(X^{\sigma};\Tate^i(\cF))
\end{equation}

\noindent
{\bf Proposition.}  {\it The map \eqref{eq:steve} is an isomorphism.}

\medskip 
In the proof we'll use ``existence of fundamental domains:'' if $\sigma$ acts freely on a compact $\ell$-space $X$, there is a fundamental domain, i.e.
a closed and open subset $F \subset X$ so that $X$ is the disjoint union
of $\sigma^i F$ for $0 \leq i \leq p-1$. Indeed, take a cover
of $X$ by finitely many closed-and-open sets $U_i$ so that
$\sigma(U_i) \cap U_i = \emptyset$, take the algebra of sets
generated (under intersection and complement)  by the $U_i$ and their images under $\langle \sigma \rangle$. The minimal nonempty
elements of that algebra give a finite \sigmastable partition of $X$,
each block of the partition being disjoint from its $\sigma$-image. The desired domain now follows
by taking representatives for the $\sigma$-orbits on the blocks.

\begin{proof}
There is a short exact sequence (cf. \cite[1.16]{BZ}) 
\[
0 \to \cF' \to \cF \to \cF'' \to 0
\]
of $\sigma$-equivariant sheaves where $\cF''$ is supported on $X^{\sigma}$ and the stalks of $\cF'$ vanishes along $X^{\sigma}$.  It suffices to treat the cases $\cF = \cF''$ and $\cF = \cF'$.  For $\cF''$, the Proposition follows from the exactness of $\Gamma_c$ applied to the 2-periodic chain complex defining $\Tate^*$. 
So it is enough to check vanishing of $\Tate^i(\Gamma_c(U;\cF'))$  when $U^{\sigma}$ is empty.

We check for $\Tate^0$, the other case being similar. 
If $f \in \Gamma_c(U,\cF')$ is $\sigma$-invariant, choose a $\sigma$-invariant compact
$U' \subset U$ containing the support of $f$, take a fundamental domain $F' \subset U'$
for the $\sigma$-action, and note 
$f = \mathrm{N} (f'),$ where $f'$ is a section which agrees with $f$ on $F'$ and is zero off $F'$.  \end{proof}
 
 \subsection{Tate on rings}
\label{Taterings}
There is an algebraic relative of \S \ref{subsec:Ccinfty}:

If $A$ is any commutative unital $k$-algebra with a $\sigma$-action,   set $\bar{A}  = \Tate^0 A = A^{\sigma}/NA$.
It has a {\em ring} structure because $N \cdot A$ is an ideal in $A^{\sigma}$. 
Then we have a bijection
\begin{equation} \label{Tateforrings}  \Hom(A, k)^{\sigma} \stackrel{\sim}{\longrightarrow} \Hom(\bar{A}, k). \end{equation}
defined by restriction of characters.

\begin{proof} 
For short, let us say an ``extension'' of a character $\psi: \bar{A} \rightarrow k$
is a character $\chi: A \rightarrow k$ such that $\chi$, when restricted to $A^{\sigma}$,
factors as $A^{\sigma} \rightarrow \bar{A} \stackrel{\psi}{\rightarrow} k$.

There is a ring homomorphism $ A \rightarrow \bar{A}$ given by 
$a \mapsto (a a^{\sigma} \dots a^{\sigma^{p-1}})$.
When we restrict this to $A^{\sigma}$, it gives the $p$th power of the tautological map $A^{\sigma} \rightarrow \bar{A}$.

 Thus  given a character $\chi$ on $\bar{A}$ the formula
$$ \tilde{\chi}(a) =  \chi(a a^{\sigma} \dots a^{\sigma^{p-1}})^{1/p}$$
defines an extension to $A$.  This is the unique extension of $\chi$ to $A$: it is clearly the only possible $\sigma$-fixed extension, and in fact any extension of $\chi$ to $A$ must be $\sigma$-fixed.  To see this, note that as $\chi$ is trivial on $NA$ we must have $\sum_{i = 0}^{p-1} \tilde{\chi}^{\sigma^i} = 0$ for any extension $\tilde{\chi}$.  By linear independence of characters \cite[Ch. V \S 6.1, Theorem 1]{Bourbaki},  it follows that $\tilde{\chi}^{\sigma^i} = \tilde{\chi}$, i.e. $\tilde{\chi}_1 \in \Hom(A,k)^{\sigma}$.  
\end{proof}

The proof shows, more generally, that given any commutative integral domain $B$, 
any homomorphism $ \chi: \bar{A} \rightarrow B$  has the property that $\chi^p$ extends uniquely to $A$
(same argument, replacing $B$ by its quotient field to invoke the linear independence of characters).

\section{The Brauer homomorphism} \label{sec:Brauer}

\subsection{$\sigma$-actions, $\sigma$-plain subgroups}
\label{subsec:sigmaactions}
Let $G$ be a locally compact, totally disconnected group, and $K \subset G$ an open compact subgroup. 
Suppose $\sigma$ acts on $G$ with $\sigma(K) = K$ and $\sigma^p = 1$.  Write $G^{\sigma}$ and $K^{\sigma}$ for the fixed subgroups.

If $X$ is a right $G$-space on which $K$-acts freely, then $\cH(G,K)$ acts on the chains and cochains of $X/K$ and $\cH(G^{\sigma},K^{\sigma})$ acts on the chains and cochains of $X^{\sigma}/K^{\sigma}$.  Note the difference between $(X/K)^{\sigma}$ and its subspace $X^{\sigma}/K^{\sigma}$ --- the former usually does not carry a $\cH(G^{\sigma},K^{\sigma})$-action.

We wish to relate the Hecke modules $H^*(X/K)$ and $H^*(X^{\sigma}/K^{\sigma})$.  The relationship becomes much simpler under a technical hypothesis on $K$.  We say that $K \subset G$ is \emph{$\sigma$-plain} if  both of the following conditions hold:
\begin{itemize}
\item[(a)]  the inclusion
\[
G^{\sigma}/K^{\sigma} \hookrightarrow (G/K)^{\sigma}:gK^{\sigma} \mapsto gK
\]
is a bijection, or equivalently if $G^{\sigma}$ acts on $(G/K)^{\sigma}$ with a single orbit,
or equivalently if  $H^1(\sigma, K) \rightarrow H^1(\sigma, G)$ has trivial fiber above the trivial class. 
\item[(b)] $K$ is virtually prime-to-$p$, i.e. there is a finite index subgroup $K' \subset K$  which is a projective limit of prime-to-$p$ finite groups. 
In particular  $H^1(\sigma, K)$ is finite (\S \ref{sec:NAC}). 
\end{itemize}

\subsection{The Brauer homomorphism}
\label{subsec:brauerhom}
Suppose $\sigma$ acts on $G$ and $K$ as in \S\ref{subsec:sigmaactions}.  
The algebra $\cH(G,K)$ has an action of $\sigma$ (i.e.
$h^{\sigma}(x^{\sigma} ) = h(x)$ for $x \in G/K \times G/K)$. 
Write $\cH(G,K)^{\sigma}$ for the $\sigma$-invariant part of $\cH(G,K)$.  If $K$ is $\sigma$-plain, then the \emph{Brauer homomorphism} is the map
\[
\Br:\cH(G,K)^{\sigma} \to \cH(G^{\sigma},K^{\sigma})
\]
just given by restricting $h$ from $ G/K \times G/K$ to $((G/K)^{\sigma} \times (G/K)^{\sigma}) = (G^{\sigma}/K^{\sigma}) \times G^{\sigma}/K^{\sigma}$.  Since the summands of \eqref{eq:heckemult} are invariant under the action of $\sigma$ on $G/K$, and $k$ has characteristic $p$, the Brauer map is an algebra homomorphism.  A similar construction is called the ``Brauer homomorphism'' in the modular representation theory of finite groups, and we call it by the same name here.

Set $N = 1+\sigma + \sigma^2 + \cdots + \sigma^{p-1}$, i.e. $N$ is the ``norm element'' in the group ring $k[\sigma]$.  Then $N\cdot(1-\sigma) = (1-\sigma)\cdot N =0$.  If $\sigma$ acts on a set $S$, 
and we thereby regard $k[S]$ as a $k[\sigma]$-module, there are canonical identifications
\begin{equation}
\label{eq:obama}
\ker(1-\sigma)/\mathrm{Im}(N) = k[S^\sigma], \qquad \ker(N)/\mathrm{Im}(1-\sigma) = k[S^\sigma]
\end{equation}
 Note these are the groups $\Tate^i$ from \S\ref{subsec:deftate}. The identification on the left sends $s \in S^{\sigma}$ to $s + \mathrm{Im}(N)$ and on the right it sends $s \in S^{\sigma}$ to $s + \mathrm{Im}(1-\sigma)$.  The Brauer homomorphism is compatible with these identifications in a sense we now describe (see also \eqref{Brauercompat} for a more general statement):

Suppose that $\tilde{S}$ is a right $G$-set with compatible $\sigma$-action such that $K$ acts freely,  $S = \widetilde{S}/K$  and $h \in \cH(G,K)^{\sigma}$.  
 Then the map $i:\tilde{S}^{\sigma}/K^{\sigma} \rightarrow S^{\sigma}$ is injective
as the $K$ action is free.    
Moreover, $\cH(G, K)^{\sigma}$ acts on $k[S^{\sigma}] \simeq \Tate^0 k[S]$ by means of \eqref{eq:obama},
i.e. each $h \in \cH$ acts via $\Tate^0 h$.  

We claim that, in fact, $k[\tilde{S}^{\sigma}/K^{\sigma}]$ is a $\cH(G, K)^{\sigma}$-direct
summand of $k[S^{\sigma}]$, and 
 \begin{equation} \label{Brauercompat0} \mbox{  The action of $\Tate^0 h$
 for $h \in \cH(G, K)^{\sigma}$ on $k[\tilde{S}^{\sigma}/K^{\sigma}]$
 coincides with 
 $\Br(h) \in  \cH(G^{\sigma}, K^{\sigma})$. }\end{equation} 
Moreover, the identical statement holds also for $\Tate^1$. 

 It is enough to show the same statement with ``direct summand'' replaced by ``submodule''; then one
 notes that the natural $k$-valued bilinear pairing on $k[S^{\sigma}]$  --- given by  $\langle \sum a_s s, \sum b_s s \rangle = \sum a_s b_s $ --- 
 has the property that $\langle a \ast \Tate^0 h , b \rangle = \langle a, b \ast \Tate^0 h' \rangle$ where $h \mapsto h'$ is the antiinvolution of the Hecke 
 algebra sending $K gK$ to $K g^{-1} K$.  Moreover, this bilinear pairing 
is  nondegenerate on $k[\tilde{S}^{\sigma}/K^{\sigma}]$. That shows that $k[\tilde{S}^{\sigma}/K^{\sigma}]$ is actually a summand. 
 
 Now to check \eqref{Brauercompat0}: 
For $\tilde{s} \in \tilde{S}^{\sigma}$ we have
$ \tilde{s} K^{\sigma} \ast \Br(h) = \sum_{g \in G^{\sigma}/K^{\sigma}} \tilde{s} g K^{\sigma} \, h(K, g K) $ and 
$ \tilde{s} K \ast h= \sum_{g \in G/K} \tilde{s} g  \, K  h(K, gK)$.
  Considered in $k[S]^{\sigma}$, these elements differ by 
 $$ \sum_{g \in G/K - (G^{\sigma}/K^{\sigma})} \tilde{s} g K \, h(K, gK)$$
 and our assumption that $K$ is $\sigma$-plain means that $\sigma$ acts
 freely on $G/K-(G^{\sigma}/K^{\sigma})$; in particular, the element above belongs
 to the image of $N$ from \eqref{eq:obama}.

\subsection{Normalized Brauer homomorphism} \label{nBr}
Suppose $\cH(G,K)$ and $\cH(G^{\sigma},K^{\sigma})$ are commutative integral domains.  Then according to \S\ref{Taterings}, 
the $p$th power of $\Br$ extends uniquely to a homomorphism
\[
\widetilde{\Br}:  \cH(G, K) \rightarrow \cH(G^{\sigma}, K^{\sigma}).
\]
This map is not $k$-linear but rather Frobenius semilinear. However, we may twist it to be linear:
$ \cH(G, K)$ has an $\mathbf{F}_p$-structure, i.e.
\[
\cH(G,K) = \Fun_G(G/K \times G/K; \mathbf{F}_p) \otimes_{\mathbf{F}_p} k
\]
 The {\em normalized Brauer homomorphism}, which we denote with a lower case ``b,'' is the unique $k$-linear homomorphism
\[ 
\NBr: \cH(G, K) \longrightarrow \cH(G^{\sigma}, K^{\sigma}),
\]
that agrees with $\widetilde{\Br}$ on $ \Fun_G(G/K \times G/K; \mathbf{F}_p)$.
An explicit formula for $\NBr$ is given by  
\begin{equation} 
\NBr(h)(K^{\sigma},gK^{\sigma}) =\left( \left(h \ast \stackrel{p}{\cdots} \ast h\right)(K,gK)\right)^{\frac{1}{p}}. \end{equation}

\begin{theorem}
\label{thm:smith} 
Let $G, K, X$ be as in \S \ref{subsec:Heckeactionsonhomology}.    Suppose that $\sigma$ acts compatibly on $G, K, X$,
so that $G^{\sigma}, K^{\sigma}, X^{\sigma}$ also satisfy the conditions of \S \ref{subsec:Heckeactionsonhomology}. 
Suppose in addition that $X/K$ has finite cohomological dimension, and that $K$ is $\sigma$-plain in the sense of \S\ref{subsec:sigmaactions}.
In this situation, as described above, 
\begin{eqnarray}
\label{eq:23}
H^*(X/K) & \text{\rm is a left $\cH(G,K)$-module} \\
\label{eq:24}
H^*(X^{\sigma}/K^{\sigma}) & \text{\rm is a left $\cH(G^{\sigma},K^{\sigma})$-module}
\end{eqnarray}

Then we have:
\begin{itemize}
\item[(a)] If we regard these as $\cH(G,K)^{\sigma}$-modules (via restriction for \eqref{eq:23} and via $\Br$ for \eqref{eq:24}), then every composition
factor of  \eqref{eq:24} is  also a composition factor of \eqref{eq:23}.   

\item[(b)] Suppose that we are in the setting of \S \ref{nBr} --- i.e., suppose that $\cH(G, K)$ and $\cH(G^{\sigma}, K^{\sigma})$ are both commutative integral domains --- so the normalized Brauer homomorphism $\NBr$ is defined. 
Suppose that $\chi: \cH(G^{\sigma}, K^{\sigma}) \rightarrow k$
is a character
that appears as  an eigenvalue of \eqref{eq:24} (i.e.
there exists an element of $H^*(X^{\sigma}/K^{\sigma})$ annihilated by all $h - \chi(h)$ for $h \in \cH(G^{\sigma}, K^{\sigma})$). 

Then also $\chi \circ \NBr$  appears as an eigenvalue of \eqref{eq:23}.  
\end{itemize}

  \end{theorem}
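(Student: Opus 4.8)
The plan is to reduce everything to Smith-theoretic ($\Z/p$-equivariant) localization, carried out at the chain level with Hecke actions in tow. First I would set up the relevant comparison map. Let $C_*(X)$ denote the singular chain complex of $X$ with $k$-coefficients; it is a complex of $k[\sigma]$-modules, and since $K$ acts freely we get $C_*(X/K) = C_*(X)^K$ and, Hecke-equivariantly, a right $\cH(G,K)$-action as in \S\ref{subsec:Heckeactionsonhomology}. The key topological input is the Smith localization theorem: because $\langle \sigma \rangle$ has order $p$ and $X/K$ has finite cohomological dimension, the restriction-to-fixed-points map induces, after passing to Tate cohomology of the $\sigma$-action, an isomorphism on the homology of the relevant complexes — concretely, $\Tate^*$ of the complex $C_*(X/K)$, computed as a complex over $k[\sigma]$ with $\cH(G,K)^{\sigma}$-action, is computed by $C_*(X^{\sigma}/K^{\sigma})$ together with its $\Br$-twisted $\cH(G,K)^{\sigma}$-action. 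This is exactly where the Proposition of \S\ref{subsec:Tateonsheaves} and the $\sigma$-plain hypothesis enter: the $\sigma$-plain condition guarantees that $X^{\sigma}/K^{\sigma} \hookrightarrow (X/K)^{\sigma}$ is the identification needed to run the fundamental-domain argument on each $C_m$, and \eqref{Brauercompat0} identifies the resulting Hecke action with $\Br$.

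Granting that, part (a) is a formal consequence of a spectral sequence argument. One has a "Tate hypercohomology" spectral sequence converging to $\Tate^*$ of the complex computing $H^*(X/K)$, whose $E_2$ (or $E_1$) page is built from $\Tate^*$ applied to the cohomology groups $H^*(X/K)$, and which abuts to $\Tate^*$ of $C_*(X^{\sigma}/K^{\sigma})$, i.e. (up to the $2$-periodicity) to $H^*(X^{\sigma}/K^{\sigma})$. Every differential and every extension in this spectral sequence is a map of $\cH(G,K)^{\sigma}$-modules (for \eqref{eq:23} via restriction and for the target via $\Br$, by \eqref{Brauercompat0}). Since composition factors can only be lost, never created, when passing to subquotients of a finite filtration, every composition factor of the abutment $H^*(X^{\sigma}/K^{\sigma})$ is a composition factor of some $\Tate^i(H^j(X/K))$, and $\Tate^i$ of a $k[\sigma]$-module that is a subquotient of the restriction of a $\cH(G,K)$-module has all its composition factors among those of $H^j(X/K)$ itself — because over $k[\sigma] = k[\sigma]/(\sigma-1)^p$ the functor $\Tate^i$ takes values in modules whose composition factors (all isomorphic to the trivial module, once we forget the Hecke structure) are subquotients after restriction; more precisely one argues that $\Tate^i$ of a module is a subquotient of it as a $\cH(G,K)^{\sigma}$-module, so composition factors only shrink. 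This yields (a).

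For part (b): suppose $\chi: \cH(G^{\sigma},K^{\sigma}) \to k$ is an eigenvalue of \eqref{eq:24}. Pull back along $\Br$ to get a character $\chi \circ \Br: \cH(G,K)^{\sigma} \to k$; by part (a) — applied to the composition factor of \eqref{eq:24} cut out by $\chi$ — this character $\chi \circ \Br$ occurs as a composition factor, hence as an eigenvalue, of the $\cH(G,K)^{\sigma}$-module \eqref{eq:23} (here we use that $\cH(G,K)^{\sigma}$ acting through a character on a composition factor forces that character to be an eigenvalue of the whole module, which is standard). Now invoke \S\ref{Taterings}: since $\cH(G,K)$ and $\cH(G^{\sigma},K^{\sigma})$ are commutative integral domains, $\overline{\cH(G,K)} := \Tate^0 \cH(G,K) = \cH(G,K)^{\sigma}/N\cdot\cH(G,K)$, and the bijection \eqref{Tateforrings} together with the remark following its proof shows that $\chi\circ\Br$, being a character of $\cH(G,K)^{\sigma}$ that kills $N\cdot\cH(G,K)$ (because $k$ has characteristic $p$, $N$ kills $\cH(G,K)$ after the Brauer restriction — indeed $\Br \circ N = 0$ on $\cH(G,K)$ in characteristic $p$), factors through $\overline{\cH(G,K)}$; its $p$-th power then extends uniquely to $\cH(G,K)$, and unwinding the definition in \S\ref{nBr} this extension, suitably Frobenius-untwisted via the $\mathbf{F}_p$-structure, is precisely $\chi \circ \NBr$. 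Since an eigenvalue of \eqref{eq:23} as a $\cH(G,K)^{\sigma}$-module that factors through $\overline{\cH(G,K)}$ lifts (via the same $p$-th power / unique extension mechanism, now applied to the eigenvector and using that the eigenvalue on the eigenvector already satisfies the needed compatibility) to an eigenvalue of \eqref{eq:23} as a $\cH(G,K)$-module equal to $\chi\circ\NBr$, we are done.

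\textbf{Main obstacle.} The hardest part is the Smith-localization input itself: establishing, with full control of the $\cH(G,K)^{\sigma}$-module structure, that $\Tate^*$ of the singular chains of $X/K$ is computed by the singular chains of $X^{\sigma}/K^{\sigma}$ — i.e. that the Smith comparison map is a quasi-isomorphism on Tate complexes — and then tracking the Brauer homomorphism through the resulting spectral sequence via \eqref{Brauercompat0}. The finite-cohomological-dimension hypothesis is what makes the Tate spectral sequence converge, and the $\sigma$-plain hypothesis is exactly what makes the chain-level fixed-point identification compatible with Hecke operators; assembling these into a clean spectral sequence (the one referred to in the introduction as not expected to degenerate) and checking Hecke-equivariance of all its differentials is the technical heart. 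The passage from $\chi\circ\Br$ to $\chi\circ\NBr$ via \S\ref{Taterings}, while requiring care with the Frobenius twist and the $\mathbf{F}_p$-structure, is essentially bookkeeping once the module-theoretic statement (a) is in hand.
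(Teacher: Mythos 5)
Your proposal follows essentially the same route as the paper: a Smith double complex on chains of $X/K$ with differentials $N$ and $1-\sigma$, the two associated spectral sequences (one degenerating to the fixed-point homology, the other with $E^2 = \Tate^* H_*(X/K)$, convergent because $X/K$ has finite cohomological dimension), Hecke-equivariance of everything via \eqref{Brauercompat0}, and then the passage from $\chi\circ\Br$ to $\chi\circ\NBr$ via \S\ref{Taterings} and a Frobenius twist. The overall logic — part (a) plus the normalized-Brauer bookkeeping gives part (b) — is exactly the paper's.

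One genuine imprecision worth flagging: you write that $\Tate^*$ of $C_*(X/K)$ "is computed by $C_*(X^{\sigma}/K^{\sigma})$ together with its $\Br$-twisted action," and later that the total complex abuts to $H_*(X^{\sigma}/K^{\sigma})$, treating $X^{\sigma}/K^{\sigma} \hookrightarrow (X/K)^{\sigma}$ as an \emph{identification}. That is not what $\sigma$-plain gives. By the Proposition of \S\ref{subsec:Tateonsheaves}, $\Tate^*$ of the chains is computed by $C_*\bigl((X/K)^{\sigma}\bigr)$, and $(X/K)^{\sigma}$ is generally strictly larger than $X^{\sigma}/K^{\sigma}$; the $\sigma$-plain hypothesis is what one uses (via the continuous map to $\sigma$-twisted conjugacy classes of $K$, which is a finite discrete set) to show that $X^{\sigma}/K^{\sigma}$ is a \emph{union of connected components} of $(X/K)^{\sigma}$, hence that $H_*(X^{\sigma}/K^{\sigma})$ is a $\cH(G,K)^{\sigma}$-direct summand of $H_*((X/K)^{\sigma})$. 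This extra step is needed before \eqref{Brauercompat0} can be invoked on the summand. Since the final claim only concerns composition factors, the conclusion survives — composition factors of a summand are composition factors of the whole — but the argument as written skips a substantive point that the paper proves separately.
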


 \begin{proof}
 
 Note that (a) implies (b).  Indeed, suppose that $\chi$ is as in (b). 
By (a) we have 
\begin{equation}  \label{WWP} \mbox{ $\chi \circ \Br$ appears as an eigenvalue of \eqref{eq:23} when restricted
 to $\cH(G, K)^{\sigma}$. } \end{equation} 
We  have seen in \S \ref{Taterings} that $(\chi \circ \FBr)^{1/p}$
 is the unique extension of $\chi \circ \Br$ from $\cH(G, K)^{\sigma}$
 to $\cH(G, K)$. 
So, supposing \eqref{WWP},  we see  that $(\chi \circ \FBr)^{1/p}$
 appears as an eigenvalue of \eqref{eq:23}.   
 But this implies that $\chi \circ \NBr$ appears as an eigenvalue of \eqref{eq:23}: 
the isomorphisms
$$H^*(X/K) = H^*(X/K; \mathbf{F}_p) \otimes_{\mathbf{F}_p} k, \ \ \cH(G,K)  = \Fun_{G}(G/K \times G/K, \mathbf{F}_p) \otimes_{\mathbf{F}_p} k$$
yield semilinear actions of $\Aut(k)$ on $\cH(G,K)$ and $H^*(X/K)$. We have also
$$ \alpha^{\tau} (h^{\tau})  = \left( \alpha(h) \right)^{\tau}$$
for $\alpha \in \cH, h \in H^*(X/K), \tau \in \Aut(k)$.  So if $h \in H^*(X/K)$ corresponds to the eigenvalue
$(\chi \circ \FBr)^{1/p}$ then $h^\tau$ corresponds to the eigenvalue $\chi \circ \NBr$, where
$\tau$ is the Frobenius automorphism.

The proof of (a) is an application of ``fixed point localization'' methods of Smith, Borel, Quillen.  We give a treatment here that is well adapted to keeping track of the Hecke action.  The statement of the theorem for homology implies the statement for cohomology --- let us prove the homology version.

Consider the ``Smith double complex''
\[
C_*^{\mathrm{Smith}} := 
\left[
\xymatrix{
\cdots &C_*(X/K) \ar[l]_N &\ar[l]_{1-\sigma} C_*(X/K) &\ar[l]_N C_*(X/K) & \cdots \ar[l]_{1-\sigma}
}
\right]
\]
The map $\sigma$ is not a map of $\cH(G,K)$-modules but it is a map of $\cH(G,K)^{\sigma}$-modules, so $C_*^{\mathrm{Smith}}$ is a double complex of $\cH(G,K)^{\sigma}$-modules.  It leads to two spectral sequences of $\cH(G,K)^{\sigma}$-modules:
\begin{itemize}
\item The spectral sequence $\hvE$, in which the differential on the $0$th page is the horizontal differential, on the $1$st page is the vertical differential.  
\item The spectral sequence $\vhE$, in which  the differential on the $0$th page is the vertical differential and on the $1$st page is the horizontal differential.
\end{itemize}
 If $C_*$ is bounded, then both $\hvE$ and $\vhE$ converge to the homology of the total complex of $C_*^{\mathrm{Smith}}$.
Let us abbreviate the horizontal differential (which alternates between $1-\sigma$ or $N$)  by $d^h$ and the vertical differential (which is the standard singular differential on $C_*(X/K)$) by $d^v$.
We can compute higher differentials in these spectral sequences by the following standard device.  If $x \in \hvE^0_{ij}$ is an element that survives to $\hvE^r_{ij}$, and $(x_1,\ldots,x_r)$ is a sequence of elements with $x = x_1$ and $d^v(x_i) = d^h(x_{i+1})$ for $i < r$, then $d^v(x_r)$ is a representative for $d^r(x)$.

We complete the proof in three steps:

(1) {\it  Degeneration of $\hvE$.}
The $\hvE$ spectral sequence is analyzed as follows:

By equation \eqref{eq:obama} the first page $\hvE^1$ is naturally identified with $C_*((X/K)^{\sigma})$ i.e.
\[
\hvE^1_{ij} = C_j((X/K)^\sigma) \qquad d^1:C_j \to C_{j-1} \text{ is the singular differential}
\]
It follows that $\hvE^2_{ij} = H_j((X/K)^\sigma)$.  Now, if $\zeta \in \hvE^0_{ij} = C_j(X/K)$ has $d^0(\zeta) = 0$
then, by \eqref{eq:obama}, 
\[
\zeta = \zeta'  + d^0 \varepsilon
\]
for some $\zeta' \in C_j((X/K)^\sigma)$.
If $\zeta$ survives to $\hvE^2$, we must have, in addition, $d^v \zeta  \in \mathrm{Im}(d^0)$,
or equivalently $d^v \zeta' \in \mathrm{im}(d^0)$. 
But $d^v \zeta' \in C_{j-1}((X/K)^{\sigma})$; by another application of \eqref{eq:obama}, 
$d^v \zeta'$ is identically zero. 
In other words, every element of $\hvE^2$ is represented by a {\em cycle} $\zeta' \in C_{j}((X/K)^{\sigma})$.
Then  $(\zeta',d^v(\zeta') = 0, 0, 0, \ldots,0)$ is a sequence we may use to compute $d^r(\zeta') = 0$ for all $r \geq 2$.  Thus, $\hvE^2 = \hvE^\infty$. 

\medskip

(2) {\it  Compatibility with the Brauer homomorphism.}
 In other words, (1) shows that the homology of the total complex of $C_*^{\mathrm{Smith}}$ has a filtration (by $\cH(G,K)^\sigma$-submodules) whose associated graded is $H_*((X/K)^\sigma)$.  We claim that our assumptions imply that $X^{\sigma}/K^{\sigma}$ is a union of connected components of $(X/K)^{\sigma}$.   %

To prove the claim, let $Y \subset X$ be the inverse image of $(X/K)^{\sigma}$.  As $K$ acts freely, for each $y \in Y$ there is a unique $\kappa(y) \in K$ such that $y \kappa(y) = \sigma(y)$.  The map $\kappa$ is $K$-equivariant for the $\sigma$-twisted conjugation action on $K$.  The graph of $\kappa$ is the set of all $(y,k) \in Y \times K$ with $\sigma(y) = yk$ --- in particular it is a closed set.
The projection of this graph to $Y$ is a homeomorphism. 
So $\kappa$ is a continuous function
$Y \rightarrow K$. 
It descends to a continuous function $$Y/K = (X/K)^{\sigma} \to \mbox{$\sigma$-twisted conjugacy classes for $K$},$$
where we give the right-hand side the quotient topology.   But the space of $\sigma$-twisted conjugacy classes for $K$
is a finite set, because we assumed that $K$ is $\sigma$-plain, and because each $\sigma$-twisted conjugacy class is closed in $K$,
the topology on this finite set is the discrete topology.
 It follows that $X^{\sigma}/K^{\sigma}$ is a union of connected components of $(X/K)^{\sigma}$.
 
Thus, on the first page,  $\hvE^1_{ij} =C_i((X/K)^{\sigma})$ has $C_i(X^{\sigma}/K^{\sigma})$ as a vector space summand.  By equation \eqref{Brauercompat0} ---
applied with $\tilde{S}$ equal to the free $K$-set of singular $i$-simplices in $X$ ---   the action of $\cH(G,K)^{\sigma}$ on this summand factors through $\Br$
 and it is actually a $\cH(G, K)^{\sigma}$-submodule.  Passing to homology, we conclude that 
  $H_*(X^{\sigma}/K^{\sigma})$ is a $\cH(G,K)^{\sigma}$-submodule of $\hvE^2_{ij}$.

 \medskip
 
 (3) {\it  Convergence of $\vhE^r$}  In this last step, observe that $\vhE^1_{ij} = H_j(X/K)$ and that since $H_*(X/K)$ vanishes in large degrees, we have a convergent spectral sequence
\begin{equation}
\label{eq:vhE}
\vhE^1_{ij} = H_j(X/K) \implies H_*(\mathrm{Tot}(C^{\mathrm{Smith}}))
\end{equation}
of $\cH(G,K)^{\sigma}$-modules.
Therefore by (2), we obtain the desired statement: we have exhibited
$H_*(X^{\sigma}/K^{\sigma})$, 
as a composition factor of  $H_*(X/K)$, where both are regarded as modules under $\cH(G,K)^{\sigma}$. 
Indeed, even better: We can identify $\vhE^{2}_{ij}$ with the Tate cohomology 
$\Tate^i H_j(X/K)$;   and we have thus actually exhibited $  H_*(X^{\sigma}/K^{\sigma})$
as a subquotient of  $\Tate^* H_*(X/K)$. 
\end{proof}

\section{Cyclic group actions on locally symmetric spaces } \label{sec:cyclic}
\label{sec:lssam}

\begin{definition}
\label{def:langleGrangle}
Let $\G, K_{\infty}$ and level structure $K \subset \G(\Afinite)$ be as in \S\ref{subsec:agals}.  Let $[G]_K$ denote the double coset space 
\[
[G]_K := \G(F) \backslash \G(\Adele)/(K_\infty \times K)
\]
\end{definition}
If $K \cap \G(F)$ is torsion-free, the homology and cohomology of $[G]_K$ carry the action of the Hecke algebra $\cH(\G(\Afinite),K)$ described in  \S\ref{subsec:Heckeactionsonhomology}.  For general $K$, one should regard $[G]_K$ as an orbifold and take homology and cohomology in this sense, in which case a more careful discussion defines an action of $\cH$ as well.

The Hecke algebra $\cH(\G(\Afinite),K)$ is a restricted tensor product over finite places
\[
\cH(\G(\Afinite),K) = {\bigotimes_v}' \cH(G_v,K_v)
\]
where the restricted product is taken with respect to the identity element in $\cH(G_v, K_v)$. 
When $V$ is a set of finite places, we write $\cH(G_V,K_V) := \bigotimes'_{v \in V} \cH(G_v,K_v)$;
we sometimes abbreviate this to simply $\cH_V$. 
 
\subsection{Good places} \label{goodplace}
We call a place $v$ \emph{good} with respect to the algebraic group $\G$, level structure $K$ and prime $p$  if
\begin{enumerate}
\item[(i)] The residue characteristic of $\OO_v$ is not equal to $p$
\item[(ii)] $\G \times_F F_v$ is quasi-split over $F_v$ and split over an unramified extension of $F_v$.
\item[(iii)] $K_v$ is a hyperspecial subgroup of $G_v$.  In other words, $K_v$ is a maximal compact subgroup of the form $\mathfrak{G}(\OO_v)$, where $\mathfrak{G}$ is a reductive smooth model for $\mathfrak{G}\times_F F_v$ over $\OO_v$.
\end{enumerate}
For any $K$, all but finitely many places are good.
 At a good place, $\cH(G_v,K_v)$ is a commutative integral domain  and its characters are understood via the Satake isomorphism;
for this, see \S \ref{Satake}.

\subsection{Characters of the Hecke algebra occurring on cohomology}
  Suppose given a character $\chi$ of $\cH(G_V, K_V)$,  where $V$ is a set of finite good places. 
  We say ``$\chi$ appears in the cohomology
of $[G]_{K}$'' if there is $h \in H^*([G]_{K})$ such that $h$ transforms under $\cH(G_V, K_V)$ by 
$\chi$. 
 
 The following result shows that it is enough to consider ``sufficiently small'' level structures, in particular --- as long as $V$ excludes at least one finite place --- one may always assume that the relevant locally symmetric spaces are manifolds and not merely orbifolds.

\begin{prop}
\label{prop:isthisnec}
Suppose that $K = \prod K_v$ and $K' = \prod K'_v$ where
$K'_v \subset K_v$ for all $v$ with equality $K'_v=K_v$ for $v \in V$. 
 If $\chi$ appears in the cohomology of $[G]_{K}$
then it also  appears in the cohomology of $[G]_{K'}$. 
\end{prop}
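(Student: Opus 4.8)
The plan is to reduce to changing the level at a single place and then to compare the cohomologies along the resulting covering.

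First I would reduce. Since $K_v=K'_v$ for all but finitely many $v$, an induction on the (finite) set of places where $K$ and $K'$ differ reduces us to the case that $K$ and $K'$ differ at exactly one finite place $w$, necessarily with $w\notin V$. Writing $K^{(w)}=\prod_{v\neq w}K_v=\prod_{v\neq w}K'_v$, we have $[G]_{K}=[G]_{K_wK^{(w)}}$, $[G]_{K'}=[G]_{K'_wK^{(w)}}$, and a natural projection $\pi\colon [G]_{K'}\to[G]_{K}$, a finite covering of orbifolds of degree $d:=[K_w:K'_w]$. Because the level is unchanged at the places of $V$, the pullback $\pi^{*}\colon H^{*}([G]_{K};k)\to H^{*}([G]_{K'};k)$ and the transfer $\pi_{!}\colon H^{*}([G]_{K'};k)\to H^{*}([G]_{K};k)$ are both maps of $\cH(G_V,K_V)$-modules.

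Next I would dispose of the case $p\nmid d$. Here $\pi_{!}\circ\pi^{*}=d\cdot\mathrm{id}$ is an isomorphism, so $\pi^{*}$ is a split injection of $\cH(G_V,K_V)$-modules; a class in $H^{*}([G]_{K})$ on which $\cH(G_V,K_V)$ acts through $\chi$ therefore maps to a nonzero such class in $H^{*}([G]_{K'})$. Using this, and the transfer along prime-to-$p$ coverings, one reduces the general statement to the situation where $K_w/K'_w$ is (up to interposing auxiliary levels) a finite $p$-group $\Gamma$ acting by deck transformations. In that situation I would run the Cartan--Leray (Hochschild--Serre) spectral sequence $H^{i}\bigl(\Gamma;H^{j}([G]_{K'};k)\bigr)\Rightarrow H^{i+j}([G]_{K};k)$ of $\cH(G_V,K_V)$-modules and track $\chi$ through it as a composition factor: a $\chi$-eigenvector in $H^{*}([G]_{K})$ yields, via the associated filtration, a $\chi$-eigenvector in some $E_{\infty}$-term, hence $\chi$ is a composition factor of the corresponding $E_{2}^{i,j}=H^{i}(\Gamma;H^{j}([G]_{K'};k))$; and since for a finite group $\Gamma$ and any $k[\Gamma]$-module $M$ the group $H^{i}(\Gamma;M)$ is, via a resolution of $k$ by finitely generated projectives, a subquotient of a finite direct sum of copies of $M$, this forces $\chi$ to be a composition factor of $H^{j}([G]_{K'};k)$. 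Finally, because the relevant arithmetic groups are of type $FP_{\infty}$ (Borel--Serre), each $H^{j}$ is a finite-dimensional $k$-vector space; and over the commutative algebra $\cH(G_V,K_V)$, for a finite-dimensional module, "$\chi$ occurs as a composition factor" and "$\chi$ occurs as an eigenvalue" coincide (the $\mathfrak{m}_{\chi}$-primary summand is nonzero and has nonzero socle). This upgrades the composition-factor statement back to the existence of an eigenvector, which is the assertion that $\chi$ appears in the cohomology of $[G]_{K'}$. (Along the way one may also use the Smith-theory input of \S\ref{subsec:deftate} and the proof of Theorem \ref{thm:smith} to handle a single $\mathbf{Z}/p$-layer of the deck action explicitly.)

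The step I expect to be the main obstacle is exactly the general case $p\mid d$: the transfer is unavailable, and one must show that the eigensystem is not destroyed on passing between $[G]_{K'}$ and its quotient $[G]_{K}$. Concretely the subtlety is whether one can control the $\cH(G_V,K_V)$-module $H^{*}([G]_{K'};k)$ finely enough as a $k[\Gamma]$-module that taking $\Gamma$-invariants (or its higher cohomology) cannot "swallow" the composition factor $\chi$ by gluing it nontrivially to other eigensystems under the deck action --- and, relatedly, whether the reduction to a $p$-group deck group really is available in all cases or whether non-solvable monodromy (e.g.\ a covering with deck group a simple group of order divisible by $p$) must be treated directly through the spectral sequence. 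This is where the bookkeeping is most delicate, and where, if the naive argument stalls, I would fall back on the local-system description $H^{*}([G]_{K'};k)=H^{*}([G]_{K};\pi_{*}\underline{k})$ together with a dévissage of the permutation local system $\pi_{*}\underline{k}$ along a $k$-composition series of its monodromy representation, using the finite-dimensionality in each degree to pass between composition factors and eigenvalues at each stage.
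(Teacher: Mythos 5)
Your core argument is the same as the paper's, and it is correct: run the Cartan--Leray spectral sequence $E_2^{ij}=H^i(K/K';H^j([G]_{K'}))\Rightarrow H^{i+j}([G]_K)$ of $\cH(G_V,K_V)$-modules, observe that $H^i(\Gamma;M)$ is a subquotient (as $\cH_V$-module) of a finite direct sum of copies of $M$ because a projective/bar resolution of $k$ over $k[\Gamma]$ turns $H^i(\Gamma;-)$ into a subquotient of $\Hom_{k[\Gamma]}(k[\Gamma]^{n_i},M)\simeq M^{n_i}$, and then pass between ``composition factor'' and ``eigenvector'' using finite-dimensionality and commutativity of $\cH_V$. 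You are right to flag that last step: the paper's proof leaves the composition-factor-to-eigenvector passage implicit, and spelling out the socle argument is a genuine service.

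Where your write-up overcomplicates things is everything before the spectral sequence. The reduction to a single place $w$, the separate treatment of $p\nmid[K_w:K'_w]$ via transfer, and the attempted reduction to a $p$-group deck group are all unnecessary: the spectral sequence for the full finite group $K/K'$ acting on $[G]_{K'}$ already does the job in one shot, and the projective-resolution argument for $H^i(\Gamma;M)$ is entirely insensitive to whether $p$ divides $|\Gamma|$. In particular, your ``main obstacle'' paragraph is a red herring --- there is no hard case. The concern about non-solvable deck groups is likewise moot because no dévissage by normal $p$-subgroups (nor by composition series of $\pi_*\underline{k}$) is required; you never need to realize $K/K'$ as an iterated $\Z/p$-extension. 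Dropping those reductions would shrink the proof to the three sentences of the paper's version while preserving the one genuine refinement you supply, namely the explicit argument that for a finite-dimensional module over a commutative $k$-algebra, occurring as a composition factor and occurring as an eigenvalue are equivalent.
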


\begin{proof}
 The finite group $K/K'$ acts on the cohomology of $[G]_{K'}$.  For all  $v\in V$ we have $K_v = K'_v$, and the actions of $\cH_v$ and $K/K'$ on $H^*([G]_{K'})$ commute.  The spectral sequence
\[
E_2^{ij} = H^i(K/K';H^j([G])_{K'}) \implies H^{i+j}([G]_K)
\]
is a spectral sequence of $\cH_V$-modules.  Thus, a character of $\cH_V$ that occurs in the cohomology of $[G]_K$ also occurs in $H^*(K/K';H^*([G])_{K'})$.  The bar model for the $K/K'$-cohomology of $H^*([G]_{K'})$ shows that the character mus appear in $H^*([G]_{K'})$  itself.  
\end{proof}

\subsection{$\sigma$-action} \label{sigmaax}
Now suppose that  $\sigma$ acts on $\G$ with order $p$, and set $\HH = \G^{\sigma}$.  Suppose that $\HH$ is connected. We may treat either $\G$ or $\HH$ as a special case of the setup of \S\ref{def:langleGrangle}, and we
  make the following parallel notations and assumptions:
\begin{itemize}

\item[(a)] Fix level structures $K$ for $\G$ and $U$ for $\HH$,  and suppose that $K$ is $\sigma$-stable with fixed
 points $U = K^{\sigma}$;
\item[(b)] Fix a maximal compact $K_{\infty} \subset \G(F_{\infty})$ in such a way that $K_{\infty}$ is $\sigma$-invariant
and $K_{\infty}$ intersects $\mathbf{H}(F_{\infty}$) in a maximal compact subgroup $U_{\infty}$.

 This is always possible: 
inside the disconnected group $\G(F_{\infty}) \rtimes \langle \sigma \rangle$, 
we may find a maximal compact subgroup that contains $U_{\infty} \times \langle \sigma \rangle$, and then
we just take its intersection with $\G(F_{\infty})$. 

  \item[(c)]Write $[G]_K = \G(F) \backslash \G(\mathbf{A}_F) / K_{\infty} K$ and $[H]_U = \HH(F) \backslash \HH(\mathbf{A}_F)/U_{\infty} U$.
  \item[(d)] $K$ is ``sufficiently small'', in that $\G(F) \cap K_{\infty} K$ is trivial; so also $\HH(F) \cap U_{\infty} U$ is trivial. By Proposition \ref{prop:isthisnec}, this will entail  no real loss of generality.

\end{itemize}

 \begin{prop} \label{gppd}  \label{subsec:sigmagood}
Say that a finite place $v$ is \emph{$\sigma$-good} with respect to $\G,K,\HH,U$ if
\begin{itemize}
\item[(a)] $v$ is good with respect to $K$ and $U$ in the sense of  \S \ref{goodplace}, and
\item[(b)] $K_v \subset G_v$ is a $\sigma$-plain subgroup in the sense of  \S \ref{subsec:sigmaactions}.
\end{itemize}

If $\HH$ is connected, then  all  but finitely many places of $F$ are $\sigma$-good. \end{prop}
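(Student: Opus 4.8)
The strategy is to verify conditions (a) and (b) separately for all but finitely many $v$, leaning on the fact that $\sigma$ is defined over $F$ and $\HH = \GG^\sigma$ is connected. Condition (a) is immediate: by \S\ref{goodplace} all but finitely many $v$ are good with respect to $K$, and the same for $U$; discarding the finite bad set for each leaves cofinitely many $v$ satisfying (a). So the work is entirely in (b), showing $K_v$ is $\sigma$-plain for almost all $v$, i.e.\ verifying conditions (a) and (b) of \S\ref{subsec:sigmaactions} for the pair $(G_v, K_v)$ with its $\sigma$-action.

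\textbf{The prime-to-$p$ condition.} Condition (b) of $\sigma$-plainness---that $K_v$ is virtually prime-to-$p$---is easy for $v$ whose residue characteristic $\ell_v \neq p$, which is automatic at good places by condition (i) of \S\ref{goodplace}. Indeed, for such $v$ a hyperspecial $K_v = \mathfrak{G}(\OO_v)$ has a pro-$\ell_v$ congruence subgroup of finite index (the kernel of reduction modulo the uniformizer), and a pro-$\ell_v$ group with $\ell_v \neq p$ is certainly a projective limit of prime-to-$p$ finite groups. So (b) of \S\ref{subsec:sigmaactions} is free at every good place.

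\textbf{The cohomological-triviality condition (the main obstacle).} The real content is condition (a) of $\sigma$-plainness: that $H^1(\sigma, K_v) \to H^1(\sigma, G_v)$ has trivial fiber over the trivial class, equivalently that $G_v^\sigma$ acts transitively on $(G_v/K_v)^\sigma$. Here I would argue as follows. Spread $\GG$, $\sigma$, and the integral model out over $\Spec \OO_F[1/M]$ for suitable $M$: for all but finitely many $v$ there is a smooth reductive model $\mathfrak{G}$ over $\OO_v$ with a $\sigma$-action of order $p$ such that $K_v = \mathfrak{G}(\OO_v)$ and, crucially, such that $\mathfrak{H} := (\mathfrak{G}^\sigma)^\circ$ is a smooth reductive model for $\HH$ over $\OO_v$ with $\mathfrak{H}(\OO_v) = U_v = K_v^\sigma$ (using that $\HH$ is connected and that fixed points of a reductive group scheme under a finite group of order invertible on the base are smooth). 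Now the fiber of $H^1(\sigma, K_v) \to H^1(\sigma, G_v)$ over the trivial class is the set of $K_v^\sigma$-orbits of cocycles $j\colon \langle\sigma\rangle \to K_v$ that become trivial in $G_v$, i.e.\ $j(\sigma) = g^{-1}\sigma(g)$ for some $g \in G_v$; triviality of the fiber says any such $j$ is already a coboundary in $K_v$. The key input is a smooth-model version of Lang's theorem / Hensel's lemma: over the residue field $\kappa_v$, the group $\mathfrak{G}(\kappa_v)$ with its order-$p$ automorphism has $H^1(\sigma, \mathfrak{G}(\kappa_v)) \to H^1(\sigma, \mathfrak{G}(\overline{\kappa_v}))$ controlled, and since $p \neq \ell_v$ and $\mathfrak{H}$ is smooth, the reduction map and Hensel lifting identify $H^1(\sigma, K_v)$ with $H^1(\sigma, \mathfrak{G}(\kappa_v))$, which in turn injects into $H^1$ over $G_v$ because $\mathfrak{G}^\sigma$ is smooth and surjects onto $\mathfrak{H}$. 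More directly: given $j(\sigma) = g^{-1}\sigma(g)$ with $g \in G_v$ and $j(\sigma) \in K_v$, the coset $gK_v \in (G_v/K_v)^\sigma$, and $(G_v/K_v)^\sigma = (\mathfrak{G}/\mathfrak{H})(\OO_v)^{\text{fixed}}$; smoothness of $\mathfrak{G}^\sigma \to \mathfrak{H}$ and the vanishing of $H^1_{\text{\'et}}(\OO_v, \mathfrak{H})$-type obstructions (here one uses that $\OO_v$ is a henselian local ring with finite residue field, so nonabelian $H^1$ of a smooth connected group scheme is trivial by Lang) forces $gK_v$ to be in the $G_v^\sigma$-orbit of the base point. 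I expect that packaging this cleanly---choosing $M$ so that all the smoothness and reductivity statements hold integrally, and invoking the right form of Lang's theorem for the fixed-point group scheme---is the main obstacle, though each ingredient is standard; the finiteness of the excluded set of $v$ then follows since only finitely many $v$ divide $M$ or fail to be good.
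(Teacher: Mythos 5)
Your plan is broadly on the right track: at good places the kernel of reduction $\Delta_v \subset \mathfrak{G}(\OO_v)$ is pro-$\ell_v$ with $\ell_v \neq p$, which disposes of the virtually-prime-to-$p$ condition, and the Lang/Steinberg theorem for connected groups over finite fields is indeed the key input for the cohomological-triviality condition. You also correctly identify the shape of the reduction: pass from $\OO_v$ to the residue field, apply Lang, then climb back up the pro-prime-to-$p$ congruence filtration. That much agrees with the paper.

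The gap is in the step where you need a point of the torsor $\mathfrak{X} = \{x \in \mathfrak{G} : x^{-1}\sigma(x) = g^{-1}\sigma(g)\}$ over $\overline{\mathbf{F}_v}$. You treat this as if it comes for free from smoothness and the vanishing of $H^1_{\text{\'et}}(\OO_v, \mathfrak{H})$, but that vanishing only helps once you know $\mathfrak{X}$ is a torsor that is faithfully flat over $\OO_v$, i.e.\ that its special fiber is nonempty. A priori you only know the generic fiber is nonempty: $g$ lives in $G_v = \mathfrak{G}(F_v)$, not in $\mathfrak{G}(\OO_v)$, so the reduction $\overline{g^{-1}\sigma(g)} \in \mathfrak{G}(\mathbf{F}_v)$ is a $\sigma$-cocycle with no obvious reason to be a coboundary over $\overline{\mathbf{F}_v}$. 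The paper handles exactly this: it shows the image of $g \mapsto g^{-1}\sigma(g)$ in $\mathfrak{G}$ is closed over $\OO_F[1/T]$ for a suitable finite $T$, by a constructibility argument (the image $I$ is constructible, its generic fiber $J$ is closed because $\sigma$ is semisimple so its twisted conjugacy class is closed in characteristic zero, and the symmetric difference of $I$ with $\overline{J}$ is constructible and avoids the generic fiber, hence lies over a finite set of places). Once the image is closed, the reduction of $g^{-1}\sigma(g)$ lies in the special fiber of the image, giving the required $\overline{\mathbf{F}_v}$-point. Nothing in your proposal substitutes for this spreading-out step, and your identification $(G_v/K_v)^\sigma = (\mathfrak{G}/\mathfrak{H})(\OO_v)^{\text{fixed}}$ is not a correct equality (the left side involves $G_v = \mathfrak{G}(F_v)$ modulo $K_v = \mathfrak{G}(\OO_v)$, which is unrelated to the scheme-theoretic quotient $\mathfrak{G}/\mathfrak{H}$). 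This is the ``main obstacle'' you anticipate at the end, but it is a concrete and necessary lemma rather than a packaging issue.
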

 
 We remark that Brian Conrad and Gopal Prasad explained to us how to obtain a much sharper result
by reducing to a corresponding assertion for tori. 

\begin{proof}  
Let $\mathfrak{G}$ be a model of $\GG$ over $\OO_F$. 
We must check that the map $H^1(\sigma, \mathfrak{G}(\OO_v)) \rightarrow H^1(\sigma, G_v)$
has trivial fiber above the trivial class,  for almost all $v$ (it is easy to check the remaining conditions are valid for almost all $v$).

Consider the morphism of $\OO_F$-schemes $ g \mapsto g^{-1} \sigma(g)$ from $\mathfrak{G}$ to itself. 
Its image $I$ is constructible, i.e. a finite union of locally closed sets. On the other hand, it interects the generic fiber
of $\mathfrak{G}$ in a {\em closed} set $J$: in characteristic zero, the conjugacy class of $\sigma$ is closed ( \cite[Corollary 5.8]{Joyner}:
$\sigma$ is automatically semisimple, being of finite order).  Let $\overline{J}$ be the closure of $J$ inside $\mathfrak{G}$. The  symmetric difference $(\overline{J}   \backslash I) \cup (I  \backslash \overline{J}) $, considered as a subset of $\mathfrak{G}$, 
is a constructible set which does not intersect the generic fiber. The  projection 
of this symmetric difference to  $\Spec(\OO_F)$
is (being constructible and disjoint from the generic point)  a finite set of closed points.

 Let $T$ be the corresponding set of places, together with all
 places at which $\mathfrak{G}$ or $\mathfrak{G}^{\sigma}$ are not smooth and all places of residue characteristic dividing $p$. 
 In what follows, replace $\mathfrak{G}$ by its restriction to $\OO[\frac{1}{T}]$. Then, by choice of $T$,  
 the image of $g \mapsto g^{-1} \sigma(g)$ is a closed subset  $J'$ of $\mathfrak{G}$. 

 We claim that the claim holds for $v \notin T$. Indeed, 
 suppose given $g \in G_v$ with the property that $g^{-1} \sigma(g) \in \mathfrak{G}(\OO_v)$. 
 We need to verify that the $\OO_v$-scheme defined by
 $$ \mathfrak{X} = \{ x \in \mathfrak{G}:  \ x^{-1} \sigma(x) = g^{-1} \sigma(g) \}$$ 
has an $\OO_v$-point. 

Now $\mathfrak{X}$ has a $\overline{\mathbf{F}_v}$-point:  By assumption, 
$g^{-1} \sigma(g)$ yields a map $\Spec(\OO_v) \rightarrow \mathfrak{G}$
sending the generic point of $\Spec(\OO_v)$ to an element of $J'$. Because $J'$ is closed, 
the special point of $\Spec(\OO_v)$ is also sent to an element of $J'$, i.e. there
exists $y \in \mathfrak{G}(\overline{\mathbf{F}_v})$ with $y^{-1} \sigma(y) = g^{-1} \sigma(g)$
modulo $v$, as desired.

Therefore, $\mathfrak{X}$ also has a point over $\mathbf{F}_v$, because
$\mathfrak{X}(\overline{\mathbf{F}_v})$ is a torsor under $\HH(\overline{\mathbf{F}_v})$
and Steinberg's theorem  \cite[Theorem 1.9]{Steinberg} says that the Galois cohomology of the connected algebraic group $\mathbf{H}$ is trivial over the finite field $\mathbf{F}_v$. 

In other words there exists $x \in \mathfrak{G}(\OO_v)$ such that
 $$ (xg)^{-1} \sigma(gx) \in \Delta_v := \mathrm{ker} \left( \mathfrak{G}(\OO_v) \rightarrow \mathfrak{G}(\mathbf{F}_v) \right),$$
 i.e. it defines a class in $H^1(\sigma, \Delta_v)$. But $\Delta_v$ has pro-order that is relatively prime to $p$, so that class must vanish, i.e.
 there exists $\delta \in \Delta_v$ such that $(xg)^{-1} \sigma(gx) = \delta^{-1} \sigma(\delta)$. In other words,
 the class of $g^{-1} \sigma(g) = y^{-1} \sigma(y)$ where $y =\delta x^{-1} \in \mathfrak{G}(\OO_v)$, as desired. \end{proof}

\subsection{Analysis of connected components} \label{conn-analysis}

Let $V$ be any nonempty
finite  
  set of $\sigma$-good places.  
Write $K^{(V)} =K_{\infty} \prod_{w \notin V} K_w$.
We are going to apply the
discussion of
\S \ref{thm:smith},
with $$ X = \G(F) \backslash \G(\adele_F) / K^{(V)}$$ and the acting groups (``$G,K$'' from \S \ref{thm:smith})
\begin{eqnarray*}G_V  & = & \mbox{$G$ from \S \ref{thm:smith}} = \prod_{w \in V} \G(F_w), \\ 
 K_V  &=&   \mbox{$K$ from \S \ref{thm:smith}}   = \prod_{w \in V} K_w, \end{eqnarray*}

Since we assumed that   $\G(F) \cap K_{\infty} K$ is trivial, the group $K_V$ acts freely on $X$.  
The main issue is to precisely analyze how the fixed locus $X^{\sigma}$ is related to $\HH$.

\begin{prop*}
The natural map  $[H]_U  \rightarrow X^{\sigma}/K_V^{\sigma}$  maps $[H]_U$ homeomorphically onto a union of components of $X^{\sigma}/K_V^{\sigma}$.
 \end{prop*}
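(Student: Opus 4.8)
The plan is to analyze the natural map $f\colon [H]_U \to X^{\sigma}/K_V^{\sigma}$ (induced by $\HH\hookrightarrow\G$; it lands in $X^\sigma$ because $\HH(\adele_F)$ is pointwise $\sigma$-fixed, and descends to $[H]_U$ because $U_\infty\subset K_\infty$ and $U=K^\sigma\subset K$) by establishing three things: (a)~$f$ is injective; (b)~the composite $[H]_U\xrightarrow{f} X^{\sigma}/K_V^{\sigma}\hookrightarrow (X/K_V)^{\sigma}=[G]_K^{\sigma}$ is a local homeomorphism onto $[G]_K^{\sigma}$ at every point of its image; and (c)~its image is open and closed in $[G]_K^{\sigma}$. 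Granting (a)--(c), and using that $X^{\sigma}/K_V^{\sigma}$ is itself a union of connected components of $[G]_K^{\sigma}$ — with the subspace topology — by Step~(2) of the proof of Theorem~\ref{thm:smith} (applied with the profinite group $K_V$, which is $\sigma$-plain since the places of $V$ are $\sigma$-good), one concludes that the clopen set $f([H]_U)\subseteq X^{\sigma}/K_V^{\sigma}$ is a union of connected components of $X^{\sigma}/K_V^{\sigma}$ and that $f$ is a homeomorphism onto it.

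For (a), suppose $h_1,h_2\in\HH(\adele_F)$ have the same image. Since $K_V$ acts freely on $X$, two points of $X^{\sigma}$ have the same image in $X^{\sigma}/K_V^{\sigma}$ only if they differ by an element of $K_V^{\sigma}$; unwinding, there are $\gamma\in\G(F)$ and $k=(k_\infty,(k_w)_w)$ with $k_\infty\in K_\infty$, $k_w\in K_w$ for $w\notin V$, and $k_w\in K_w^{\sigma}$ for $w\in V$, with $h_2=\gamma h_1 k$ in $\G(\adele_F)$. Applying $\sigma$ and using $\sigma(h_i)=h_i$ gives $\sigma(\gamma)^{-1}\gamma=h_1\,\sigma(k)k^{-1}\,h_1^{-1}$. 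Evaluating at a place $w\in V$, the right-hand side is trivial because $\sigma(k_w)=k_w$; since $V\neq\emptyset$ and $\G(F)\hookrightarrow\G(F_w)$, this forces $\sigma(\gamma)=\gamma$, hence $\gamma\in\HH(F)$, and then $\sigma(k)=k$, hence $k\in K_\infty^{\sigma}\prod_w K_w^{\sigma}=U_\infty U$. Thus $h_2\in\HH(F)h_1(U_\infty U)$, so $h_1,h_2$ already coincide in $[H]_U$. (This step uses only $V\neq\emptyset$, not $\sigma$-goodness.)

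For (b) and (c) I would pass to the Riemannian picture. Since $K$, hence $U$, is neat, $[G]_K$ and $[H]_U$ are smooth manifolds — finite disjoint unions of quotients of the symmetric spaces of $\G(F_\infty)$ and $\HH(F_\infty)$ by torsion-free discrete groups — and, after averaging an invariant metric over $\langle\sigma\rangle$, $\sigma$ acts by isometries and $f$ is an isometric immersion; in particular $[G]_K^{\sigma}$ is a disjoint union of complete totally geodesic submanifolds. The crucial local point is that for every $x\in[H]_U$ the differential $df_x$ is an isomorphism onto $T_{f(x)}([G]_K^{\sigma})$: it is injective because $\Lie(\HH)=\Lie(\G)^{\sigma}$ identifies $\mathfrak{h}/\mathfrak{u}$ with a subspace of $\mathfrak{g}/\mathfrak{k}$, and its image is the full fixed tangent space because in characteristic zero $\sigma$ is semisimple, so $(\mathfrak{g}/\mathfrak{k})^{\sigma}=\mathfrak{g}^{\sigma}/\mathfrak{k}^{\sigma}=\mathfrak{h}/\mathfrak{u}$, while $(\mathfrak{g}/\mathfrak{k})^{\sigma}$ is exactly the tangent space to the fixed submanifold at $f(x)$. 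Hence, for each connected component $C$ of $[G]_K^{\sigma}$, the restriction $f\colon f^{-1}(C)\to C$ is a Riemannian local isometry out of the complete manifold $f^{-1}(C)$ (a union of components of $[H]_U$), so it is a covering map; being injective it is a homeomorphism onto $C$ whenever $f^{-1}(C)\neq\emptyset$. Therefore $f([H]_U)$ is precisely the union of those components of $[G]_K^{\sigma}$ that it meets — open and closed — and $f$ is a homeomorphism onto it; combining with the component statement from Theorem~\ref{thm:smith} as above finishes the proof.

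I expect the main difficulty to lie in (b)--(c): pinning down that the fixed locus $X^{\sigma}$ coincides, in a neighbourhood of the image of $[H]_U$, \emph{exactly} with the $\HH$-locus — neither larger (as it could be at an isolated point in positive characteristic, cf.\ the other components discussed in \S\ref{openquestions}(v)) nor smaller. This is precisely where one uses that $\sigma$ has order prime to $\mathrm{char}\,F_\infty$ (so $\sigma$ acts semisimply and $\HH$ is smooth with $\Lie\HH=\Lie(\G)^{\sigma}$) and that $K_\infty$ is chosen $\sigma$-stable; and the completeness/covering-space argument is what upgrades ``open piece of a fixed component'' to ``union of components,'' i.e.\ supplies the closedness of the image.
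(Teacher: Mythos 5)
Your proof is correct, and it takes a genuinely different route from the paper for the "homeomorphism onto a union of components" part.

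The paper's own proof is entirely topological at the adelic level. For the "union of components'' step it constructs a continuous, locally constant map
\[
\mathfrak{e}: X^{\sigma}/K_V^{\sigma} \longrightarrow H^1(\sigma, \G(F)) \times H^1(\sigma, K^{(V)})
\]
sending a $\sigma$-fixed double coset to the pair of cocycle classes $(\gamma^{-1},\kappa)$ arising from the identity $\sigma(g)=\gamma g\kappa$; it identifies $[H]_U$ with the fiber over the trivial class, which is clopen because $\mathfrak{e}$ is locally constant. For the ``homeomorphism'' step it observes the map $[H]_U \to X^{\sigma}/K_V^{\sigma}$ is a proper continuous bijection onto that fiber, with properness established by a reduction-theory argument (the $A^+$-cone lemma sketched at the end of the proof). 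Your (a) is the same injectivity argument as the paper's; but for (b)--(c) you instead go through the Riemannian manifold $[G]_K$: you use that the $\sigma$-fixed locus of an isometry of finite order is a closed totally geodesic submanifold, that semisimplicity of $\sigma$ in characteristic zero gives $(\mathfrak{g}/\mathfrak{k})^{\sigma}=\mathfrak{h}/\mathfrak{u}$ so that $f$ is a local diffeomorphism onto $[G]_K^{\sigma}$, and that a local isometry from a complete manifold onto a connected one is a covering map (hence, being injective, a homeomorphism). You then import Step~(2) of the proof of Theorem~\ref{thm:smith} (itself another cocycle-to-discrete-set argument, but an independent one) to cut $X^{\sigma}/K_V^{\sigma}$ out of $[G]_K^{\sigma}$ as a union of components. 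What your approach buys is that it replaces the reduction-theory properness lemma — which the paper explicitly flags as having no convenient reference — by the standard Riemannian completeness/covering-map theorem, and it makes the dimension-count explicit; what it costs is that it is tied to the smooth/Riemannian structure (and thus to the neatness hypothesis (d) of \S\ref{sigmaax}) and borrows the $\sigma$-plainness hypothesis via Theorem~\ref{thm:smith}, whereas the paper's argument is more elementary-topological and, for the Proposition itself, only uses $V\neq\varnothing$.

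One small point worth making explicit in your write-up: the metric on $[H]_U$ should be taken to be the pullback of the chosen $\sigma$-invariant, $\G(F_\infty)$-invariant metric on $[G]_K$ (equivalently, the restricted metric on the totally geodesic submanifold $\HH(F_\infty)/U_\infty\subset\G(F_\infty)/K_\infty$); with this choice $f$ is tautologically an isometric immersion, and $[H]_U$ is complete because a closed submanifold of a complete manifold is complete. That turns your informal ``after averaging'' into the precise statement the covering theorem needs.
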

 \proof   There's  a map  \begin{equation} \label{Ymap2} \mathfrak{e}:  X^{\sigma}/K_V^{\sigma}  \rightarrow H^1(\sigma, \G(F)) \times H^1(\sigma, K^{(V)}) \end{equation}  constructed as follows:
For $g \in \G(\adele_F)$, the double coset $\G(F)g    K^{(V)}  \in X $ is $\sigma$-fixed if and only if one can find $\gamma \in \G(F)$ and $\kappa \in K^{(V)}$ such that 
$\sigma(g) = \gamma g \kappa$ inside $\G(\adele_F)$. 
Consider this equality at a place $w \in V$: it shows that actually
$\gamma =  \sigma(g_w) g_w^{-1}$, 
in particular, it satisfies   $\sigma^{p-1}(\gamma)  \dots  \sigma(\gamma) \gamma= e$. 
Then computing $\sigma^p(g)$ we see that also $\kappa \sigma(\kappa) \dots \sigma^{p-1}(\kappa)=e$. 
In other words $\sigma \mapsto \gamma^{-1}$ and $\sigma \mapsto \kappa$
define cocycles in $H^1(\sigma, \G(F))$ and $H^1(\sigma, K^{(V)})$; these classes depend only on the double coset. 

The map $\mathfrak{e}$ of \eqref{Ymap2} is  locally constant. In fact
choose $x \in X^{\sigma}$ and a representative $g \in \G(\adele_F)$.  
Let $U_{\infty}$ be a $\sigma$-fixed open  neighbourhood of $K_{\infty}$ inside $\G(F_{\infty})$ 
and let $U = U_{\infty} \cdot K$. 
 Suppose  $g$ is, as above, so that the double coset $\G(F) g K^{(V)}$ is $\sigma$-fixed, and $\gamma, \kappa$ are as above. Suppose that $g u$ also defines a $\sigma$-fixed element of $X$, i.e.
$$ \sigma(g)   \sigma(u)= \gamma'  g u \kappa' \implies \gamma g \kappa  \sigma(u) = \gamma' g u \kappa',$$
and in particular,
$$ \gamma  g   K \cap \gamma' g K \cdot  U_{\infty}  \cdot U_{\infty}^{-1}  \neq \emptyset.$$
Because the action of $\G(F)$ on $\G(\adele_F)/K_{\infty} K$ is   properly discontinuous
and free by assumption (d) of \S \ref{sigmaax}, 
this implies that $\gamma' =\gamma$ if $U_{\infty}$ is chosen sufficiently small.   (Recall that $K_{\infty}$ is chosen $\sigma$-invariant,
and so one may choose $U_{\infty}$ to be an arbitrarily small open neighbourhood of it.)

We also then have $\kappa =u \kappa' (\sigma(u))^{-1}$
and thus the corresponding classes in $H^1(\sigma, K^{(V)})$ are also equal. 
 Indeed, this is now clear for the projection to the latter component
 of $K^{(V)} \simeq K_{\infty} \times \prod_{w \notin V} K_w$;
 to handle the $K_{\infty}$ component  we observe that $H^1(\sigma, K_{\infty})$ is finite, and for each class in $H^1(\sigma, K_{\infty})$
the set of representing cocycles is closed; thus the induced topology on $H^1(\sigma, K_{\infty})$ is the discrete one. 
Thus, if  we take $U_{\infty}$ sufficiently small, the  classes  $H^1(\sigma, K_{\infty})$ corresponding to
 $\kappa_{\infty}$ and $\kappa'_{\infty}$ are then forced to be equal.

The natural $[H]_U \rightarrow X^{\sigma}/K_V^{\sigma}$ is  injective:
if the double cosets of $h,h' \in \mathbf{H}(\adele_F)$  map to the same point, 
we have $h = \gamma h' k$ with $ \gamma \in \G(F), k \in K^{(V)} K_V^{\sigma}$;
considering components at a place $w \in V$ we see that $h_w = \gamma h'_w k_w$, and in particular
$\gamma$ is $\sigma$-invariant; then $k$  too is $\sigma$-invariant and belongs to 
$(K_{\infty} K)^{\sigma} = U_{\infty} U$.    

Finally,  the image of $[H]_U \rightarrow X^{\sigma}/K_V^{\sigma}$
is, by definition, precisely the fiber  of $\mathfrak{e}$ above the trivial class, i.e. a union of connected components. 
This map  from $[H]_U$ to its image is now a  proper continuous bijection, so a homeomorphism.  

(We do not know a reference for the properness, which uses the reductivity of $\HH$; we outline this argument. Let $A^+$
be the positive cone in the connected real points of a maximal split torus of $\HH$.  Then reduction theory 
shows that there is a compact set $\Omega \subset \HH(\adele_F)$ such that $A^+ \cdot \Omega$ surjects to $\HH(F) \backslash \HH(\adele_F)$. 
We are reduced to verifying that $A^+ \rightarrow   \GG(F) \backslash \GG(\adele)$ is proper, and in turn 
that will follow from the properness of $A_G \rightarrow \GG(F) \backslash \GG(\adele)$, where $A_G$ is the  group of connected real points
of a maximal $F$-split torus inside $\GG$.  For the last statement, we use the action of the Weyl group
of $\GG$ to reduce to the case of $A_G^+ \rightarrow \GG(F) \backslash \GG(\adele)$, where $A_G^+$ is again a positive cone in $A_G$. That statement is again part of reduction theory.) \qed 
\medskip

 At this point we  are ready to prove the first Theorem from the introduction.

  \begin{theorem}
\label{thm:realtheorem}
Let $\GG, \HH, U,K, \sigma$ be as in \S \ref{sigmaax}, and $V$ a nonempty set of $\sigma$-good places.
Then $\cH(G_V, K_V)$ and $\cH(H_V, U_V)$ are both commutative integral domains, and in particular the 
 normalized Brauer homomorphism  $\NBr: \cH(G_V, K_V) \rightarrow \cH(H_V, U_V)$ of \S \ref{nBr} is defined. 
  If $\chi:\cH(H_V,U_V) \to k$ is an eigenvalue occuring in the cohomology of $[H]_U$ then the character $\tilde{\chi} = \chi \circ \NBr :\cH(G_V,K_V) \to k$  occurs in the cohomology of $[G]_K$.
\end{theorem}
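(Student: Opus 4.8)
The plan is to deduce this theorem from Theorem \ref{thm:smith} by a careful choice of the space $X$ and the acting group-pair, using the structural analysis of the fixed locus carried out in \S\ref{conn-analysis}. First I would record the algebraic inputs: at a $\sigma$-good place $v$, the Hecke algebra $\cH(G_v, K_v)$ is a commutative integral domain by the Satake isomorphism (this is \S\ref{goodplace}), and likewise for $\HH$; taking the restricted tensor product over $v \in V$ preserves this, so $\cH(G_V, K_V)$ and $\cH(H_V, U_V)$ are commutative integral domains and $\NBr$ is defined as in \S\ref{nBr}. Moreover, since every $v \in V$ is $\sigma$-good, each $K_v$ is $\sigma$-plain, hence $K_V = \prod_{w \in V} K_w$ is $\sigma$-plain in the sense of \S\ref{subsec:sigmaactions}.

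Next I would set $X = \G(F) \backslash \G(\adele_F)/K^{(V)}$, acted on the right by $G_V = \prod_{w \in V}\G(F_w)$, with the open compact $K_V = \prod_{w\in V} K_w$. By assumption (d) of \S\ref{sigmaax} the action of $K_V$ on $X$ is free and proper, and $X/K_V = [G]_K$, while the $\sigma$-fixed data passes to $X^\sigma$ with acting pair $G_V^\sigma = \prod_w \G^\sigma(F_w)$, $K_V^\sigma = \prod_w U_w$, and $X^\sigma/K_V^\sigma$ containing $[H]_U$ as a union of connected components by the Proposition of \S\ref{conn-analysis}. One also needs $X/K_V = [G]_K$ to have finite cohomological dimension; this holds because $[G]_K$ is (homotopy equivalent to) a finite CW complex, being a locally symmetric space of finite volume with sufficiently small level. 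With these verifications in place, Theorem \ref{thm:smith}(b) applies directly: if $\chi$ is an eigenvalue of $\cH(G_V^\sigma, K_V^\sigma)$-module $H^*(X^\sigma/K_V^\sigma)$, then $\chi\circ\NBr$ is an eigenvalue of the $\cH(G_V, K_V)$-module $H^*([G]_K)$.

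The remaining point is to get from an eigenvalue on $H^*([H]_U)$ to an eigenvalue on $H^*(X^\sigma/K_V^\sigma)$. Since $[H]_U$ is a union of connected components of $X^\sigma/K_V^\sigma$, its cohomology is a direct summand of $H^*(X^\sigma/K_V^\sigma)$ as a graded vector space; I would check this is a summand \emph{as a $\cH(H_V, U_V)$-module}, which follows because the Hecke correspondences at places of $V$ preserve the decomposition into components (the correspondences are defined by the $G_V^\sigma$-action, which permutes components of $X^\sigma/K_V^\sigma$, and $[H]_U$ is exactly the fiber of the locally constant map $\mathfrak{e}$ of \eqref{Ymap2} over the trivial class, hence $G_V^\sigma$-stable). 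Therefore any eigenvalue $\chi$ occurring in $H^*([H]_U)$ also occurs in $H^*(X^\sigma/K_V^\sigma)$, and we are done.

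The main obstacle I anticipate is the bookkeeping identifying the two ways the same Hecke algebra acts: in Theorem \ref{thm:smith} the $\cH(G_V^\sigma, K_V^\sigma)$-action on $H^*(X^\sigma/K_V^\sigma)$ is the ``geometric'' one coming from $X^\sigma$ with its $G_V^\sigma$-action, while in Theorem \ref{thm:realtheorem} we want the intrinsic Hecke action on $H^*([H]_U)$ from \S\ref{subsec:Heckeactionsonhomology}. These agree because $[H]_U = \HH(F)\backslash\HH(\adele_F)/U_\infty U$ sits inside $X^\sigma$ compatibly with the $\HH(F_w)$-actions for $w\in V$, but one must make sure the level away from $V$ matches, i.e. that $U^{(V)} := (K^{(V)})^\sigma$ is the right complementary level for $\HH$ — which is forced by $U = K^\sigma$. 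Once this identification is pinned down, the argument is just assembling the pieces.
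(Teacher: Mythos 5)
Your argument tracks the paper's proof almost exactly: take $X = \GG(F)\backslash\GG(\adele_F)/K^{(V)}$, invoke the analysis of \S\ref{conn-analysis} to identify $[H]_U$ as a Hecke-stable union of connected components of $X^\sigma/K_V^\sigma$, and then apply Theorem \ref{thm:smith}(b). However, there is one genuine gap. You assert that $K_V = \prod_{w\in V}K_w$ is $\sigma$-plain ``since every $v\in V$ is $\sigma$-good,'' for arbitrary nonempty $V$. This fails when $V$ is infinite: condition (b) of \S\ref{subsec:sigmaactions} asks for a finite-index subgroup of $K_V$ of pro-order prime to $p$, but any finite-index subgroup of $\prod_{w\in V} K_w$ must contain the tail $\prod_{w\notin S}K_w$ for some finite $S$, and for infinitely many $w$ the order of $\mathfrak{G}(\mathbf{F}_{q_w})$ is divisible by $p$, so that tail has pro-order divisible by $p$. (Relatedly, the Proposition of \S\ref{conn-analysis} that you rely on is only stated and proved for finite $V$.)

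The paper handles this by first reducing to the case of finite $V$: since $H^*([G]_K)$ is finite-dimensional, if $\chi\circ\NBr$ did not occur there, then already its restriction to $\cH(G_{V'},K_{V'})$ would fail to occur for some finite $V'\subset V$; but $\chi$ restricted to $\cH(H_{V'},U_{V'})$ still occurs in $H^*([H]_U)$, and since $\NBr$ is defined factorwise this contradicts the finite-$V$ case. Adding this reduction step would repair your proof; everything else you wrote is correct and is essentially the paper's argument.
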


\begin{proof}
The fact that $\cH(G_V, K_V)$ and $\cH(H_V, U_V)$ are commutative integral domains is well-known (at least in the context
where the coefficient ring is $\C$ rather than $k$, but the same proof works); we  summarize the proof in  Theorem \ref{Satake}.

  It suffices to prove the theorem when $V$ is finite. For suppose that $V$ is infinite and the theorem is false, 
i.e. $\tilde{\chi}$ doesn't occur in the cohomology of $[G]_K$.  Since that cohomology is finite-dimensional, there is certainly a finite subset $V' \subset V$ such that the restriction 
of $\tilde{\chi}$ to $\cH(G_{V'}, K_{V'})$ doesn't occur in the cohomology of $[G]_K$.  

Now suppose $V$ is finite. 
As we saw above, $H^*([H]_U)$ is a direct summand of $H^* (X^{\sigma}/K_V^{\sigma})$, where $X$ is as above.  Indeed, it is even  a $\cH(H_V, U_V)$-submodule, as is clear by inspection.  Now apply Theorem \ref{thm:smith} (and use   Theorem \ref{Satake}, applied to both $\GG$ and $\HH$, to check the conditions.)
\end{proof}

\section{Representation theory}
\label{sec:behavioratramified}

 Let $\G,K,\HH,U$  be as in 
 \S\ref{sigmaax}.   For any finite place $v$ the Hecke algebras $\cH(G_v,K_v)$ and $\cH(H_v,U_v)$ describe portions of the category of representations of $G_v$-modules and $H_v$-modules.  In this section we make precise a sense in which the Brauer homomorphism of \S\ref{subsec:brauerhom} ``lifts'' to a functor between categories of representations.  This is relevant both to understand the situation at ramified places and for the proof of the Theorem of \S\ref{BrauerSatake}. 

\subsection{Linkage and the Brauer homomorphism} \label{linkBrauer}
 Let $\G,K,\HH,U$  be as in 
 \S\ref{sigmaax}.  Fix a finite place $v$ of $F$, of residue characteristic $\neq p$.  We do not require that $K_v$ is maximal compact.  In particular, $\cH(G_v,K_v)$ is not necessarily commutative.  

We consider irreducible $k$-linear representations $\Pi$  of $G_v$. These are always understood to be continuous, i.e.
every vector in $\Pi$ has open stabilizer (often called ``smooth.'')  We will only consider {\em admissible} representations: 
 $\Pi^{K'}$ is finite-dimensional over $k$ for every compact open subgroup $K' \subset G$.
Say that such a representation is \emph{$\sigma$-fixed} if it is isomorphic to $\Pi \circ \sigma$.  

\begin{prop*}  If $\Pi$ is $\sigma$-fixed, then there is a unique action of $\sigma$ on $\Pi$ compatible with the $\sigma$-action on $G_v$.
\end{prop*}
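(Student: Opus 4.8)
The plan is to deduce the statement from Schur's lemma together with the fact that, since $k$ has characteristic $p$, the Frobenius map $x \mapsto x^p$ is a bijection of $k$; this is precisely what makes both the existence and the uniqueness of the action work. First I would record Schur's lemma in the form $\End_{G_v}(\Pi) = k$. Since $\Pi$ is irreducible, any nonzero $G_v$-endomorphism is an isomorphism, so $\End_{G_v}(\Pi)$ is a division algebra over $k$. Choosing a compact open $K' \subset G_v$ with $\Pi^{K'} \neq 0$ (possible by smoothness), admissibility gives $\dim_k \Pi^{K'} < \infty$; restriction to $\Pi^{K'}$ is injective on $\End_{G_v}(\Pi)$, since the kernel of an endomorphism is a subrepresentation, hence all of $\Pi$ once it meets $\Pi^{K'}$. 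Thus $\End_{G_v}(\Pi)$ is a finite-dimensional division algebra over the algebraically closed field $k$, so $\End_{G_v}(\Pi) = k$.

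Next, ``$\Pi$ is $\sigma$-fixed'' means exactly that there is a $k$-linear automorphism $A \colon \Pi \to \Pi$ with $A(g \cdot w) = \sigma(g) \cdot A(w)$ for all $g \in G_v$ and $w \in \Pi$, i.e.\ an isomorphism $\Pi \xrightarrow{\sim} \Pi \circ \sigma$; one exists by hypothesis. An immediate induction gives $A^p(g \cdot w) = \sigma^p(g) \cdot A^p(w) = g \cdot A^p(w)$, so $A^p \in \End_{G_v}(\Pi) = k$; write $A^p = \lambda$, with $\lambda \in k^*$ since $A$ is invertible. Because Frobenius is bijective on $k$, there is a \emph{unique} $\mu \in k^*$ with $\mu^p = \lambda$. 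Set $\tilde{\sigma} := \mu^{-1} A$. Then $\tilde{\sigma}$ still satisfies $\tilde{\sigma}(g \cdot w) = \sigma(g) \cdot \tilde{\sigma}(w)$, because $\mu^{-1}$ is a scalar, and $\tilde{\sigma}^p = \mu^{-p} A^p = \lambda^{-1}\lambda = 1$; so $\tilde{\sigma}$ defines an action of $\langle \sigma \rangle$ on $\Pi$ compatible with the $\sigma$-action on $G_v$.

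For uniqueness, suppose $\tilde{\sigma}$ and $\tilde{\sigma}'$ are two such actions. Then $\tilde{\sigma}' \tilde{\sigma}^{-1}$ is a $G_v$-endomorphism of $\Pi$, hence a scalar $c \in k^*$ by Schur's lemma, and $1 = (\tilde{\sigma}')^p = c^p \tilde{\sigma}^p = c^p$, so $c^p = 1$; as $x \mapsto x^p$ is injective on $k$ this forces $c = 1$, i.e.\ $\tilde{\sigma}' = \tilde{\sigma}$. The argument is essentially formal once Schur's lemma is in hand; the only genuinely delicate input is the bijectivity of Frobenius on $k$, which is where characteristic $p$ is essential — in characteristic $0$ one would obtain an action only up to a $p$-th root of unity, and uniqueness would fail. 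The one point requiring a little care is simply ensuring Schur's lemma applies, which is where admissibility of $\Pi$ enters.
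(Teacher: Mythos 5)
Your proof is correct and reaches the same normalization trick as the paper: extract the unique $p$-th root in $k$ of the scalar $A^p$ and rescale $A$ by its inverse. The one genuine difference is how you establish that $A^p$ is a scalar. You prove Schur's lemma $\End_{G_v}(\Pi) = k$ directly from admissibility and irreducibility: restriction embeds $\End_{G_v}(\Pi)$ into the finite-dimensional $\End_k(\Pi^{K'})$, so $\End_{G_v}(\Pi)$ is a finite-dimensional division algebra over the algebraically closed field $k$, hence $k$ itself. The paper instead chooses a pro-prime-to-$p$ open subgroup $K_v^0 \subset K_v$ (using that the residue characteristic is $\neq p$) so that $\Pi^{K_v^0}$ is a finite-dimensional irreducible $\cH(G_v,K_v^0)$-module, applies finite-dimensional Schur's lemma there, and then propagates the scalar to all of $\Pi$ via the action map. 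Your route is a bit more self-contained — it needs no pro-$p'$ subgroup and no appeal to Hecke algebra module theory — at the modest cost of proving Schur directly. You also spell out uniqueness, which the statement asserts but the paper's proof leaves implicit; your argument (the ratio of two compatible actions is a $G_v$-equivariant scalar $c$ with $c^p = 1$, so $c = 1$ by injectivity of Frobenius) is the right one and is worth having on the page.
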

\proof 

If $A$ is a $k$-linear isomorphism from $\Pi$ to itself that intertwines $\Pi$ with $\Pi \circ \sigma$, then we claim $A^p$ must be a scalar.  If that scalar is $\lambda$, then $\sigma = \lambda^{-1/p} A$ is a $\sigma$-action on $\Pi$ compatible with the $\sigma$-action on $G_v$.

To prove the claim, choose a prime-to-$p$ open subgroup $K_v^0 \subset K_v$.  Then $\Pi^{K_v^0}$ is a finite-dimensional irreducible representation of the Hecke algebra $\cH(G_v,K_v)$, so by Schur's lemma $A^p$ acts as a scalar on $\Pi^{K_v^0}$.  Since the image of the action map $G_v/K_v^0 \times \Pi^{K^0_v} \to \Pi$ generates $\Pi$, $A^p$ must act by the same scalar on the entirety of $\Pi$.
\qed
 
 \medskip

For a $\sigma$-fixed $\Pi$ with its action of $\sigma$, we may then consider the Tate cohomology  $\Tate^i \Pi$ for $i \in \{0,1\}$.  It carries an action of $H_v$.  

\begin{definition}\label{linkagedef} We say that an irreducible representation $\pi$ of $H_v$ is \emph{linked with} $\Pi$ if the Frobenius-twist $\pi^{\twist}$
(see \S \ref{twistdef})
occurs as a Jordan-Holder constituent of $\Tate^0(\Pi)$ or $\Tate^1(\Pi)$. 
\end{definition} 

As a motivating example, which may explain the role of the Frobenius-twist:  take $\GG = \HH^p$ and $\sigma$ to act by cyclic permutation. 
Then the irreducible representation $\pi_v$ of $H_v$ is linked with the irreducible representation
$\pi_v^{\otimes p}$ of $G_v \simeq H_v^p$.

The notion of linkage is a representation theoretic version of the Brauer homomorphism:  Let $\Pi$ be a $\sigma$-fixed representation of $G_v$. 
We may apply $\Tate^*$ to the $\cH(G_v,K_v)$-module $\Pi^{K_v}$.  The $\sigma$-equivariant inclusion map $\Pi^{K_v} \to \Pi$ induces $\Tate^*(\Pi^{K_v}) \to \Tate^*(\Pi)$, which in fact takes values in the $\cH(H_v,U_v)$-module $\Tate^*(\Pi)^{U_v}$.  

We now suppose that $H_v/U_v = (G_v/K_v)^{\sigma}$, as in \S\ref{subsec:sigmaactions}, i.e. $K_v$ is $\sigma$-plain 
in the notation of that section.   Then the (unnormalized) Brauer homomorphism $\Br:\cH(G_v,K_v)^{\sigma} \to \cH(H_v,U_v)$ is compatible with linkage in that
the diagram
\begin{equation} \label{Brauercompat}
\xymatrix{
\Tate^*(\Pi^{K_v})  \ar[d]_{\Tate^*(h)} \ar[r] & \Tate^*(\Pi)^{U_v} \ar[d]^{\Br(h)} \\
\Tate^*(\Pi^{K_v}) \ar[r] & \Tate^*(\Pi)^{U_v}
}
\end{equation}
commutes for any $h \in \cH(G_v,K_v)^{\sigma}$.
 
\begin{proof} We give the proof for $\Tate^0$. 
If $x \in \Pi^{K_v}$ is $\sigma$-fixed, then the image of $x + N({\Pi^{K_v}})$ in $\Tate^0(\Pi)^{U_v}$ is $x + N(\Pi)$, and to verify \eqref{Brauercompat} we have to show that the equation
\[
\Br(h)\ast\left(x + N(\Pi)\right) = (h \ast x) + N(\Pi)
\]
holds.  The left-hand side is
\begin{eqnarray*}
& & \sum_{gU_v \in H_v / U_v} \Br(h)(U_v, gU_v) g(x +N(\Pi))\\
& = & \sum_{g U_v \in H_v / U_v} h(K_v, gK_v)(gx + N(\Pi))
\end{eqnarray*}
and the right-hand side is 
\[
\left(\sum_{g K_v \in G_v /K_v} h(K_v,gK_v)gx \right) + N(\Pi)
\]
so \eqref{Brauercompat} reduces to checking
\[
\sum_{g K_v \in G_v /K_v - H_v/U_v} h(K_v,gK_v)gx \in N(\Pi)
\]
Since we have assumed $(G_v/K_v)^{\sigma} = H_v/U_v$, $\sigma$ acts freely on the set indexing the sum, which therefore does belong to $N(\Pi)$.  A similar computation shows \eqref{Brauercompat} holds for $x + (1-\sigma)(\Pi^{K_v}) \in \Tate^1(\Pi^{K_v})$.
\end{proof}

 \subsection{Conjectures} \label{sec:linkedrep}

It seems very reasonable to believe that
\begin{quote} 
{\it Let  $\Pi$ be a $\sigma$-fixed irreducible admissible representation of $G_v$. 
Then $\Tate^* \Pi$ is of finite length as an $H_v$-representation.}
\end{quote}
 
The conjecture is motivated by the analogy with Eisenstein series formulated in the introduction.  If the functor $\Tate^i \Pi$ should be seen as an analog of the Jacquet functor, then the Conjecture is a counterpart to the fact that the Jacquet functor carries admissibles to admissibles  \cite[Theorem 3.3.1]{Casselman}. The analogy, together with computations we have carried out in the case of depth zero base change for $\GL_n$, 
suggests another conjecture, which we will leave in a slightly less precise form:

\begin{quote}
{\it Linkage is compatible with the Langlands functorial transfer associated to a $\sigma$-dual homomorphism $\Lpsi: \LH \rightarrow \LG$ {\rm (\S\ref{torsionfunctoriality}, \S\ref{subsec:sigmadual})}}
\end{quote}

In particular, if  the $\sigma$-fixed representation $\Pi$ of $G_v$ is linked with the representation $\pi$ of $H_v$, 
we should expect $\Lpsi$ to carry the Langlands parameter of $\pi$ to the Langlands parameter of  $\Pi$. In other words: Just as the Jacquet functor realizes functoriality between an $L$-group and a Levi subgroup, we expect that the Tate cohomology functor should realize functoriality for the $\sigma$-dual homomorphism of \S\ref{subsec:sigmadual}.

\subsection{Ramified places}
\label{subsec:ramified}
Fix a finite set $V$ of places of $F$ and a level structure $K \subset \G$, 
where each place $v \in V$ is good (\S \ref{goodplace}) with respect to $K$.  
 
Let $S$ be a finite set of finite places, disjoint from $V$, and put $G_S = \prod_{w \in S} G_w$. 
Consider all level structures $K'$ that agree with $K$ away from the set $S$, that is to say, 
\begin{equation} \label{K'def} K' = \prod_{v \in S} K'_v \cdot \prod_{v \notin S} K_v.\end{equation}
The $V$-Hecke algebra $\cH(G_V,K_V)$ is a commutative integral domain acts on the cohomology of 
$[G]_{K'}$. Let $\chi: \cH(G_V, K_V) \rightarrow k$ be a homomorphism. 
We may form the $G_S$-module
\[
\pi(\chi) := \chi \text{-component of }\varinjlim_{K'} H^*([G]_{K'})
\]
where by $\chi$-component we mean  in fact the localization at the maximal ideal defined by $\chi$, 
i.e. the generalized eigenspace corresponding to $\chi$. Strictly speaking, as we have defined it, this depends on both $\chi$ and $V$,
but we have suppressed the dependence on $V$ in the notation. 

The precise determination of $\pi(\chi)$ is an interesting and difficult question; it is the subject of the mod $p$ Langlands correspondence \cite{emerton}.
In any case, $\pi(\chi)$ and all of its irreducible subquotients are admissible: if we take $K'$ small enough that
$K'_S:=\prod_{v \in S} K'_v$ has pro-order that is prime-to-$p$, then  
  $\pi(\chi)^{K'_S}$ is identified with the $\chi$-component of cohomology of $H^*([\G]_{K'})$.  If we shrink $K'$ further,
we may ensure that $[\G]_{K'}$ is a manifold and has finite-dimensional cohomology.

 We are ready to formulate the exact relationship between linkage and the functoriality associated to a $\sigma$-dual homomorphism:

\begin{theorem}
\label{thm:behavioratramified}
Let $\G,\HH,  K, U,\sigma$ be as in \S \ref{sigmaax}, and suppose $\GG$ is semisimple and $\HH$ is connected.
Let $V$ be a finite set of $\sigma$-good places (see Proposition \ref{gppd}) and $S$ a finite set of 
finite places disjoint from $V$ and all primes above $p$. 

Let $\chi: \cH(H_V, U_V) \rightarrow k$ be a character,   $\psi = \chi \circ \NBr$.
 Let $\pi = \pi_{\chi}$
and $\Pi= \Pi(\psi)$ be the representations of (respectively) $H_S$ and $G_{S}$ attached to $\chi$ as in \S\ref{subsec:ramified}.  Then any irreducible subquotient of the $H_{S}$-module $\pi(\chi)$ is linked with an irreducible subquotient of the $G_{S}$-module $\Pi( \psi)$.
\end{theorem}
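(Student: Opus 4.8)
The plan is to reduce to a sufficiently small level at $S$, apply Theorem \ref{thm:smith} with acting group $G_V\times G_S$ together with the component analysis of \S\ref{conn-analysis}, and then read off the $\chi$-part using the bijection \eqref{Tateforrings}; the Frobenius twist in Definition \ref{linkagedef} is exactly what reconciles $\NBr$ with the unnormalized $\Br$ (recall from \S\ref{nBr} that $\NBr$ is the Frobenius-linear twist of $\widetilde{\Br}=\Br\circ(h\mapsto h h^\sigma\cdots h^{\sigma^{p-1}})$). Note first that $\psi=\chi\circ\NBr$ is $\sigma$-invariant, immediate from $\widetilde{\Br}(h^\sigma)=\widetilde{\Br}(h)$ (cyclic reordering, $\cH(G_V,K_V)$ being commutative), so $\Pi(\psi)$ is $\sigma$-stable and $\Tate^*\Pi(\psi)$ is defined.

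I would first pass to invariants. Since $S$ avoids all primes above $p$, I may choose $\sigma$-stable compact open subgroups $K_S'\subseteq G_S$ that are projective limits of prime-to-$p$ finite groups and with $U_S':=(K_S')^\sigma$ running over a cofinal family of compact opens of $H_S$ (take such a $K_S'$ inside a given neighbourhood of $e$ and intersect its $\langle\sigma\rangle$-translates). Set $K'=K_S'\prod_{v\notin S}K_v$ and $U'=(K')^\sigma$. As $(-)^{K_S'}$ and $(-)^{U_S'}$ are exact on smooth $k$-representations, $\Pi(\psi)^{K_S'}$ is the $\psi$-generalized eigenspace of $H^*([G]_{K'})$ and $\pi(\chi)^{U_S'}$ the $\chi$-generalized eigenspace of $H^*([H]_{U'})$, and $\Tate^*$ commutes with these invariants and with the colimit over $K_S'$. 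By \eqref{Brauercompat} (an isomorphism here since $K_S'$ is prime-to-$p$, so $(-)^{K_S'}$ commutes with $\Tate^*$), it therefore suffices to prove, for each irreducible subquotient $\pi_0$ of the $H_S$-module $\pi(\chi)$ and each small enough $K_S'$, that $(\pi_0^{U_S'})^{\twist}$ is a Jordan--H\"older constituent of $\Tate^0(\Pi(\psi))^{U_S'}$ or $\Tate^1(\Pi(\psi))^{U_S'}$ as an $\cH(H_S,U_S')$-module: this gives $\pi_0^{\twist}$ as a constituent of $\Tate^0(\Pi(\psi))$ or $\Tate^1(\Pi(\psi))$, and a routine argument with the long exact sequence for $\Tate$ along a $\sigma$-stable composition series of $\Pi(\psi)$ (non-$\sigma$-fixed constituents contributing nothing) locates a $\sigma$-fixed irreducible subquotient $\Pi_0$ of $\Pi(\psi)$ with $\pi_0$ linked to $\Pi_0$.

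For the Smith-theory step, $K_V\times K_S'$ is $\sigma$-plain in $G_V\times G_S$: condition (b) of \S\ref{subsec:sigmaactions} is clear, and condition (a) is checked at $V$ (using that $V$ is $\sigma$-good) and at $S$ separately, where it holds because $H^1(\sigma,K_S')$ is trivial --- any $\langle\sigma\rangle$-cocycle into a prime-to-$p$ profinite group is a coboundary, by Sylow's theorem inside $K_S'\rtimes\langle\sigma\rangle$. Apply Theorem \ref{thm:smith} with $X=\G(F)\backslash\G(\Adele)/K^{(V\cup S)}$, group $G_V\times G_S$, and subgroup $K_V\times K_S'$, so $X/(K_V\times K_S')=[G]_{K'}$. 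The argument of \S\ref{conn-analysis} --- which uses only that the level is sufficiently small, that $\HH$ is reductive, and the finiteness of the relevant $H^1(\sigma,-)$, not $\sigma$-goodness of the removed places --- realizes $[H]_{U'}$ as a union of components of $X^\sigma/(K_V\times K_S')^\sigma$; combined with the proof of Theorem \ref{thm:smith} (the refinement that $H^*(X^\sigma/(K_V\times K_S')^\sigma)$ is a subquotient of $\Tate^* H^*([G]_{K'})$) and with \eqref{Brauercompat0}, this exhibits $H^*([H]_{U'})$ as a subquotient of $\Tate^* H^*([G]_{K'})$ as a module over the two commuting subalgebras $\cH(G_V,K_V)^\sigma$ and $\cH(G_S,K_S')^\sigma$, acting on $H^*([H]_{U'})$ through their Brauer homomorphisms $\Br_V\colon\cH(G_V,K_V)^\sigma\to\cH(H_V,U_V)$ and $\Br_S\colon\cH(G_S,K_S')^\sigma\to\cH(H_S,U_S')$, and on $\Tate^* H^*([G]_{K'})$ through honest Hecke operators.

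Finally I would isolate the $\chi$-part and match it with $\psi$. Decompose the finite-dimensional $\cH(G_V,K_V)$-module $H^*([G]_{K'})$ into generalized eigenspaces; $\sigma$ permutes the eigencharacters, a free $\langle\sigma\rangle$-orbit contributes $0$ to $\Tate^*$, and a $\sigma$-fixed $\theta$ contributes $\Tate^*$ of its eigenspace, on which $\cH(G_V,K_V)^\sigma$ acts through $\theta|_{\cH(G_V,K_V)^\sigma}$. The $\chi$-generalized eigenspace $\pi(\chi)^{U_S'}$ is the $(\chi\circ\Br_V)$-generalized eigenspace for $\cH(G_V,K_V)^\sigma$, so by the previous step it is an $\cH(H_S,U_S')$-subquotient (via $\Br_S$) of $\Tate^*(\Pi(\theta_0)^{K_S'})$, where $\theta_0$ is the unique $\sigma$-fixed extension of $\chi\circ\Br_V$ from $\cH(G_V,K_V)^\sigma$ to $\cH(G_V,K_V)$ provided by \eqref{Tateforrings}; in particular $\Pi(\theta_0)$ is $\sigma$-stable. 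Comparing the explicit extension formula in the proof of \eqref{Tateforrings} with the definitions of $\widetilde{\Br}$ and $\NBr$ in \S\ref{nBr} identifies $\Pi(\theta_0)$ as the Frobenius untwist of $\Pi(\psi)$ --- the direction being dictated precisely by the consistency we are after; since the Frobenius twist commutes with $(-)^{K_S'}$ and with $\Tate^*$, applying it gives $(\pi(\chi)^{U_S'})^{\twist}$ as an $\cH(H_S,U_S')$-subquotient of $\Tate^*(\Pi(\psi)^{K_S'})\cong\Tate^*(\Pi(\psi))^{U_S'}$, which is what the reduction required. The main obstacle is exactly this last bookkeeping --- following the single Frobenius twist coherently through $\widetilde{\Br}$, the normalization, \eqref{Tateforrings}, and Definition \ref{linkagedef}, and checking that the $\cH(G_S,K_S')$-module structures are carried along correctly through \eqref{Brauercompat}; the Smith-theoretic core is a variant of the proof of Theorem \ref{thm:realtheorem}, now with $G_S$ also acting.
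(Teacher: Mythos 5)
Your proposal is correct and follows essentially the same approach as the paper: Smith-theoretic localization as in Theorem~\ref{thm:smith}, the component analysis of \S\ref{conn-analysis} applied with the enlarged acting group, and a Frobenius-twist on coefficients to reconcile the unnormalized Brauer map with $\NBr$. The only cosmetic differences are that you feed $G_V\times G_S$ and $K_V\times K_S'$ directly into Theorem~\ref{thm:smith} (verifying $\sigma$-plainness at $S$ via vanishing of $H^1(\sigma,K_S')$, then taking a colimit over $K_S'$), whereas the paper forms the Smith double complex from the colimit of cochain complexes at the outset, and that you organize the coefficient-Frobenius step via $\theta_0$ (the unique $\sigma$-fixed extension of $\chi\circ\Br$, with $\Pi(\theta_0)^{\twist}\cong\Pi(\psi)$) rather than via the unique extension of $(\chi\circ\Br)^p$ --- but the mathematical content is identical.
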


  Note a minor weakness compared to Theorem \ref{thm:realtheorem}: the set $V$ above is 
required to be finite. It is likely this can be relaxed, and it seems harmless in practice.
More  seriously one could ask for a more precise statement --- for example, a complete determination of one space in terms of the other, but we do not pursue this here.
  
\begin{proof} 
 Let $K' \leqslant G_S$ be an open compact subgroup as above (see around \eqref{K'def}), now  assumed $\sigma$-stable;
 let $U' = (K')^{\sigma} \leqslant H_S$.

 We proceed just as in \S \ref{conn-analysis} and Theorem \ref{thm:realtheorem} but
 in cohomology rather than homology. That furnishes an embedding of 
 $$H^*([H]_{U'}) \mbox{ as a subquotient of } \Tate^* H^*([G]_{K'})$$
 equivariantly  for the action of $\cH(H_V, U_V)$. 
In fact, we can do this compatibly at all levels at once, thus embedding
$\varinjlim H^*([H]_{U'}) \mbox{ as a subquotient of }  \Tate^* \left(  \varinjlim H^*([G]_{K'})\right) $ 
in an $H_S \times \cH(H_V, U_V)$-equivariant fashion.    We explicate this a little:

Proceed as in the proof of Theorem \ref{thm:realtheorem},  but form the associated ``Smith double complex'' from 
  the direct limit of cochain complexes for the $[G]_{K'}$. 
The sequences are convergent because the cohomological dimension 
of $[G]_{K'}$ is bounded independent of $[K']$. Our reasoning as before
shows that the {\em hv}-complex converges to $ \varinjlim H^*([G]_{K'}^{\sigma})$
whereas the $E^2$ term of the  {\em vh}-complex is $\varinjlim \Tate^* H^*([G]_{K'})$. 
Moreover, $\varinjlim H^*([H]_{U'})$ is an $H_S \times \cH(H_V, K_V)$-summand
of $ \varinjlim H^*([G]_{K'}^{\sigma})$.

Now localizing at a character  of $\cH(H_V, K_V)$ we see that (as $H_S$-representations) 
\begin{quote}
Any irreducible constituent $\tau$  of 
$\pi(\chi)$ is  a composition factor of $\Tate^* \Pi(\psi')$
\end{quote}
where $\psi' = \chi \circ \Br$ is considered as a character of $\cH(G_V, K_V)^{\sigma}$. 
By an argument with Frobenius acting on the coefficients, similar to that given earlier, 
we see that $\tau^{(p)}$ is a composition factor of $\Tate^* \Pi( (\psi')^p)$. 
Finally, because $\chi \circ \NBr$ is the unique extension of $(\psi')^p$ 
to $\cH(G_V, K_V)$, we see that $  \Pi( (\psi')^p) = \Pi(\psi)$. 
   \end{proof}

\section{Satake parameters} \label{Satakeparam}

In this background section, we recall the Satake isomorphism and the notion of modularity.
 Then we reformulate the Brauer map in terms of Satake parameters.  
 This section is primarily to set up notation and give references
 for results which are standard over $\C$ but less so over $k$.

 However the formulation of Theorem \ref{cSatake}
 and the accompanying discussion of the $c$-group may be of independent interest. Although Theorem \ref{cSatake} points
 the way to the most intrinsic way of formulating our results, we do not use it in the rest of the paper --- sticking instead
 to the $L$-group and taking  the {\em ad hoc} approach to the various square roots that occur. This is enough for our purposes,
 and  can be readily matched with the existing literature. 
 
\subsection{Restricted Weyl group}
\label{subsec:w0v}

Let $v$ be a good place for $\GG$, and let $A_v \subset B_v$ be a maximally split torus and Borel subgroup of $G_v$;
let $\mathbf{A}_v \subset \mathbf{B}$ be the corresponding algebraic groups, and $\mathbf{T}$ the quotient torus of $\mathbf{B}$. 
  The ``restricted Weyl group'' of $\GG$ at $v$ is the quotient $N_{G_v}(A_v)/\Cent_{G_v}(A_v)$, i.e. the normalizer of $A_v$ divided by the centralizer of $A_v$.  We denote it by $W_{0,v}$.  The correspondence between unramified characters of $T_v$ and  splittings of $\hat{T}(k) \rtimes \Frob_v \rightarrow \langle \Frob_v \rangle$ from \S\ref{subsec:cft} is compatible with the natural action  of $W_{0,v}$ on each side.  

The restricted Weyl group also acts on the dual torus $\hat{A}_v$ to $A_v$.  We will need the following assertion:

\begin{prop*}
The action of any $w \in W_{0,v}$ on $\hat{A}_v$ is induced by an element $n \in \hat{G}(k)$ normalizing $\hat{T}$ and fixed by $\Frob_v$.
\end{prop*}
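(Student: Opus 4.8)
\emph{Proof plan.} The plan is to reduce the statement to a lifting problem for Weyl group elements inside the \emph{pinned} dual group $\hat{G}$, where a $\Frob_v$-fixed lift is available essentially for free.

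First I would compare $W_{0,v}$ with the absolute Weyl group. Since $v$ is good, $\GG$ is quasi-split over $F_v$, so $\mathbf{T}' := Z_{\GG}(\mathbf{A}_v)$ (formed over $F_v$) is a maximal torus contained in $\mathbf{B}$, and the projection $\mathbf{B} \to \mathbf{B}/\mathrm{R}_u(\mathbf{B}) = \mathbf{T}$ restricts to an isomorphism $\mathbf{T}' \stackrel{\sim}{\to} \mathbf{T}$ under which $\mathbf{A}_v$ becomes the maximal split subtorus of $\mathbf{T}'$. An element of $G_v$ normalizing $A_v$ normalizes its Zariski closure $\mathbf{A}_v$, hence also $Z_{\GG}(\mathbf{A}_v) = \mathbf{T}'$; conversely an $F_v$-point normalizing $\mathbf{T}'$ normalizes its maximal split subtorus. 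Thus $W_{0,v} = N_{\GG}(\mathbf{T}')(F_v)/\mathbf{T}'(F_v)$, and sending $n\mathbf{T}'$ to the class of $n$ in the absolute Weyl group $W := (N_{\GG}(\mathbf{T}')/\mathbf{T}')(\overline{F_v})$ defines a homomorphism $W_{0,v} \to W$. The $\Gamma_{F_v}$-action on $W$ is the action on the Weyl group of $\Psi(\GG)$, which factors through $\langle \Frob_v\rangle$ by goodness of $v$, and the image of $W_{0,v} \to W$ consists of $\Gamma_{F_v}$-fixed elements, i.e. lands in $W^{\Frob_v}$.

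Next I would match the two actions on $\hat{A}_v$. Under the canonical identifications $X^*(\hat{T}) = X_*(\mathbf{T}')$ and $W = N_{\hat{G}}(\hat{T})/\hat{T}$, the pinned automorphism $\Frob_v$ of $\hat{T}$ is dual to the $\Frob_v$-action on $\mathbf{T}'$, and $\hat{A}_v = \hat{T}_{\Frob_v}$ has $X^*(\hat{A}_v) = X^*(\hat{T})^{\Frob_v} = X_*(\mathbf{T}')^{\Frob_v} = X_*(\mathbf{A}_v)$. A direct check on these lattices then shows that the action of $w \in W_{0,v}$ on $\hat{A}_v$ (dual to its action on $A_v$) coincides with the action on $\hat{T}_{\Frob_v}$ of the image $\bar{w} \in W^{\Frob_v}$ of $w$ from the previous step; this is the compatibility of $W_{0,v}$-actions signalled just before the Proposition. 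It therefore suffices to lift $\bar{w} \in W^{\Frob_v}$ to an element of $N_{\hat{G}}(\hat{T})(k)$ fixed by $\Frob_v$. For this I would invoke the Tits (Bruhat) lifting: since $\hat{G}$ carries a pinning $(\hat{T},\hat{B},\{X_i\})$ defined over $\mathbf{F}_p$, one has the distinguished representatives $\dot{s}_i = u_{\alpha_i}(1)u_{-\alpha_i}(-1)u_{\alpha_i}(1) \in N_{\hat{G}}(\hat{T})(\mathbf{F}_p)$, where $u_{\pm\alpha_i}\colon \Ga \to \hat{G}$ are the root homomorphisms of the pinning, and, because the $\dot{s}_i$ satisfy the braid relations, $\dot{u} := \dot{s}_{i_1}\cdots\dot{s}_{i_r}$ depends only on $u \in W$ and not on the chosen reduced word. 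The automorphism $\Frob_v$ preserves $(\hat{T},\hat{B},\{X_i\})$, hence permutes the $u_{\pm\alpha_i}$ according to its permutation of the simple roots, so that $\Frob_v(\dot{u}) = \dot{\Frob_v(u)}$ for all $u$; in particular $\dot{\bar{w}}$ is $\Frob_v$-fixed. Then $n := \dot{\bar{w}} \in \hat{G}(k)$ normalizes $\hat{T}$, is fixed by $\Frob_v$, and acts on $\hat{T}$ — hence on the quotient $\hat{A}_v = \hat{T}_{\Frob_v}$ — through $\bar{w} \in W^{\Frob_v}$, which by the previous paragraph realizes the action of $w$.

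The only genuinely nontrivial point is this last step: for an arbitrary torus there is an $H^1(\langle\Frob_v\rangle,\hat{T})$-obstruction to choosing a $\Frob_v$-fixed Weyl representative, and what removes it here is precisely the pinning on $\hat{G}$ — a structure $\GG$ itself need not carry — which supplies a $\Frob_v$-equivariant system of representatives. I expect the merely fiddly (not substantive) part to be the bookkeeping of the second paragraph: keeping straight the identifications of character and cocharacter lattices, and of maximal split subtori with Frobenius-coinvariants, so that the $W_{0,v}$-action on $\hat{A}_v$ and the $W^{\Frob_v}$-action on $\hat{T}_{\Frob_v}$ visibly agree.
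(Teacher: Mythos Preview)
Your argument is correct and takes a genuinely different route from the paper. Both reduce to the same lifting problem --- that any $\Frob_v$-fixed element $\bar{w}$ of the absolute Weyl group admits a $\Frob_v$-fixed representative in $N_{\hat{G}}(\hat{T})(k)$ --- but the paper resolves it by invoking Steinberg's description of $W^{\Frob_v}$ as generated by ``basic reflections'' $W_D$ (one for each $\Frob_v$-orbit $D$ on simple roots, $W_D$ being the long element of the Levi $\hat{M}_D$), and then verifying case-by-case that the long element of each such Levi has a $\Frob_v$-fixed lift (the nontrivial case being essentially $\SL_3$ with its pinned involution). Your approach instead uses the Tits section $w \mapsto \dot{w}$ furnished by the pinning: since the $\dot{s}_i$ satisfy the braid relations and $\Frob_v$ permutes them according to its permutation of simple roots, one has $\Frob_v(\dot{w}) = \dot{\Frob_v(w)}$ uniformly, and the lift is immediate. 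Your route is cleaner and avoids the case analysis; the paper's route makes the structure of $W^{\Frob_v}$ more explicit but is longer. The one fact you are quoting without proof --- that the $\dot{s}_i$ satisfy the braid relations --- is standard (Tits; see e.g.\ Bourbaki, \emph{Lie groups and Lie algebras}, Ch.~IV), so this is fine.
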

This result is proven in \cite[Lemma 6.2]{Borel} over the complex numbers. 

\begin{proof}

As in \cite[\S 6.1]{Borel} we must show that each $\Frob_v$-fixed class in the Weyl group of $(\hat{G}, \hat{T})$
has a $\Frob_v$-fixed representative within $\hat{G}(k)$.

 It is proven by Steinberg \cite[p 173]{Steinberg}
that  $\sigma$-fixed points of the Weyl group $\hat{W}$
are generated by {\em basic reflections} $W_D$ indexed by orbits $D$ of $\Frob$ 
 on simple roots on $(\hat{G}, \hat{T})$.  The basic reflection $W_D$ is characterized (item (3) of {\em loc. cit.}) as the unique element of $\langle w_{\alpha} \rangle_{\alpha \in D} \subset \hat{W}$
with the property that $W_D$ sends $D$ setwise into $-D$.  
Equivalently, $W_D$ is the long element of the Weyl of the Levi subgroup $\hat{M}_D$
obtained when we adjoin each of the $\alpha \in D$ to $\hat{T}$.

We are reduced to a verification inside such a Levi group $\hat{M}$. 
This is a reductive group $\hat{M}$, equipped with a pinning, 
and the pinned automorphism $\Frob_v$ acts transitively on simple roots. We need to produce a representative for the long Weyl group element that is
$\Frob_v$-fixed.  It suffices to produce such a representative inside the derived group $[\hat{M}, \hat{M}](k)$,
and then inside $\hat{M}'(k)$ where $\hat{M}'$ is the simply connected cover of that derived group. 
The question can then be analyzed on each simple factor of $(\hat{M}', \Frob_v)$. The only nontrivial cases are as follows:
\begin{itemize}
\item[(a)]  $\hat{M}'=\SL(3)$:   the pinned automorphism of  $\SL(3)$ 
is given by $g \mapsto \omega (g^t)^{-1} \omega$, where $\omega = \left( \begin{array}{ccc} 0 & 0 & 1 \\ 0 & -1 & 0 \\ 1 & 0 & 0 \end{array}\right)$.
This fixes a representative for the long Weyl group element, namely  $\omega$ itself. 
\item[(b)] $\hat{M}' = \hat{M}_0^r$ and $\Frob_v$ permutes the simple factors. 
Note that $\Frob_v^r$ still acts transitively on the simple roots of $\hat{M}_0$, and by what we showed above,
there is a $\Frob_v^r$-fixed  representative $w \in \hat{M}_0$ for the long Weyl element; then $(w, \Frob(w), \dots ) \in \hat{M}_0^r$ gives the desired representative. 
\end{itemize}
\end{proof}

\subsection{The Satake isomorphism}
\label{Satake}

  We now describe the Satake isomorphism. We begin with a statement of the main ingredients, but presented
``over $k$'' and with no choices of square roots made.  For this statement, we will require the following twisted action of the Weyl group $W_{0,v}$ on $\hat{A}_v$:
\begin{equation}
\label{twisted}
w \ast a = wa  \cdot \sqrt{\frac{  \Sigma^*_G}{w\Sigma^*_G}}(q_v) \quad \text{for $w \in W_0$ and   $a \in \hat{A}_v(k)$}
\end{equation}
and $\Sigma^*_G$ is the co-character of $\hat{T}$ given by the sum of all positive coroots. Note that $\Sigma^*/w \Sigma^*_G$ is divisible by $2$ in that cocharacter lattice; thus $ \sqrt{\frac{  \Sigma^*_G}{w\Sigma^*_G}}(q_v)$ makes sense, and we can then project
to $\hat{A}_v$ via $\hat{T} \rightarrow \hat{A}_v$. 
 
\begin{theorem*}
Let $G_v = \G(F_v)$ be a reductive $v$-adic group and let $K_v \subset G_v$ be a maximal compact subgroup satisfying the conditions of \S\ref{goodplace},
which is ``in good position'' with respect to $A_v$, i.e. $A_v \cap K_v$ is a maximal compact subgroup of $A_v$. 
Let  $\hat{A}_v$ be as in \S\ref{subsec:cft}, \S\ref{subsec:w0v}.  The following hold: \begin{itemize} 
\item[(i)] There is a natural isomorphism
\begin{equation}
\label{twistedSatake}
\begin{array}{rcl}
\cH(G_v,K_v) & \stackrel{\sim}{\longrightarrow} &
\begin{array}{c}
\mbox{\rm $(W_{0,v},\ast)$-invariant regular} \\
\mbox{\rm functions on $\hat{A}_v$}
\end{array}
\end{array}
\end{equation}

  In particular, $\cH(G_v, K_v)$ is a commutative integral domain. 
 
\item[(ii)] There is a natural identification
\begin{equation} 
\label{Invarianttheory}  
\begin{array}{rcl}
\begin{array}{c}
\mbox{\rm $(W_{0,v},\cdot)$-invariant regular}\\
\mbox{\rm functions on $\hat{A}_v$}
\end{array}
&
\stackrel{\sim}{\longleftarrow} &
\begin{array}{c}
 \mbox{\rm regular functions on} \\
 \mbox{\rm $\hat{G}\rtimes \Frob_v \git \hat{G}$}. 
 \end{array}
 \end{array}
 \end{equation} 
where the $W_{0,v}$-action on $\hat{A}_v$ is now the usual one. 
\end{itemize}
\end{theorem*}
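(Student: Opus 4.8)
The statement repackages the classical Satake isomorphism into two canonical pieces: part (i) is the Satake transform, normalized so as to require no choice of $\sqrt{q_v}$ --- which is exactly why the twisted action $\ast$ of \eqref{twisted} appears in place of the usual one --- while part (ii) is a Chevalley-type restriction theorem for the disconnected group along its coset $\hat G\,\Frob_v\subset\hat G\rtimes\langle\Frob_v\rangle$. The plan is to prove the two parts independently, since it is only their composite that recovers the familiar $\rho$-shifted Satake isomorphism and hence would need a square root of $q_v$.

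\textbf{Part (i).} I would introduce the unnormalized Satake transform $\mathcal S\colon\cH(G_v,K_v)\to k[A_v/(A_v\cap K_v)]=k[X_*(\mathbf A_v)]$, $\mathcal S(h)(a)=\delta_{B_v}(a)\int_{N_v}h(an)\,dn$, the integral taken against the Haar measure giving $N_v\cap K_v$ volume $1$. This is legitimate with $k$-coefficients precisely because the residue characteristic is not $p$: all the volumes that occur are powers of $q_v$, hence units in $k$. The classical manipulations --- the Gelfand-trick anti-involution for commutativity of the source, the Iwasawa decomposition for multiplicativity of $\mathcal S$, and the computation of how $\mathcal S$ transforms under $W_{0,v}$ --- go through verbatim over $k$, and show that $\mathcal S$ is an injective algebra map with image in the $(W_{0,v},\ast)$-invariants. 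The twist of \eqref{twisted} is exactly what using $\delta_{B_v}$ rather than $\delta_{B_v}^{1/2}$ produces, and it is defined over $k$ without extracting any root of $q_v$ since $\Sigma^*_G-w\Sigma^*_G=2(\rho^\vee-w\rho^\vee)$ is divisible by $2$ in $X_*(\hat T)$, so $\tfrac12(\Sigma^*_G-w\Sigma^*_G)(q_v)\in\hat T(k)$ makes literal sense. Surjectivity onto the $\ast$-invariants is the Cartan-decomposition leading-term argument: $\{\mathbf 1_{K_v\lambda K_v}\}$ over dominant $\lambda$ is a $k$-basis of $\cH(G_v,K_v)$, the $\ast$-invariants have a $k$-basis of twisted orbit sums $m^\ast_\lambda$ over dominant $\lambda$, and $\mathcal S(\mathbf 1_{K_v\lambda K_v})=c_\lambda m^\ast_\lambda+\sum_{\mu<\lambda}c_{\lambda\mu}m^\ast_\mu$ with $c_\lambda$ a power of $q_v$, hence invertible in $k$; thus $\mathcal S$ is unitriangular in these bases. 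Integrality (``commutative domain'') then follows from the isomorphism, as $k[X_*(\mathbf A_v)]$ is a domain and so are its invariants.

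\textbf{Part (ii).} I would show that the restriction-and-descent map $k[\hat G\,\Frob_v]^{\hat G}\to k[\hat A_v]^{W_{0,v}}$ is an isomorphism. It is constructed thus: a $\hat G$-invariant regular function on $\hat G\,\Frob_v$ restricts to a function on the closed subvariety $\hat T\,\Frob_v\cong\hat T$; for $t_0\in\hat T$ one has $t_0(s\,\Frob_v)t_0^{-1}=\bigl(s\cdot t_0\Frob_v(t_0)^{-1}\bigr)\Frob_v$, so this restricted function is invariant under translation by $(1-\Frob_v)\hat T$ and descends to $\hat T/(1-\Frob_v)\hat T=\hat T_{\Frob_v}=\hat A_v$ (see \S\ref{subsec:cft}); moreover an analogous computation shows it is invariant under the residual action of $N_{\hat G}(\hat T)^{\Frob_v}/\hat T^{\Frob_v}$, which is identified with $W_{0,v}$ acting on $\hat A_v$ in the usual way --- the identification of relative Weyl groups being part of the Langlands-dual formalism, and the existence of $\Frob_v$-fixed normalizing representatives in $\hat G(k)$ being exactly the Proposition of \S\ref{subsec:w0v}. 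That this map is an isomorphism is the standard GIT argument: every regular quasi-semisimple $\hat G$-conjugacy class in $\hat G\,\Frob_v$ meets $\hat T\,\Frob_v$, the regular quasi-semisimple locus is dense with closed orbits, and two points of $\hat T\,\Frob_v$ are $\hat G$-conjugate iff $W_{0,v}$-conjugate after descent; hence the induced morphism of affine GIT quotients is bijective, and an isomorphism on coordinate rings by density and separability of the regular locus. The structural inputs --- $\Frob_v$ acting by a pinned (hence quasi-semisimple) automorphism, the classification of quasi-semisimple classes in a disconnected reductive group, and closedness of semisimple orbits --- are due to Steinberg, Spaltenstein and Digne--Michel and hold in arbitrary characteristic.

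\textbf{Main obstacle.} The genuinely delicate point is the positive-characteristic invariant theory underlying part (ii): Chevalley restriction and the structure of the adjoint quotient can degenerate in small characteristic for non-simply-connected or disconnected groups, so one must take care which version is invoked. I would reduce the twisted-conjugation quotient to the connected centralizer $Z_{\hat G}(\Frob_v)^\circ$, then to the simply connected cover of its derived group, where Steinberg's description of $G\git G$ and of the regular locus is available in all characteristics, and then descend along the central isogeny; the torsion that could obstruct this is controlled by $\pi_1$ of $\hat G$ and by the order of the pinned automorphism $\Frob_v$, both harmless since $q_v$ is prime to $p$. Carrying out this reduction cleanly and uniformly over all $\hat G$ and all pinned $\Frob_v$, rather than any single step, is what makes the write-up of (ii) long --- and, as the authors remark, worth recording for lack of a characteristic-$p$ reference.
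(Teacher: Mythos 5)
For part (i), your strategy is essentially the paper's: avoid $\sqrt{q_v}$ by twisting the Weyl action, then run a leading-term / unitriangularity argument on Cartan double cosets. Two differences of substance. First, the paper does \emph{not} re-derive the Satake transform over $k$; it quotes the $\Z[q_v^{\pm 1/2}]$-integral form (Gross, Haines--Rostami) and only then checks that the $\delta^{-1/2}$-twist lands in $\Z[q_v^{-1}]$ before base-changing to $k$. Your plan to redo the "classical manipulations verbatim over $k$" is defensible but carries more burden than you acknowledge, since you would be reproving, not citing, the Haines--Rostami upper-triangularity. Second, there is a normalization slip: the paper's map that lands in the $(W_{0,v},\ast)$-invariants for the $\ast$ of \eqref{twisted} is $\mathcal S^*(h)(a)=\int_{N_v}h(K_v,anK_v)\,dn$ with \emph{no} $\delta$-factor ($\mathcal S^*=\delta^{-1/2}\mathcal S$). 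Your displayed formula $\delta_{B_v}(a)\int_{N_v}h(an)\,dn$ carries a full $\delta$ and would be invariant for the \emph{opposite} twist; as written it does not match \eqref{twisted}. This is a sign error, not a conceptual one, but it does need fixing.

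For part (ii) you take a genuinely different route, and I think it has a real gap. You propose the GIT / quasi-semisimple-class argument: restrict to $\hat T\Frob_v$, descend to $\hat A_v$, and argue that the induced map of affine quotients is a bijection, then an isomorphism "by density and separability of the regular locus," after reducing to the simply connected cover of the connected centralizer and descending along the central isogeny. In characteristic $p$, bijectivity of a morphism of affine varieties does not give an isomorphism of coordinate rings, and the normality/separability and isogeny-descent steps you flag as the "main obstacle" are exactly where the argument is incomplete --- you state the plan but do not carry it out, and these are precisely the steps that degenerate in the small-characteristic situations the paper cares about. The paper avoids this entirely with a much lighter argument: a Lemma (the one in \S\ref{ReardenMetal}) showing by a leading-term induction over dominant $\Frob_v$-fixed weights that traces of Weyl modules $V_\nu$ on $\hat G\rtimes\Frob_v$ span the $W_{0,v}$-invariants of $k[\hat A_v]$ (this gives surjectivity of restriction), plus a one-line Lie-algebra computation that the action map $\hat G\times(\hat T\rtimes\Frob_v)\to\hat G\rtimes\Frob_v$ is dominant (this gives injectivity of restriction on invariants). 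That argument uses no structure theory of quasi-semisimple classes, no classification of closed orbits, and no normality or isogeny-descent statements; it is uniform across characteristics by construction. If you want to keep your route, you would need to actually establish normality of $\hat G\Frob_v\git\hat G$ and separability of the quotient map in arbitrary characteristic, or replace the whole package with the trace-of-Weyl-modules lemma.
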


\begin{proof}[Proof of (i)]  
We will prove (i) here, and (ii) in \S\ref{ReardenMetal}.
There does not seem to be a reference with coefficients in $k$, so we give some details. We emphasize
that we are working with characteristic $p$ coefficients and no square roots are chosen. 

We will deduce (i) from some properties of the standard Satake isomorphism (i.e. the Satake isomorphism over $\C$).  It is a standard observation that the coefficients of this isomorphism can be shrunk from $\C$ to $\Z[q_v^{\pm 1/2}] \subset \R$  (see e.g. \cite{Gross}).  By using the modified $W_{0,v}$-action of \eqref{twisted}, we can further shrink the coefficients to $\Z[q_v^{\pm 1}] \subset \Q$.

Let us first recall the standard Satake isomorphism. Let $B_v$ be the $F_v$-points of a Borel containing $A_v$, 
and $N_v$ the points of its unipotent radical. 
Let $\delta_{\R_{>0}}$ be the modular character of the Borel, i.e.   the composite
\begin{equation} \label{Deltadef}
B_v \to F_v^* \to q_v^{\Z} \subset \R_{>0},
\end{equation} 
where the first map is the sum $\Sigma_G$ of all positive roots, evaluated on $F_v$-points. 
 Let $\delta_{\R_{>0}}^{1/2}$ denote the positive square root of $\delta_{\R_{>0}}$.   The usual Satake transform $f \mapsto \mathcal{S}(f)$ produces from $f \in \cH(G_v,K_v)$ a function $\mathcal{S}(f)$ on $A_v$ given by
 \[
\mathcal{S}(f)(t) = \delta^{1/2}_{\R_{>0}}(t) \cdot \int_{N_v} f(K_v,tnK_v) dn
 \]
where the measure $dn$ is normalized so that $N_v \cap K_v$ has mass $1$. Then $\mathcal{S}(f)$ is compactly supported, and constant on $(A_v \cap K_v)$-cosets.  We may therefore regard $\mathcal{S}$ as an element of the group ring $\C[X_*(\mathbf{A}_v)] = \C[X^*(\hat{A}_v)]$.
As such \cite{Gross} $\mathcal{S}$ defines a ring isomorphism from $\cH$ to  the $W_{0,v}$-invariant subring of $\C[X^*(\hat{A}_v)]$, where $W_{0,v}$ acts as in \S\ref{subsec:w0v};
moreover,  \cite[Lemma 10.2.1]{HainesRostami}, with  respect to the basis of double $K_v$-cosets in the domain, and of $W_{0,v}$-orbits on $X^*(\hat{A}_v)$ in the codomain, $\mathcal{S}$ is upper triangular and the diagonal entries belong to $\Z[q_v^{\pm 1/2}]$. In other words, $\mathcal{S}$ defines an algebra isomorphism 
\[
\text{$\cH(G_v,K_v)$ with $\Z[q_v^{\pm 1/2}]$-coefficients} \stackrel{\sim}{\to} \Z[q_v^{\pm 1/2}][X^*(A_v)]^{W_{0,v}}
\]
where the $W_{0,v}$-action on $X^*(\hat{A}_v)$ is the ``untwisted'' one from \S\ref{subsec:w0v}.  

One obtains a form of the Satake isomorphism over $k$ by tensoring with $k$, but this requires choosing an embedding $\Z[q_v^{\pm 1/2}] \to k$, i.e. choosing a square root of $q_v$ in $k$.
To avoid that,  define a modified Satake transform $\mathcal{S}^*$ by 
\begin{equation} \label{modSatake}
 \mathcal{S}^*(f)(t) = \int f(K_v,tn K_v) dn
 \end{equation} 
 i.e. $\mathcal{S}^* := \delta^{-1/2} \mathcal{S}$.   Then $\mathcal{S}^*$ is also an injective ring homomorphism with values in $\C[X^*(\hat{A}_v)]$.  
 Since the $*$-action of $W_{0,v}$ on $\C[X^*(\hat{A}_{v})] $  \eqref{twisted} sends $\chi \in X^*(\hat{A}_{v})$ 
to 
\begin{eqnarray*}
w* \chi & = & w \chi \cdot q_v^{\langle \Sigma^*_G-w^{-1} \Sigma^*_G,  \chi \rangle/2} \\
 & = & w \chi \cdot q_v^{\langle w \Sigma^*_G- \Sigma^*_G,  w \chi \rangle/2},
\end{eqnarray*}
and  we deduce that $\mathcal{S}^*$  is an isomorphism onto the $(W_{0,v},*)$-invariant subring of $\C[X^*(\hat{A}_v)]$.
 
For the rest of the proof of (i), let us write $\Z' := \Z[q_v^{-1}]$ and $\Z'' := \Z[q_v^{- 1/2}]$, for short.  The $*$-action of $W_{0,v}$ on $\C[X^*(\hat{A}_v)]$ clearly leaves the subrings $\Z'[X^*(\hat{A}_v)]$ and $\Z''[X^*(\hat{A}_v]$ stable, and (d) for $\mathcal{S}$ implies that $\mathcal{S}^*$ lies in $\Z''[X^*(\hat{A}_v)]$.  To conclude that $\mathcal{S}^*$ is an isomorphism over $\Z'$ it is only necessary to check that $\mathcal{S}^*$ carries the basis element indexed by $K_v t_0 K_v$ into $\Z'[X^*(\hat{A}_v)]$, i.e. to check that  
the measure of $n \in N_v$ with $ K_v tn K_v = K_v t_0 K_v$ lies in $\Z'$; 
 that follows from the normalization of   $dn$. \end{proof}
 
\subsection{Invariant theory lemma}
\label{ReardenMetal}
The proof of part (ii) of the Theorem of \S\ref{Satake} depends on the following lemma:
\begin{lemma*}
Every $W_0$-invariant function on $\hat{A}$ arises from a $\hat{G}$-invariant function on $\hat{G} \rtimes \Frob_v$
\end{lemma*}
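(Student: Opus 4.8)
The plan is to exhibit an explicit finite set of $\hat G$-invariant functions on $\hat G \rtimes \Frob_v$ whose restrictions to $\hat A_v \cdot \Frob_v$ (followed by transport along the identification $\hat A_v \cong \hat A_v \cdot \Frob_v$) generate, or at least span modulo the ideal of relations, the algebra of $W_{0,v}$-invariant functions on $\hat A_v$. The natural source of such functions is the representation ring: for each algebraic representation $(\rho, W)$ of the disconnected group $\hat G \rtimes \langle \Frob_v \rangle$ over $k$, the function $g \mapsto \mathrm{tr}(\rho(g))$ on the component $\hat G \rtimes \Frob_v$ is $\hat G$-conjugation-invariant, and its restriction to $\hat A_v$ is a sum of "$\Frob_v$-twisted weights" --- concretely, grouping a basis of $W$ into $\Frob_v$-orbits of weight spaces and keeping only the orbits of length one contributing $\chi(a)$ and longer orbits contributing a product that lands in the $\Frob_v$-coinvariant lattice $X^*(\hat A_v)$. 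These traces are manifestly $W_{0,v}$-invariant by the Proposition of \S\ref{subsec:w0v} (any $w \in W_{0,v}$ is realized by a $\Frob_v$-fixed element $n \in \hat G(k)$, and conjugation by $n$ permutes the relevant weight spaces). So the real content is surjectivity onto the \emph{whole} invariant ring, not just producing some invariants.

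First I would reduce to understanding which characters of $\hat A_v$ occur. The lattice $X^*(\hat A_v)$ is the $\Frob_v$-coinvariants $X^*(\hat T)_{\Frob_v}$, and $W_{0,v}$ is the "relative Weyl group" acting on it; the invariant ring $k[X^*(\hat A_v)]^{W_{0,v}}$ is spanned over $k$ by the orbit sums $\sum_{w \in W_{0,v}} w\lambda$ as $\lambda$ ranges over dominant elements of $X^*(\hat A_v)$ (dominant for the restricted root system). So it suffices to show that each such orbit sum is a $k$-linear combination of restrictions of characters $\mathrm{tr}\,\rho|_{\hat A_v \Frob_v}$. Here I would run the standard "highest weight" triangularity argument: pick, for a dominant restricted weight $\lambda$, a representation $\rho$ of $\hat G \rtimes \langle\Frob_v\rangle$ whose restriction to $\hat A_v$ has $\sum_{W_{0,v}} w\lambda$ as its "leading term" (highest in the dominance order on $X^*(\hat A_v)$) with multiplicity one, and all other weights strictly lower. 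One constructs such a $\rho$ by starting from an irreducible representation of $\hat G$ of highest weight $\tilde\lambda$ lifting $\lambda$ (or a Weyl module, in characteristic $p$), noting it is $\Frob_v$-stable because $\lambda$ is $W_{0,v}$-, hence $\Frob_v$-, compatible, extending it to the semidirect product (possibly after inducing up over the finite cyclic group $\langle\Frob_v\rangle$ and taking a summand, exactly as in the existence-of-extension argument used for $\sigma$-fixed representations earlier in the paper), and observing that passing to $\Frob_v$-coinvariants of weights gives leading weight $\lambda$. Then descending induction on the (finite) poset of dominant restricted weights expresses every orbit sum, hence every invariant, in terms of these traces.

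The step I expect to be the main obstacle is the characteristic-$p$ subtlety: Weyl modules need not be irreducible and character-theoretic triangularity must be set up at the level of formal characters (weights with multiplicity) rather than via complete reducibility, and one must make sure the weights of the chosen $\rho$, after applying $\Frob_v$-coinvariants, really do have a \emph{unique} maximal one equal to $\lambda$ --- a priori two distinct $\hat T$-weights above $\tilde\lambda$ could collapse to the same or to larger $\hat A_v$-weights. I would handle this by working with the Weyl module $V(\tilde\lambda)$, whose formal character is the same as in characteristic $0$ (given by Weyl's formula), so that the weight combinatorics is identical to the classical case; the classical statement that $W_0$-invariants of the maximal torus come from $\hat G \rtimes \Frob_v$-characters is then available (this is essentially the content over $\C$, cf. the remark after \cite[Lemma 6.2]{Borel}), and tensoring the resulting integral triangularity statement with $k$ gives the claim. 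The only care needed is that the change-of-basis matrix between orbit sums and restricted characters is unipotent with $\Z$-coefficients, so stays invertible over $k$; this is exactly parallel to the triangularity input already invoked in the proof of part (i) of the Theorem of \S\ref{Satake}.
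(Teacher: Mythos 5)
Your plan is essentially the argument the paper gives: span the $W_{0,v}$-invariant functions by traces of Weyl modules for $\hat G \rtimes \langle\Frob_v\rangle$, and argue by a unipotent triangularity indexed by dominant $\Frob_v$-fixed weights. Two corrections to your description of the restricted trace are worth making: $X^*(\hat A_v)$ is the $\Frob_v$-\emph{invariant} lattice $X^*(\hat T)^{\Frob_v}$, not the coinvariants (since $\hat A_v$ itself is the coinvariant torus $\hat T_{\Frob_v}$, its character lattice is the invariants); and $\Frob_v$-orbits of weight spaces of length $>1$ contribute \emph{zero} to $\mathrm{tr}\,\rho(t\rtimes\Frob_v)$, not a ``product'' — they are cyclically permuted, so the corresponding block has no diagonal. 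Only the $\Frob_v$-fixed weights $\lambda$ survive, each contributing $\lambda(t)\,\mathrm{Tr}(\Frob_v|_{V(\lambda)})$, which is the paper's formula \eqref{eq:chinu1}.

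The step you flag as the main obstacle is indeed where the content lies, but the worry as you phrase it (distinct $\hat T$-weights colliding under restriction) is not quite the right concern: what needs showing is that the leading piece of the restricted trace, coming from $\lambda \in (W\nu)^{\Frob_v}$, equals a \emph{nonzero scalar} times the \emph{single} orbit sum $\omega_\nu$. The paper proves this directly in any characteristic by a Bruhat-order lemma: every $\Frob_v$-fixed coset of $W_\nu$ in $W$ contains a $\Frob_v$-fixed representative (the Bruhat-minimal element, preserved since $\Frob_v$ respects the Bruhat order), so $(W\nu)^{\Frob_v} = W_{0,v}\nu$ is a single $W_{0,v}$-orbit on which $\mathrm{Tr}(\Frob_v|_{V_\nu(\lambda)})$ is constant; and the scalar at $\nu$ is nonzero because $V_\nu(\nu)$ is a line. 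With this in hand the triangularity works over $k$ with no detour. Your proposed alternative of quoting the characteristic-zero statement and using that the Weyl module's formal character is characteristic-independent would also work, but it is more circuitous and, as you anticipate, still has to verify the same constancy and non-vanishing to see that the diagonal of the change-of-basis matrix remains a unit in $k$.
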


\begin{proof}

We will actually show that the ring of $W_0$-invariant regular functions on $\hat{A}$ is spanned by traces of representations of $\hat{G} \rtimes \Frob_v$.
(It will follow that  functions on  $\hat{G} \rtimes \langle \Frob_v \rangle \git \hat{G}$
are spanned by traces of representations of $\hat{G} \rtimes \langle \Frob_v \rangle$.)

The ring of regular functions on $\hat{A}$ is naturally identified with the group ring $k[X^*(\hat{T})^\Frob_v]$.  The $W_{0,v}$-invariant regular functions have a basis parameterized by $W_{0,v}$-orbits on $X^*(\hat{T})^{\Frob_v}$.   

Every $W_{0,v}$-orbit on $X^*(\hat{T})^{\Frob_v} = X_*(\TT)^{\Frob_v}$ contains a dominant element:
if we look on the dual side, the relative Weyl group for $\mathbf{A}$ (i.e., the Weyl group of the relative root system, that might not be reduced)   has a relatively dominant element in its orbit,
which implies it is dominant considered as a cocharacter of $\mathbf{T}$ --- although it may lie on a wall.  Let us denote the basis element corresponding to $W_{0,v} \nu$, where $\nu$ is dominant, by  $\omega_\nu$.

For each dominant weight $\nu$, let $V_\nu$ denote the corresponding Weyl module for $\hat{G}$, i.e. by Borel-Weil
\[
V_\nu = H^0(\hat{G}/\hat{B};\OO(\nu))
\]
Note that $V_{\nu}$ need not be irreducible, since we are in characteristic $p$, but it does not matter for us. 
If $\nu$ is $\Frob_v$-invariant then (as $\Frob_v$ leaves $\hat{B}$ stable) the line bundle $\OO(\nu)$ acquires a $\hat{G} \rtimes \langle \Frob_v \rangle$-equivariant structure, and $V_\nu$ is canonically a $\hat{G} \rtimes \langle \Frob_v \rangle$-module.  

Let $W$ be the full Weyl group for $\hat{T}$, i.e. the quotient $N(\hat{T})/\hat{T}$. 
Let $|W\nu| \subset X^*(\hat{T})$ denote the convex hull of the $W$-orbit of $\nu$.  For $\lambda \in |W\nu|$, let $V_\nu(\lambda)$ denote the corresponding weight space of $V(\nu)$.  For $t \in \hat{T}$, we compute
\begin{equation}
\label{eq:chinu1}
\chi_\nu(t \rtimes \Frob_v) = \sum_{\lambda \in |W\nu|^{\Frob_v}} \lambda(t) \mathrm{Tr}(\Frob_v\vert_{V_\nu(\lambda)}) 
\end{equation}
(where $|W\nu|^{\Frob_v} $ is the $\Frob_v$-fixed elements of $|W\nu|$). 
We will show that \begin{equation}
\label{eq:chinu2}
\sum_{\lambda \in (W\nu)^{\Frob_v}} \lambda(t) \mathrm{Tr}(\Frob_v\vert_{V_{\nu}(\lambda)}) = \mathrm{Tr}(\Frob_v\vert_{V_{\nu}(\nu)}) \omega_\nu
\end{equation}
The left-hand side is the dominant term of  the right-hand side of \eqref{eq:chinu1}, so that \eqref{eq:chinu2} implies 
$$\chi_v(t \rtimes \Frob_v) = \mathrm{Tr}(\Frob_v\vert_{V_\nu(\nu)}) \cdot \omega_{\nu} + \sum_{\nu'} a_{\nu'} \omega_{\nu'}$$
where every $\nu' \in X^*(\hat{T})^{\Frob_v}$ has the property that $|\nu'| < |\nu|$. 
After observing that $\mathrm{Tr}(\Frob_v\vert_{V_\nu(\nu)})$ is nonzero and $V_{\nu}(\nu)$ is one-dimensional, it follows by induction that $\omega_\nu$ can be written as a linear combination of characters of $V_{\nu'}$, where $\nu' \in X^*(T)$ and $\nu' \leq \nu$.

Let us prove \eqref{eq:chinu2}.  We may write $W\nu$ as $W/W_{\nu}$, where $W_{\nu}$ is the $\Frob_v$-stable parabolic subgroup fixing $\nu$.  Recall that each coset of $W_{\nu}$ has a minimum element in the Bruhat ordering on $W$.  As $\Frob_v$ preserves the Bruhat ordering on $W$, it follows that each $\Frob_v$-fixed coset is represented by a $\Frob_v$-fixed element of $W$, i.e. by an element of $W_0$.  As $\Frob_v$ and $W_0$ commute, the trace $\mathrm{Tr}(\Frob_v\vert_{V_\nu(\lambda)})$ is therefore constant on the unique $W_0$-orbit on $(W\nu)^{\Frob_v}$.  This completes the proof.
\end{proof}

\begin{proof}[Proof of part (ii)]
We are now ready for the proof of part (ii) of the Theorem of \S\ref{Satake}.

Any $\hat{G}$-invariant regular function on $\hat{G} \rtimes \Frob_v$ gives by restriction a regular function on $\hat{T} \rtimes \Frob_v$.  As $t\Frob_v$ and $t (t')^{\Frob_v} (t')^{-1} \Frob_v$ are conjugate by $t'$, this restricted function descends to a regular function on the $\Frob_v$-coinvariants on $\hat{T}_v$, that is to say, it descends to  $\hat{A}_v$.  By the Proposition of \S\ref{subsec:w0v}, this function is $W_{0,v}$-invariant as well.  After the Lemma, it only remains to check that any $\hat{G}$-invariant regular function on $\hat{G} \rtimes \Frob_v$ that induces the zero function on $\hat{A}_v$ (and thus on $\hat{T}$), is zero.

The image of the action map $\alpha:\hat{G} \times (\hat{T} \rtimes \Frob_v) \to \hat{G} \rtimes \Frob_v$ is of dimension $\dim(\hat{G}) + \dim(\hat{T}) - \dim(M)$, where $M$ is the set-wise stabilizer of $\hat{T}\rtimes \Frob_v$ in $\hat{G} \rtimes \Frob_v$.  A Lie algebra computation
shows that the identity component of $M$ is $\hat{T}$; consequently $\alpha$ is dominant.   This completes the proof.
 \end{proof}

\subsection{Local pseudoroots}
\label{subsec:localpseudoroot}

We now address the ``mismatch'' between \eqref{twisted} and \eqref{Invarianttheory} of Theorem \ref{Satake}. After a suitable choice, we can identify the $*$-action and the usual action of $W_{0,v}$
on $\hat{A}_v$, thus obtaining the usual statement of the Satake isomorphism.  A more intrinsic approach is described in Theorem \ref{cSatake} but, for most of this paper,
we will follow the {\em ad hoc} approach outlined below, because it is enough for our purposes and much closer to the literature. 

With notation as in the Theorem, a {\em pseudoroot at $v$} is a fixed point of $(W_{0,v},*)$ 
on $\hat{A}_v$ whose square is normalized in a natural way. More precisely, a pseudoroot is a 
choice of element $\alpha_0 \in \hat{A}_v$ such that 
 \begin{enumerate}
\item[(a)] $\alpha_0^2 = \Sigma^*_G(q_v)$
\item[(b)] $\alpha_0$ is invariant under the $*$-action of $W_{0,v}$ (see \eqref{twisted}). 
\end{enumerate}
Such always exist: If we choose a square root $\sqrt{q_v} \in k^*$ we could take $\alpha_0 = \Sigma^*_G(\sqrt{q_v})$. 
If $\Sigma_G^*$ is divisible by two, then $\sqrt{\Sigma_G^*}(q_v)$ gives a  particularly natural choice. 

    The rule 
 $a \mapsto a \alpha_0$ defines an isomorphism 
\begin{equation} 
\label{twistedID}\hat{A}_v /(W_{0,v}, \mbox{ usual action})    \longrightarrow \hat{A}_v /(W_{0,v}, \mbox{ twisted action})  
\end{equation}
Thus, by composing \eqref{twistedSatake}, pullback under \eqref{twistedID}, and \eqref{Invarianttheory}, we arrive at an identification
\begin{equation} \label{ChosenPseudoSatake}
\text{characters of }\cH(G_v,K_v) \simeq \hat{G}\rtimes \Frob_v \git \hat{G}
\end{equation} 
which we refer to, in short, as the Satake isomorphism. 
In particular, having fixed a pseudoroot, each element of $\hat{G}(k)$ gives a character $\cH(G_v, K_v) \rightarrow k$,  
and two elements $g_1, g_2$ give  the same character precisely when $g_1 \cdot \Frob, g_2 \cdot \Frob$
have the same projection to $\LG \git \hat{G}$.

There are two other important equivalent ways to think of pseudroots,
via the identifications of \S \ref{subsec:cft}: 
\begin{equation} \label{cftredux}  \hat{A}_v \simeq \mbox{ conj. classes of splittings $\LT_v \rightarrow \langle \Frob_v \rangle$ } \simeq \mbox{ unramified characters of $T_v$} \end{equation}  
\begin{itemize}

\item[(a)] As ``square roots of the modular character'' of $B_v$: 

A pseudoroot gives rise via \eqref{cftredux}  to an unramified character of $T_v$, thus also an unramified character  of $B_v$. 
In this way, pseudoroots are identified with certain preferred square roots of the modular character for $B_v$:

Let $v$ be a good place.  By proceeding as in \eqref{Deltadef}
but using the natural map $q_v^{\Z} \rightarrow k^*$ instead of the inclusion $q_v^{\Z} \subset \R$ we get
the ``$k$-valued modular character'' 
\begin{equation}
\label{eq:delta}
\delta: B_v \to F_v^* \stackrel{|\cdot|_v}{\longrightarrow} q_v^{\Z} \to k^*
\end{equation}
or $\delta = |\Sigma_G|_v$ for short.   Then a  local pseudroot for $\G$ at $v$ corresponds to an unramified character $\delta^{1/2}:B_v \to k^*$ that squares to $\delta$, and that obeys
\[
(\delta^{1/2})^2 = \delta \qquad \frac{w\delta^{1/2}}{\delta^{1/2}} = \left| \sqrt{ \frac{w \Sigma_G}{\Sigma_G}}\right|_v \text{ for $w \in W_{0,v}$}
\]

\medskip

\item[(b)] A pseudoroot is uniquely determined by the associated splitting $\rho: \langle \Frob_v \rangle  \rightarrow \LT_v$;
  this splitting sends $\Frob_v$ to $\widetilde{\alpha_0} \cdot \Frob_v$, where $\widetilde{\alpha_0} \in \hat{T}$
  is a lift of $\alpha_0 \in \hat{A}_v$. 
  
  We will often simply say ``let $\rho: \langle \Frob_v \rangle  \rightarrow \LT_v$ be a pseudoroot'', meaning that  
$\rho(\Frob_v) \cdot \Frob_v^{-1}$ is a pseudoroot in the above sense.
  
   It will also be convenient to use the symbol
  $\inverserho$ for the parameter of the {\em inverse} pseudoroot, i.e.  
  \begin{equation} \label{inverserhodef} 
  \inverserho: \Frob_v \mapsto \widetilde{\alpha_0^{-1}} \Frob_v.\end{equation}
 i.e. $\inverserho$ parameterizes a square root of the {\em negative} modular character for $B_v$. \end{itemize}

\subsection{Global pseudoroots and canonical pseudoroots}
\label{subsec:globalpseudoroot} 
We define a \emph{global} pseudoroot to be a $k^*$-valued idele class character of $\Tcan_G$ (the canonical $F$-torus of $\G$, \S\ref{subsec:cantorus}) which restricts to a local pseudoroot at almost every good place $v$ --- that is to say, the associated character of a Borel subgroup $B_v \subset \G(F_v)$, via $B_v \rightarrow \Tcan_G(F_v)$, 
is a pseudoroot in the sense of  the discussion around \eqref{eq:delta}.

At least in our cases (and presumably always, by a global Langlands for tori with $k^*$ coefficients, but we did not verify the validity of this) 
a global pseudoroot  yields a conjugacy class of sections $\rho_G$ of $\LT \to \Gamma_F$ with the property that for almost every $v$, the element $\rho_G(\Frob_v) \Frob_v^{-1} \in \hat{T}(k)$ projects to an element of $\hat{A}_v$ satisfying conditions (a) and (b) of \S\ref{subsec:localpseudoroot}.

There are two situations when there is a canonical choice of global pseudoroot:
\begin{itemize}
\item[(a)] When the characteristic of $k$ is two, where the trivial character is the canonical choice ---  equivalently, the trivial
splitting where $\rho_G(\Frob_v) = \Frob_v$;
\item[(b)] When the half-sum of positive roots $\frac{\Sigma_G}{2}: \Tcan_G \rightarrow \mathbb{G}_m$ exists in the character lattice for $\G$. 
Then we pull back the ``cyclotomic'' idele class character of $\mathbb{G}_m(\adele) \rightarrow k^*$ via $\frac{\Sigma_G}{2}$. (The ``cyclotomic'' idele class character corresponds to  the Hecke character that sends a  prime-to-$p$ ideal to its norm in $k^*$.)    
The associated splitting is 
\begin{equation} \label{writeitout} \rho_G: \gamma \mapsto \frac{\Sigma_G^*}{2}(\mathrm{cyclo} (\gamma)) \rtimes \gamma,
\end{equation}
where $\mathrm{cyclo}:\Gamma_F \to \mathbf{F}_p^*$ is the action of $\Gamma_F$ on $p$th roots of unity in $\overline{F}$.
\end{itemize}

 When both these apply, these canonical choices agree: the half-sum of positive roots determines the trivial character.  
We shall simply say ``there is a canonical pseudoroot'' in these cases. 

Our focus will be on cases where there is a canonical pseudoroot for both $\G$ and $\HH$, but in general there need not be any global pseudoroot at all, e.g. \S\ref{subsec:Cgroup}.  
 \subsection{Parabolic induction and Satake parameters}
\label{piSat}
The isomorphism of \eqref{Invarianttheory} induces a bijection between maximal ideals, i.e. a bijection between semisimple twisted conjugacy classes in $\hat{G}(k)$ and characters of the Hecke algebra.  To clarify the role of pseudoroots, let us give an explicit formula for this bijection.

Let $\theta$ be an unramified character of $T_v$. We can form the unnormalized parabolic induction $\mathrm{J}_B^G(\theta)$. 
This is the submodule of $C_c^{\infty}(G_v;k)$ given by those $s:G_v \to k$ that obey $s(bg) = \theta(b) s(g)$ for $b \in B_v$. The $K_v$-invariant subspace is a one-dimensional $k$-vector space (because $G_v = B_vK_v$), generated by the  vector $v^0$  
whose restriction
 to $K_v$ is identically $1$. For $h \in \cH(G_v, K_v)$ we have\footnote{Indeed we compute directly  $ h v^0(e) = \int_{n \in N_v, a \in A_v , k \in K_v} h( an)  \theta(a) dn_v da_v $; note that the measure on $G_v$, normalized so that the measure of $K_v$ is $1$, also decomposes \cite[\S 4.1]{Cartier} 
 via $g=ank$ as $da \cdot dn \cdot dk$, where the measures on $A_v, N_v$ are normalized so that the measures of $A_v \cap K_v, N_v \cap K_v$ is $1$.
} \begin{equation} \label{JohnGalt} h v^0 =  \langle \mathcal{S}^* h , \theta \rangle  \cdot v^0\end{equation} 
with $\mathcal{S}^*$ as in \eqref{modSatake}. 
 In other words, the Hecke algebra acts on the $K_v$-fixed vector by the character  obtained by pulling back
$\theta$ via $\mathcal{S}^*$.      
Let $\chi_{\theta}$ be this character of the Hecke algebra.

Now $\theta$ determines a point $a_{\theta} \in \hat{A}_v$ as  in \eqref{cftredux}, and thus 
 a well-defined conjugacy class $C(\theta) \subset \hat{G}(k) \rtimes \Frob_v \git \hat{G}$ --- the $\hat{G}$-conjugacy class of any element of the form $t_{\theta} \Frob_v$
 where $t_{\theta} \in \hat{T}$ lifts $a_{\theta} \in \hat{A}_v$. 
  
Now fix a pseudoroot, 
which we think of, by the discussion around 
\eqref{eq:delta}, as a square root $\delta^{1/2}$ of the modular character. The Satake correspondence is then given by \begin{equation} C(\theta) \longleftrightarrow \chi_{\theta \delta^{1/2}} \end{equation}

Or, to say a different way: the Hecke character arising from the induction $\mathrm{J}_B^G(\theta)$
has parameter  given by the class of 
$$  t_{\theta} \cdot \inverserho_G(\Frob_v) $$ 
 inside
$ \hat{G}(k) \rtimes \Frob_v \git \hat{G}$.

For later use, let us examine the situation when we induce from a parabolic that is not minimal.
Suppose $P_v $ is a parabolic subgroup and $\theta$  an unramified  character of $\mathbf{L}^{\ab}$, the abelianized Levi subgroup for the parabolic $P_v$,
and its Langlands parameter therefore is a twisted conjugacy class in the dual torus $Z(\hat{L})$ (cf. \S \ref{parabelly});
let $l_{\theta} \in Z(\hat{L})$ be a representative.  
 Then the Satake parameter of the character of $\cH(G_v,K_v)$ on $\mathrm{J}_P^G(\theta)$ is
\begin{equation}
l_{\theta} \cdot \inverserho_G(\Frob_v)
\end{equation}
 Indeed to verify this we just choose a Borel subgroup $B_v \subset P_v$ and note that $J_P^G(\theta) \subset J_B^G(\theta)$, 
and use the previous formula.

\subsection{The $\sigma$-dual homomorphism}
\label{subsec:sigmadual}

In many cases,  the transfer of eigenvectors of Theorem \ref{thm:realtheorem}  arises from a  homomorphism of $L$-groups we call the ``$\sigma$-dual homomorphism.''  
As in \S\ref{subsec:dgalg}, let $\hat{G}$ and $\hat{H}$ be the dual groups to $\G, \HH$, and let  $\LG$ and $\LH$ be the $L$-groups to $\G,\HH$.

We have defined  at a $\sigma$-good place the normalized Brauer map
\[
\NBr: \cH(G_v, K_v) \rightarrow \cH(H_v, U_v).
\]
It seems very likely  that this arises from an {\em algebraic homomorphism}
\begin{equation}
\label{eq:sigmadualv}
\psi_v : \LH_v \rightarrow \LG_v
\end{equation}
covering the identity map on $\langle \Frob_v \rangle$. 
Indeed such a $\psi_v$ induces by Theorem \ref{Satake} a homomorphism of Hecke algebras.  
The existence of such a $\psi_v$ depends, a priori,  on the choice of local pseudoroot.

Even better, we can ask for a single homomorphism
\begin{equation} \label{eq:sigmadual}
\Lpsi: \LH \rightarrow \LG
\end{equation}
which induces
$\psi_v$ for almost all places $v$.   Again, there is an implicit choice of global pseudoroots. 

 We will call such an $\Lpsi$ a {\em $\sigma$-dual} homomorphism. 
 We will prove that these exist in ``most'' cases when $\GG$ is simply connected
 and $\HH$ semisimple (see Theorem \ref{Gsctheorem}). 
 In the presence of a $\sigma$-dual homomorphism we may reformulate  Theorem \ref{thm:realtheorem} as a functorial lift:

\begin{theorem*} \label{theoremv2} Suppose there is a $\sigma$-dual homomorphism 
$\LH \rightarrow \LG$.  If $\rho:\Gamma_F \to \LH$
is modular for $\HH$,  then $\Lpsi \circ \rho$ is modular (for $\GG$). 
\end{theorem*}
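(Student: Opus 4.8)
The plan is to deduce this theorem directly from Theorem \ref{thm:realtheorem} by matching Satake parameters at almost all places. Recall what "modular" means here: a conjugacy class of sections $\rho : \Gamma_F \to \LH$ is modular for $\HH$ if, for almost every good place $v$, the associated semisimple conjugacy class $\rho(\Frob_v) \in \hat{H}(k) \rtimes \Frob_v \git \hat{H}$ is the Satake parameter of a character $\chi_v$ of $\cH(H_v, U_v)$ occurring in the cohomology of $[H]_U$, and these local characters glue to a character $\chi$ of $\cH(H_V, U_V)$ occurring in the cohomology (for $V$ the set of good places, or any cofinite subset). So the task reduces to: \textbf{(i)} verifying that the set of places where $v$ is simultaneously $\sigma$-good and where the local $\sigma$-dual homomorphism $\psi_v$ of \eqref{eq:sigmadualv} is defined and induces $\NBr$ on Hecke algebras is cofinite; and \textbf{(ii)} checking that at such a place, applying $\psi_v$ to the parameter of $\chi_v$ gives precisely the parameter of $\chi_v \circ \NBr$.

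First I would fix $V$ to be the (cofinite) set of places that are $\sigma$-good (Proposition \ref{gppd}) and at which the $\sigma$-dual homomorphism $\Lpsi$ restricts to the local homomorphism $\psi_v : \LH_v \to \LG_v$ inducing $\NBr$ on $\cH(G_v,K_v) \to \cH(H_v, U_v)$ — this is cofinite by the definition of $\sigma$-dual homomorphism (the phrase "induces $\psi_v$ for almost all places $v$" in \S\ref{subsec:sigmadual}). Next, by hypothesis $\rho$ is modular for $\HH$, so there is a character $\chi : \cH(H_V, U_V) \to k$ occurring in the cohomology of $[H]_U$ whose local Satake parameter at each $v \in V$ is $\rho(\Frob_v)$. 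Now apply Theorem \ref{thm:realtheorem}: the character $\tilde\chi = \chi \circ \NBr : \cH(G_V, K_V) \to k$ occurs in the cohomology of $[G]_K$. It remains to identify the Satake parameter of $\tilde\chi$ at each $v \in V$ with $(\Lpsi \circ \rho)(\Frob_v)$. But $\tilde\chi|_{\cH(G_v, K_v)} = \chi|_{\cH(H_v, U_v)} \circ \NBr_v$, and since $\NBr_v$ is, on the nose, the map on Hecke algebras induced by $\psi_v : \LH_v \to \LG_v$ via the Satake isomorphism of Theorem \ref{Satake}, functoriality of Satake (contravariance of $\Spec$ applied to the map $\hat{H} \rtimes \Frob_v \git \hat{H} \to \hat{G} \rtimes \Frob_v \git \hat{G}$) shows that the parameter of $\tilde\chi$ at $v$ is the image under $\psi_v$ of the parameter of $\chi$ at $v$, i.e. $\psi_v(\rho(\Frob_v)) = (\Lpsi \circ \rho)(\Frob_v)$. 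Hence $\Lpsi \circ \rho$ is modular for $\GG$.

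The main obstacle — and the only genuinely delicate point — is step (ii), the compatibility of the \emph{normalized} Brauer homomorphism $\NBr$ with the Satake isomorphism and the putative $\psi_v$. The subtlety is the Frobenius-semilinear twist built into $\NBr$ (the $p$th-power-then-$p$th-root normalization of \S\ref{nBr}) together with the choice of pseudoroot, which enters the Satake identification \eqref{ChosenPseudoSatake}; one must check that the half-sum shifts $\inverserho_G$ and $\inverserho_H$ are matched correctly by $\psi_v$ on the Galois component, exactly the phenomenon flagged in \S\ref{torsionfunctoriality}. In the setup of this theorem, however, this matching is precisely what it \emph{means} for $\psi_v$ to "induce $\NBr$," so once the Theorem of \S\ref{BrauerSatake} (the computation of $\NBr$ in Satake terms) is invoked, the identification is formal. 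One should also note the harmless passage between finite and cofinite $V$, handled exactly as in the first paragraph of the proof of Theorem \ref{thm:realtheorem}: finite-dimensionality of the cohomology lets one detect occurrence of a character on a finite subset of places.
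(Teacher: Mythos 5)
Your proposal is correct and matches the paper's intent exactly: the paper states this result as a ``reformulation'' of Theorem \ref{thm:realtheorem} and supplies no separate proof block, because once one unwinds the definitions of ``modular'' and ``$\sigma$-dual homomorphism'' the statement is precisely Theorem \ref{thm:realtheorem} read through the Satake isomorphism, which is what you do. One small remark: in your step (ii) the compatibility between $\NBr$ and $\psi_v$ at a place $v$ is not something to be verified via the Theorem of \S\ref{BrauerSatake} --- it is \emph{built into} the definition of a $\sigma$-dual homomorphism in \S\ref{subsec:sigmadual} (``$\Lpsi$ induces $\psi_v$ for almost all $v$,'' where ``induces'' means via \eqref{twistedSatake}--\eqref{Invarianttheory}), so once $\Lpsi$ is assumed $\sigma$-dual there is genuinely nothing left to check at step (ii); the Theorem of \S\ref{BrauerSatake} is what one uses to \emph{produce} such a $\Lpsi$, not to apply it.
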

Here, we  say that $\rho: \Gamma_F \rightarrow \LG(k)$ is {\em modular}, 
with respect to a fixed choice of global pseudoroot for $\G$, if
there is a level structure $K$ and  a class $h \in H^*([G]_K)$ such that
\begin{quote}
For all but finitely many good places $v$, the class $h$ is an eigenvector for the $\cH_v$-action whose eigenvalue $\chi:\cH_v \to k$ coincides with $\rho (\Frob_v)$ under Satake.
\end{quote}

\subsection{The $C$-group and the $c$-group} 
\label{subsec:Cgroup}
 It is possible that there is no $\sigma$-dual homomorphism at all. A simple example
is provided by $\GG = \PGL_2$ over $F=\Q$ and $\sigma$ an inner automorphism of order $3$, 
with fixed points isomorphic to $\HH = \mathrm{PSO}(x^2+3y^2)$.  In that case, there is no homomorphism of $L$-groups:
$$ \mathbb{G}_m \rtimes \Gamma_{\Q} \rightarrow \SL_2 \times \Gamma_{\Q},$$
because the image of complex conjugation (considered in $\Gamma_{\Q}$, on the left-hand side)
when projected to $\SL_2$ must be an order $2$ element that normalizes but does not centralize a nontrivial torus, and none such exists. 
 
Deligne has introduced a mild modification of the $L$-group that allows one to bypass the issues of square roots.
It is termed by ``$C$-group'' by the Buzzard and Gee \cite{BG}; we will denote it $\CG$. It is the quotient
$\LG \times \Gm$ by the order $2$ element $e:= (\Sigma^*_G(-1), -1) \in  \LG \times \Gm$.    Here $\Sigma^*_G$ denotes the cocharacter $k^* \to \hat{T}(k)$ corresponding to the sum of positive roots of $\G$, as in \S\ref{subsec:localpseudoroot}.  

 Note that $\Sigma^*_G(-1)$ is always central, since the pairing of the sum of positive coroots with a root is always even.
As $\Sigma^*_G$ is a product of coroot homomorphisms, which extend to $\SL_2$, it always takes values in the commutator subgroup of $\hat{G}$.  
There is a natural projection $\CG \rightarrow \Gamma \times \Gm$: if we identify the set $\CG(k)$ with $\{1,e\}$-cosets of $\hat{G}(k) \times \Gamma \times k^*$, it carries the class of $(g,\gamma,c)$ to $(\gamma,c^2)$.

   Even when there is no global pseudoroot, one expects (see \cite[Conjecture 5.3.4]{BG} for characteristic zero) Hecke eigenclasses in $H^*([G])$ to match homomorphisms $\Gamma_F \to \CG$ whose composition with this projection is $\gamma \mapsto (\gamma, \mathrm{cyclo}(\gamma))$. One might hope that in such cases there would be a homomorphism of $C$-groups $\CH \rightarrow \CG$ over $\Gamma \times \Gm$.  But  the same example ($\GG=\PGL_2$ over $\Q$, $\sigma$ of order $3$) also contradicts this hope:

The  $C$-group
for $\PGL_2$ identified with the product  $\GL_2 \times \Gamma_{\Q}$
and the $C$-group for $\HH$ is $\left( \mathrm{SO}_2 \rtimes \Gamma_{\Q}\right) \times \Gm$. We seek
 $$\Cpsi: \left( \mathrm{SO}_2 \rtimes \Gamma_{\Q}\right) \times \Gm  \rightarrow \GL_2 \times \Gamma_{\Q}$$
where projection to $\Gm$ on the left 
 should correspond to determinant on the right.
 The projected map $\Gm \rightarrow \GL_2$ must be of the form $x \mapsto \left(  \begin{array}{cc} x^n & 0\\ 0 & x^m \end{array}\right)$
 where $n+m=1$; in particular, the centralizer of this image is the diagonal torus in $\GL_2$,
 and so $\mathrm{SO}_2 \rtimes \Gamma_{\Q}$ must map into the diagonal torus in $\GL_2$.
 But every element of $\mathrm{SO}_2$ is a commutator in $\mathrm{SO}_2 \rtimes \Gamma_{\Q}$, so that 
  $\Cpsi$ must be trivial on $\mathrm{SO}_2$.

 This problem  can be fixed by reducing the size of the $C$-group, retaining only the essential part of the $\Gm$ factor:  Suppose that the characteristic of $k$ is not $2$. We replace the $C$-group by the subgroup $\cG$ with
 \[
 \cG := \frac{\{(g, \gamma, c) \in \hat{G}(k) \times \Gamma \times k^* : c^2 = \mathrm{cyclo}(\gamma)\}}{ (\Sigma_G(-1), 1, -1)}
 \]
 Observe that this naturally fits in an extension $\hat{G} \rightarrow \cG \rightarrow \Gamma$; 
 the extension class corresponds to the $2$-cocycle that is the image
 $$\mathrm{cyclo} \in H^1(\Gamma, k^*) \stackrel{\mathrm{Bockstein}}{\longrightarrow} H^2(\Gamma, \{ \pm 1\}) \stackrel{\Sigma_G}{\longrightarrow}  \left( \mbox{extensions $\hat{G} \rightarrow ? \rightarrow \Gamma$}\right),$$
 where the last map simply constructs the extension associated to a cocycle in $H^2(\Gamma, \{ \pm 1\})$, by using $\Sigma_G: \{\pm 1\} \rightarrow \hat{G}(k)$.

With this notation, there exists a $\sigma$-dual homomorphisms ``for $c$-groups'' in the case described above:  the $c$-group of $\PGL_2$ becomes identified with
 $$ \{ (g \in \GL_2(k), \gamma \in \Gamma): \det(g) = \cyclo(\gamma) \},$$ and
the $c$-group of the torus $\HH$ is simply the semidirect product $\mathrm{SO}_2 \rtimes \Gamma$.
Then

$$(h, \gamma) \in \mathrm{SO}_2 \rtimes \gamma \mapsto (\iota(h)  w^{\chi(\gamma)}, \gamma) \in \cG$$
defines a $\sigma$-dual morphism; here $\iota: \mathrm{SO}_2 \rightarrow \GL_2$
is the standard inclusion,  $w $ is any element of $\mathrm{O}_2 - \mathrm{SO}_2$ and  $\chi: \Gamma \rightarrow \Z/2\Z$ is the quadratic character associated to $\Q(\sqrt{-3})$.

As in \S\ref{subsec:llg}, we can define a variant $\cG_v$ of the $c$-group, using only the discrete cyclic group generated by Frobenius at $v$ rather than the full Galois group of $F_v$.
This allows us to close this section with the following intrinsic form of the Satake isomorphism: 
 
\begin{theorem}[Satake isomorphism in terms of $c$-groups] \label{cSatake} 
Let $G_v = \G(F_v)$ be a reductive $v$-adic group and let $K_v \subset G_v$ be a maximal compact subgroup satisfying the conditions of \S\ref{goodplace}.    Then there is a natural isomorphism
\begin{equation}
\label{twistedcSatake}
\begin{array}{rcl}
\cH(G_v,K_v) & \stackrel{\sim}{\longrightarrow} &
\begin{array}{c}
 \mbox{\rm regular functions on} \\
 \mbox{\rm $\mbox{preimage of $\Frob_v$ in $\cG_v$} \git \hat{G}$}. 
 \end{array}
\end{array}
\end{equation}
 \end{theorem}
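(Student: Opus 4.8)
The plan is to build \eqref{twistedcSatake} by composing the ordinary (pseudoroot-dependent) Satake isomorphism of the Theorem of \S\ref{Satake} with a concrete identification of the two ``dual sides,'' and then to check that the composite is insensitive to all choices, the point being that the $\Sigma^*_G(-1)$-ambiguity built into the definition of $\cG_v$ is exactly the pseudoroot normalization \eqref{twistedID}. Write $Z_v$ for the preimage of $\Frob_v$ in $\cG_v$, a $\hat{G}$-variety under conjugation, and let $Z_v^T\subset Z_v$ be the subset of classes $[t,\Frob_v,c]$ with $t\in\hat{T}(k)$ (the preimage of $\Frob_v$ in the analogous $c$-group attached to $\Tcan_G$, which makes sense because $\Sigma^*_G$ is a cocharacter of $\hat{T}$); recall $\cyclo(\Frob_v)$ is the image of $q_v$ in $\mathbf{F}_p^*\subset k^*$. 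First I would introduce the map
\[
r:Z_v^T\longrightarrow \hat{A}_v,\qquad [t,\Frob_v,c]\longmapsto\ \text{(image in }\hat{A}_v\text{ of) }t\cdot\Sigma^*_G(c),
\]
which is well-defined because $\Sigma^*_G(-1)^2=1$ and $\Sigma^*_G(-c)=\Sigma^*_G(-1)\Sigma^*_G(c)$, and which, since $\hat{T}$-conjugation on $Z_v^T$ is translation by $\{t'\Frob_v(t')^{-1}:t'\in\hat{T}\}$ while $\hat{A}_v$ is the $\Frob_v$-coinvariants of $\hat{T}$ (\S\ref{subsec:cft}), descends to an isomorphism $Z_v^T\git\hat{T}\xrightarrow{\sim}\hat{A}_v$.

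Next I would match Weyl actions. By the Proposition of \S\ref{subsec:w0v} each $w\in W_{0,v}$ is realized by some $n\in N_{\hat{G}}(\hat{T})(k)$ with $\Frob_v(n)=n$; conjugation by such $n$ preserves $Z_v^T$, and under $r$ it acts on $\hat{A}_v$ by $a\mapsto wa\cdot\frac{\Sigma^*_G(c)}{(w\Sigma^*_G)(c)}$ for any square root $c$ of $q_v$. Because $\Sigma^*_G-w\Sigma^*_G$ is twice a cocharacter, $\frac{\Sigma^*_G(c)}{(w\Sigma^*_G)(c)}=\sqrt{\frac{\Sigma^*_G}{w\Sigma^*_G}}(c^2)=\sqrt{\frac{\Sigma^*_G}{w\Sigma^*_G}}(q_v)$, which is precisely the factor in the twisted action \eqref{twisted}; so $r$ intertwines $n$-conjugation with the $\ast$-action. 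Now consider restriction of regular functions $\mathrm{res}:k[Z_v]^{\hat{G}}\to k[Z_v^T]^{\hat{T}}\cong k[\hat{A}_v]$. By the previous sentence and \S\ref{subsec:w0v} its image lies in $k[\hat{A}_v]^{(W_{0,v},\ast)}$; it is injective because, after choosing a square root of $q_v$ one identifies $\cG_v$ with $\LG_v=\hat{G}\rtimes\langle\Frob_v\rangle$ carrying $Z_v^T$ to $\hat{T}\rtimes\Frob_v$ and $Z_v$ to $\hat{G}\rtimes\Frob_v$, and the action map $\hat{G}\times(\hat{T}\rtimes\Frob_v)\to\hat{G}\rtimes\Frob_v$ is dominant (proof of part (ii) of the Theorem of \S\ref{Satake}); and it is surjective onto $k[\hat{A}_v]^{(W_{0,v},\ast)}$ by the Invariant theory Lemma of \S\ref{ReardenMetal} (every $W_{0,v}$-invariant function on $\hat{A}_v$ extends to a $\hat{G}$-invariant function on $\hat{G}\rtimes\Frob_v$), combined with the translation \eqref{twistedID} that converts $(W_{0,v},\text{usual})$-invariants into $(W_{0,v},\ast)$-invariants.

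It then remains to assemble and to remove the choices. Composing $\mathrm{res}^{-1}$ with the Satake isomorphism $\mathcal{S}^{\ast}:\cH(G_v,K_v)\xrightarrow{\sim}k[\hat{A}_v]^{(W_{0,v},\ast)}$ of part (i) of the Theorem of \S\ref{Satake} (which is defined over $k$ with no square roots), and identifying $k[Z_v]^{\hat{G}}$ with the ring of regular functions on $Z_v\git\hat{G}$, yields the isomorphism \eqref{twistedcSatake}. Although $\mathrm{res}$ was analyzed after choosing $\sqrt{q_v}$, the resulting isomorphism $\cH(G_v,K_v)\cong k[Z_v\git\hat{G}]$ is independent of it: replacing $\sqrt{q_v}$ by $-\sqrt{q_v}$ changes the identification $\cG_v\cong\LG_v$ by left translation by the central element $\Sigma^*_G(-1)$, equivalently it replaces the local pseudoroot $\alpha_0=\Sigma^*_G(\sqrt{q_v})$ by its unique other value $\Sigma^*_G(-1)\alpha_0$ (\S\ref{subsec:localpseudoroot}), and the induced discrepancy on $\hat{A}_v$ is exactly \eqref{twistedID} --- which is already absorbed into the factor $\Sigma^*_G(c)$ defining $r$ (its well-definedness on $Z_v^T$ being precisely the statement that the two pseudoroots are exchanged by the relation $(t,\Frob_v,c)\sim(\Sigma^*_G(-1)t,\Frob_v,-c)$).

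The main obstacle is this last bookkeeping point: one must follow the pseudoroot normalization of the ordinary Satake isomorphism through two different trivializations of $\cG_v$ and confirm that the discrepancy is cancelled exactly by the twist in $r$ --- i.e., that the ``$q_v$-twist'' of \eqref{twisted} on the automorphic side and the ``$\Sigma^*_G(-1)$-ambiguity'' in $\cG_v$ on the Galois side are the same phenomenon. Everything else is a direct appeal to results already in hand: the Satake isomorphism over $k$ (part (i) of the Theorem of \S\ref{Satake}), the Proposition of \S\ref{subsec:w0v} on $\Frob_v$-fixed Weyl representatives, the dominance of the action map, and the Invariant theory Lemma of \S\ref{ReardenMetal}.
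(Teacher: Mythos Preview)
Your proof is correct and follows the same idea as the paper's: both transport the already-established Satake isomorphism to the $c$-group side via a choice of $\sqrt{q_v}$ and reduce the independence check to the fact that flipping the sign of $\sqrt{q_v}$ shifts both sides by the central element $\Sigma^*_G(-1)$. The organizational difference is that the paper simply picks $\sqrt{q_v}$, composes the pseudoroot-dependent Satake isomorphism \eqref{ChosenPseudoSatake} with the identification $\LG_v\cong\cG_v$ it induces, and then checks the two choices agree; you instead define the map $r:[t,\Frob_v,c]\mapsto t\cdot\Sigma^*_G(c)$ intrinsically and verify it intertwines $N_{\hat{G}}(\hat{T})$-conjugation with the $\ast$-action of \eqref{twisted}, so that your composite $\mathrm{res}^{-1}\circ\mathcal{S}^\ast$ is choice-free by construction (making your final independence paragraph redundant --- the choice of $\sqrt{q_v}$ enters only to verify bijectivity of $\mathrm{res}$, which is a property, not part of the definition). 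Your route is a bit more transparent about why the $c$-group absorbs exactly the pseudoroot ambiguity; the paper's is terser.
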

 
\proof  
Choose a square root  $\sqrt{q}$ of $q$ inside $k^*$. This gives, firstly, an isomorphism
\begin{equation} \label{frou-frou}  \LG_v \stackrel{\sim}{\longrightarrow} \cG_v\end{equation} 
given by $ g \rtimes \Frob_v^k \mapsto (g, (\sqrt{q})^k, \gamma).$
It also gives us a pseudoroot, namely, 
$$b \in B_v \mapsto (\sqrt{q})^{v (\delta(b))}.$$

 In this way we obtain 
the isomorphism of \eqref{twistedcSatake} by comparing   \eqref{frou-frou} with the Satake isomorphism, in the sense previously described.  If we change
$\sqrt{q}$ by $-1$ both the identification  of \eqref{twistedcSatake}, and the Satake isomorphism
change by multiplication by $\Sigma^*_G(-1) \subset \hat{G}(k)$ inside $\LG$. Thus, \eqref{twistedcSatake} is independent of choice of $\sqrt{q}$.
\qed

 \medskip
 
 In these terms, we may precisely formulate a ``better'' version of our earlier question:
 \begin{quote}
Suppose $\GG$ is semisimple and $\HH=\GG^{\sigma}$ connected.
Does there always exist a map $^c{}\hat{H} \rightarrow \cG$ which induces via Theorem \ref{cSatake}
for almost every place $v$, the normalized   Brauer map
$
\NBr: \cH(G_v, K_v) \rightarrow \cH(H_v, U_v)$?
\end{quote}

   \section{The Satake parameters of the Brauer homomorphism}  \label{BrauerSatakeproof}

\subsection{Computing the Brauer homomorphism}
\label{BrauerSatake}
Let $\G$ and $\HH = \GG^\sigma$ be as in \S\ref{sigmaax}.  Suppose that $\HH$ is connected.  Let $v$ be a $\sigma$-good place (\S\ref{subsec:sigmagood}), let $\gamma \in \Gamma_F$ be a Frobenius element at $v$, and choose a local pseudoroot at $v$ (\S\ref{subsec:localpseudoroot}) for both $\G$ and $\HH$.  
 
By Theorem \ref{Satake} and \eqref{ChosenPseudoSatake}, the normalized Brauer homomorphism gives a map
\begin{equation}
\label{specBrauer}
\Spec(\NBr):\hat{H} \rtimes \gamma \git \hat{H} \to \hat{G} \rtimes \gamma \git \hat{G}
\end{equation}
We may write the domain and codomain of this map as quotients of the maximal tori $\hat{T}_H \subset \hat{H}$ and $\hat{T}_G \subset \hat{G}$, respectively.  In this section we discuss the extent to which this map is covered by a homomorphism $\hat{N}:\hat{T}_H \to \hat{T}_G$.
The map $\hat{N}$ is not canonically specified --- there is a finite set of choices for it, indexed by what we call ``$\gamma$-admissible Borel classes'' (\S\ref{subsec:ab}).   The map $\hat{N}$  is the ``dual norm'' associated to the admissible Borel class (\S\ref{normsec}).  We will  also attach to $\gamma$ a Levi subgroup $\hat{L}_{\gamma} \subset \hat{G}$, called the associated dual Levi  (\S \ref{lem:generate} and \S\ref{normsec}). With these notations:
\begin{theorem*}
Let $\hat{N}:\hat{T}_H \rightarrow \hat{T}_G$ be the dual norm homomorphism attached to a $\gamma$-admissible Borel class, and let $\hat{L}_{\gamma}$ be the associated dual Levi to $\gamma$.
\begin{enumerate}
\item
For every $k$-point $x$ of $\hat{H} \rtimes \gamma \git \hat{H}$, there is a $t \in \hat{T}_H(k)$ such that $t\inverserho_H(\gamma) \in \LT_H$ is a representative for $x$, and $\hat{N}(t) \inverserho_G(\gamma)$  is a representative for the image of $x$ under the map \eqref{specBrauer}.
\item Moreover $t$ can be chosen such that $\hat{N}(t)$ lies in the center of $\hat{L}_{\gamma}$.
\end{enumerate}
\end{theorem*}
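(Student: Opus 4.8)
The plan is to sidestep a direct computation of $\NBr$ and instead read it off from its action on unramified principal series, using the linkage formalism of \S\ref{linkBrauer} --- in particular the compatibility square \eqref{Brauercompat} --- together with an explicit computation of the Tate cohomology of such a series. Throughout, fix the $\sigma$-good place $v$, so that $K_v$ is $\sigma$-plain and $H_v/U_v = (G_v/K_v)^\sigma$, and fix a $\sigma$-stable Borel $B_v \subset G_v$ lying in the chosen $\gamma$-admissible Borel class, together with $\sigma$-stable maximally split tori aligned so that $A_v^H := A_v \cap H_v$ is maximally split in $H_v$ and $B_v \cap H_v$ is a Borel of $H_v$. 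This is exactly the combinatorial data from which $\hat N\colon \hat T_H \to \hat T_G$ is built (it is the cocharacter-level dual of the norm $T_v \to T_v^H$ cut out by the class) and which determines the associated dual Levi $\hat L_\gamma \subset \hat G$.

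First I would compute the Tate cohomology of a principal series. Let $\theta$ be a $\sigma$-fixed unramified character of $T_v$ and set $\Pi = \mathrm{J}_B^G(\theta)$, realized as $\Gamma_c(B_v\backslash G_v; \mathcal L_\theta)$ for the $\sigma$-equivariant line bundle $\mathcal L_\theta$ on the compact $\ell$-space $B_v\backslash G_v$. Because $\theta$ is $\sigma$-fixed and $\mu_p(k)=1$, the automorphism $\sigma$ acts trivially on every stalk of $\mathcal L_\theta$ along the fixed locus $(B_v\backslash G_v)^\sigma$, so the Proposition of \S\ref{subsec:Tateonsheaves} gives $\Tate^i(\Pi) \cong \Gamma_c\bigl((B_v\backslash G_v)^\sigma; \Tate^i(\mathcal L_\theta)\bigr)$ for $i=0,1$. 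Now $(B_v\backslash G_v)^\sigma$ is the $F_v$-points of the $\sigma$-fixed subvariety of the flag variety, a finite disjoint union of $H_v$-orbits $Q_j\backslash H_v$ with each $Q_j$ a parabolic of $H_v$; on each orbit $\Tate^i(\mathcal L_\theta)$ is the line bundle attached to an explicit unramified character $\eta_j$ of $Q_j$, whence $\Tate^i(\Pi) = \bigoplus_j \mathrm{J}_{Q_j}^{H_v}(\eta_j)$ is a finite sum of parabolic inductions --- note this is a direct computation and does not need the finiteness conjecture of \S\ref{sec:linkedrep}. The orbit of the base point $B_v$ is, by choice of the $\gamma$-admissible class, of the form $B_{H_v}\backslash H_v$ with $B_{H_v}=B_v\cap H_v$, and there $\eta$ is obtained from $\theta$ by pullback through the norm.

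Next I would read off $\NBr$. The space $\Pi^{K_v}$ is one-dimensional, with $\cH(G_v,K_v)$ acting by the Satake character $\chi_\Pi$ of $\Pi$, whose parameter is $t_\theta\cdot\inverserho_G(\gamma)$ by \S\ref{piSat}. The diagram \eqref{Brauercompat} shows that the image of the spherical line in $\Tate^i(\Pi)^{U_v}$ --- nonzero, since the spherical vector $v^0$ does not vanish at $B_v$ --- lies in the $B_{H_v}\backslash H_v$-component of the previous paragraph, where $\cH(H_v,U_v)$ acts by the Satake character $\chi_H$ of $\mathrm{J}_{B_{H_v}}^{H_v}(\eta)$, and that $\chi_H\circ\Br$ and $\chi_\Pi$ agree on $\cH(G_v,K_v)^\sigma$. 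Applying the unique-extension and normalization bookkeeping of \S\ref{Taterings}--\S\ref{nBr} (which is precisely what renders $\NBr$ $k$-linear and accounts for the Frobenius twist in the definition of linkage) upgrades this to $\chi_H\circ\NBr=\chi_\Pi$. For an arbitrary $k$-point $x$ of $\hat H\rtimes\gamma\git\hat H$: by Theorem \ref{Satake} it is the parameter $t\cdot\inverserho_H(\gamma)$ of the Satake character of some $\mathrm{J}_{B_{H_v}}^{H_v}(\eta)$; since $X^*(\mathbf A_v)\otimes k^*\twoheadrightarrow X^*(\mathbf A_v^H)\otimes k^*$ and $k^*$ is divisible, we may lift $\eta$ through the norm to a $\sigma$-fixed unramified $\theta$ on $T_v$ with $t_\theta=\hat N(t)$, and running the above for $\Pi=\mathrm{J}_B^G(\theta)$ exhibits $\Spec(\NBr)(x)$ as the class of $\hat N(t)\cdot\inverserho_G(\gamma)$ --- this is part (1). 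For part (2) one identifies $\hat L_\gamma$ as the centralizer in $\hat G$ of the image under $\hat N$ of the split part of $\hat T_H$ --- equivalently the dual Levi whose Langlands-parameter slot houses inductions from the parabolic of $G_v$ attached to the $\gamma$-admissible class --- so that $\hat N$ takes values in $\hat L_\gamma$; since these values are semisimple one may conjugate within $\hat L_\gamma$, i.e. modify $t$ inside its twisted $W_H$-orbit (equivalently realize $\Pi$ as induced from that parabolic rather than from the Borel), to arrange $\hat N(t)\in Z(\hat L_\gamma)$.

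The main obstacle is the computation underlying the second step: identifying $(B_v\backslash G_v)^\sigma$ and the restricted sheaves $\Tate^i(\mathcal L_\theta)$ on each $H_v$-orbit precisely enough to (a) see a finite sum of parabolic inductions, (b) single out the distinguished orbit and recognize its character $\eta$ as the norm of $\theta$ relative to the chosen $\gamma$-admissible Borel class, and (c) keep track of the Frobenius and Frobenius-twist bookkeeping so that the final answer is exactly $\hat N(t)\,\inverserho_G(\gamma)$ for the $k$-linear $\NBr$. This is where the structure theory of \S\ref{DTT1}--\S\ref{sec:DTT2} re-enters, and where the definitions of $\gamma$-admissible Borel classes, the dual norm and the dual Levi in \S\ref{subsec:ab}--\S\ref{normsec} must be arranged so that everything lines up; the part-(2) identification of $\hat L_\gamma$ is a subsidiary point of the same flavour.
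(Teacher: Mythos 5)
Your outline is close in spirit to the paper's --- both realize $\NBr$ via an unramified induction for $G_v$, apply the compatibility square \eqref{Brauercompat} to the spherical vector, and track the Frobenius/$p$th-power bookkeeping of \S\ref{Taterings}--\S\ref{nBr}. But there is a structural gap at the starting point. You induce from a Borel $B_v$ which you assume lies in the $\gamma$-admissible class and is simultaneously $\sigma$-stable and $F_v$-rational. The paper warns explicitly (\S\ref{subsec:rat}) that the admissible Borel $\BB_G$ cannot in general be arranged $F_v$-rational, even when $\GG$ is quasisplit at $v$; it need not be $\sigma$-stable either. This is precisely why Lemma \ref{lem:generate} is there: the parabolic $\PP_G$ generated by $\LL_G(\gamma)$ and $\BB_G$ \emph{is} $\sigma$-stable, is $F_v$-rational (\S\ref{subsec:rat}), and satisfies $\PP_G^\sigma = \BB_H$, so the paper induces from $\PP_G$ rather than from a Borel. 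If $B_v$ is not $\sigma$-stable the sheaf $\mathcal L_\theta$ on $B_v\backslash G_v$ has no $\sigma$-equivariant structure, and the Proposition of \S\ref{subsec:Tateonsheaves} does not apply. Your argument does cover the inner case (where every Borel containing $\TT_G$ is $\sigma$-stable) and the pinned Borel in the outer case, but not the general $\gamma$-admissible class to which the Theorem refers.

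Second, part (2) needs more care. You cannot force $\hat N(t)\in Z(\hat L_\gamma)$ merely ``by conjugating within $\hat L_\gamma$'' --- conjugating an element of $\hat T_G$ by $\hat L_\gamma$ produces its orbit under the Weyl group of $\hat L_\gamma$, and that orbit meets $Z(\hat L_\gamma)$ only if the element was central to begin with. The freedom actually exploited in the paper is the choice of a lift $\overline\chi$ of the given character through the anisotropic torus $\mathrm{N}(\mathbf{K})$ (Lemma \ref{lem:811} plus Lemma \ref{sur-langlands}); \S\ref{subsec:ext} uses this to build the extension $\chi^*$ factoring through $\LL_G^{\mathrm{ab}}$, and \S\ref{parabelly} then places the parameter of $\chi^*$ in $Z(\hat L_\gamma)$. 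There is no substitute for this analysis in your sketch. Your remaining slips --- the character read off on the base orbit is $\eta^p$, not $\eta$, since $\theta = \eta \circ \mathrm{N}$ restricts to the $p$-th power on $\TT_H$, and ``$\eta$ pulled back through the norm'' has the norm pointing the wrong way --- are indeed absorbed by the Frobenius bookkeeping you gesture at, but the two issues above are structural: the first is exactly the role of $\PP_G$, and the second is what Lemma \ref{lem:811} buys.
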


Note that part (1) of the Theorem does \emph{not} assert that \eqref{specBrauer} can be extended to a commutative square of the form
\begin{equation} \label{nottruesquare}
\xymatrix{
\hat{T}_H \ar@{-->}[r] \ar[d] & \hat{T}_G \ar[d] \\
\hat{H} \rtimes \gamma \git \hat{H} \ar[r] & \hat{G} \rtimes \gamma \git \hat{G}
}
\end{equation}

Roughly speaking, the proof of the Theorem goes like this:
We compute the effect of Tate cohomology on suitable spherical representations and deduce the computation of the Brauer homomorphism from \eqref{Brauercompat}. 
In turn, spherical representations are realized in the spaces of sections of suitable
line bundles over flag varieties;   the main technical step is 
extending a Borel subgroup of $H_v$ to a $\sigma$-stable parabolic subgroup of $G_v$, 
to produce ``compatible'' flag varieties for $G_v$ and $H_v$.  
It is at this step that we need to make choices (the ``$\gamma$-admissible Borel classes'') from above. 

 The arguments in this section resemble the arguments used to prove Theorem 3.3.A of \cite{KottwitzShelstad}, which produces a map similar to \eqref{specBrauer}, but on the dual side and in characteristic zero. We are grateful to Laurent Clozel for bringing this to our attention; it would be interesting to investigate further.

\subsection{Outline of this section}
In \S\ref{subsec:title}--\S\ref{normsec}, we will work with $\GG$ and $\HH$ over $\overline{F}$.  The Galois group makes its mark through its image in $\Out(\HH)$ and $\Out(\GG)$.  For a fixed element $\gamma \in \Out(\HH)$, we define certain subgroups $\TT_H^{\gamma}$, $\LL_G(\gamma)$, and a restricted class of Borels $\BB_G$ and parabolics $\PP_G$ (the former called ``$\gamma$-admissible Borels'').  Until \S\ref{subsec:rat}, \emph{all these groups are defined over $\overline{F}$}.  
In \S\ref{subsec:rat} we return to rationality issues.  If $\gamma$ is a Frobenius element at $v$, some of these groups (but not $\BB_G$) are defined over $F_v$.  
\newline

\begin{prop}
\label{subsec:title}

Let $\TT_H$ be a maximal torus in $\HH$.  Then the centralizer of $\TT_H$ is a maximal torus in $\G$.
\end{prop}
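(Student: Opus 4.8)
The plan is to prove that the subgroup $S := \Cent_{\GG}(\TT_H)$ is itself a maximal torus of $\GG$. I would begin with the standard structural observations: since $\TT_H$ is a torus, $S$ is connected and reductive (a Levi subgroup of $\GG$); any maximal torus $\TT$ of $\GG$ containing $\TT_H$ is abelian and so commutes with $\TT_H$, giving $\TT \subseteq S$; and since $\TT_H \subseteq \HH = \GG^\sigma$ is fixed pointwise by $\sigma$, we have $\sigma(S) = \Cent_{\GG}(\sigma(\TT_H)) = S$, so $S$ is $\sigma$-stable. It therefore suffices to show that the derived group $S' := [S,S]$ is trivial, for then $S$ is a torus containing the maximal torus $\TT$, which forces $S = \TT$.

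The next step is to identify the fixed points: I claim $(S^\sigma)^\circ = \TT_H$. One inclusion is immediate, as $\TT_H$ is connected, central in $S$, and $\sigma$-fixed. Conversely, $(S^\sigma)^\circ$ is a connected subgroup of $(\GG^\sigma)^\circ = \HH$ that centralizes $\TT_H$, hence lies in $\Cent_{\HH}(\TT_H) = \TT_H$, using that a maximal torus is its own centralizer in the connected reductive group $\HH$; here the connectedness of $\HH$ is essential.

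To conclude I would argue by contradiction. Suppose $S' \neq 1$; then $S'$ is a nontrivial semisimple $\sigma$-stable subgroup of $S$, and (we are in characteristic zero, so $\sigma$ is a semisimple automorphism) Steinberg's theorem \cite{Steinberg} provides a $\sigma$-stable Borel $\BB' \supseteq \TT'$ in $S'$. Relative to this choice $\sigma$ permutes the simple roots of $(S',\TT')$, which form a $\Q$-basis of $X^*(\TT')\otimes\Q$, so the $\sigma$-fixed subspace is nonzero and $(\TT'^\sigma)^\circ$ is a torus of positive dimension. But $(\TT'^\sigma)^\circ \subseteq (S'^\sigma)^\circ \subseteq (S^\sigma)^\circ = \TT_H \subseteq \Cent(S)$, while at the same time $(\TT'^\sigma)^\circ \subseteq S'$; hence $(\TT'^\sigma)^\circ \subseteq \Cent(S) \cap S' \subseteq \Cent(S')$, which is finite — a contradiction. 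Therefore $S' = 1$, so $S$ is a torus and $S = \TT$, as desired.

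The one genuinely non-formal ingredient is the positivity of $\dim (S'^\sigma)^\circ$ for a nontrivial semisimple $S'$, which is where Steinberg's structure theory for semisimple automorphisms enters; this is the crux, since the naive analogue for $S$ in place of $S'$ — that $(S^\sigma)^\circ$ is a maximal torus of $S$ — is exactly the statement being proved. Everything else is bookkeeping with centralizers and identity components.
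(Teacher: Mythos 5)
Your proof is correct, but it takes a genuinely different route from the paper's own argument. The paper picks a regular semisimple element $x \in \TT_H$, invokes Steinberg (or Reeder) to produce a $\sigma$-stable Borel and maximal torus $\TT_G$ of $\GG$ through $x$ --- hence through $\TT_H$ --- and then shows directly that no root of $\TT_G$ can vanish on $\mathrm{Lie}(\TT_H)$: such a root would lie in the kernel of the norm $1 + \sigma + \cdots + \sigma^{p-1}$, yet with respect to the $\sigma$-stable Borel it is a nonzero sum of $p$ positive (resp. $p$ negative) roots, a contradiction. You instead work at the level of $S = \Cent_{\GG}(\TT_H)$, reduce to showing $[S,S]$ is trivial, prove $(S^\sigma)^\circ = \TT_H$ using the connectedness of $\HH$, and rule out a nontrivial semisimple derived group $S'$ by a dimension count: Steinberg applied to $S'$ gives a $\sigma$-stable Borel and torus, from which you extract a positive-dimensional $\sigma$-fixed subtorus (since $\sigma$ permutes the simple roots, which span $X^*(\TT')\otimes\Q$), but this subtorus must land in $\Cent(S) \cap S' \subseteq Z(S')$, which is finite. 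Both arguments lean on Steinberg's theorem about $\sigma$-stable Borels, but apply it to different groups ($\GG$ for the paper, the hypothetical $S'$ for you), and the crux is different: a positivity-of-roots argument versus a finite-center argument. The paper's route is more explicit about which roots of $\TT_G$ survive into $\Cent_{\GG}(\TT_H)$ (none, by a clean computation with the norm), whereas yours is slightly more self-contained in that it never needs to single out a regular semisimple element. Either approach does the job.
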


(We repeat: in \S\ref{subsec:title}--\S\ref{normsec}, all subgroups are to be taken as defined over  $\bar{F}$.) 

\begin{proof}
Let $x \in \TT_H$ be a regular semisimple element of $\HH$.  By \cite[\S 8.9]{SteinbergEndomorphisms}, or by \cite[Lemma 3.2]{Reeder}, we may find a $\sigma$-stable maximal torus and Borel of $\G$ containing $x$, and therefore containing $\TT_H$.  
Let $\TT_G \subset \G$ be such a maximal torus.  Then $\TT_H$ is the identity component of $\TT_G^{\sigma} = \TT_G \cap \HH$.  (In fact since $\HH$ is connected and $\TT_H$ is a maximal torus in $\HH$, we actually have $\TT_H = \TT_G^{\sigma}$).  The roots of $\TT_G$ on the centralizer of $\TT_H$ are those $\beta \in \Phi(\TT_G,\G)$ that are trivial on $\TT_H$, or equivalently that vanish on the Lie algebra of $\TT_H$.  We will show that there are no such $\beta$, and therefore the centralizer is equal to $\TT_G$.

The map $\mathrm{Lie}(\TT_G)^* \to \mathrm{Lie}(\TT_H)^*$ identifies the codomain with the $\sigma$-coinvariants of the domain.  Thus any $\beta$ that vanishes on $\mathrm{Lie}(\TT_H)$ belongs to the image of $1 - \sigma$, or equivalently to the kernel of $1 + \sigma + \cdots + \sigma^{p-1}$.  If $\beta$ is a positive (resp. negative) root, then each $\sigma^i(\beta)$ is also positive (resp. negative), and in particular $\beta + \sigma(\beta) + \cdots + \sigma^{p-1}(\beta) \neq 0$.  This completes the proof.
\end{proof}

\subsection{The torus $(\TT_H^{\gamma})^{\circ}$}
\label{subsec:tttth} 
We continue with the notations of the previous subsection. 
Let $\BB_H \subset \HH$ be a Borel containing $\TT_H$.
Let $\gamma$ be an outer automorphism of $\HH$.  (For instance, the image of an element of $\Gamma_F \to \mathrm{Out}(\HH)$ induced by the $F$-rational structure of $\HH$).  Then 
there exists a unique representative for
$\gamma$ in $\Aut(\HH)$ which preserves $\TT_H$ and $\BB_H$; we denote
this representative also by $\gamma$.  Let $\TT_H^{\gamma}$ denote the group of $\gamma$-fixed points, and $(\TT_H^{\gamma})^{\circ}$ the identity component of $\TT_H^{\gamma}$.

\begin{prop*}
 \begin{enumerate}
\item The centralizer of $(\TT_H^{\gamma})^{\circ}$ in $\HH$ is $\TT_H$.
\item The cone of coweights in $(\TT_H^{\gamma})^{\circ}$ that are positive on $\Phi(\TT_H,\BB_H)$ is ``open'', i.e. it does not lie in any hyperplane in $X_*((\TT_H^{\gamma})^{\circ})$
\end{enumerate}
\end{prop*}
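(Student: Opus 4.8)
The plan is to reduce both assertions to elementary facts about the action of $\gamma$ on the character lattice of $\TT_H$, running in close parallel with the proof of Proposition~\ref{subsec:title}. Write $d$ for the order of $\gamma$ and $N_\gamma = 1 + \gamma + \cdots + \gamma^{d-1}$ for the associated norm operator on $X^*(\TT_H)$. The one combinatorial input I would isolate at the outset is that $N_\gamma(\beta) \neq 0$ for every root $\beta$ of $\TT_H$ on $\HH$. Indeed, since $\gamma$ stabilizes $\BB_H$, it stabilizes the set $\Phi(\TT_H,\BB_H)$ of roots of $\TT_H$ on $\BB_H$ as well as its negative; each of these lies in an open half-space of $X^*(\TT_H)\otimes\R$, so a sum of $d$ of its members cannot vanish, and every root lies in one of the two sets. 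This is exactly the mechanism used in Proposition~\ref{subsec:title}.

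For part (1) I would pass to the maximal subtorus $S_0 \subseteq (\TT_H^\gamma)^\circ$ (the reduced identity component), whose cocharacter lattice is $X_*(S_0) = X_*(\TT_H)^{\gamma}$ --- this is just the observation that a homomorphism $\Gm \to \TT_H$ factors through $\TT_H^\gamma$ precisely when it is $\gamma$-fixed. Then $\TT_H \subseteq \Cent_{\HH}\bigl((\TT_H^\gamma)^\circ\bigr) \subseteq \Cent_{\HH}(S_0)$, so it suffices to show $\Cent_{\HH}(S_0) = \TT_H$. Now $\Cent_{\HH}(S_0)$ is connected reductive with maximal torus $\TT_H$, and its roots on $\TT_H$ are precisely the $\beta \in \Phi(\TT_H,\HH)$ with $\langle \beta, \lambda\rangle = 0$ for all $\lambda \in X_*(S_0) = X_*(\TT_H)^{\gamma}$; over $\Q$ such a $\beta$ lies in the annihilator of $(X_*(\TT_H)\otimes\Q)^{\gamma}$, i.e.\ in $(1-\gamma)(X^*(\TT_H)\otimes\Q) = \ker N_\gamma$, contradicting the displayed fact above. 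Hence $\Cent_{\HH}(S_0)$ has no roots and equals $\TT_H$, which gives (1).

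For part (2) I would introduce the open cone $D \subset X_*(\TT_H)\otimes\R$ of coweights strictly positive on $\Phi(\TT_H,\BB_H)$; it is nonempty (it is the interior of the dominant cone for $\BB_H$) and $\gamma$-stable, because $\gamma$ preserves $\Phi(\TT_H,\BB_H)$. Since $X_*((\TT_H^\gamma)^\circ) = X_*(\TT_H)^{\gamma}$ and, after $\otimes\R$, this is the subspace $(X_*(\TT_H)\otimes\R)^{\gamma}$, the cone in the statement becomes exactly $D \cap (X_*(\TT_H)\otimes\R)^{\gamma}$. To see this intersection is nonempty I would average: for any $\mu \in D$, convexity and $\gamma$-stability of $D$ give $\frac{1}{d} N_\gamma(\mu) \in D$, and that element is $\gamma$-fixed. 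A nonempty subset of the real vector space $X_*((\TT_H^\gamma)^\circ)\otimes\R$ cut out there by finitely many strict linear inequalities is open, hence spans the space, hence (clearing denominators to return to the lattice) the cone of coweights lies in no hyperplane.

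The only genuinely delicate point --- and the reason I pass to $S_0$ in part (1) rather than arguing with $(\TT_H^\gamma)^\circ$ directly --- is that when $p$ divides $d$ (for instance $D_4$-triality in characteristic $3$) the fixed-point group scheme $\TT_H^\gamma$, and even its identity component, can fail to be reduced: its non-torus part is a finite infinitesimal $p$-group, on which characters of $\TT_H$ are not detected by cocharacters, so a naive ``roots vanishing on $(\TT_H^\gamma)^\circ$'' computation would be wrong. Replacing $(\TT_H^\gamma)^\circ$ by its maximal subtorus $S_0$ --- which carries exactly the cocharacters appearing in both statements --- circumvents this; once it is in place, the rest is routine linear algebra over $\Q$ and $\R$ together with the standard structure theory of centralizers of subtori, and I do not anticipate any further obstruction.
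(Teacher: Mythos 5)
Your proof is correct and follows essentially the same route as the paper. Part (1) reduces, exactly as in the paper, to the observation that a root vanishing on the $\gamma$-fixed subtorus would lie in $\ker N_\gamma = \mathrm{Im}(1-\gamma)$ over $\Q$, while the $\gamma$-orbit of any root of $\TT_H$ on $\HH$ sits entirely in one open half-space (because $\gamma$ stabilizes $\BB_H$), so $N_\gamma(\beta) \neq 0$ --- this is precisely the combinatorial device from the paper's Proposition~\ref{subsec:title}. For part (2), your averaging $\mu \mapsto \frac{1}{d}N_\gamma(\mu)$ is the pointwise version of the paper's argument that the norm operator $\nu = N_\gamma$ surjects (after $\otimes\Q$) onto $X_*(\TT_H^\gamma)$ and preserves positivity on $\BB_H$.

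The one place where you work harder than necessary is the detour through the maximal subtorus $S_0$. The section you are in works over $\overline{F}$ where $F$ is a number field, so the ambient characteristic is zero: $\TT_H^\gamma$ is automatically a (reduced) diagonalizable group and $(\TT_H^\gamma)^\circ$ is already a torus with cocharacter lattice $X_*(\TT_H)^\gamma$. Your worry about triality in characteristic $3$ would indeed be a real issue for the groups $\hat{G}$, $\hat{H}$ living over $k$, but $\HH$ and $\TT_H$ here live over $\overline{F}$, not $k$, so the precaution is dispensable. It is harmless, though, and the rest of the argument is sound.
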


\begin{proof}
The proof of Prop. \ref{subsec:title}, with $\sigma$ replaced by $\gamma$ and $\GG$ replaced by $\HH$, establishes (1).  
Let us prove (2).  Let $m$ denote the order of $\gamma$ in $\mathrm{Out}(\HH)$, and consider the operator  $\nu$ on  $X_*(\TT_H)$ carrying $\chi$ to $\chi + \gamma \circ \chi+ \cdots +  \gamma^{m-1} \circ \chi$.  After tensoring with $\Q$, the image of $\nu$ coincides with the kernel of $1-\gamma$, in particular the image of $\nu$ is not contained in any hyperplane
of $X_*(\TT_H^{\gamma})$.  Part (2) now follows from the fact that $\nu$ preserves the property of being positive on $\BB_H$.
\end{proof}

\subsection{The Levi $\LL_G(\gamma)$ and its derived group}
\label{C2}
We let $\LL_G(\gamma)$ denote the centralizer in $\G$ of the torus $(\TT_H^{\gamma})^{\circ}$.  It is a Levi subgroup, and $\Phi(\TT_G,\LL_G(\gamma)) \subset \Phi(\TT_G,\G)$ is given by those $\beta:\TT_G \to \Gm$ that are trivial on $(\TT_H^{\gamma})^{\circ}$.

Let us call a cocharacter $\chi:\Gm \to (\TT_H^{\gamma})^{\circ}$ \emph{generic} if its centralizer in $\GG$ is $\LL_G(\gamma)$.  Each $\delta \in \Phi(\TT_G,\G) - \Phi(\TT_G,\LL_G(\gamma))$ defines an orthogonal hyperplane  $H_{\delta} \subset X_*((\TT_H^{\gamma})^{\circ})$, and genericity is equivalent to $\chi \notin \bigcup_{\delta} H_{\delta}$.  

Note that $\LL_G(\gamma)$ is $\sigma$-stable, as is $[\LL_G(\gamma),\LL_G(\gamma)]$.  We have the following fixed-point computations:

\begin{prop*}
The following hold
\begin{enumerate}
\item $\LL_G(\gamma)^{\sigma} = \TT_H$.
\item $[\LL_G(\gamma),\LL_G(\gamma)]^{\sigma}$ is a maximal torus in $[\LL_G(\gamma),\LL_G(\gamma)]$.
\end{enumerate}
\end{prop*}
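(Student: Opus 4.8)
The plan is to derive everything from the fact, established in the proof of Proposition~\ref{subsec:title}, that $\TT_G := \Cent_\GG(\TT_H)$ is a maximal torus of $\GG$, is $\sigma$-stable with $\TT_G^\sigma = \TT_H$, and can be chosen inside a $\sigma$-stable Borel $\BB_G$ of $\GG$; note $\TT_G \subseteq \mathbf{M} := \LL_G(\gamma)$ since $(\TT_H^\gamma)^\circ \subseteq \TT_H$, and that $\mathbf{M}$ and $[\mathbf{M},\mathbf{M}]$ are $\sigma$-stable. Part (1) is then immediate: $\mathbf{M}^\sigma = \Cent_\GG((\TT_H^\gamma)^\circ) \cap \GG^\sigma = \Cent_\HH((\TT_H^\gamma)^\circ) = \TT_H$ by part (1) of the Proposition in \S\ref{subsec:tttth}, the reverse inclusion being clear as $\TT_H$ centralizes $(\TT_H^\gamma)^\circ$ and is $\sigma$-fixed.

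For part (2), put $\TT' := \TT_G \cap [\mathbf{M},\mathbf{M}]$, a $\sigma$-stable maximal torus of the semisimple group $[\mathbf{M},\mathbf{M}]$. Using part (1),
\[
[\mathbf{M},\mathbf{M}]^\sigma = [\mathbf{M},\mathbf{M}] \cap \mathbf{M}^\sigma = [\mathbf{M},\mathbf{M}] \cap \TT_H = \TT' \cap \TT_H = (\TT')^\sigma ,
\]
so it suffices to show that $\sigma$ acts trivially on $\TT'$; equivalently that $\sigma$ fixes every root $\beta \in \Phi(\TT_G,\mathbf{M})$ (these span a finite-index sublattice of $X^*(\TT')$, $\sigma$ permutes them, and $\sigma$ acts trivially on a torsion-free lattice as soon as it does so on a finite-index sublattice). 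Recall from \S\ref{C2} that $\Phi(\TT_G,\mathbf{M})$ consists precisely of the roots of $\GG$ that are trivial on $(\TT_H^\gamma)^\circ$. After replacing $\BB_H$ by an $N_\HH(\TT_H)$-conjugate --- which replaces $\gamma$, $(\TT_H^\gamma)^\circ$ and $\mathbf{M}$ by $N_\HH(\TT_H)$-conjugates and so leaves the two assertions unchanged --- we may assume $\BB_H = \BB_G \cap \HH$, which is a Borel of $\HH$ containing $\TT_H$.

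Now suppose $\beta \in \Phi(\TT_G,\mathbf{M})$ with $\sigma\beta \neq \beta$; then its $\sigma$-orbit in $\Phi(\TT_G,\GG)$ has size $p$, and, after possibly replacing $\beta$ by $-\beta$, all of $\beta, \sigma\beta, \dots, \sigma^{p-1}\beta$ are positive for $\BB_G$. First, $\bar\beta := \beta|_{\TT_H}$ is nonzero: by Proposition~\ref{subsec:title} the centralizer of $\TT_H$ in $\GG$ is only $\TT_G$, so no root of $\GG$ is trivial on $\TT_H$. Second, by the structure theory of the connected fixed subgroup $\HH = \GG^\sigma$ relative to the compatible choices $\TT_H = \TT_G^\sigma \subseteq \BB_H = \BB_G^\sigma$, the restriction to $\TT_H$ of a positive root of $\GG$ is a non-negative combination of positive roots of $\HH$; hence $\bar\beta$ is a nonzero element of $\mathbf{Z}_{\geq 0}\,\Phi(\TT_H,\BB_H)$. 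By part (2) of the Proposition in \S\ref{subsec:tttth} there is a coweight $u \in X_*((\TT_H^\gamma)^\circ)$ with $\langle \delta, u\rangle > 0$ for every $\delta \in \Phi(\TT_H,\BB_H)$, so $\langle \bar\beta, u\rangle > 0$. But $u$ is a cocharacter of $(\TT_H^\gamma)^\circ$ and $\beta$ is trivial on $(\TT_H^\gamma)^\circ$, so $\langle \bar\beta, u\rangle = \langle \beta, u\rangle = 0$ --- a contradiction. Therefore $\sigma\beta = \beta$ for all $\beta \in \Phi(\TT_G,\mathbf{M})$, $\sigma$ acts trivially on $\TT'$, and $[\mathbf{M},\mathbf{M}]^\sigma = (\TT')^\sigma = \TT'$ is a maximal torus of $[\mathbf{M},\mathbf{M}]$.

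The one step that needs real care is the structure-theoretic input: that restriction along $\TT_H = \TT_G^\sigma \hookrightarrow \TT_G$, with Borels chosen compatibly, carries positive roots of $\GG$ into non-negative combinations of positive roots of $\GG^\sigma$. Over $\C$ this is classical, but here we are in characteristic $p$ with $\sigma$ of order $p$, so in the exotic situations (e.g. $\mathfrak{so}_8$-triality in characteristic $3$) one must know that $\GG^\sigma$ still exhibits the expected root-theoretic behaviour; this is where one leans on the theory of fixed points of semisimple automorphisms (\cite{SteinbergEndomorphisms}, \cite{Reeder}) already invoked for Proposition~\ref{subsec:title}, or else verifies the remaining cases by hand. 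I expect this to be essentially the only obstacle --- the rest of the argument is formal.
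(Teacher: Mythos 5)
Your part (1) matches the paper's. For part (2), the paper takes a quite different route: it shows that $\sigma$ must act as an \emph{inner} automorphism on $[\LL_G,\LL_G]$ (by a direct Lie-algebra computation: a non-inner prime-order automorphism of a semisimple Lie algebra cannot have fixed points contained in a Cartan subalgebra, verified via a pinning and a suitable simple root), and then combines the standard fact that an inner automorphism fixes a maximal torus with part~(1). You instead try to show directly that $\sigma$ acts trivially on $\TT' = \TT_G\cap[\LL_G,\LL_G]$ by showing $\sigma$ fixes every root in $\Phi(\TT_G,\LL_G(\gamma))$.

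The gap is in the ``structure-theoretic input'' you flag yourself, and it is more than a characteristic-$p$ subtlety. With $\BB_H = \BB_G\cap\HH$, the claim that the restriction to $\TT_H$ of a positive root of $\GG$ lies in $\mathbf{Q}_{\geq 0}\,\Phi(\TT_H,\BB_H)$ is \emph{false} as a blanket statement, even over $\mathbf{C}$: for $\sigma$ inner, $\TT_H=\TT_G$, restriction is the identity, and the positive root cone of $\HH$ is typically a proper subcone of that of $\GG$ (for instance $\mathrm{G}_2\supset\SL_3$ long: the short simple root of $\mathrm{G}_2$ is not a non-negative combination of long positive roots). You correctly note you only need the claim for $\beta$ with $\sigma\beta\neq\beta$, so that inner cases are vacuous; but even so the claim does not reduce to standard structure theory. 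Writing $\sigma = \theta\cdot\mathrm{ad}_t$ with $\theta$ pinned, the restrictions $\bar\beta$ for $\theta\beta\neq\beta$ are (positive multiples of) roots of $\GG^\theta$, and these need not be roots of $\HH=\GG^\sigma$: whether the corresponding weight space survives in $\mathfrak{h}$ depends on $(\mathrm{N}\beta)(t)$. When it doesn't, $\bar\beta$ may fall outside the positive cone $C_H$, and then your contradiction $\langle\bar\beta,u\rangle > 0$ versus $\langle\beta,u\rangle = 0$ does not arise. One would have to verify cone containment $C_{G^\theta}\subseteq C_H$ case by case for the non-pinned outer automorphisms in the paper's list; this is the genuine missing content. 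The paper's argument is designed precisely to sidestep this.
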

\begin{proof}
Part (1) follows immediately from part (1) of the proposition of \S\ref{subsec:tttth}.
We claim that $\sigma$ induces an inner automorphism of $[\LL_G,\LL_G]$.  Since the fixed points of an inner automorphism contain a maximal torus, and $[\LL_G,\LL_G]^{\sigma}$ is contained in a torus by part (1), we can conclude (2).

Let's prove the claim.  In fact we will prove that if $\mathfrak{g}$ is a semisimple Lie algebra over $\bar{F}$ and $a$ an automorphism of $\mathfrak{g}$ of prime order $p$, then if $a$ is not inner we cannot have $\mathfrak{g}^a$ contained in a Cartan subalgebra.  Indeed we may find a pinning $(\mathfrak{t},\mathfrak{b},\{e_{\alpha}\}_{\alpha \in I})$ of $\mathfrak{g}$ such that $a = \theta \mathrm{ad}_x$, where $\theta$ is a pinned automorphism and $x \in \mathfrak{t}$.  If $\theta$ is nontrivial then it also has order $p$, and there is a simple root $\alpha$ with $\alpha,\theta(\alpha),\theta^2(\alpha) \cdots,\theta^{p-1}(\alpha)$ all distinct.   

From the classification of semisimple Lie algebras by Dynkin diagrams, either we may choose $\alpha$ so that the $\theta^i(\alpha)$ are all orthogonal, or else $p = 2$ and $\mathfrak{g}$ has a factor of the form $\sl_3$ on which $\theta$ acts by transpose-inverse.  In the second case, take $\alpha$ to be one of the simple roots of the $\sl_3$-factor.  In either case one computes that the elements $e_{\alpha} + a(e_{\alpha}) + \cdots + a^{p-1}(e_{\alpha})$ and $e_{-\alpha} + a(e_{-\alpha}) + \cdots + a^{p-1}(e_{-\alpha})$ do not commute. 
 \end{proof}

\subsection{Admissible Borels}
\label{subsec:ab}
We will say that a Borel $\BB_G$ is \emph{$\gamma$-admissible} (with respect to $\sigma,\TT_H,\BB_H$) if it contains $\TT_G$ and there exists a cocharacter $\chi:\Gm \to (\TT_H^{\gamma})^{\circ}$ with the following properties:
\begin{enumerate}
\item[(i)] $\chi$ is positive for $\BB_H$: all nontrivial roots $\beta \in \Phi(\TT_H,\BB_H)$ satisfy $\langle \beta,\chi\rangle > 0$
\item[(ii)] $\chi$ is nonnegative for $\BB_G$: all nontrivial roots $\delta \in\Phi(\TT_G,\BB_G)$ satisfy $\langle \delta,\chi\rangle \geq 0$
\item[(iii)] $\chi$ is generic in the sense of \S\ref{C2}
\end{enumerate}

A tuple $(\TT_H,\BB_H,\TT_G,\BB_G)$, where $\TT_H \subset \BB_H$, $\TT_G = \Cent_\G(\TT_H)$, and $\BB_G$ is an admissible Borel, will be called a ``$\gamma$-admissible Borel tuple.''  The group $\HH(\overline{F})$ acts on $\gamma$-admissible Borel tuples by conjugation --- an orbit of this action is called a ``$\gamma$-admissible Borel class.''
Given a homomorphism $\Gamma \to \mathrm{Out}(\HH)$, we will say that a tuple $(\TT_H,\BB_H,\TT_G,\BB_G)$ is ``$\Gamma$-admissible'' if it is $\gamma$-admissible for every $\gamma$ in the image.  

The significance of admissibility is the following Lemma, which realizes $\BB_H$ as the $\sigma$-fixed points of a parabolic in $\GG$ (cf. discussion after \eqref{nottruesquare}). 

\begin{lemma}
\label{lem:generate}
If $\BB_G$ is a $\gamma$-admissible Borel, then $\LL_G(\gamma)$ and $\BB_G$ generate a $\sigma$-stable parabolic subgroup $\PP_G$ whose $\sigma$-fixed points are $\PP_G^{\sigma} = \BB_H$.
\end{lemma}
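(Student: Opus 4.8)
The plan is to realize $\PP_G$ explicitly as the parabolic attached to a cocharacter witnessing the admissibility of $\BB_G$, and then read off its $\sigma$-fixed points from the root data. Fix a cocharacter $\chi\colon\Gm\to(\TT_H^{\gamma})^{\circ}$ satisfying conditions (i)--(iii) of \S\ref{subsec:ab}, and set $\PP_G:=\PP_G(\chi)$, the ``attracting'' parabolic $\{g\in\GG:\lim_{t\to0}\chi(t)g\chi(t)^{-1}\text{ exists}\}$; its roots relative to $\TT_G$ are $\{\delta\in\Phi(\TT_G,\GG):\langle\delta,\chi\rangle\ge0\}$ and its Levi factor containing $\TT_G$ is $\Cent_{\GG}(\chi)$. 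The assignment $\chi\mapsto\PP_G(\chi)$ is functorial for automorphisms of $\GG$, so since $\chi$ takes values in $(\TT_H^{\gamma})^{\circ}\subseteq\TT_H\subseteq\GG^{\sigma}$ it is fixed by $\sigma$, whence $\sigma(\PP_G)=\PP_G(\sigma\circ\chi)=\PP_G$; thus $\PP_G$ is $\sigma$-stable.

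Next I would check $\PP_G=\langle\LL_G(\gamma),\BB_G\rangle$. Genericity (iii) gives $\Cent_{\GG}(\chi)=\LL_G(\gamma)$, so $\LL_G(\gamma)$ is precisely the Levi factor of $\PP_G(\chi)$ and in particular lies in $\PP_G$. Condition (ii) says every root of $\BB_G$ is nonnegative on $\chi$, hence $\BB_G\subseteq\PP_G$; being a Borel of $\GG$ contained in the parabolic $\PP_G$, it is a Borel of $\PP_G$ and therefore contains $\mathrm{R}_u(\PP_G)$ (a connected normal unipotent subgroup lies in every Borel). Since $\PP_G=\LL_G(\gamma)\cdot\mathrm{R}_u(\PP_G)$, the subgroup generated by $\LL_G(\gamma)$ and $\BB_G$ exhausts $\PP_G$, and conversely both sit inside $\PP_G$, so they generate exactly $\PP_G$.

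For the fixed points, write $\PP_G^{\sigma}=\PP_G\cap\GG^{\sigma}=\PP_G(\chi)\cap\HH$. Because $\HH$ is closed in $\GG$ and the terms $\chi(t)h\chi(t)^{-1}$ with $h\in\HH$ remain in $\HH$, an element $h\in\HH$ lies in $\PP_G(\chi)$ iff the limit already exists inside $\HH$; thus $\PP_G(\chi)\cap\HH=\PP_H(\chi)$, the parabolic of $\HH$ attached to $\chi$, whose roots are $\{\beta\in\Phi(\TT_H,\HH):\langle\beta,\chi\rangle\ge0\}$. Condition (i) forces every positive root of $\BB_H$ to be strictly positive on $\chi$, hence every root of $\HH$ is nonzero on $\chi$, i.e. $\chi$ is $\HH$-regular; so $\PP_H(\chi)$ is a Borel, and its positive system contains (hence equals, by counting) $\Phi(\TT_H,\BB_H)$. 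Therefore $\PP_G^{\sigma}=\PP_H(\chi)=\BB_H$.

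I do not expect an essential obstacle: the lemma is an assembly of standard facts about cocharacter parabolics together with the fixed-point computations of \S\ref{subsec:title}--\S\ref{C2}. The only points needing care are the identification $\PP_G(\chi)\cap\HH=\PP_H(\chi)$ --- that membership in the $\GG$-parabolic is witnessed inside $\HH$ --- and the observation, from property (i), that admissibility makes $\chi$ regular for $\HH$, which is exactly what collapses $\PP_H(\chi)$ down to $\BB_H$ rather than a larger parabolic; the role of $\LL_G(\gamma)$ and (ii)--(iii) is then just to pin down which parabolic $\PP_G$ is.
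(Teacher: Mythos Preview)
Your proof is correct and follows essentially the same strategy as the paper: both construct $\PP_G$ as the parabolic attached to the admissibility-witnessing cocharacter $\chi$ and then verify the three claims. The paper phrases everything via Lie algebras and root spaces (and for the fixed points argues $(\PP\cap\HH)^\circ=\BB_H$ and then invokes that $\BB_H$ is self-normalizing), whereas your dynamic-parabolic formulation gives the identity $\PP_G(\chi)\cap\HH=\PP_H(\chi)$ directly and your generation argument via the Levi decomposition $\PP_G=\LL_G(\gamma)\cdot\mathrm{R}_u(\PP_G)$ replaces the paper's root-by-root check --- but these are packaging differences, not a different route.
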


\begin{proof}
We'll construct $\PP = \PP_G$ by different means, and then show that its $\sigma$-fixed points are $\BB_H$ and that it is generated by $\LL_G(\gamma)$ and $\BB_G$.  

Let $\mathfrak{g}$ and $\mathfrak{h}$ denote the $\overline{F}$-linear Lie algebras of $\GG$ and $\HH$.  Similarly let $\mathfrak{t}_H$ and $\mathfrak{t}_G$ denote the Lie algebras of $\TT_H$ and $\TT_G$.  We have root space decompositions
\[
\mathfrak{g} = \mathfrak{t}_G \oplus \bigoplus_{\beta \in \Phi(\TT_G,\G)} \mathfrak{g}_{\beta} \qquad \qquad \mathfrak{h} = \mathfrak{t}_H \oplus \bigoplus_{\delta \in \Phi(\TT_H,\HH)} \mathfrak{h}_{\delta}
\]
Suppose $\chi$ witnesses the admissibility of $\gamma$, i.e. $\chi$ obeys (i), (ii), and (iii) of \S\ref{subsec:ab}.

Let $\PP \subset \G$ denote the parabolic subgroup containing $\BB_G$ whose Lie algebra is the sum of $\mathfrak{t}_G$ and those root spaces $\mathfrak{g}_{\beta}$ with $\langle \beta,\chi \rangle \geq 0$.  Then $\PP$ contains $\BB_G$.

It is clear that $\PP$ is $\sigma$-stable.  To see that $\PP^\sigma = \BB_H$, note that by assumption (i) for $\chi$, the Lie algebra of $\PP \cap \HH$ is the Lie algebra of $\BB_H$, i.e. $(\PP \cap \HH)^{\circ} = \BB_H$, and $\BB_H$ is its own normalizer in $\HH$.  

It remains to show that $\PP_G$ is generated by $\LL_G(\gamma)$ and $\BB_G$.  Since both $\LL_G(\gamma)$ and $\BB_G$ are connected, the subgroup they generate is connected as well, so this can be checked on Lie algebras, i.e. it is enough to see $\mathfrak{p} = \mathfrak{l}_G(\gamma) + \mathfrak{b}_G$.  We already have $\mathfrak{p} \supset \mathfrak{b}_G$, and $\mathfrak{p} \supset \mathfrak{l}_G(\gamma)$ follows from
\[
\mathfrak{l}_G(\gamma) = \mathfrak{t}_G \oplus \bigoplus_{\beta \mid \langle \beta,\chi\rangle = 0} \mathfrak{g}_{\beta}
\]
As all three spaces contain $\mathfrak{t}_G$, to show that $\mathfrak{p} \subset \mathfrak{l}_G(\gamma) + \mathfrak{b}_G$ it is enough to prove that
 a root of $\TT_G$ on $\mathfrak{p}$ is either a root of $\mathfrak{l}_G(\gamma)$, or a root of $\mathfrak{b}_G$.  Suppose $\mathfrak{g}_\beta \subset \mathfrak{p}$ but $\mathfrak{g}_{\beta} \not\subset \mathfrak{l}_G(\gamma)$, then $\langle \beta,\chi\rangle > 0$.  By assumption (ii) for $\chi$, it follows that $\mathfrak{g}_{-\beta}$ is not a root for $\BB_G$.  But $\BB_G$ is a Borel subgroup: we have $\Phi(\TT_G,\G) = \Phi(\TT_G,\BB_G) \amalg \left(-\Phi(\TT_G,\BB_G)\right)$, so $\mathfrak{g}_{-\beta} \not\subset \mathfrak{b}_G$ implies $\mathfrak{g}_{\beta} \subset \mathfrak{b}_G$.  This completes the proof.
 
Note the proof has shown that  $\LL_G(\gamma)$ is the standard Levi factor of $\PP_G$, generated by $\TT$, the root subgroups for simple roots $\alpha_i$ of $\BB_G$ with $\langle \alpha_i,\chi\rangle$= 0, and the roots subgroups for $-\alpha_i$.
 \end{proof}

$\gamma$-admissible Borels always exist, in fact:
\begin{lemma}
\label{pokemon}
Fix a Borel subgroup and maximal torus $\BB_H \supset \TT_H$ in $\HH$. 
\begin{enumerate}
\item For any $\gamma \in \mathrm{Out}(\HH)$, there is a Borel subgroup $\BB_G \subset \GG$ that is $\gamma$-admissible with respect to $\BB_H,\TT_H$.
\item For any group homomorphism $\Gamma \to \mathrm{Out}(\HH)$ whose image is cyclic of prime order, there is a Borel subgroup $\BB_G \subset \GG$ that is $\Gamma$-admissible with respect to $\BB_H,\TT_H$.
\end{enumerate}
\end{lemma}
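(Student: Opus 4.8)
The plan is to prove (1) directly and then deduce (2) from it. The key for (2) is that when the image of $\Gamma$ in $\Out(\HH)$ is cyclic of prime order $\ell$, each of its $\ell-1$ nontrivial elements generates the whole image; so if $\gamma$ denotes the automorphism attached to a generator (its action on $\TT_H$ has order $\ell$), then each power $\gamma^{j}$, $1\le j\le\ell-1$, generates the same cyclic group as $\gamma$ and hence has the same fixed subtorus $(\TT_H^{\gamma})^{\circ}$ and the same Levi $\LL_G(\gamma)=\Cent_\GG((\TT_H^{\gamma})^{\circ})$. Consequently the conditions (i)--(iii) of \S\ref{subsec:ab} defining ``$\gamma^{j}$-admissible'' do not depend on $j\in\{1,\dots,\ell-1\}$, and a $\Gamma$-admissible Borel is precisely a Borel that is both $\gamma$-admissible and admissible for the trivial automorphism.

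For (1): by the Proposition of \S\ref{subsec:tttth}, part (2), the cone of cocharacters of $(\TT_H^{\gamma})^{\circ}$ positive on $\Phi(\TT_H,\BB_H)$ spans $X_*((\TT_H^{\gamma})^{\circ})_{\Q}$, hence is open; and for each $\delta\in\Phi(\TT_G,\GG)-\Phi(\TT_G,\LL_G(\gamma))$ the hyperplane $\delta^{\perp}$ is a proper subspace of $X_*((\TT_H^{\gamma})^{\circ})_{\Q}$, because such $\delta$ is, by the very definition of $\LL_G(\gamma)$, nontrivial on $(\TT_H^{\gamma})^{\circ}$. Since a finite union of proper subspaces cannot contain an open cone, we may pick an integral cocharacter $\chi\colon\Gm\to(\TT_H^{\gamma})^{\circ}$ that is $\BB_H$-positive and generic in the sense of \S\ref{C2}, giving conditions (i) and (iii). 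Let $\PP_G\supset\TT_G$ be the parabolic of $\GG$ whose Lie algebra is the sum of $\Lie(\TT_G)$ and the root spaces $\mathfrak{g}_{\beta}$ with $\langle\beta,\chi\rangle\ge0$; by genericity its standard Levi factor is $\Cent_\GG(\chi)=\LL_G(\gamma)$, every root of $\LL_G(\gamma)$ pairs to $0$ with $\chi$, and every root of the unipotent radical $\mathrm{R}_u(\PP_G)$ pairs positively with $\chi$. Choosing any Borel $\BB_L\subset\LL_G(\gamma)$ containing $\TT_G$ and setting $\BB_G:=\BB_L\cdot\mathrm{R}_u(\PP_G)$ --- a Borel of $\GG$ through $\TT_G$ --- every root of $\BB_G$, being a root of $\BB_L$ or of $\mathrm{R}_u(\PP_G)$, pairs nonnegatively with $\chi$, so (ii) holds and $\BB_G$ is $\gamma$-admissible. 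When $\gamma$ is trivial this degenerates correctly: $(\TT_H^{\gamma})^{\circ}=\TT_H$, $\LL_G(\gamma)=\Cent_\GG(\TT_H)=\TT_G$ by the Proposition of \S\ref{subsec:title}, $\chi$ is just a $\GG$-regular $\BB_H$-positive cocharacter of $\TT_H$, and $\BB_G$ is the Borel it cuts out.

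For (2), by the opening observation it suffices to produce a single $\BB_G$ that is both $\gamma$-admissible and admissible for the trivial automorphism. I would run the construction of (1) but choose $\BB_L$ compatibly with a $\GG$-regular cocharacter: first take an integral cocharacter $\mu\colon\Gm\to\TT_H$ that is $\BB_H$-positive and \emph{$\LL_G(\gamma)$-regular}, i.e.\ $\langle\delta,\mu\rangle\ne0$ for all $\delta\in\Phi(\TT_G,\LL_G(\gamma))$; let $\BB_L\subset\LL_G(\gamma)$ be the Borel with positive roots $\{\delta\in\Phi(\TT_G,\LL_G(\gamma)):\langle\delta,\mu\rangle>0\}$; then build $\BB_G=\BB_L\cdot\mathrm{R}_u(\PP_G)$ from a generic $\BB_H$-positive $\chi\in(\TT_H^{\gamma})^{\circ}$ as in (1). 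This $\BB_G$ is $\gamma$-admissible via $\chi$, hence $\gamma^{j}$-admissible for all $j$; and for $N\gg0$ the cocharacter $\chi_{1}:=N\chi+\mu\in X_*(\TT_H)$ is $\BB_H$-positive, is $\BB_G$-nonnegative (positive on $\mathrm{R}_u(\PP_G)$-roots for $N$ large, and equal to $\langle\delta,\mu\rangle>0$ on $\BB_L$-roots since $\chi$ kills those), and is $\GG$-regular (nonzero on roots outside $\Phi(\TT_G,\LL_G(\gamma))$ for $N$ large, and equal to $\langle\delta,\mu\rangle\ne0$ on roots of $\LL_G(\gamma)$); as $\LL_G(1)=\TT_G$, this $\chi_{1}$ witnesses admissibility of $\BB_G$ for the trivial automorphism, so $\BB_G$ is $\Gamma$-admissible. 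The existence of $\mu$ requires that every $\delta\in\Phi(\TT_G,\LL_G(\gamma))$ be nontrivial on $\TT_H$ --- then the $\LL_G(\gamma)$-regular locus in $X_*(\TT_H)_{\Q}$ is the complement of finitely many proper hyperplanes and so meets the open $\BB_H$-positive cone --- and this follows from \S\ref{C2}: there $\LL_G(\gamma)^{\sigma}=\TT_H$, whence $[\LL_G(\gamma),\LL_G(\gamma)]^{\sigma}=[\LL_G(\gamma),\LL_G(\gamma)]\cap\TT_H$ is a maximal torus of $[\LL_G(\gamma),\LL_G(\gamma)]$ and hence equals the equidimensional torus $[\LL_G(\gamma),\LL_G(\gamma)]\cap\TT_G$ that contains it, while a root of $\LL_G(\gamma)$ is nonzero on this torus, which lies in $\TT_H$. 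I expect this last coordination --- picking one Borel simultaneously compatible with a generic cocharacter of $(\TT_H^{\gamma})^{\circ}$ and with a $\GG$-regular cocharacter of $\TT_H$ --- to be the only real obstacle; producing $\gamma$-admissible Borels one outer automorphism at a time is routine.
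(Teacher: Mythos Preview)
Your proof is correct and follows essentially the same perturbation idea as the paper, but your argument for (2) is more elaborate than necessary.

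For (1), the paper proceeds almost identically: it picks a generic $\BB_H$-positive $\chi\in X_*((\TT_H^{\gamma})^{\circ})$, then perturbs by any cocharacter $\epsilon\colon\Gm\to\TT_H$ that is nonzero on every root of $\GG$, and lets $\BB_G$ be the Borel determined by the $\GG$-regular cocharacter $N\chi+\epsilon$ for $N\gg0$. Your construction via $\PP_G$ and a choice of Borel $\BB_L$ in the Levi is an equivalent repackaging of the same perturbation.

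For (2), the paper's observation is simpler than yours: the \emph{same} cocharacter $N\chi+\epsilon$ that was used to define $\BB_G$ already witnesses $1$-admissibility, with no further conditions on $\epsilon$. Indeed $N\chi+\epsilon$ lies in $X_*(\TT_H)=X_*((\TT_H^{1})^{\circ})$, is $\BB_H$-positive for $N$ large (since $\chi$ is), is $\BB_G$-positive by construction, and is $\GG$-regular (hence generic for $\gamma=1$) because $\epsilon$ was chosen $\GG$-regular. The existence of such an $\epsilon$ follows directly from Proposition~\ref{subsec:title}, which shows no root of $\GG$ vanishes on $\TT_H$. By contrast, you impose that your perturbation $\mu$ be $\BB_H$-positive and $\LL_G(\gamma)$-regular, and then invoke the structural Proposition of \S\ref{C2} to justify that roots of $\LL_G(\gamma)$ are nontrivial on $\TT_H$. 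This works, but it is a detour: taking the perturbation $\GG$-regular from the outset avoids any appeal to \S\ref{C2} and makes the $1$-admissibility immediate. Your reduction of $\Gamma$-admissibility to $\{\gamma,1\}$-admissibility via the observation that all nontrivial powers of $\gamma$ have the same fixed subtorus is correct and matches the paper's implicit reasoning.
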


The second assertion of the Lemma is not used in the proof of Theorem \S\ref{BrauerSatake}, but has a further consequence (discussed in \S\ref{BrauerSatakeCor}) that will be useful  later.

\begin{proof}
By part (2) of the Proposition of \S\ref{subsec:tttth}, we may find a cocharacter of $\TT_H^{\gamma}$ that is positive on $\BB_H$, i.e. that obeys (i) and (iii) of the conditions for admissibility.  Fix such a $\chi$.  By ``perturbing $\chi$ in $X_*(\TT_H)$,'' we may find a Borel $\BB_G$ obeying (ii).  More specifically, let $\epsilon:\Gm \to \TT_H$ be any cocharacter that does not vanish any roots of $\Phi(\TT_G,\GG)$.  For $N \in \Z$ sufficiently large, the cocharacter $N\chi + \epsilon$ also does not vanish on any root of $\GG$ and therefore determines a positive system in $\Phi(\TT_G,\GG)$.  Let $\BB_G$ be the corresponding Borel, i.e. with $\delta \in \Phi(\TT_G,\BB_G)$ if and only if $\langle \delta,N\chi +\epsilon\rangle \geq 0$.  By taking $N$ sufficiently large, we have $\frac{1}{N} \langle \delta,\epsilon\rangle > -1$ and therefore $\langle \delta,\chi \rangle \geq 0$ for all $\delta \in \Phi(\TT_G,\BB_G)$.

To prove the second assertion it suffices to show that $\BB_G$ can be chosen simultaneously $\gamma$-admissible and $1$-admissible.   When $\gamma = 1$, we have $(\TT_H^{\gamma})^{\circ} = \TT_H$, so the cocharacter $N\chi + \epsilon$ also witnesses the $1$-admissibility of $\BB_G$.
\end{proof}

\subsection{The norm and dual norm homomorphisms}  \label{normsec} 
With $\TT_H$ a maximal torus of $\HH$ and $\TT_G$ its centralizer in $\GG$, we define the norm homomorphism $\mathrm{N}:\TT_G \to \TT_H$ by 
\[
\mathrm{N}(t) = t \cdot t^{\sigma} \cdot \cdots \cdot t^{\sigma^{p-1}}
\]
If we choose $\BB_H \supset \TT_H$ and $\BB_G \supset \TT_G$, we get an induced map
\[
X^*(\BB_H) \simeq X^*(\TT_H) \stackrel{\mathrm{N}^*}{\longrightarrow} X^*(\TT_G) \simeq X^*(\BB_G)
\]
which in turn induces a map $\hat{T}_H \to \hat{T}_G$ which we call the \emph{dual norm}.  Note that (as there is no direct identification of $\TT$ with $\Tcan$, \S\ref{subsec:cantorus}) the dual norm depends on $\BB_H$ and $\BB_G$.  

When $\BB_G$ is $\gamma$-admissible with respect to $\TT_H,\BB_H$, the parabolic $\PP_G$ of Lemma \ref{lem:generate} determines (\S\ref{parabelly}) a Levi subgroup $\hat{L}_{\gamma} \subset \hat{G}$ containing $\hat{T}_G$.  The natural projection $\TT_G \to \LL_G^{\ab}$  dualizes to an inclusion
\begin{equation}
\label{eq:zhatl}
Z(\hat{L}_{\gamma}) \hookrightarrow \hat{T}_G
\end{equation}

\subsection{Rationality} 
\label{subsec:rat}
Now fix a place $v$ of $F$ at which $\HH$ is quasisplit, then we may choose $\TT_H$ and $\BB_H$ to be $F_v$-rational.  The group $\TT_G := \Cent_{\G}(\TT_H)$ is also $F_v$-rational.

If $\gamma \in \mathrm{Out}(\HH)$ is the outer automorphism corresponding to the Frobenius, then $(\TT_H^{\gamma})^{\circ}$ is the maximal split subtorus of $\TT_H$, and its centralizer $\LL_G(\gamma)$ is $F_v$-rational.  When $v$ is implicit we write $\LL_G := \LL_G(\gamma)$, for short.

If $\BB_G$ is any $\gamma$-admissible Borel (relative to $\TT_H, \BB_H$) then the corresponding parabolic $\PP_G$ of Lemma \ref{lem:generate} is $F_v$-rational, because
any character into the split torus $\left(\TT_H^{\gamma}\right)^{\circ}$ is automatically $F$-rational and $\PP_G$ can be defined via the non-negative weight spaces for such a character. 
Note that we cannot necessarily arrange for $\BB_G$ itself to be $F_v$-rational (nor will we need it), even if $\GG$ is quasisplit at $v$. 
As before we write $\hat{L}_{\gamma}$ for the standard Levi subgroup of $\hat{G}$ associated to the parabolic $\PP_G$.
 
\begin{lemma}
\label{lem:811} Notation as above,  $\TT_G \cap [\LL_G,\LL_G]$ is an anisotropic maximal torus in $[\LL_G,\LL_G]$.
\end{lemma}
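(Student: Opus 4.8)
The plan is to separate the two assertions. For ``maximal torus'': by Proposition~\ref{subsec:title} the group $\TT_G=\Cent_{\GG}(\TT_H)$ is a maximal torus of $\GG$, and since $(\TT_H^{\gamma})^{\circ}\subseteq\TT_H$ we get $\TT_G=\Cent_{\GG}(\TT_H)\subseteq\Cent_{\GG}\big((\TT_H^{\gamma})^{\circ}\big)=\LL_G$; a maximal torus of $\GG$ lying in the Levi subgroup $\LL_G$ is automatically a maximal torus of $\LL_G$, hence $\mathbf{S}:=\TT_G\cap[\LL_G,\LL_G]$ is a maximal torus of $[\LL_G,\LL_G]$. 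I will also want the identification $\mathbf{S}=[\LL_G,\LL_G]^{\sigma}$: indeed $[\LL_G,\LL_G]^{\sigma}\subseteq\LL_G^{\sigma}=\TT_H\subseteq\TT_G$ by part (1) of the Proposition of \S\ref{C2}, so $[\LL_G,\LL_G]^{\sigma}\subseteq\mathbf{S}$, and by part (2) of that Proposition $[\LL_G,\LL_G]^{\sigma}$ is itself a (connected) maximal torus of $[\LL_G,\LL_G]$; two connected maximal tori of $[\LL_G,\LL_G]$, one inside the other, coincide. In particular $\sigma$ acts trivially on $\mathbf{S}$.

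For anisotropy I will pass to cocharacter lattices over $\Q$. First note that $\sigma$ normalizes $\LL_G=\Cent_{\GG}\big((\TT_H^{\gamma})^{\circ}\big)$, since it fixes $(\TT_H^{\gamma})^{\circ}\subseteq\HH$ pointwise, and it preserves the $F_v$-rational subgroups $Z(\LL_G)^{\circ}$, $[\LL_G,\LL_G]$ and $\TT_G$; moreover $\sigma$ commutes with $\Gal(\overline{F_v}/F_v)$ as it is $F$-rational. The almost-direct product $\TT_G=Z(\LL_G)^{\circ}\cdot\mathbf{S}$ gives a decomposition
\[
X_*(\TT_G)_{\Q}=X_*\big(Z(\LL_G)^{\circ}\big)_{\Q}\oplus X_*(\mathbf{S})_{\Q}
\]
of $\langle\sigma\rangle\times\Gal(\overline{F_v}/F_v)$-modules. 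Taking $\sigma$-invariants, using $X_*(\TT_G)_{\Q}^{\sigma}=X_*(\TT_H)_{\Q}$ (as $\TT_H=(\TT_G^{\sigma})^{\circ}$, cf.\ the proof of Proposition~\ref{subsec:title}) and the triviality of $\sigma$ on $X_*(\mathbf{S})_{\Q}$ from the previous paragraph, one obtains a decomposition of $\Gal(\overline{F_v}/F_v)$-modules
\[
X_*(\TT_H)_{\Q}=X_*\big(Z(\LL_G)^{\circ}\big)_{\Q}^{\sigma}\oplus X_*(\mathbf{S})_{\Q};
\]
taking $\Gal(\overline{F_v}/F_v)$-invariants and using that $X_*(\TT_H)_{\Q}^{\Gal}$ is the cocharacter space of the maximal split subtorus $(\TT_H^{\gamma})^{\circ}$ of $\TT_H$ (see \S\ref{subsec:rat}) gives
\[
X_*\big((\TT_H^{\gamma})^{\circ}\big)_{\Q}=\Big(X_*\big(Z(\LL_G)^{\circ}\big)_{\Q}^{\sigma}\Big)^{\Gal}\oplus X_*(\mathbf{S})_{\Q}^{\Gal}.
\]

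Finally I will observe that $(\TT_H^{\gamma})^{\circ}$ is central in its own centralizer $\LL_G$ and is connected, hence lies in $Z(\LL_G)^{\circ}$; it is also $\sigma$-fixed, being inside $\HH=\GG^{\sigma}$, and split over $F_v$. Therefore, inside $X_*(\TT_G)_{\Q}$, the subspace $X_*\big((\TT_H^{\gamma})^{\circ}\big)_{\Q}$ is contained in $\big(X_*(Z(\LL_G)^{\circ})_{\Q}^{\sigma}\big)^{\Gal}$, which is precisely the first summand in the last display. A dimension count then forces the second summand $X_*(\mathbf{S})_{\Q}^{\Gal}$ to vanish; since $X_*(\mathbf{S})_{\Q}^{\Gal}$ is the cocharacter space of the maximal $F_v$-split subtorus of $\mathbf{S}$, this says $\mathbf{S}=\TT_G\cap[\LL_G,\LL_G]$ is anisotropic over $F_v$, as desired. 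The one genuinely substantive point is the identification $\TT_G\cap[\LL_G,\LL_G]=[\LL_G,\LL_G]^{\sigma}$, so that $\sigma$ acts trivially on $\mathbf{S}$; this rests on the fixed-point computations of \S\ref{C2}, and everything afterwards is routine bookkeeping with invariants of $\langle\sigma\rangle$ and of the decomposition group.
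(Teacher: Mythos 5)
Your proof is correct and follows essentially the same strategy as the paper's: both arguments hinge on showing that $\mathbf{S}:=\TT_G\cap[\LL_G,\LL_G]$ is pointwise $\sigma$-fixed (hence lies in $\TT_H$) via the fixed-point computations of \S\ref{C2}, and then deducing anisotropy from the fact that the maximal split subtorus $(\TT_H^{\gamma})^{\circ}$ of $\TT_H$ lies in $\Cent(\LL_G)$, whose intersection with $[\LL_G,\LL_G]$ is finite. Your unpacking of the $\sigma$-fixedness step — $[\LL_G,\LL_G]^{\sigma}\subseteq\LL_G^{\sigma}=\TT_H\subseteq\TT_G$ forces $[\LL_G,\LL_G]^{\sigma}\subseteq\mathbf{S}$, and two maximal tori with one inside the other coincide — is a clean and explicit version of what the paper leaves terse (``This follows from part (2)\ldots''). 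The anisotropy half, however, you route through a two-step decomposition of $X_*(\TT_G)_{\Q}$ as a $\langle\sigma\rangle\times\Gal$-module plus a dimension count; this is valid but more machinery than needed, since once you have $\mathbf{S}\subseteq\TT_H$ you can simply observe directly, as the paper does, that any split subtorus of $\mathbf{S}$ lies inside $(\TT_H^{\gamma})^{\circ}\cap[\LL_G,\LL_G]\subseteq\Cent(\LL_G)\cap[\LL_G,\LL_G]$, which is finite.
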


\begin{proof}
We appeal to the following basic structural properties of Levi subgroups: $\TT_G \cap [\LL_G,\LL_G]$ is a maximal torus in $[\LL_G,\LL_G]$, and $[\LL_G,\LL_G] \cap \Cent(\LL_G)$ is finite.      
As $\Cent(\LL_G)$ contains the maximal split torus $(\TT_H^{\gamma})^{\circ}$, we can prove that $\TT_G \cap [\LL_G,\LL_G]$ is anisotropic by proving
\[
\TT_H \cap [\LL_G,\LL_G] = \TT_G \cap [\LL_G,\LL_G]
\]
The left-to-right containment is obvious, and since $\TT_G^{\sigma} = \TT_H$, to show equality it is enough to show that any element of the right-hand group is $\sigma$-fixed. This follows from part (2) of the proposition of \S\ref{C2}.
\end{proof}

\begin{lemma} \label{sur-langlands} 
Let $\TT_1$ and $\TT_2$ be algebraic tori over $F_v$, and suppose that $f:\TT_1 \to \TT_2$  has  anisotropic kernel.  Then precomposition with $f$ induces a surjection 
\[
\{\text{unramified $k^*$-valued characters of $\TT_2$}\} \to \{\text{unramified $k^*$-valued characters of $\TT_1$}\}
\]
\end{lemma}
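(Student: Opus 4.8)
The plan is to reduce the statement to a concrete computation in terms of the Langlands correspondence for tori recalled in \S\ref{subsec:cft}. First I would recall from there that an unramified $k^*$-valued character of a torus $\TT$ over $F_v$ (unramified at $v$) is determined by its restriction to the maximal split subtorus, and equivalently corresponds, via \eqref{res-iso} and the subsequent discussion, to an element of $\hat{T}_{\Frob_v}$ (the $\Frob_v$-coinvariants of the dual torus), or concretely to a class in $X^*(\TT)^{\Frob_v}\otimes k^*$. So the question becomes: given $f:\TT_1\to\TT_2$ with anisotropic kernel, is the induced map on $\{X^*(\TT_i)^{\Frob_v}\otimes k^*\}$ surjective onto the target? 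Dualizing, $f$ corresponds to a map of cocharacter lattices $X_*(\TT_1)\to X_*(\TT_2)$, or on characters to $f^*:X^*(\TT_2)\to X^*(\TT_1)$, and I need $f^*\otimes k^*$ restricted to $\Frob_v$-invariants to be surjective.

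The key point is that ``$f$ has anisotropic kernel'' means precisely that the induced map on maximal split subtori $\AA_1\to\AA_2$ is an isogeny onto its image, equivalently that $X^*(\AA_2)\otimes\Q\to X^*(\AA_1)\otimes\Q$ is surjective, equivalently that $X^*(\TT_2)^{\Frob_v}\otimes\Q\to X^*(\TT_1)^{\Frob_v}\otimes\Q$ is surjective (using that $X^*(\AA_i)$ is, up to finite index and isogeny, $X^*(\TT_i)^{\Frob_v}$, as in \S\ref{subsec:cft}). Therefore the cokernel of $X^*(\TT_2)^{\Frob_v}\to X^*(\TT_1)^{\Frob_v}$ is a finite abelian group. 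Now tensor the exact sequence
\[
X^*(\TT_2)^{\Frob_v} \xrightarrow{f^*} X^*(\TT_1)^{\Frob_v} \to C \to 0
\]
with $k^*$. Since $k^*$ is a divisible abelian group, it is an injective $\Z$-module, so $-\otimes_\Z k^*$ is... not right-exact in the naive sense, but rather: the relevant functor here is that $k^*$ divisible implies $C\otimes_\Z k^*$ vanishes whenever $C$ is finite, because a divisible group has no nonzero finite quotient of the form $C\otimes k^*$ — indeed $C\otimes_\Z k^* = 0$ for $C$ finite since $k^*$ is divisible and $C\otimes - $ of a divisible group: writing $C$ as a quotient of $\Z^n$ by a finite-index sublattice, $C\otimes k^*$ is the cokernel of $(k^*)^n\to(k^*)^n$ given by an integer matrix of nonzero determinant, which is surjective by divisibility. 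Hence tensoring the right-exact sequence above with $k^*$ gives that $f^*\otimes k^*: X^*(\TT_2)^{\Frob_v}\otimes k^*\to X^*(\TT_1)^{\Frob_v}\otimes k^*$ is surjective, which is exactly the desired statement after translating back through \S\ref{subsec:cft}.

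The main obstacle — or rather the only subtlety to get right — is the bookkeeping identifying ``unramified $k^*$-valued characters of $\TT_i$'' with $X^*(\TT_i)^{\Frob_v}\otimes k^*$, including the fact (noted in \S\ref{subsec:cft}) that this parametrization need not be injective, so one must phrase everything at the level of surjectivity and check that the square relating $f$ to $f^*$ commutes. I would handle this by invoking directly the explicit formula from \S\ref{subsec:cft}: the character associated to $\alpha\otimes\lambda$ sends $t\mapsto \lambda^{v(\alpha(t))}$, and check that precomposing with $f$ corresponds to $\alpha\mapsto f^*\alpha$ on the lattice side; this is immediate from $\alpha(f(t)) = (f^*\alpha)(t)$. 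Combined with the surjectivity of $f^*\otimes k^*$ on $\Frob_v$-invariants established above, and the fact that every unramified character of $\TT_1$ arises from some $\alpha\otimes\lambda$, the surjection follows. The only place genuine content enters is the equivalence between ``anisotropic kernel'' and ``rationally surjective on $\Frob_v$-invariant character lattices,'' which is a standard fact about tori over local fields that I would state and cite (or prove in one line: the kernel being anisotropic is equivalent to $X^*(\ker f)^{\Frob_v}\otimes\Q = 0$, and the exact sequence $0\to X^*(\TT_2/\mathrm{im})\to X^*(\TT_2)\to X^*(\mathrm{im})\to 0$ together with $X^*(\mathrm{im})\otimes\Q \hookleftarrow X^*(\TT_1)\otimes\Q$ having finite cokernel dual to $\ker f$ gives what we want after taking $\Frob_v$-invariants, which is exact up to finite groups rationally).
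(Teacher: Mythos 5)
Your proof is correct, but it takes a genuinely different route from the paper's. The paper stays entirely on the automorphic side: it observes that an unramified $k^*$-valued character of $\TT_i$ is the same as a character of the discrete free abelian group $T_{i,v}/T^\circ_{i,v}$, notes that $f$ induces a map $T_{1,v}/T^\circ_{1,v}\to T_{2,v}/T^\circ_{2,v}$ whose kernel is a compact subgroup of a free abelian group (using that $\ker f$ has compact $F_v$-points) hence trivial, and then concludes immediately by injectivity of $k^*$ as an abelian group. You instead pass through the Langlands parametrization of \S\ref{subsec:cft}, reducing to showing $f^*\colon X^*(\TT_2)^{\Frob_v}\to X^*(\TT_1)^{\Frob_v}$ has finite cokernel, which you derive from the anisotropy of $\ker f$ by a rational computation with character lattices, and then you invoke divisibility of $k^*$ to kill the finite cokernel after tensoring. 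Both arguments turn on the same underlying fact (injectivity/divisibility of $k^*$), but the paper's is a two-line argument that avoids the dual side and the attendant bookkeeping about the non-injectivity of the $X^*(\TT)^{\Frob_v}\otimes k^*$ parametrization, which you correctly flag and handle but which adds length. What your version buys is an explicitly dual-side description that lines up with how the lemma is actually used later (extending Langlands parameters from $\hat T_H$ to $Z(\hat L_\gamma)$), so the dictionary is made visible rather than left implicit.
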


\begin{proof}

Indeed, let $T_{v,i}^{\circ}$ be the maximal compact subgroup of $\TT_i(F_v)$. 
Then $f$ induces a map $T_{1,v}/T_{1,v}^{\circ} \rightarrow T_{2,v}/T_{2,v}^{\circ}$ which is {\em injective}:
Its kernel is a compact subgroup of a free abelian   group, thus trivial.  Since $k^*$ is injective as an abelian group, the result follows. 
\end{proof}

\subsection{Extension of characters}
\label{subsec:ext}

Let us say that a homomorphism $\BB_H(F_v) \to k^*$ or $\PP_G(F_v) \to k^*$ is unramified if it factors through an unramified character of $\BB_H^{\mathrm{ab}}(F_v) = \TT_H(F_v)$ or $\PP_G^{\mathrm{ab}}(F_v) = \LL_G^{\mathrm{ab}}(F_v)$.

\begin{prop*}
With notation as in \S \ref{subsec:rat}, let $\chi$ be an unramified character of $\BB_H(F_v)$.
\begin{enumerate}
\item $\chi$  extends to a $\sigma$-invariant unramified character $\chi^*$ of $\PP_G(F_v)$. 
\item $\chi^*$ may be chosen in such a way that we may choose representatives  $t_{\chi} \in \hat{T}_H$ and $t_{\chi^*} \in \Cent(\hat{L}_{\gamma}) \subset \hat{T}_G$
for the Langlands parameters of $\chi$ and $\chi^*$, respectively,  with 
\begin{equation}
\label{eq:8131}
\hat{N}(t_{\chi}) =( t_{\chi^*})^p
\end{equation}
where $\hat{N}$ is the dual norm map associated to the admissible Borel $\BB_G$ (\S\ref{normsec}). 
\end{enumerate}
\end{prop*}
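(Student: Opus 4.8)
The plan is to construct $\chi^*$ explicitly using parabolic induction data and then track Langlands parameters through the dual norm. First I would use the norm homomorphism $\mathrm{N}:\TT_G \to \TT_H$ of \S\ref{normsec}. Since $\BB_G$ is $\gamma$-admissible, Lemma \ref{lem:generate} realizes $\BB_H$ as $\PP_G^\sigma$, and the Levi $\LL_G(\gamma)$ has the key property (Proposition of \S\ref{C2}, part (1)) that $\LL_G(\gamma)^\sigma = \TT_H$. Thus the projection $\PP_G \to \LL_G^{\ab}$ restricted to $\sigma$-fixed points relates $\TT_H$ to $\LL_G^{\ab}(F_v)$; concretely, the composite $\TT_H \hookrightarrow \TT_G \to \LL_G^{\ab}$ followed by the norm describes how a character of $\BB_H^{\ab} = \TT_H$ pulls back. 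The first step is to show $\chi$ extends: the map $\TT_H \to \LL_G^{\ab}$ (or rather a suitable map between these tori) has anisotropic kernel by Lemma \ref{lem:811} (noting $\TT_G \cap [\LL_G,\LL_G]$ is anisotropic, while $(\TT_H^\gamma)^\circ$ is exactly the split part of $\LL_G^{\ab}$), so Lemma \ref{sur-langlands} gives a surjection on unramified $k^*$-characters — hence $\chi$ extends to an unramified character $\chi^*$ of $\LL_G^{\ab}(F_v)$, which we inflate to $\PP_G(F_v)$. Since $\PP_G$ is $\sigma$-stable and $\LL_G^{\ab,\sigma}$ surjects appropriately, we can arrange $\chi^*$ to be $\sigma$-invariant (the ambiguity in the choice of $\chi^*$ lies in $\sigma$-invariant characters, using that $H^1(\sigma;-)$ of the relevant compact torus quotient vanishes as it is prime-to-$p$, or simply by averaging).

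For part (2), I would make the parameter bookkeeping precise. The Langlands parameter of an unramified character is computed via the recipe of \S\ref{subsec:cft}, equations \eqref{res-iso}--\eqref{XT}: a character corresponds to an element of $X^*(\TT)^{\Frob_v} \otimes k^* = X_*(\hat T) \otimes k^*$. The dual norm $\hat N:\hat T_H \to \hat T_G$ is dual to $\mathrm{N}^*: X^*(\TT_H) \to X^*(\TT_G)$, which (up to the identifications $X^*(\BB_H) \simeq X^*(\TT_H)$ etc.) is the transpose of the algebraic norm $\mathrm{N}:\TT_G \to \TT_H$. Now the relation $\chi = \chi^*|_{\TT_H}$ at the level of characters translates, after applying the functor $X^*(-)^{\Frob_v}\otimes k^*$, to a relation between $t_\chi$ and the parameter of $\chi^*$; the factor of $p$ in \eqref{eq:8131} appears because restricting along $\TT_H \hookrightarrow \TT_G$ followed by $\mathrm{N}$ is multiplication by $p$ on $\TT_H$ (as $\mathrm{N}(t) = t\cdot t^\sigma\cdots t^{\sigma^{p-1}}$ restricted to $\sigma$-fixed elements is the $p$th power map), dually giving $\hat N(t_\chi) = (t_{\chi^*})^p$. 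The requirement that $t_{\chi^*}$ land in $Z(\hat L_\gamma) \subset \hat T_G$ is arranged because $\chi^*$ factors through $\LL_G^{\ab}$, and $Z(\hat L_\gamma)$ is exactly the dual torus to $\LL_G^{\ab}$ by \S\ref{parabelly} (its cocharacter group is the orthogonal complement of the coroots of $\LL_G(\gamma)$).

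I expect the main obstacle to be a careful and correct treatment of the various torus identifications — in particular, matching the algebraic norm $\mathrm{N}:\TT_G \to \TT_H$ against the dual norm $\hat N$ defined via $X^*(\BB_H)\simeq X^*(\TT_H)\stackrel{\mathrm{N}^*}{\to}X^*(\TT_G)\simeq X^*(\BB_G)$, and verifying that the $\Frob_v$-invariants/coinvariants behave compatibly (so that Lemmas \ref{lem:811} and \ref{sur-langlands} genuinely apply to the map whose kernel we need anisotropic). One subtle point: the parameter $t_\chi$ lives naturally in $\hat T_H$ only after choosing $\BB_H$, and similarly $t_{\chi^*}$ depends on $\BB_G$; one must check that the $\gamma$-admissibility of $\BB_G$ is precisely what makes these choices compatible, i.e. that $\PP_G$ (not an honest Borel of $\GG$) still carries enough structure for the parameter of an unramified character of $\PP_G^{\ab}$ to be read off inside $Z(\hat L_\gamma)$. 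The $p$-th power discrepancy itself is essentially forced and should not cause trouble once the identifications are fixed; it is the reason the statement asserts $\hat N(t_\chi) = (t_{\chi^*})^p$ rather than an equality on the nose, mirroring the Frobenius-semilinearity already seen for $\FBr$ versus $\NBr$ in \S\ref{nBr}.
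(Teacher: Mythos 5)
Your proposal is substantially correct and reaches the same conclusion, but the bookkeeping is organized differently from the paper's. You extend $\chi$ directly along $g\iota:\TT_H \to \LL_G^{\ab}$ (whose kernel is exactly the anisotropic $\mathbf{K}$ of Lemma \ref{lem:811}, since that lemma shows $\mathbf{K}\subset\TT_H$), then symmetrize over $\sigma$ and take a $p$th root; the paper instead lifts $\chi$ to $\overline{\chi}$ on $\TT_H/\mathrm{N}(\mathbf{K})$ and pulls back through $f:\LL_G^{\ab}\to\TT_H/\mathrm{N}(\mathbf{K})$, which bundles the extension and the $\sigma$-invariance into one commutative square of tori. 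For \eqref{eq:8131}, you argue from the observation that $\hat N \circ \hat\iota$ is the $\sigma$-norm on $\hat T_G$, hence the $p$th power map on the $\sigma$-fixed locus containing $t_{\chi^*}$, whereas the paper reads the identity off the dualized square via $\hat N\hat\pi = \hat g\hat f$; both rely on uniqueness of $p$th roots in $k^*$. Two small points in your write-up need tightening. First, "the ambiguity in the choice of $\chi^*$ lies in $\sigma$-invariant characters" is not right: the ambiguity is in characters of $\LL_G^{\ab}$ trivial on $g\iota(\TT_H)$, which need not be $\sigma$-invariant — but your fallback (average, then take a $p$th root) repairs this and is exactly what is needed. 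Second, you implicitly use that a $\sigma$-invariant $\chi^*$ has a $\sigma$-fixed representative $t_{\chi^*}\in Z(\hat L_\gamma)$ (not merely $\sigma$-fixed in $\Frob_v$-coinvariants); the paper's route gets this for free because $\TT_H/\mathrm{N}(\mathbf{K})$ is $\sigma$-trivial, while your route needs a word about $\sigma$-equivariance of the parameter correspondence and compatibility of $p$th roots. With those clarifications, your argument is a valid, mildly more direct variant; the paper's intermediate quotient buys cleaner parameter bookkeeping at the cost of an extra torus.
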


\begin{proof}

Let $\mathbf{K}(F_v)$ be the kernel of the natural projection $\TT_G  \hookrightarrow \LL_G  \to \LL_G^{\ab} $.  Let $\mathrm{N}(\mathbf{K}) \subset \TT_H$ be the image of $\mathbf{K}$ under the norm map of \S\ref{normsec}.  By Lemma \ref{lem:811}, $\mathbf{K}$ and $\mathrm{N}(\mathbf{K})$ are anisotropic tori.

 The composite $\TT_G \stackrel{N}{\rightarrow} \TT_H \rightarrow \TT_H/\mathrm{N}(\mathbf{K})$ is trivial on $\mathbf{K}$,
 so it factors through $\LL_G^{\ab}$. Consider the commutative squares of $F_v$-algebraic tori and of associated dual tori
\[
\xymatrix{
\TT_G \ar[d]_-{\mathrm{N}} \ar[r]^-{g} & \LL_G^{\ab} \ar[d]^-{f} \\
\TT_H \ar[r]_-{\pi} & \TT_H/\mathrm{N}(\mathbf{K})
}
\qquad  \qquad
\xymatrix{
\widehat{\TT_H/\mathrm{N}(\mathbf{K})} \ar[r]^-{\hat{\pi}} \ar[d]_-{\hat{f}} &\hat{T}_H \ar[d]^-{\hat{N}} \\
\Cent(\hat{L}_{\gamma}) \ar[r] & \hat{T}_G
}
\]
where we use the $\gamma$-admissible Borel class to identify the duals of $\TT_G,\TT_H,$ and $\LL_G^{\ab}$ with $\hat{T}_G,\hat{T}_H,$ and $\Cent(\hat{L}_{\gamma})$ as in \S\ref{normsec}.  

Since $\mathrm{N}(\mathbf{K})$ is anisotropic, there is (by Lemma
\ref{sur-langlands})
 an unramified character of $\TT_H/\mathrm{N}(\mathbf{K})$, call it $\overline{\chi}$, with $\pi^* \overline{\chi} = \chi$.
Set $\psi = f^* \overline{\chi}$, an unramified character of $\LL_G^{\ab}(F_v)$.
Then for $t \in \TT_H(F_v)$ we have 
$$ \psi(g(t))  = \overline{\chi}(\pi \circ \mathrm{N}(t)) = \chi(t)^p.$$
In other words, $\chi^* := \psi^{1/p}$ extends $\chi$.

  If $t_{\overline{\chi}}$ is a representative for the Langlands parameter of $\overline{\chi}$, 
 then its image $t_{\chi} = \hat{\pi}(t_{\overline{\chi}}) \in \hat{T}_H$ is a representative for the Langlands parameter for $\chi$, 
 $t_{\psi} = \hat{f}(t_{\overline{\chi}}) \in \Cent(\hat{L}_{\gamma})$ is a representative for the Langlands parameter of $\psi$,
 and $t_{\psi}^{1/p}$ is a representative for the Langlands parameter of $\chi^* = \psi^{1/p}$.  (These facts are all readily deduced
 from \eqref{XT}). 
So \eqref{eq:8131} is a consequence of the commutativity of the right-hand square.
\end{proof}

\subsection{Proof of the Theorem of \S\ref{BrauerSatake}}

As in the hypotheses of the Theorem, let $v$ be a place of $F$, let $\gamma \in \Gamma_F$ be a Frobenius element at $v$, and let $\TT_H,\BB_H,\TT_G,\BB_G$ be a $\gamma$-admissible Borel tuple.  Let $\PP_G$ be the corresponding parabolic (Lemma \ref{lem:generate}), and let $\hat{N}$ be the corresponding dual norm (\S\ref{normsec}).

Fix $\theta \in \hat{H} \rtimes \gamma \git \hat{H}$.  We have to find $t \in \hat{T}_H$ such that the image of $t \inverserho_H(\gamma)$ in $\hat{H} \rtimes \gamma \git \hat{H}$
coincides with $\theta$, and  is moreover carried by the Brauer homomorphism to the image of $\hat{N}(t) \inverserho_G(\gamma)$ in $\hat{G} \rtimes \gamma \git \hat{G}$. 

We may regard (via Satake \eqref{ChosenPseudoSatake})
 $\theta$ as a character $\cH(H_v,U_v) \to k$.  By \S\ref{piSat}, there is an unramified homomorphism $\chi:\BB(F_v) \to k^*$ such that $\theta$ is the character by which $\cH(H_v,U_v)$ acts on the $U_v$-invariants of the unnormalized induction $\mathrm{J}_{B_H}^{H}(\chi)$.  We will show that we may take $t = t_{\chi}$, the element of $\hat{T}_H(k)$ corresponding to $\chi$.

Indeed, by \S\ref{subsec:ext}, $\chi$ extends to a $\sigma$-invariant character $\chi^*$ of $\PP_G(F_v)$.  Restriction from $G_v$ to $H_v$ gives a surjection  (see \eqref{eq:steve})
\begin{equation}
\label{Tate2}
\Tate^0(\mathrm{J}_{P_G}^G(\chi^*)) \to \mathrm{J}_{B_H}^H(\chi)
\end{equation}
that carries a nonzero $K_v$-invariant vector on the left (the function which is identically $1$ on $K_v$) to a nonzero $U_v$-invariant vector on the right (the function which is identically $1$ on $U_v$).
Let $\Theta$ be the character by which $\cH(G_v,K_v)$ acts on $\mathrm{J}_{P_G}^G(\chi^*)^{K_v}$.  As $\PP_G$ and $\chi^*$ are $\sigma$-fixed, we may apply \eqref{Brauercompat} and conclude
\[
\Theta\vert_{\cH(G_v,K_v)^{\sigma}} = \theta \circ \mathrm{Br}
\]

Thus  $  (\theta \circ \mathrm{Br})^p$   is the character by which
$\cH(G_v, K_v; \mathbf{F}_p)^{\sigma}$ acts on $\mathrm{J}_P^G((\chi^*)^p)^{K_v}$, by \eqref{JohnGalt}. 
Recall \S\ref{nBr} that $\NBr$ is the linear extension of $\mathrm{Br}^p$ from the $\F_p$-valued Hecke algebra. 
Since $\theta \circ \NBr$
extends  the $\sigma$-invariant character $(\theta \circ \Br)^p: \cH(G_v, K_v; \mathbf{F}_p)^{\sigma} \rightarrow k$
and such an extension is unique (\S \ref{Taterings}),  it follows that  $\theta \circ \NBr$ is the character by which $\cH(G_v,K_v)$ acts on on $\mathrm{J}_P^G((\chi^*)^p)^{K_v}$.  In other words, $\theta \circ \NBr \in \hat{G} \rtimes \gamma \git \hat{G}$ has a representative of the form $t_{\chi^*}^p \inverserho_G(\gamma)$,
where  $t_{\chi^*} \in Z(\hat{L})$ is a Langlands parameter for $\chi^*$.

 The conclusion of the Theorem now follows from \eqref{eq:8131}

\subsection{A consequence of the Theorem of \S\ref{BrauerSatake}}
 \label{BrauerSatakeCor}
 In this section we show how to use the Theorem of \S\ref{BrauerSatake} to verify that a candidate homomorphism $\LH \to \LG$ is a $\sigma$-dual homomorphism, in the sense of \S\ref{subsec:sigmadual}. The criterion we derive here is not applicable in every case we check --- sometimes one has to use \S\ref{BrauerSatake} directly --- but it is nonetheless very useful.

Suppose that we may find a $\Gamma$-admissible Borel tuple (\S\ref{subsec:ab}), and let $\hat{N}$ be the associated dual norm map (\S\ref{normsec}).  Let $\Lpsi:\LH \to \LG$ be a homomorphism over $\Gamma_F$, not a priori known to be a $\sigma$-dual homomorphism.  If
\begin{enumerate}
\item[(a)] $\Lpsi$ agrees with $\hat{N}$ on $\hat{T}_H$, and 
\item[(b)] for every $\gamma \in \Gamma_F$, the elements $\Lpsi(\inverserho_H(\gamma))$ and $\inverserho_G(\gamma)$ project to the same element of $\hat{G} \rtimes \gamma \git \hat{L}_\gamma$,
\end{enumerate}
then we claim that $\Lpsi$ is a $\sigma$-dual homomorphism. 

 Whenever (a) holds, (b) is implied by the following condition, which we will often use to verify it:
\begin{enumerate}
\item[($\beta$)] For every $\gamma \in \Gamma_F$, the elements $\Lpsi(\inverserho_H(\gamma))$ and $\inverserho_G(\gamma)$ of $\hat{G} \rtimes \gamma$ centralize $\Cent(\hat{L}_\gamma)$ and have the same prime-to-$p$ part, i.e. $\Lpsi(\inverserho_H(\gamma))^{p^n} = \inverserho_G(\gamma)^{p^n}$ for some positive integer $n$.\end{enumerate}

  \proof 

   Indeed, choose a representative
  $t \inverserho_H(\gamma)$ for an element of $\hat{H} \rtimes \gamma \git \hat{H}$, as in the statement of Theorem \ref{BrauerSatake}.  Set $\alpha = \Lpsi(\inverserho_H(\gamma))$ and $\beta = \inverserho_G(\gamma)$.  We need to see that
  $ \Lpsi(t \inverserho_H(\gamma)) = \hat{N}(t)  \alpha $  and 
  $\hat{N}(t)  \beta$ define the same classes in  $\hat{G} \rtimes \gamma \git \hat{G}$, i.e
  $ f(\hat{N}(t) \alpha ) = f(\hat{N}(t) \beta )$
for any $\hat{G}$-invariant algebraic  function $f$ on $\hat{G} \rtimes \gamma$.  But note that $x \mapsto f(\hat{N}(t) x)$ actually 
defines a regular function on   $\hat{G} \rtimes \gamma \git \hat{L}_{\gamma}$, so
the claim follows from hypothesis.  

To see that ($\beta$) implies (b):  consider the centralizer $\hat{L}^{+}$ of  $Z(\hat{L}_{\gamma})$ inside
$\hat{G} \rtimes \langle \gamma \rangle$. Its identity component is $\hat{L}_{\gamma}$.
  Thus $\alpha, \beta \in \hat{L}^+ $. For any linear representation $\rho$ of $\hat{L}^+$ we 
 we have
$ \mathrm{tr} \ \rho(\alpha) = \mathrm{tr} \  \rho(\beta)$ because $\alpha^{p^n} = \beta^{p^n}$. 
But the traces of such representations span all $\hat{L}_{\gamma}$-invariant functions
on the component of $\hat{L}^+$ containing $\alpha$ and $\beta$;
that was proved in the Lemma of \S\ref{ReardenMetal} and concludes the proof. 
 \qed

Finally, we have the following Lemma, which will be useful later:

\begin{lemma} \label{ZNL}
Fix a $\gamma$-admissible class $(\TT_H,\BB_H;\TT_G,\BB_G)$ for some $\gamma \in \Gamma_F$.  Let $\hat{N}$ be the dual norm map associated to the Borel class, and
let $\hat{T}_H^{\gamma}$ denote the $\gamma$-fixed points on $\hat{T}_H$.  Suppose also that $\mathfrak{h}$ is semisimple and its absolutely simple factors 
have multiplicity at most $1$. 
Then
 \begin{equation} \label{incl} \Cent_{\hat{G}}\left(   \hat{N} (\hat{T}_H^{\gamma})^{\circ} \right)  =  \hat{L}_{\gamma}. \end{equation}   \end{lemma}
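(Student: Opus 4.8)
The plan is to unwind the definitions and reduce \eqref{incl} to a statement about root systems, then use the admissibility witness $\chi$ to control things. Recall from \S\ref{C2} that $\hat L_\gamma$ is the Levi of $\hat G$ associated to the parabolic $\PP_G$, and from the proof of Lemma \ref{lem:generate} it is generated by $\hat T_G$ together with the root subgroups for those simple roots $\alpha_i$ of $\hat B_G$ with $\langle\alpha_i,\chi\rangle = 0$ (and their negatives). On the other hand, dualizing \S\ref{parabelly}, the centralizer of the torus $Z(\hat L_\gamma)$ in $\hat G$ is exactly $\hat L_\gamma$, so it suffices to prove that $\hat N\bigl((\hat T_H^{\gamma})^{\circ}\bigr)$ and $Z(\hat L_\gamma)^\circ$ generate the same subtorus of $\hat T_G$, or at least that the first contains the second up to isogeny; since both centralizers in $\hat G$ depend only on the subtorus up to isogeny, this is enough.

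First I would set up the character-lattice bookkeeping: $\hat N\colon \hat T_H\to\hat T_G$ is dual to the norm map $X^*(\TT_H)\xrightarrow{\mathrm N^*}X^*(\TT_G)$, which in $X_*$ terms is the transpose, i.e.\ $X_*(\hat T_G)=X^*(\TT_G)\to X^*(\TT_H)=X_*(\hat T_H)$ is the norm $\mathrm N$ on $\TT_G\to\TT_H$. Unwinding, $\hat N$ restricted to $(\hat T_H^\gamma)^\circ$ has image the subtorus of $\hat T_G$ whose cocharacter lattice is spanned by $\mathrm N^*$ applied to the $\gamma$-coinvariants; and a root $\delta^\vee_*$ of $\hat G$ (dual to a coroot $\delta^\vee$ of $\TT_G$ on $\G$) vanishes on this image iff $\delta^\vee$ pairs to zero with $\mathrm N^*(\chi)$ for the genericity witness $\chi$, equivalently iff $\langle\delta,\chi\rangle=0$ after transporting through the norm — but this is precisely the condition defining the root system of $\LL_G(\gamma)$, hence of $\hat L_\gamma$. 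So the roots of $\hat G$ vanishing on $\hat N\bigl((\hat T_H^\gamma)^\circ\bigr)$ are exactly the roots of $\hat L_\gamma$, which gives \eqref{incl}.

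The step I expect to be the main obstacle is checking that $(\hat T_H^\gamma)^\circ$ is ``big enough'' after applying $\hat N$ — that is, that $\hat N\bigl((\hat T_H^\gamma)^\circ\bigr)$ is not accidentally contained in a smaller subtorus on which extra roots of $\hat L_\gamma$ vanish, so that the centralizer does not jump up. This is where the hypothesis that $\mathfrak h$ is semisimple with absolutely simple factors of multiplicity at most $1$ enters: it guarantees that $\gamma$ acts on each factor of $\hat T_H$ without the pathology that would shrink the $\gamma$-fixed (equivalently, via $\mathrm N^*$, the image) torus below full rank in $Z(\hat L_\gamma)$. Concretely I would argue that $\mathrm N^*$ induces, after $\otimes\Q$, an isomorphism from the $\gamma$-coinvariants of $X^*(\TT_H)$ onto a full-rank sublattice of the cocharacters of $Z(\LL_G(\gamma))/(\text{anisotropic part})$, using part (2) of the Proposition of \S\ref{subsec:tttth} (that the cone of $\BB_H$-positive coweights in $(\TT_H^\gamma)^\circ$ spans) together with Lemma \ref{lem:811} and the multiplicity-one hypothesis to rule out a drop in rank. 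Once the rank count matches, the containment of root systems established above forces equality of the two centralizers.

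Finally I would assemble: the roots of $\hat G$ trivial on $\hat N\bigl((\hat T_H^\gamma)^\circ\bigr)$ coincide with the roots of $\hat L_\gamma$ (by the pairing computation), and the torus $\hat N\bigl((\hat T_H^\gamma)^\circ\bigr)$ has full rank inside $Z(\hat L_\gamma)$ (by the lattice/rank argument using the hypotheses on $\mathfrak h$); hence $\Cent_{\hat G}\bigl(\hat N(\hat T_H^\gamma)^\circ\bigr)=\Cent_{\hat G}\bigl(Z(\hat L_\gamma)^\circ\bigr)=\hat L_\gamma$, which is \eqref{incl}. I would double-check the edge cases where $\gamma=1$ (then $(\hat T_H^\gamma)^\circ=\hat T_H$, $\hat L_\gamma=\hat G$, and the statement is immediate) and where $\G$ has type $\mathrm G_2$ or the quasi-closedness subtleties of \S\ref{sec:DTT2} could interfere, but I do not expect these to cause real trouble given the multiplicity-one hypothesis.
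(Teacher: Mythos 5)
Your plan is sound in identifying the right target — reduce to a statement about which roots of $\hat{G}$ vanish on the torus $\hat{N}\bigl((\hat{T}_H^{\gamma})^{\circ}\bigr)$, and compare with the roots of $\hat{L}_{\gamma}$ — but the crucial step is asserted, not proved, and the mechanism you propose to close the gap does not work.

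The step ``a root $\delta^{\vee}_*$ of $\hat{G}$ vanishes on this image iff $\delta^{\vee}$ pairs to zero with $N^*(\chi)$ ... equivalently iff $\langle \delta,\chi\rangle=0$'' is exactly the content of the Lemma, and it is not a bookkeeping identity.  First, the expression $N^*(\chi)$ does not typecheck: $\chi$ is a cocharacter of $\TT_H$, i.e.\ an element of $X_*(\TT_H)$, while $N^*$ is a map $X^*(\TT_H) \to X^*(\TT_G)$; you are implicitly using some identification of $X_*$ with $X^*$, and that identification (essentially the Killing form) is where all the work is hiding.  Second, even granting some such identification, vanishing of a root on the whole cocharacter lattice of $\hat{N}\bigl((\hat{T}_H^{\gamma})^{\circ}\bigr)$ is a priori a stronger condition than vanishing on a single element; the genericity of $\chi$ controls which roots of $\GG$ vanish on $(\TT_H^{\gamma})^{\circ}$, but that genericity does not transfer automatically across the dual norm.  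If you spell out the pairing, the condition ``$\delta^{\vee}_*$ vanishes on $\hat{N}\bigl((\hat{T}_H^{\gamma})^{\circ}\bigr)$'' becomes $N_{\gamma} N\circ \delta^{\vee}=0$, while ``$\delta$ vanishes on $(\TT_H^{\gamma})^{\circ}$'' becomes $\delta\circ \iota N_{\gamma} N =0$, and the Lemma is the claim that these are equivalent.  The paper proves this by showing $\iota N_{\gamma} N$ is self-adjoint for the Killing form on $\mathfrak{t}_G$ (the adjoint of $N$ is $p\iota$, of $\iota$ is $N/p$), reducing to self-adjointness of $N_{\gamma}$ on $\mathfrak{t}_H$ for the \emph{restricted} Killing form; the multiplicity-one hypothesis enters precisely to ensure $\gamma$ preserves the restricted form, via proportionality to the $\mathfrak{h}$-Killing form on each simple factor.

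Your proposed replacement — a rank count, arguing that $\hat{N}\bigl((\hat{T}_H^{\gamma})^{\circ}\bigr)$ has ``full rank inside $Z(\hat{L}_{\gamma})$'' — is not correct.  Even after the Lemma is established, $\hat{N}\bigl((\hat{T}_H^{\gamma})^{\circ}\bigr)$ need not have the same dimension as $Z(\hat{L}_{\gamma})^{\circ}$ (the center of the centralizer of a torus can be strictly larger than the torus).  You can see this already in your own ``edge case'': when $\gamma=1$ one has $\LL_G(1)=\Cent_{\GG}(\TT_H)=\TT_G$, hence $\hat{L}_1=\hat{T}_G$, \emph{not} $\hat{G}$ as you wrote; the Lemma then asserts that no root of $\hat{G}$ vanishes on $\hat{N}(\hat{T}_H)$, which is the nontrivial content of the Proposition of \S\ref{subsec:title} ($N\circ\alpha^{\vee}\neq 0$ for every root $\alpha$).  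Whenever $\HH$ has smaller rank than $\GG$, the torus $\hat{N}(\hat{T}_H)$ is a \emph{proper} subtorus of $\hat{T}_G=Z(\hat{L}_1)^{\circ}$, so the rank count fails, yet the Lemma is true.  The right mechanism is the self-adjointness/Killing-form argument, not a dimension comparison.
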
 
\proof 
We need to prove that, for a root $\alpha$ of $\GG$, 
it is equivalent for $\alpha$ to be trivial on $\left( \TT_H^{\gamma}\right)^{\circ}$,
and for the associated root $\alpha_*^{\vee}$ of $\hat{G}$
to be trivial on $\hat{N}(\hat{T}_H^{\gamma})^{\circ}$. Here, the subscript $\circ$ denote connected component.

The latter condition is equivalent to 
$$ \alpha^{\vee}_*  \circ \hat{N} \mbox{ is trivial on }  \left(  \hat{T}_H^{\gamma}\right)^0.$$
or equivalently
$N \circ \alpha^{\vee}: \mathbb{G}_m \rightarrow \TT_H$ 
should project trivially to coinvariants $(\TT_H)_{\gamma}$, or, what is the same,
$N \circ \alpha^{\vee}$ takes image in $(\gamma-1)\TT_H$.

Write $N_{\gamma}: \TT_H \rightarrow \TT_H$ for the ``$\gamma$-norm''   $ \sum_{j=1}^{o} \gamma^j$
 where $o$ is the order of $\gamma$ acting on $\hat{H}$.  Also write $\iota$ for the inclusion of $\TT_H$ into $\TT_G$.  
 Thus $(\gamma-1)\TT_H$ is the connected component of the kernel of $N_{\gamma}$;
 on the other hand, 
 $\alpha$ is trivial on $\left( \TT_H^{\gamma}\right)^{\circ}$ if and only if
 $\alpha \circ \iota N_{\gamma}$ is trivial on $\TT_H$, i.e.
 if $\alpha \circ  \iota N_{\gamma}  N$ is trivial on $\TT_G$. 
 So we need to check that 
 $$ \mbox{
 $\alpha \circ \iota N_{\gamma}  N = 0$ if and only if $ \underbrace{\iota}_{\TT_H \rightarrow \TT_G} \underbrace{N_{\gamma}}_{\TT_H \rightarrow \TT_H}   \underbrace{N}_{\TT_G \rightarrow \TT_H}  \circ \underbrace{\alpha^{\vee}}_{\G_m \rightarrow \TT_G}= 0$. } $$
 
Computing with  Lie algebras, we see that  it is enough to check that $\iota N_{\gamma} N$,
considered as a map $\mathfrak{t}_G \rightarrow \mathfrak{t}_G$,
is self-adjoint with respect to
the Killing form on $\mathfrak{g}$. Equip  $\mathfrak{t}_H$ with the restricted Killing form from $\mathfrak{g}$. 
Factor this map as 
$$ \mathfrak{t}_G \stackrel{N}{\rightarrow} \mathfrak{t}_H \stackrel{N_{\gamma}}{\rightarrow} \mathfrak{t}_H \stackrel{\iota}{\hookrightarrow} \mathfrak{t}_G.$$
The adjoint of $N: \mathfrak{t}_G \rightarrow \mathfrak{t}_H$ with respect to the Killing form is
$p \iota$, and the adjoint of $\iota$ is $N/p$.

So it is enough to verify that $N_{\gamma}: \mathfrak{t}_H \rightarrow \mathfrak{t}_H$ is self-adjoint with respect
to the restricted Killing form; and for that it is enough to see that $\gamma$  preserves the restricted Killing form. 
But on each simple factor of $\mathfrak{h}$, the restricted Killing form from $\mathfrak{g}$
is proportional to the Killing form for $\mathfrak{h}$, and (by virtue of the assumption on $\mathfrak{h}$) the action of $\gamma$ preserves the splitting into simple factors.
   \qed  

 We could have reached the same conclusion under the following assumption:
   Split $\mathfrak{h} = \bigoplus \mathfrak{h}_i$, 
 where each $\mathfrak{h}_i$ is a sum of simple Lie algebras of the same type (and the types of different $\mathfrak{h}_i$ are distinct).
 Then we require that the action of $\gamma$ on each $\mathfrak{h}_i$ be induced by an element of $\Aut(\mathfrak{g})$. 
 
\section{Construction of $\sigma$-dual homomorphisms 1: Preliminaries} \label{section:prelim}

The remainder of this paper is devoted to the proof of the second main theorem, i.e.
  \begin{theorem} \label{Gsctheorem}
 Suppose that $\GG$ is simply connected and $\HH=\GG^{\sigma}$ is semisimple, where $\sigma$ is a
 $F$-rational automorphism of $\GG$ of prime order.  
 Suppose also that $(\mathrm{Lie}(\GG), \mathrm{Lie}(\HH))$ does not contain a factor isomorphic to
 $(\mathfrak{e}_6, \mathfrak{sl}_3^3  \mbox{ or } \mathfrak{sl}_6 \times \mathfrak{sl}_2 \mbox{ or } \mathfrak{sp}_8)$.
  Then there exists a $\sigma$-dual homomorphism, with respect
 to  canonical pseudoroots (see Proposition \ref{canpseudo}).   \end{theorem}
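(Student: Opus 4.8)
The plan is to classify the pairs $(\GG,\HH)$ that can occur, to write down in each case an explicit candidate homomorphism $\Lpsi\colon\LH\to\LG$ over $\Gamma_F$, and to verify it is $\sigma$-dual by means of the criterion of \S\ref{BrauerSatakeCor} --- falling back on the Theorem of \S\ref{BrauerSatake} directly in the cases where that criterion does not apply. Throughout we use the canonical pseudoroots for $\GG$ and $\HH$ supplied by Proposition~\ref{canpseudo}.

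\emph{Reductions.} Since $\GG$ is simply connected it is a product of $F$-almost-simple factors, and $\sigma$ permutes these; organizing the combined action of $\sigma$ and $\Gamma_F$ on the geometric simple factors, and using that a product of $\sigma$-dual homomorphisms is again $\sigma$-dual (Hecke algebras tensor), one reduces to two situations. If $\sigma$ cyclically permutes $p$ conjugate factors --- the cyclic base change situation --- then $\HH$ is the diagonal copy, $\hat{G}=\hat{H}^p$, and one takes $\Lpsi$ to be the diagonal inclusion $\hat{H}\hookrightarrow\hat{H}^p$ with $\Gamma_F$ acting on the right through the permutation of the factors; the comparison of canonical pseudoroots is then immediate. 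Otherwise $\GG$ is absolutely almost simple and $\sigma$, being $F$-rational of prime order $p$, is inner or outer. When $\sigma$ is inner, the semisimplicity of $\HH$ forces $(\GG,\HH)$ onto the Borel--de Siebenthal list of \S\ref{DTT1}; when $\sigma$ is outer we have $p\in\{2,3\}$ and $\HH$ is the fixed-point group of a pinned automorphism of order $p$, possibly composed with an inner twist. (In all cases $\HH$ is automatically connected.)

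\emph{The verification apparatus.} Fix a $\Gamma$-admissible Borel tuple $(\TT_H,\BB_H;\TT_G,\BB_G)$, available by Lemma~\ref{pokemon}(2) --- or replaced by a family of $\gamma$-admissible tuples, one per $\gamma$, in the cases where the criterion of \S\ref{BrauerSatakeCor} must be bypassed --- and form the dual norm $\hat{N}\colon\hat{T}_H\to\hat{T}_G$ and the dual Levis $\hat{L}_\gamma$ of \S\ref{normsec}. The reduced pairs all satisfy the multiplicity hypothesis on $\Lie(\HH)$, so Lemma~\ref{ZNL} identifies $\Cent_{\hat{G}}\bigl(\hat{N}(\hat{T}_H^\gamma)^\circ\bigr)$ with $\hat{L}_\gamma$. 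By \S\ref{BrauerSatakeCor} it then suffices to check that $\Lpsi$ restricts to $\hat{N}$ on $\hat{T}_H$ and satisfies condition $(\beta)$: for each $\gamma\in\Gamma_F$, the elements $\Lpsi(\inverserho_H(\gamma))$ and $\inverserho_G(\gamma)$ centralize $\Cent(\hat{L}_\gamma)$ and have the same prime-to-$p$ part. For canonical pseudoroots, $\inverserho_G(\gamma)=\tfrac{-\Sigma^*_G}{2}(\cyclo(\gamma))\rtimes\gamma$ and similarly for $\HH$ by \eqref{writeitout}; since $\cyclo$ is valued in $\F_p^*$, the toral contributions are prime-to-$p$ torsion, so $(\beta)$ reduces to matching the cocharacter $\hat{N}_*\bigl(\tfrac{\Sigma^*_H}{2}\bigr)$ against $\tfrac{\Sigma^*_G}{2}$ up to the Galois twist carried by $\Lpsi$ and up to something centralizing $\Cent(\hat{L}_\gamma)$ --- which is the precise content of the slogan that the Galois component of $\Lpsi$ must compensate the difference of half-sum shifts of $\GG$ and $\HH$.

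\emph{Construction and case analysis.} This is the bulk of the argument (\S\ref{sec9}, \S\ref{casebash3}, \S\ref{casebash4}). For inner $\sigma$, the natural candidate has $\Lpsi(\hat{H})$ equal to a conjugate of the endoscopic subgroup $\hat{H}_0\subset\hat{G}$ of \S\ref{sec:DTT2}, so the first task is to identify the based root datum produced by $\hat{N}$ with that of $\hat{H}_0$; this is uniform away from small characteristics, where one must instead twist by an exceptional isogeny --- the characteristic $3$ isogeny $\mathrm{G}_2\to\mathrm{G}_2$, as in the $\Spin_8$-triality example, or the characteristic $2$ isogenies in types $\mathrm{B}$, $\mathrm{C}$, $\mathrm{F}_4$ --- after which the Galois component of $\Lpsi$ is pinned down by $(\beta)$. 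Outer $\sigma$ is treated dually, starting from the dual of the pinned automorphism. I expect the main obstacle to be exactly this characteristic-$p$ case-by-case bookkeeping: choosing the correct, possibly twisted, embedding $\hat{H}\to\hat{G}$ and checking the root-datum identification together with $(\beta)$, the genuinely delicate cases being the exceptional pairs such as the order-$5$ inner automorphism of $\mathrm{E}_8$ and the triality of $\Spin_8$. The pairs $(\mathfrak{e}_6, \mathfrak{sl}_3^3 \mbox{ or } \mathfrak{sl}_6 \times \mathfrak{sl}_2 \mbox{ or } \mathfrak{sp}_8)$ are excluded from the statement precisely because the construction --- or at any rate its verification by these methods --- breaks down there.
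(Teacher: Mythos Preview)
Your outline is structurally correct and tracks the paper's approach closely: reduce to $\GG$ absolutely almost simple (\S\ref{subsec:rttcwGiaas}, with cyclic base change handled in \S\ref{CBC}), classify the pairs via the tables of \S\ref{zippy}, and then verify case-by-case using the criterion of \S\ref{BrauerSatakeCor}. However, what you have written is a plan rather than a proof, and the plan contains some inaccuracies that would bite once you actually carried it out.

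First, your claim that ``the reduced pairs all satisfy the multiplicity hypothesis on $\Lie(\HH)$'' for Lemma~\ref{ZNL} is false: $(\mathfrak{f}_4,\mathfrak{sl}_3^2)$, $(\mathfrak{e}_8,\mathfrak{sl}_5^2)$, and $(\mathfrak{sp}_{2n},\mathfrak{sp}_{2u}\times\mathfrak{sp}_{2u})$ all have repeated simple factors. The paper circumvents this with the alternative criterion noted after the proof of Lemma~\ref{ZNL}, and in some cases (e.g.\ $\mathfrak{g}_2\supset\mathfrak{so}_4$) sidesteps the lemma entirely. Second, you have blurred the inner and outer constructions: the $\mathrm{G}_2\to\mathrm{G}_2$ exceptional isogeny arises in the \emph{pinned} (outer) triality case via Lemma~\ref{pinnedcase} and Chevalley's isogeny theorem, not in the inner construction, which instead uses the endoscopic subgroup of \S\ref{sec:DTT2} composed with Frobenius (Proposition~\ref{LpsiInner}). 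Third, condition $(\beta)$ is not always what is checked; in the inner $p=3$ cases (Proposition~\ref{crit6}) and the $p=5$ case of $\mathrm{E}_8$, the paper verifies the weaker condition~(b) of \S\ref{BrauerSatakeCor} directly, via an explicit lift $\varpi$ of a specific Weyl element to $N_{\hat{G}}(\hat{T})$ --- this is where the real work lies, and it requires the tables of \S\ref{specialcase} and Lemma~\ref{lem:windomearle}. Finally, for several of the cases in Theorem~\ref{casebash4thm} the paper remarks that even \S\ref{BrauerSatakeCor} is insufficient and one must fall back on the Theorem of \S\ref{BrauerSatake} directly. Your sentence ``I expect the main obstacle to be exactly this characteristic-$p$ case-by-case bookkeeping'' is accurate, but that bookkeeping \emph{is} the proof.
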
 

In this   somewhat miscellaneous section, we reduce  to the case where $\GG$ is absolutely almost simple.
 Part of this reduction involves proving the Conjecture in the important case of cyclic base change.  
 After these reductions, what is left is a finite list of cases, summarized in the Tables at the end of the section.

\subsection{Cyclic base change}
\label{CBC}
Let $E/F$ be a cyclic extension, let $\HH$ be a simply connected group over $F$,  and let $\GG = \mathrm{Res}_{E/F} (\HH \otimes_F E)$ (thus also simply connected). 
Then a generator $\sigma$ for $\Aut(E/F)$ induces an automorphism of $\GG$ fixing $\HH$.  
In this case there is a unique \sigmastable Borel class. 

We claim that the canonical ``diagonal-restriction'' map on $L$-groups $\LH \rightarrow \LG$ 
is a $\sigma$-dual homomorphism.  (For generalities on the $L$-group of a restriction of scalars, we refer to \cite[\S 5]{Borel}). That follows from \S\ref{BrauerSatake}
and the following facts:  with respect to the canonical map $\iota: \LT_H \rightarrow \LT_G$,
the canonical pseudoroots pseudoroots satisfy $\inverserho_G = \iota \circ \inverserho_H$
and also moreover $\iota|_{\hat{T}_H}$ is simply the dual norm. 

\begin{prop} \label{anyformworks} 
Suppose that $\GG$ has simply connected cover $\GG'$.
Let $\hat{G}'$ be the dual group to $\GG'$.  Suppose that   canonical pseudoroots exist for $\GG, \GG', \HH, \HH'=\mathrm{fix}(\sigma)$.  If $(\GG, \sigma)$ has a $\sigma$-dual homomorphism $\Lpsi: \LH \rightarrow \LG$, extending
a dual norm map,  then so  also does
$(\GG', \sigma)$. 
\end{prop}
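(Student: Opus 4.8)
The plan is to descend the existence of a $\sigma$-dual homomorphism along the central isogeny $\GG' \to \GG$, using the compatibility of canonical pseudoroots with the isogeny. First I would record the basic duality picture: a central isogeny $\GG' \to \GG$ with simply connected $\GG'$ dualizes to a central isogeny $\hat{G} \to \hat{G}'$ (note the reversal of direction), and likewise $\HH' \to \HH$ dualizes to $\hat{H} \to \hat{H}'$, where $\HH' = \mathrm{fix}(\sigma|_{\GG'})$; since $\sigma$ lifts canonically to $\GG'$ this makes sense. These isogenies intertwine $\Frob$ and the $\Gamma_F$-actions (the Galois action is defined via $\Out$, which only sees the based root datum up to central modification), so one gets compatible maps $\LG \to \LG'$ and $\LH \to \LH'$. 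The existing $\sigma$-dual homomorphism $\Lpsi: \LH \to \LG$ for $(\GG,\sigma)$ then yields by composition a map $\LH \to \LG \to \LG'$; what I want is a map $\LH' \to \LG'$, so I would instead need to produce $\Lpsi'$ directly and then check it has the defining property.

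The cleaner route, which I would take, is the criterion of \S\ref{BrauerSatakeCor}: by hypothesis $\Lpsi$ extends a dual norm map $\hat{N}: \hat{T}_H \to \hat{T}_G$ attached to some $\Gamma$-admissible Borel tuple (such a tuple exists by Lemma \ref{pokemon}(2), which only depends on the root data, hence is the same for $\GG$ and $\GG'$). Pushing $\hat{T}_G \to \hat{T}_{G'}$ and pulling back along $\hat{T}_{H'} \to \hat{T}_H$ produces a candidate dual norm $\hat{N}'$ for $(\GG',\sigma)$ — one checks this is exactly the dual norm attached to the \emph{same} admissible tuple, now read on the $\GG'$-side, because the norm map $\TT_G \to \TT_H$ and the root data are transported compatibly by the isogeny. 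I would then build a candidate $\Lpsi': \LH' \to \LG'$ by the recipe of \S\ref{BrauerSatakeCor}: on $\hat{T}_{H'}$ it is $\hat{N}'$, and on the Galois component one specifies it so that condition (b) (equivalently ($\beta$), via the prime-to-$p$ part) holds. Here the key input is that canonical pseudoroots are compatible with the isogeny: the canonical splittings $\inverserho_G, \inverserho_{G'}$ (built either from the trivial character when $p=2$, or from $\tfrac{\Sigma^*}{2}(\mathrm{cyclo}(\gamma))$) are carried to one another under $\LG \to \LG'$, since $\Sigma^*_G$ maps to $\Sigma^*_{G'}$ under the dual isogeny (both are sums of positive coroots, and coroots are preserved); similarly for $\HH$. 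Thus $\Lpsi'(\inverserho_{H'}(\gamma))$ and $\inverserho_{G'}(\gamma)$ have the same image in $\hat{G}' \rtimes \gamma \git \hat{L}'_\gamma$ precisely because the corresponding statement holds downstairs for $\Lpsi$ — one transports the identity $f(\hat{N}(t)\alpha) = f(\hat{N}(t)\beta)$ for $\hat{L}_\gamma$-invariant $f$ across the isogeny, using that $\hat{L}'_\gamma$ is the image of $\hat{L}_\gamma$.

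Concretely the steps are: (1) fix a $\Gamma$-admissible tuple, the same for $\GG$ and $\GG'$, and set up the dual isogenies $\hat{G} \to \hat{G}'$, $\hat{H} \to \hat{H}'$ with their $\Gamma_F$-equivariance; (2) check $\Sigma^*_G \mapsto \Sigma^*_{G'}$ and $\Sigma^*_H \mapsto \Sigma^*_{H'}$, hence the canonical pseudoroots correspond; (3) transport the dual norm $\hat{N}$ to $\hat{N}'$ and verify it is the dual norm for the tuple on the $\GG'$-side; (4) define $\Lpsi'$ on $\hat{T}_{H'}$ as $\hat{N}'$ and on $\Gamma_F$ so that the values at $\inverserho_{H'}(\gamma)$ agree with $\inverserho_{G'}(\gamma)$ up to the centralizer of $\Cent(\hat{L}'_\gamma)$ and up to prime-to-$p$ powers — this is forced, and well-defined, because it is the image under the isogeny of the data defining $\Lpsi$; (5) apply the criterion of \S\ref{BrauerSatakeCor} (conditions (a) and ($\beta$)) to conclude $\Lpsi'$ is a $\sigma$-dual homomorphism. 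The main obstacle I anticipate is step (4): one must check that the recipe really does descend — i.e. that there is no obstruction to defining $\Lpsi'$ on the Galois component compatibly with $\Lpsi$ — and in particular that the central kernel of $\hat{G} \to \hat{G}'$ does not interfere with the semidirect-product structure (one needs $\Frob$ and $\gamma$ to act the same way on the kernel, which they do since the kernel is a subgroup of $\hat{T}_G$ defined over $\F_p$ and $\Gamma_F$-stable). Granting that, the verification of ($\beta$) is essentially formal given the corresponding property of $\Lpsi$ and the compatibility of pseudoroots, so the proposition follows.
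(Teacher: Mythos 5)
Your plan has a genuine gap at the step where you transport the criterion of \S\ref{BrauerSatakeCor} from $\GG$ to $\GG'$. You write that ``$\Lpsi'(\inverserho_{H'}(\gamma))$ and $\inverserho_{G'}(\gamma)$ have the same image in $\hat{G}' \rtimes \gamma \git \hat{L}'_\gamma$ precisely because the corresponding statement holds downstairs for $\Lpsi$.'' But the hypothesis of the Proposition only tells you that $\Lpsi$ is a $\sigma$-dual homomorphism extending a dual norm map; this is condition (a) of \S\ref{BrauerSatakeCor}, not condition (b). Condition (b) --- agreement of $\Lpsi(\inverserho_H(\gamma))$ and $\inverserho_G(\gamma)$ in the quotient by $\hat{L}_\gamma$ rather than by all of $\hat{G}$ --- is a strictly stronger, merely \emph{sufficient} criterion; the Theorem of \S\ref{BrauerSatake} together with the $\sigma$-dual property only forces agreement of $\hat{N}(t)\Lpsi(\inverserho_H(\gamma))$ with $\hat{N}(t)\inverserho_G(\gamma)$ after passing to $\git\,\hat{G}$. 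So there is no statement ``downstairs'' to transport, and the argument does not close. (You would also need to actually construct $\Lpsi'$ on the whole of $\hat{H}'$, not only on $\hat{T}_{H'}$ and the Galois component; ``specify it so that (b) holds'' is not a definition, and the kernel you need to control for the descent is $\ker(\hat{H}\to\hat{H}')$, not $\ker(\hat{G}\to\hat{G}')$.)

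The paper takes a route that sidesteps all of this and is in fact more elementary. It does not invoke \S\ref{BrauerSatakeCor} at all. Instead it observes that $\cH(G'_v,K'_v)$ embeds into $\cH(G_v,K_v)$ by extension by zero (and likewise for $\HH$), that these embeddings are compatible with the Satake isomorphisms and with both the unnormalized Brauer map $\Br$ and hence the normalized one $\NBr$ (the central square of the displayed diagram commutes for $\Br$ by inspection, and one upgrades to $\NBr$ by the uniqueness of extensions from \S\ref{Taterings}). Given this diagram, \emph{any} $\Lpsi'$ covered by $\Lpsi$ automatically inherits the $\sigma$-dual property, with no need to re-verify a criterion. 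The only thing left to check is that the composite $\LH \to \LG \to \LG'$ does factor through $\LH'$, i.e.\ kills $\ker(\hat{H}\to\hat{H}')$; this follows because that kernel lies in $\hat{T}_H$ and $\Lpsi|_{\hat{T}_H}$ is the dual norm, which visibly descends to a dual norm $\hat{T}_{H'} \to \hat{T}_{G'}$. Your observations about the reversal of arrows under duality and the compatibility of canonical pseudoroots (via $\Sigma^*_G \mapsto \Sigma^*_{G'}$) are correct and useful, but to repair your proof you would either have to strengthen the hypothesis to include condition (b), or replace the appeal to \S\ref{BrauerSatakeCor} by the Hecke-algebra compatibility argument.
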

  Note that the converse statement does not hold: if there is a $\sigma$-dual homomorphism
 for $\widetilde{\GG}$, we cannot make any deduction about $\GG$. 
 
\proof 
Let $\HH'$ be the fixed points of $\sigma$ on $\GG'$.
The Hecke algebra for $\GG'$ is identified with a subalgebra of the Hecke algebra for $\GG$
and similarly for $\HH$.  Indeed, 
let  
$K_v \subset G_v$ be a hyperspecial maximal compact subgroup,
with preimage $K_v'$. We may identify $(G'_v/K_v' )$ with a subset of $(G_v/K_v)$; 
moreover, if two elements in $G_v'/K_v'$ are in the same $K_v$-orbit, they are also in the same $K_v$-orbit. 
Then the identification is given by ``extension by zero.'' 

For almost all $v$, the following diagram commutes, where the middle vertical arrows are the inclusions just mentioned
\begin{equation} \label{passagetocover1} 
\xymatrix{
 k[   \hat{G}' \rtimes \Frob_v \git  \hat{G}] \ar[d]   &  \mathcal{H}(G_v')  \ar[l]_{\qquad \simeq} \ar[d]\ar[r]^{\NBr}  & \mathcal{H}(H'_v) \ar[d] \ar[r]^{\simeq \qquad}  & k[   \hat{H}' \rtimes \Frob_v \git  \hat{H}]  \ar[d] \\
 k[ \hat{G} \rtimes \Frob_v  \git \hat{G}_v]    &\mathcal{H}(G_v) \ar[l]^{ \qquad \simeq}  \ar[r]_{\NBr}   &   \mathcal{H}(H_v)  \ar[r]_{\simeq \qquad}  &  k[   \hat{H} \rtimes \Frob_v \git  \hat{H}] 
}
\end{equation}  We discuss only commutativity of the central square: It clearly commutes
if $\NBr$ is replaced by the un-normalized $\Br$ (vertical arrows are extension by zero, the horizontal arrows are then restriction). The two compositions in the central square are thus seen to be the same on $\cH(G_v', K_v'; \mathbf{F}_p)^{\sigma}$
and by \S \ref{Taterings} the same on $\cH(G_v', K_v')$. 
 
 It follows from these remarks that, if the map $\Lpsi: \LH \rightarrow \LG$
 covers a  homomorphism $\Lpsi': \LH' \rightarrow \LG'$, then $\Lpsi'$ is also a $\sigma$-dual homomorphism. 
To verify that $\Lpsi$ indeed descends to such a $\Lpsi'$, it is enough to check that its projection to  
$\LG'$ is trivial on the kernel of $\LH \rightarrow \LH'$, that is, the kernel of
$\hat{H} \rightarrow \hat{H}'$.  (This kernel is regarded as a group-scheme; it may have a single $k$-point.) But that kernel is contained inside $\hat{T}_H$;
and the dual norm map $\hat{T}_H \rightarrow \hat{T}_G$ visibly 
  covers a dual norm map $\hat{T}_H' \rightarrow \hat{T}_G'$, 
  in particular, is trivial on $\ker(\hat{T}_H \rightarrow \hat{T}_H') \simeq \ker(\hat{H} \rightarrow \hat{H}')$. 
\qed

\subsection{Reduction to the case where $\G$ is absolutely almost simple}
\label{subsec:rttcwGiaas}

 Suppose now that $\G$ is simply connected over $F$. 
We claim that the existence of a $\sigma$-dual homomorphism reduces
to the absolutely simple case.  Start by noting: 

\begin{itemize}
\item[(i)] If $(\G, \sigma) = (\G_1, \sigma_1 ) \times (\G_2, \sigma_2)$, 
and the $\G_i$ admit $\sigma$-dual homomorphisms, then so does $\G$.

\item[(ii)]  If $E \supset F$ is an extension field
and $(\G, \sigma) = \mathrm{Res}_{E/F} (\G', \sigma')$,  
and $\G'$ admits a $\sigma$-dual homomorphism, then so does $\G$.  
(Here the  $L$-group of $\G$ is the ``induction'' of the $L$-group of $\G'$, 
and we just do direct computations with induced groups.)

 \end{itemize}

By a version of  \cite[\S 6.21 (ii)]{BorelTits}, there
is an {\'e}tale $F$-algebra $F'$,
an $F'$-group $\GG'$ with {\em absolutely almost simple} geometric fibers,   
so that $\GG = \mathrm{Res}_{F'/F} \GG'$, 
and moreover an  $F$-automorphism $\tau$ of $F'$
and an $\tau$-linear automorphism $\widetilde{\tau}$ of $\GG'$
which induce $\sigma$.

Now split $(F', \tau)$ as a product of irreducible factors.  
There's a corresponding splitting of $\GG'$, and it is thus enough to consider the case that $(F', \tau)$ is irreducible, i.e.
either   $F'$ is a field, or
  $F'$ is a sum of $p$ isomorphic fields, permuted by $\tau$; in that case there is also
a corresponding decomposition of $\GG' \simeq \GG_0^p$, the factors being permuted by $p$.  
 The second case is easy to check by hand (indeed, it amounts to \S  \ref{CBC} in the degenerate case where $E = F^{\oplus p}$). 

 In the first case ($F'$ a field)
 we use $\widetilde{\tau}$ to descend $\GG'$ to a group $\GG^*$ over the fixed field $F^* \subset F'$ so that there's an isomorphism
$$ \G^* \otimes_{F^*} F' \simeq \G',$$
and now $\widetilde{\tau}$ on the right-hand side is induced by $\tau \in \mathrm{Aut}(F'/F^*)$.  

Set $\G'' = \mathrm{Res}_{F'/F^*} (\G^* \otimes_{F^*} F')$, and let $\sigma''$ be the automorphism
of $\G''$ defined by $\tau \in \Aut(F'/F^*)$. 
Then
$(\G, \sigma)   =\mathrm{Res}_{F'/F} (\G', \widetilde{\tau}) = \mathrm{Res}_{F^*/F} (\G'' , \sigma'')$. Together with reduction (ii), this is cyclic base change, which we have already discussed (\S \ref{CBC})

\begin{prop}[Canonical pseudoroots] \label{canpseudo} 
When $\G$ is simply connected and $\HH$ is semisimple, both $\G$ and $\HH$ have canonical pseudoroots
(see \S \ref{subsec:globalpseudoroot} for definition). 
\end{prop}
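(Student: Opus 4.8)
Recall (see \S\ref{subsec:globalpseudoroot}) that a canonical global pseudoroot exists in either of two situations: when $\mathrm{char}(k)=2$ (take the trivial character), or when the half-sum of positive roots $\frac{\Sigma_G}{2}$ lies in the character lattice $X^*(\Tcan_G)$ (pull back the cyclotomic idele class character via $\frac{\Sigma_G}{2}$). The case $p=2$ being immediate, the plan is to show that, when $p$ is odd, both $\G$ (simply connected) and $\HH = \G^\sigma$ (semisimple) have $\frac{\Sigma}{2}$ in the relevant character lattice. For $\G$ simply connected this is classical: the character lattice $X^*(\Tcan_G)$ is the weight lattice, which contains all integral linear combinations of fundamental weights, and $\frac{\Sigma_G}{2} = \rho$ is the sum of the fundamental weights, hence lies in the weight lattice. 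So the only real content is the assertion for $\HH$.

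For $\HH$, I would argue as follows. The statement about $\frac{\Sigma_H}{2} \in X^*(\Tcan_H)$ is equivalent to the assertion that $\langle \frac{\Sigma_H}{2}, \alpha^\vee \rangle \in \ZZ$ for every coroot $\alpha^\vee$ of $\HH$ \emph{and} that $\frac{\Sigma_H}{2}$ is fixed by the Galois action $\theta_H : \Gamma_F \to \Aut(\Psi(\HH))$ (so that it descends to an $F$-rational character of $\Tcan_H$). The integrality half is automatic since $\langle \rho_H, \alpha^\vee\rangle$ is a nonnegative integer for each positive coroot, so $\frac{\Sigma_H}{2} = \rho_H$ pairs integrally with coroots; the subtlety is only whether $\rho_H$ is a genuine weight of the \emph{possibly non-simply-connected} group $\HH$, i.e. whether it lies in $X^*(\Tcan_H)$ rather than merely in the weight lattice of the simply connected cover. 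The Galois-invariance half follows because $\theta_H$ acts through diagram automorphisms, which permute the simple roots of $\HH$ and hence fix $\rho_H = \frac12\sum_{\alpha>0}\alpha$. So everything reduces to: \emph{if $\HH = \G^\sigma$ with $\G$ simply connected, then $\rho_H$ is a character of $\Tcan_H$}, i.e. $\rho_H$ is trivial on $\Cent(\HH)$ (equivalently, on the fundamental group dual).

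The key step — and the main obstacle — is therefore this last lattice-theoretic claim about $\HH = \G^\sigma$. I would prove it using the explicit structure theory recalled in the excerpt. When $\sigma$ is inner with semisimple fixed points, Borel--de Siebenthal theory (\S\ref{DTT1}) identifies $\HH$ as the centralizer of a torus element, containing the maximal torus $\TT$ of $\G$; then $X^*(\Tcan_H)$ is a quotient of $X^*(\TT)$, which for $\G$ simply connected is the weight lattice, and one checks $\rho_H$ is the image of a weight of $\G$ — indeed of $\rho_G$ minus a sum of fundamental weights indexed by the root removed — using the description $\Delta(\HH) = \{-\alpha_0\}\cup\{\alpha_j\}_{j\neq i}$ of the simple roots. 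When $\sigma$ is outer (a pinned automorphism, possibly composed with an inner one), one uses that $\HH$ is then quasi-split with root system the "folded" diagram, and $\rho_H$ is the half-sum of positive roots in this folded system; a direct check on each Dynkin type ($A_{2n}, A_{2n-1}, D_n, D_4, E_6$) shows $\rho_H$ is a weight of $\HH$. Since $\G$ is simply connected, $\HH = \G^\sigma$ is known (see the footnote to the Second Main Theorem, and \cite{Steinberg, Reeder}) to be connected; combined with Steinberg's results this pins down its isogeny type sufficiently to run the case-check. Because the list of $(\Lie\G, \Lie\HH)$ arising this way is finite and explicit, I would organize the verification as: (1) handle $p=2$ by triviality; (2) for $p$ odd, handle $\G$ by the simply-connected remark; (3) for $p$ odd, handle $\HH$ by reducing to "$\rho_H \in X^*(\Tcan_H)$", then dispatch the inner case via Borel--de Siebenthal and the outer case by inspection of the folded diagrams. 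I expect step (3) for the non-simply-laced folded cases ($\mathrm{B}_n$, $\mathrm{C}_n$, $\mathrm{F}_4$, $\mathrm{G}_2$ as fixed-point groups) to require the most care, since there $\rho_H$ involves short and long roots with different multiplicities, but in each case $\rho_H$ turns out to be an integral combination of fundamental weights lying in $X^*(\Tcan_H)$, so no obstruction arises.
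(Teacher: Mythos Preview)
Your overall strategy is the same as the paper's: for $p=2$ take the trivial pseudoroot, for $\G$ simply connected use that $\rho_G$ lies in the weight lattice, and for $\HH$ reduce to showing $\rho_H \in X^*(\Tcan_H)$. But your execution of the $\HH$-step has a real gap. The formula you propose in the inner case --- that $\rho_H$ equals ``$\rho_G$ minus a sum of fundamental weights indexed by the root removed'' --- is simply false. For example, with $\GG = \mathrm{G}_2$ and node $1$ removed (so $\HH$ of type $A_2$ with simple roots $\alpha_2, -\alpha_0$), one has $\rho_H = \alpha_2 - \alpha_0 = -3\alpha_1 - \alpha_2$, while $\rho_G - \omega_1 = \omega_2 = 3\alpha_1 + 2\alpha_2$. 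So that route does not establish $\rho_H \in X^*(\TT)$, and without it you have only an outline, not a proof.

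The paper's argument is both correct and much shorter than what you sketch, because it first reduces (via \S\ref{subsec:rttcwGiaas}) to $\G$ absolutely almost simple and then consults the classification tables: for $p$ odd, the only cases are the two triality fixed-point groups on $D_4$ and a short list of inner automorphisms of order $3$ or $5$ on exceptional groups. Your list of outer types $A_{2n}, A_{2n-1}, D_n, E_6$ all have $p=2$, hence are already disposed of; only $D_4$ with $p=3$ survives, so your anticipated ``most care'' for the folded $B_n, C_n, F_4$ cases is misplaced --- they never arise. For the inner cases the paper's key observation is that in all but one of them, $\mathfrak{h}$ is a product of $\mathfrak{sl}_k$ with $k$ odd, and for $\mathfrak{sl}_{2m+1}$ the half-sum $\rho$ already lies in the \emph{root} lattice, hence a fortiori in $X^*(\TT)$. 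The sole exception is $(\mathrm{E}_7,\ \mathfrak{sl}_3 \times \mathfrak{sl}_6)$, where $\Sigma_H$ is computed explicitly in the $\alpha_i$-basis and seen to have all even coefficients. Replacing your incorrect formula with this $\mathfrak{sl}_{\mathrm{odd}}$ observation (plus the one hand computation) closes the gap.
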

 
\begin{proof}
When $p = 2$, the trivial section is a canonical pseudoroot and the proposition is trivial, so let us suppose $p > 2$.  As $\G$ is simply connected, the sum of roots for $\G$ is always divisible by $2$ in the weight lattice, but there is something to check for $\HH$.  

As in \S\ref{subsec:rttcwGiaas}, we may reduce to the case where $\G$ is almost simple.  We consult the classification given in the tables of $\S\ref{zippy}$.
Either
\begin{enumerate}
\item  $\sigma$ is an order $3$ outer automorphism of $\mathrm{Spin}_8$ and $\HH$ is one of $\mathrm{G}_2$, $\PGL_3$.   
\item $\G$ is an exceptional group and $\sigma$ is an inner automorphism of order $3$ or $5$.
\end{enumerate}
The outer cases are readily checked.  In the inner cases (2), we see from the classification that in every case but one, the sum of positive roots of $\HH$ is divisible by $2$ even in the adjoint form (because the Lie algebra of $\HH$ is a product of copies of $\mathfrak{sl}_k$ for $k$ odd.)  The exception is the order $3$ automorphism of $\mathrm{E}_7$, whose fixed points have $\mathfrak{h} \simeq \mathfrak{sl}_3 \times \mathfrak{sl}_6$.  We verify that $\Sigma_H$ is even by hand, using \S\ref{DTT1}.
\[
\Sigma_H = 96\alpha_{1} + 84\alpha_{2} + 96\alpha_{3} + 160\alpha_{4} +
132\alpha_{5} + 96\alpha_{6} + 52\alpha_{7}
\]
where $\alpha_1,\ldots,\alpha_7$ are the simple roots of the simply-connected form of $\mathrm{E}_7$ with their Bourbaki numbering.
\end{proof}

\subsection{Overview of the remainder of the proof} 
\label{zippy}

We have verified in \S\ref{subsec:rttcwGiaas} that it suffices to treat the case where $\G$ is almost simple simply connected.  In that case, automorphisms of $\G \times_F \overline{F}$ whose fixed points are a semisimple group have been classified.   

We now recall the classification of prime order automorphisms of simple Lie algebras over an algebraically closed field   as a brute list (omitting the several interesting automorphisms of composite order), and refer to \cite{Reeder} for a more subtle discussion.   For the outer cases, the ``comments'' column list the name of the automorphism given in loc. cit., \S 4.

Let $\mathfrak{g}$ be a simple Lie algebra over $\overline{F}$, and let $\sigma$ be an automorphism of prime order whose fixed point subalgebra $\mathfrak{g}^{\sigma}$ is semisimple.
We tabulate the cases below (the two tables give inner and outer cases) 
together with references to where Theorem \ref{Gsctheorem} is proved, i.e.
we verify the existence of a $\sigma$-dual homomorphism for some group $\GG$ (not necessarily the simply connected one) 
with the corresponding Lie algebra. This is enough by
 Proposition \ref{anyformworks}.

{\tiny 
\begin{figure}[!htbp]
\subfloat{
  \begin{tabular}{|c|c|c|c|c|}
 \hline \hline
 group &  fixed subgroup   & comments & $p$ & ref \\ 
 \hline
 $\sl_{2n+1}$ & $\so_{2n+1}$ &$\sigma_0 $   & 2 & \ref{crit4} \\ 
 $\sl_{2n}$ & $\sp_{2n}$ & $\sigma_0$   & 2 & \ref{crit4} \\ 
 $\sl_{2n}$ & $\so_{2n}$ &  $\sigma_n$   & 2 &  Th \ref{casebash4thm} \\ 
 $\so_{2n+2}$ & $\so_{2a+1} \times \so_{2b+1}$ &   & 2 &   Th \ref{casebash4thm} \\
 $\mathfrak{so}_8$ & $\mathfrak{g}_2$ & $\sigma_0$ & $3$ & \ref{crit4}  \\ 
  $\mathfrak{so}_8$ & $\mathfrak{sl}_3$ & $\sigma_2$ & $3$ &  Th \ref{casebash4thm} \\ 
  $\mathfrak{e}_6$ & $\mathfrak{f}_4$ & $\sigma_0$ & 2 & \ref{crit4}  \\
  $\mathfrak{e}_6$ & $\mathfrak{sp}_8$ &$\sigma_4$ & 2 &  \\
  \hline \hline 

 \end{tabular}
  }
 \subfloat{
   \begin{tabular}{|c|c|c|c|c|}
 \hline \hline
 group &  fixed subgroup   & comments & $p$ & ref \\ 
 \hline
 $\mathfrak{so}_{2n}$ & $\mathfrak{so}_{2a} \times \mathfrak{so}_{2b}$ &    $a,b \geq 2$ & 2 &  Th \ref{casebash4thm}   \\
$ \mathfrak{so}_{2n+1}$ & $ \mathfrak{so}_{2a+1} \times \mathfrak{so}_{2b}$ &  $b \geq 2$ & 2 & \ref{crit3}  \\
$\mathfrak{sp}_{2n}$ &  $\mathfrak{sp}_{2a} \times \mathfrak{sp}_{2b}$  &   $a,b \geq 1$ &2 & \ref{crit3}  \\
$\mathfrak{g}_2$ & $\mathfrak{sl}_3$ &     & 3 & \ref{crit6}  \\
$ \mathfrak{g}_2$ &  $ \mathfrak{so}_4$ &    & 2& \ref{crit3}  \\ 
$\mathfrak{f}_4$ & $\mathfrak{sl}_3 \times \mathfrak{sl}_3$ &  	& 3 & \ref{crit6} \\ 
$  \mathfrak{f}_4$ &  $ \mathfrak{sp}_6 \times \mathfrak{sl}_2$ &      	&2&  \ref{crit3} \\
 $  \mathfrak{f}_4$  & $  \mathfrak{so}_9$ & 	&2& \ref{crit3} \\
 $  \mathfrak{e}_6$ &  $  \mathfrak{sl}_3^{\times 3}$  & 	&3& \\ 
$\mathfrak{e}_6$  &  $ \mathfrak{sl}_6 \times \mathfrak{sl}_2$. & 		& 2 &\\
  $  \mathfrak{e}_7$ &  $  \mathfrak{sl}_3 \times \mathfrak{sl}_6$ & & 3 &\ref{crit6}   \\ 
  $ \mathfrak{e}_7$  &  $  \mathfrak{sl}_8$ & 	& 2 & \ref{crit3}   \\
  $ \mathfrak{e}_7$  &  $ \mathfrak{sl}_2 \times \mathfrak{so}_{12}$ & 	& 2 & \ref{crit3}  \\ 
$ \mathfrak{e}_8$  & $ \mathfrak{so}_{16}$ & 	& 2 &  \ref{crit3} \\ 
$ \mathfrak{e}_8$ &   $ \sl_9$ & 	 & 3 & \ref{crit6}  \\
$\mathfrak{e}_8$  & $  \sl_5 \times \sl_5$ & 	& 5 &  \\
$\mathfrak{e}_8$ &$  \sl_2 \times \mathfrak{e}_7$ &	& 2 & \ref{crit3} \\
$\mathfrak{e}_8$ & $ \sl_3 \times \mathfrak{e}_6$& 	& 3 &  \ref{crit6} \\
\hline\hline
 \end{tabular}
}
\end{figure}
}

\section{Construction of $\sigma$-dual homomorphisms 2:  Inner cases}
\label{sec9}

In this section, which is part of the proof of the second main theorem \ref{Gsctheorem}, we handle
inner cases with $p=3$, except for $\mathrm{E}_6$.
Throughout this section, suppose $\GG$ is a simply connected algebraic group over $F$, and that $\sigma$ is an inner automorphism of $\GG$ whose fixed points $\HH$ are semisimple.

By a \emph{Borel triple} for $\GG, \HH$ we will mean a triple $(\TT,\BB_H,\BB_G)$ where $\TT$ is a maximal torus in $\HH \times_F \overline{F}$ and $\BB_H$, $\BB_G$ are Borel subgroups of $\HH, \GG$ containing $\TT$.  Note that a Borel triple leads to an identification of canonical tori $\Tcan_G \simeq \TT \simeq \Tcan_H$, and therefore of dual tori $\hat{T}_H \simeq \hat{T}_G$ --- we denote this identification by $\psi_1:\hat{T}_H \stackrel{\sim}{\to} \hat{T}_G$.  (This differs from the dual norm \S\ref{normsec}, which in the inner case is a composition of $\psi_1$ with a Frobenius endomorphism.)

\begin{prop}
\label{psi1sec}
Let $(\TT,\BB_H,\BB_G)$ be a Borel triple and let $\psi_1$ denote the corresponding identification $\hat{T}_H \stackrel{\sim}{\to} \hat{T}_G$.  Then $\psi_1$ extends to an injection of dual groups $\psi_1':\hat{H} \hookrightarrow \hat{G}$.  This extension is moreover unique up to $\hat{T}_H$-conjugacy. 
\end{prop}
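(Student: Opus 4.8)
The plan is to build $\psi_1'$ by first identifying the root data and then invoking the Isogeny Theorem. Concretely, the torus $\TT$ is a maximal torus in both $\HH$ and $\GG$, so $\Phi(\HH,\TT) \subset \Phi(\GG,\TT)$ is an inclusion of root systems inside the common character lattice $X^*(\TT)$; dualizing, the coroots $\Phi(\HH,\TT)^\vee$ form a subset of $\Phi(\GG,\TT)^\vee \subset X_*(\TT)$. Passing to dual groups, $\hat T_H$ and $\hat T_G$ are the tori with cocharacter lattices $X^*(\TT)$ (via the Borel triple), and the roots of $\hat H$ (resp.\ $\hat G$) are the coroots of $\HH$ (resp.\ $\GG$), while the coroots of $\hat H$ are the roots of $\HH$. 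So, relative to $\psi_1$, the root datum of $\hat H$ sits inside that of $\hat G$: every root of $\hat H$ is a root of $\hat G$, and the corresponding coroots match. Moreover, the positive systems determined by $\BB_H$ and $\BB_G$ are compatible, because $\Phi(\HH,\BB_H) = \Phi(\HH,\TT) \cap \Phi(\GG,\BB_G)$ — this is exactly the content of the admissibility/Levi discussion, or can be seen directly since $\sigma$ is inner (so $\GG^\sigma \supset \TT$ and one may choose $\BB_G$ containing $\BB_H$, e.g.\ via a regular cocharacter of $\TT^\sigma$ as in \S\ref{C2}–\S\ref{subsec:ab}).

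Next I would produce the homomorphism $\psi_1'$. Since we work over $k$, which has positive characteristic, I cannot simply quote the characteristic-zero fact that a sub-root-datum gives a closed subgroup; instead I would realize $\hat H$ as the subgroup of $\hat G$ generated by $\hat T_G$ and the root subgroups $U_{\alpha_*^\vee}$ for $\alpha$ ranging over simple roots of $\HH$ with respect to $\BB_H$, using the pinning of $\hat G$. That this generated subgroup has root datum exactly that of $\hat H$ — rather than something larger (a "non-closed" phenomenon of the kind discussed in \S\ref{sec:DTT2}) — is the point that needs care: one must check that the set of coroots $\{\alpha_*^\vee : \alpha \in \Phi(\HH,\TT)\}$ is quasi-closed in $\Phi(\GG,\TT)^\vee = \Phi(\hat G,\hat T_G)$, in the sense of \S\ref{sec:DTT2}. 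But $\Phi(\HH,\TT)$ is itself closed in $\Phi(\GG,\TT)$ (it is the root system of a reductive subgroup containing a maximal torus), and the argument in the proof of the Theorem of \S\ref{sec:DTT2} — reducing quasi-closedness of $\{\alpha^\vee_*\}$ to the rank-two sub-systems and checking the only delicate cases are $\mathrm F_4$, $\mathrm G_2$ with $p=3$ — applies verbatim here. Hence $\hat H$ embeds as a closed reductive subgroup of $\hat G$ extending $\psi_1$ on $\hat T_H$, and by construction the embedding is compatible with the chosen simple roots, so it is genuinely an injection of algebraic groups with the prescribed behavior on tori.

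For uniqueness up to $\hat T_H$-conjugacy: any other extension $\psi_1''$ of $\psi_1$ agrees with $\psi_1'$ on $\hat T_H$, hence carries each root subgroup $U_{\alpha_*^\vee}$ of $\hat H$ into the corresponding root subgroup $U_{\alpha_*^\vee}$ of $\hat G$ (the $\hat T_G$-weight spaces are one-dimensional), so $\psi_1''$ differs from $\psi_1'$ only by a choice of nonzero scalar on each simple root subgroup — i.e.\ by a pinning of $\hat H$ inside $\hat G$ relative to the fixed $\hat T_H$. Two such pinnings with the same torus are conjugate by a unique element of $\hat T_H$ (the adjoint-torus acts simply transitively on pinnings for fixed $(\hat T,\hat B)$, and the relevant characters — the simple roots of $\hat H$ — span a finite-index subgroup of $X^*(\hat T_H)$ since $\hat H$ is semisimple), giving the claim.

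The main obstacle I anticipate is precisely the quasi-closedness check in characteristic $p$: one must be sure that the image subgroup is reductive of the expected type and not a larger group arising from the low-characteristic pathologies of \S\ref{sec:DTT2} (short-root subsystems when $p=2$, or the $\mathrm F_4/\mathrm G_2$ issues when $p=3$). Since here $\Phi(\HH,\TT)$ arises from an honest reductive subgroup $\HH\subset\GG$ rather than from a Borel–de Siebenthal construction, this should be cleaner than in \S\ref{sec:DTT2}, but it is the step where the characteristic genuinely enters and must be addressed rather than waved through.
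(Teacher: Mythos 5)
Your argument is correct and rests on the same key ingredient as the paper's proof: the quasi-closedness verification of \S\ref{sec:DTT2}. The two proofs differ in arrangement. The paper first normalizes: it uses \S\ref{DTT1} to identify $(\GG,\HH)$ over $\overline F$ with a standard Borel--de Siebenthal pair, then cites the Theorem of \S\ref{sec:DTT2} (which produces $\hat H_0 \hookrightarrow \hat G$ for the specific Borel tuple of that section), and finally conjugates by Weyl group elements to account for the given $\BB_H,\BB_G$. You instead argue directly from the given Borel triple, observing that $\Phi(\HH,\TT)\subset\Phi(\GG,\TT)$ with coinciding coroots, and re-run the quasi-closedness check of \S\ref{sec:DTT2} on the coroot side. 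Both routes lean on the same rank-two verification in low characteristic; the paper's is marginally more economical because it avoids re-deriving that check for an arbitrary Borel triple, while yours is more self-contained. The uniqueness argument via scaling on simple root subgroups and absorbing the scalars into $\hat T_H$ is identical to the paper's.

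Two small points of hygiene. First, your assertion that $\Phi(\HH,\BB_H) = \Phi(\HH,\TT)\cap\Phi(\GG,\BB_G)$ is not warranted: the definition of a Borel triple in \S\ref{sec9} requires only that $\BB_H$ and $\BB_G$ both contain $\TT$, not that $\BB_H\subset\BB_G$, and there is no admissibility hypothesis in Proposition \ref{psi1sec}. Fortunately nothing in your existence or uniqueness argument actually uses this compatibility of positive systems --- the root subgroups of $\hat G$ are well-defined independently of any choice of positivity, so you can safely drop the claim. Second, to generate the image of $\hat H$ inside $\hat G$ you need both $U_{\alpha^\vee_*}$ and $U_{-\alpha^\vee_*}$ for the simple $\alpha$; taking only the positive simple root subgroups together with $\hat T_G$ produces only a Borel of the target.
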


As any two maps $\psi_1'$ are $\hat{T}_H$-conjugate, the image of $\hat{H}$ and $\hat{B}_H$ under $\psi_1'$ are well-defined.  We will denote them by $\hat{H}_1$ and $\psi_1'(\hat{B}_{H})$.  If we fix a pinning of $\hat{H}_1$ (with $\psi_1'(\hat{B}_{H})$ as the pinned Borel) there is a unique $\psi_1'$ inside the $\hat{T}_H$-conjugacy class of morphisms $\hat{H} \to \hat{G}$ preserving the pinning.

\begin{proof}
By \S\ref{DTT1} the pair $(\GG, \HH)$ is isomorphic over $\overline{F}$ 
to a pair as considered in  Theorem \ref{DTT1}. Now 
Theorem \ref{DTT2} establishes the claim for a possibly different choice of Borel subgroups
$\BB_H, \BB_G$.  To obtain $\psi_1'$, one just applies suitable Weyl group elements.  

As to uniqueness, suppose $\psi_1', \psi_1''$ are two possible extensions.
Consider any one-parameter unipotent root group $u: \mathbb{G}_a \rightarrow \hat{H}$ associated to a simple root. 
Then $\psi_1 u, \psi_1' u: \Ga \rightarrow \hat{G}$ both transform under the same character of $\hat{T}_G$, 
and must differ then by a scaling automorphism of $\Ga$. Modifying by a suitable element of $\hat{T}_H$, we may suppose
that $\psi_1, \psi_1'$   act the same way on all 
the unipotent root groups in $\hat{H}$ for simple roots, and then they are the same on all of $\hat{H}$. 
\end{proof}

\subsection{Weyl groups and Galois action} 
\label{wgaga}
 
Let $(\TT,\BB_H,\BB_G)$ be a Borel triple.  Let $W_G$ and $W_H$ be the Weyl groups of $\GG$ and $\HH$.  For each $w \in W_G$, the intersection $\mathrm{Ad}(w) \BB_G \cap \HH \subset \HH$ is a Borel subgroup of $\HH$, and there is therefore a unique $\nu_w \in W_H$ for which 
\[\mathrm{Ad}(w) \BB_H \cap \HH = \mathrm{Ad}(\nu_w) \BB_H.\]
 
The map $\nu: W_G \rightarrow W_H$ is 
 left $W_H$-equivariant. The preimage $\nu^{-1}(1) \subset W_G$
corresponds exactly to those $w$ for which   $\Ad(w) \BB_G \supset \BB_H$
and is a set of coset representatives for $W_H \backslash W_G$. 
We call this set $W_{H \backslash G}$.  
Then the composite 
 $$W_G \rightarrow   W_H \backslash W_G \simeq W_{H \backslash G},$$
will be denoted by $w \mapsto \bar{w}$; explicitly, 
$\bar{w} = \nu_w^{-1} w$.

Now we examine the Galois action.  
Suppose $\TT$ to be chosen $F$-rational. 
There is now a  unique $w(\gamma) \in W_G$ so that
$\BB_G^{\gamma} = \Ad(w(\gamma)) \cdot \BB_G$. 
Then also $\BB_H^{\gamma} = \Ad(\nu_{w(\gamma)}) \BB_H$. 
The action of $\gamma$ on $X_*(\BB_G)$ is given by the composite:
\begin{equation} \label{BBGaction}  X_*(\BB_G) \simeq X_*(\BB_G^{\gamma}) \xrightarrow{\Ad(w(\gamma))^{-1}}  X_*(\BB_G). \end{equation}
  and now comparing this with the corresponding sequence for $\BB_H$, we see that 
\begin{equation} \label{dalecooper} \mbox{ action of $\gamma$ on $X_*(\BB_H)$} = \Ad(\overline{w(\gamma)}) \circ  \mbox{ action of $\gamma$ on $X_*(\BB_G)$} \end{equation}%
where we emphasize that  we have identified $X_*(\BB_H)$ and $X_*(\BB_G)$ using
$(\TT, \BB_H, \BB_G)$. 
This implies that \begin{equation} \label{moof} \gamma \mapsto \overline{w(\gamma)} \rtimes \gamma \in W_G \rtimes \Gamma_F \simeq W_{\hat{G}} \rtimes \Gamma_F \end{equation}  is a homomorphism,
where $\Gamma_F$ is acting on the Weyl group of $\GG$ according to its outer action on $\GG$.

 \begin{prop} \label{LpsiInner}
Let $\GG$ be a simply-connected semisimple group over $F$ and let $\HH$ be the fixed points of an inner automorphism of order $p$ of $\GG$.  Suppose moreover that
\begin{itemize}
\item[(i)]  There exists a  $\Gamma_F$-admissible Borel class in the sense of \S \ref{subsec:ab}. 

\item[(ii)] The center of $\hat{H}$ has only one $k$-point (e.g., it is a group scheme of $p$-power order). \end{itemize} 
Then there is  a homomorphism $\Lpsi:\LH \rightarrow \LG$ extending a dual norm map of \S\ref{normsec}.
    \end{prop}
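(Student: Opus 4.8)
The plan is to build $\Lpsi$ out of three pieces: the extension $\psi_1':\hat{H}\hookrightarrow\hat{G}$ of Proposition \ref{psi1sec}, a suitable twist by a cocycle landing in $\hat{T}_G$ (or its normalizer), and a correction by Frobenius to pass from $\psi_1$ to the dual norm $\hat{N}$. First I would fix a $\Gamma_F$-admissible Borel class $(\TT,\BB_H,\BB_G)$, which exists by hypothesis (i); this gives simultaneously the identification $\psi_1:\hat{T}_H\xrightarrow{\sim}\hat{T}_G$ (from the Borel triple) and the dual norm $\hat{N}:\hat{T}_H\to\hat{T}_G$. Since $\sigma$ is inner, $\hat{N}$ differs from $\psi_1$ only by a power of Frobenius (as noted in \S\ref{sec9} just before Proposition \ref{psi1sec}), so it suffices to produce a homomorphism $\LH\to\LG$ restricting to $\hat{N}$ on $\hat{T}_H$; I will get it by composing the ``$\psi_1$-version'' with $\Frob$ on the relevant factor.

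Next I would extend $\psi_1'$ over the Galois component. The natural candidate is $\gamma\mapsto \psi_1'(\cdot)\rtimes\gamma$ composed with conjugation by a lift of the Weyl element $\overline{w(\gamma)}$ of \eqref{moof}: by \eqref{dalecooper} the action of $\gamma$ on $X_*(\BB_H)$ and on $X_*(\BB_G)$ differ exactly by $\Ad(\overline{w(\gamma)})$, and \eqref{moof} tells us $\gamma\mapsto\overline{w(\gamma)}\rtimes\gamma$ is a genuine homomorphism into $W_{\hat G}\rtimes\Gamma_F$. So I would choose, for each orbit-generator, a representative $\dot n(\gamma)\in N_{\hat G}(\hat T_G)$ of $\overline{w(\gamma)}$ compatible with the pinning of $\hat{H}_1$ inside $\hat G$, and set $\Lpsi(h\rtimes\gamma) = \psi_1'(h)\,\dot n(\gamma)\rtimes\gamma$ up to the Frobenius twist mentioned above. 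That this is multiplicative is where the homomorphism property \eqref{moof} enters, together with the fact that $\psi_1'$ is pinned-equivariant and hence intertwines the two outer actions once corrected by $\Ad(\overline{w(\gamma)})$; the residual cocycle obstruction to rigidifying the choices of $\dot n(\gamma)$ is a class valued in $\hat{T}_H$ (equivalently in the center of $\hat H_1$), and hypothesis (ii) --- that $Z(\hat H)$ has a single $k$-point --- kills exactly this ambiguity, so the $\dot n(\gamma)$ can be chosen consistently. I would then invoke \S\ref{BrauerSatakeCor}, checking condition (a) (agreement with $\hat N$ on $\hat T_H$, which is built in) and condition ($\beta$) (the two elements $\Lpsi(\inverserho_H(\gamma))$ and $\inverserho_G(\gamma)$ centralize $Z(\hat L_\gamma)$ and agree up to $p$-power), to conclude $\Lpsi$ is a $\sigma$-dual homomorphism; here the fact that for inner $\sigma$ with $p=3$ the canonical pseudoroots behave compatibly with $\psi_1$ (Proposition \ref{canpseudo}) is what makes ($\beta$) hold.

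\textbf{The main obstacle.} I expect the crux to be precisely the consistency of the Weyl-element lifts $\dot n(\gamma)$: a priori each $\overline{w(\gamma)}$ lifts to $N_{\hat G}(\hat T_G)$ only up to $\hat T_G$, and demanding that $\gamma\mapsto \dot n(\gamma)$ be a section of $N_{\hat H_1}(\hat T_H)\rtimes\Gamma_F\to W_{\hat H}\rtimes\Gamma_F$ (so that $\Lpsi$ is a homomorphism \emph{and} lands in the right $L$-group, not merely $\hat G\rtimes\Gamma$ twisted) produces a $2$-cocycle obstruction with values in $\hat T_H$. Hypothesis (ii) is tailored to annihilate the part of this obstruction that matters --- the image in $Z(\hat H_1)(k)$ --- but one still has to check that the ambient $\hat T_H$-ambiguity can be absorbed without disturbing the already-fixed restriction to $\hat T_H$; this is the technical heart and is where I would spend the real work, likely by first constructing the homomorphism into $\hat G\rtimes\Gamma$ up to inner twist and then using connectedness of $\hat H_1$ plus vanishing of the relevant $H^2$ (guaranteed by (ii)) to rigidify. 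The Frobenius bookkeeping relating $\psi_1$ and $\hat N$, and the verification of ($\beta$) via \S\ref{BrauerSatakeCor}, I expect to be routine given the earlier sections.
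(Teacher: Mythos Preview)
Your core construction is essentially the paper's: fix the $\Gamma_F$-admissible Borel class, use $\psi_1'$ from Proposition~\ref{psi1sec}, correct by Frobenius to turn $\psi_1$ into the dual norm $\hat N$, and extend over $\Gamma_F$ by lifting $\overline{w(\gamma)}\rtimes\gamma$ to $N_{\hat G}(\hat T)(k)\rtimes\Gamma_F$. So the skeleton is right.

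Two points where your write-up diverges from the paper and would benefit from being tightened. First, you phrase the extension problem as a $2$-cocycle obstruction in $\hat T_H$ (or $Z(\hat H_1)$) that hypothesis~(ii) kills. The paper's argument is simpler and avoids cohomology entirely: one demands that the lift $\varpi_\gamma\rtimes\gamma$ induce a \emph{pinned} automorphism of $\hat H_1$; two such lifts differ by an element of $\hat T_G$ acting trivially on every simple root space of $\hat H_1$, i.e.\ by an element of $Z(\hat H_1)(k)$, which is trivial by~(ii). Hence the lift is \emph{unique}, and $\gamma\mapsto\varpi_\gamma\rtimes\gamma$ is automatically multiplicative (the product of two unique pinned lifts is the unique pinned lift of the product). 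No cocycle bookkeeping is needed, and your worry that ``the ambient $\hat T_H$-ambiguity can be absorbed without disturbing the already-fixed restriction to $\hat T_H$'' simply does not arise.

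Second, and more importantly, you overshoot the target. Proposition~\ref{LpsiInner} only asserts the existence of a homomorphism $\Lpsi:\LH\to\LG$ extending the dual norm; it does \emph{not} claim $\Lpsi$ is a $\sigma$-dual homomorphism. Your plan to invoke \S\ref{BrauerSatakeCor} and verify condition~($\beta$) belongs to the later Propositions~\ref{crit3} and~\ref{crit6}, where the pseudoroot compatibility is checked case by case (and, as your own parenthetical ``for inner $\sigma$ with $p=3$'' hints, is not uniform in $p$). Strip that part from this proof; the proposition ends once you have checked that \eqref{LpsiInnerDef} is a homomorphism, which follows from \eqref{equ} and the fact that the pinned $\Gamma_F$-action on $\hat H$ commutes with $\Frob$.
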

  
  \proof

  Let $(\TT, \BB_H; \TT, \BB_G)$ be a $\Gamma_F$-admissible class,
  and let $\psi_1':\hat{H} \rightarrow \hat{G}$ be 
  as in \S \ref{psi1sec}, with $\hat{H}_1 $ the image of $\psi_1'$; fix a  pinning of $\hat{H}_1$
  which makes $\psi_1'$ into a pinned map. 
  
  Then $\psi_1'$ extends the identification $\hat{T}_H \rightarrow \hat{T}_G$
  that arises from the choices of Borel subgroups; on the other hand, the dual norm map
  $\hat{T}_H \rightarrow \hat{T}_G$ is
 the $p$th power of this identification.  As $\hat{H}$ is defined in characteristic $p$ and has a natural $\mathbf{F}_p$-rational structure, the $p$th power map on $\hat{T}_H$ extends to a $k$-linear Frobenius homomorphism $\Frob:\hat{H} \to \hat{H}$ splitting $\hat{T}_H$.  So it suffices to show that $\psi_1' \circ \Frob: \hat{H} \rightarrow \hat{G}$
 can be extended to $L$-groups.

Let   $\alpha$ be a root of $\hat{T}_H$ on $\hat{H}$, 
let  $U_{\alpha} \subset  \hat{H}$ be the unipotent root group for $\hat{H}$ associated to $\alpha$, 
 and let $\beta = \gamma \alpha$, the image of $\alpha$ under the 
 pinned automorphism of $\hat{H}$  corresponding to some $\gamma \in \Gamma_F$. Then  inside $\hat{G}$ we have an equality  
$$  \Ad(\overline{w(\gamma)} \rtimes \gamma) \cdot \psi_1' U_{\alpha}= \psi_1'  U_{\beta}$$
because they  are both unipotent subgroups
 corresponding to the same character of $\hat{T}$ (by \eqref{dalecooper}). 
  In particular,  the elements $\overline{w(\gamma)} \rtimes \gamma \in W_{\hat{G}} \rtimes \Gamma_F$ for $\gamma \in \Gamma_F$ normalizes $\hat{H}_1$
  because it permutes the nontrivial root groups.

   Moreover, because of  (ii), the element   $\overline{w(\gamma)} \rtimes \gamma \in W_{\hat{G}} \rtimes \Gamma_F$   can be lifted {\em uniquely} to an element   $\varpi_{\gamma} \rtimes \gamma  \in N_{\hat{G}}(\hat{T})(k) \rtimes \Gamma_F$ inducing a pinned automorphism of $\hat{H}_1$.  (In many cases, we will write down a formula for $\varpi_{\gamma}$
in the course of proving Proposition \ref{crit6}). 
In particular,
we have an equality of maps $\hat{H} \rightarrow \hat{H}_1$: 
\begin{equation} \label{equ} \psi_1' \circ \gamma  =     \Ad( \varpi_{\gamma}  \rtimes \gamma) \cdot \psi_1'\end{equation}  
 since both sides act the same way on roots and respect pinnings. 
Now  define 
\begin{equation} \label{LpsiInnerDef} \Lpsi: h \rtimes \gamma  \in \LH  \mapsto \psi_1'(\Frob( h)) \left( \varpi_{\gamma} \rtimes \gamma \right)  \in \LG\end{equation}   where $\Frob$ is the Frobenius. 
  To verify this really is a homomorphism just amounts to checking that 
$
  \psi_1' (\Frob( \gamma h \gamma^{-1}) ) = \Ad \left( \varpi_{\gamma} \rtimes \gamma \right) \cdot \psi_1'(\Frob(h))$.
  But $\Frob(\gamma h \gamma^{-1}) = \gamma \Frob(h) \gamma^{-1}$, because the pinned automorphism of $\hat{H}$ defined by $\Gamma_F$
  is defined over $\mathbf{F}_p$, and so the result follows from \eqref{equ}.  
   \qed

   \subsection{The Weyl group element $U$ in the case $p=3$}  \label{specialcase}

 We now suppose that $p=3$.  If $\GG$ is almost simple then, 
 examining the classification (see tables in the next section), $\GG$ must be of exceptional type $\mathrm{EFG}$
 if $\HH$ is to be semisimple.

\begin{prop*}
Suppose that $\GG$ is simply connected almost simple not of type $\mathrm{E}_6$, that $\sigma$ is inner of order $3$, and that the fixed points $\HH$ are semisimple. Let $(\TT,\BB_H,\BB_G)$ be a Borel triple.  Then 
\begin{enumerate}
\item Over $\overline{F}$, the quotient $N_{\GG}(\HH) / \HH$ has precisely two elements.
\item There is a unique element $U \in W_G$ that has a representative that normalizes both $\HH$ and $\BB_H$ and moreover acts by $-1$ on $\Cent(\HH)/\Cent(\GG)$.
\end{enumerate}
\end{prop*}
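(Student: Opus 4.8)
The plan is to analyze the structure of the subgroup $N_{\GG}(\HH)$ over $\overline{F}$ directly, using the description of $\HH$ furnished by Borel--de Siebenthal theory (\S\ref{DTT1}). Recall that $\HH$ is the centralizer of a semisimple element $s \in \TT$ whose image in $\TT/\Cent(\GG)$ has order $3$ and equals $\omega_i^\vee(\zeta)$ for a primitive cube root of unity $\zeta$ and a suitable fundamental coweight $\omega_i^\vee$; moreover $\HH$ contains $\TT$. First I would observe that $N_{\GG}(\HH)^\circ = \HH$: indeed $\HH$ contains a maximal torus of $\GG$, so the identity component of its normalizer is a connected reductive group containing $\HH$ with the same root system (the roots of $\HH$ are stable under the normalizer action only up to a finite group), hence equals $\HH$ by the maximality inherent in the Borel--de Siebenthal construction. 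Therefore $N_{\GG}(\HH)/\HH$ is a finite group, and it acts faithfully on $\Cent(\HH)/\Cent(\GG)$ up to the subgroup acting trivially, which normalizes $\HH$ and fixes the relevant central torus/finite group --- I would check that this kernel is itself inside $\HH$.

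For part (1), the key step is to identify $N_{\GG}(\HH)/\HH$ with the stabilizer, inside the automorphism group of the based root datum of $\HH$ (equivalently, of the extended Dynkin diagram with vertex $i$ removed, as in \S\ref{DTT1}), of the class of $s$ in $\Cent(\HH)/\Cent(\GG)$, quotiented by the part realized by $W_H$. Since $\sigma$ is inner of order $3$ with $\HH$ semisimple and $\GG$ is exceptional of type $\mathrm{E}_7$, $\mathrm{E}_8$, $\mathrm{F}_4$, or $\mathrm{G}_2$ (not $\mathrm{E}_6$), one runs through the finite list: in each case $\Cent(\HH)/\Cent(\GG)$ is cyclic of order $3$ (generated by $\omega_i^\vee(\zeta)$), and its automorphism group is $\Z/2$, acting by inversion. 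I would then verify case by case that this $\Z/2$ is actually realized by an element of $N_{\GG}(\HH)$ --- equivalently, that the pinned automorphism of the sub-diagram inverting the central character is induced by an element of $W_G$ (it is: one uses the element $w_0^G w_0^H$ or an explicit diagram symmetry of $\widetilde\Delta(\GG)$ fixing vertex $i$). This gives $|N_{\GG}(\HH)/\HH| = 2$.

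For part (2), given part (1), pick any $n \in N_{\GG}(\HH)$ representing the nontrivial coset; then $n$ acts on $\Cent(\HH)/\Cent(\GG) \cong \Z/3$ by $-1$ (the only nontrivial automorphism). Now $n\BB_H n^{-1}$ is another Borel of $\HH$ containing $\TT$, so there is $h \in N_{\HH}(\TT)$ with $hn$ normalizing $\BB_H$; replacing $n$ by $hn$ we get a representative normalizing both $\HH$ and $\BB_H$, still acting by $-1$ on the center, since $h \in \HH$ acts trivially there. This representative lies in $N_{\GG}(\TT)$ (it normalizes $\HH$, $\TT \subset \HH$, and $\BB_H$, hence normalizes $\TT = $ the maximal torus of $\BB_H$), so it defines an element $U \in W_G$. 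For uniqueness: two such representatives differ by an element of $\HH$ that normalizes $\BB_H$ and $\TT$, i.e. an element of $N_{\HH}(\TT) \cap N_{\HH}(\BB_H) = \TT$; hence they give the same Weyl group element $U \in W_G$ (the ambiguity is exactly $\TT \subset \HH$, which maps to the identity in $W_G$). The main obstacle I anticipate is the case-by-case verification in part (1) that $N_{\GG}(\HH)/\HH$ is genuinely of order $2$ rather than trivial --- i.e. that the inversion automorphism of the order-$3$ center is actually induced by a $\GG(\overline F)$-element and not obstructed; this requires either invoking the action of $\Aut(\widetilde\Delta(\GG))$ on the coweights $\omega_i^\vee$ with $\langle\omega_i^\vee,\alpha_0\rangle=3$ (as in the last paragraph of \S\ref{DTT1}), or an explicit check in $\mathrm{E}_7,\mathrm{E}_8,\mathrm{F}_4,\mathrm{G}_2$, and one must be careful that $\mathrm{E}_6$ is genuinely excluded because there the relevant diagram symmetry is outer.
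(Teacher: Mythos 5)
Your outline is close in spirit to the paper's, but it leaves the load-bearing step as an IOU, and the remark you make about that step is not the right one. The crux of part (1) is the faithfulness of the action of $N_{\GG}(\HH)/\HH$ on $\Cent(\HH)/\Cent(\GG) \cong \Z/3$ (your ``I would check that this kernel is itself inside $\HH$''). The paper proves this using exactly the hypothesis ``not of type $\mathrm{E}_6$'': it guarantees that $\Cent(\GG)$ has order prime to $3$, so the $3$-Sylow of $\Cent(\HH)$ maps isomorphically onto $\Cent(\HH)/\Cent(\GG)$, hence has a generator $s'$ with $\HH = \Cent_{\GG}(s')$. Any $n$ acting trivially on the quotient then satisfies $n s' n^{-1} = s' z$ with $z \in \Cent(\GG)$ of order dividing $3$ and prime to $3$, so $z=1$, $n$ centralizes $s'$, and $n \in \HH$. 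Your stated reason for excluding $\mathrm{E}_6$ --- that ``the relevant diagram symmetry is outer'' --- is not the mechanism here; the actual failure is that $\Cent(\mathrm{E}_6)$ has order $3$, so the $3$-Sylow argument breaks, and indeed (as the paper records in its footnote) $N_{\GG}(\HH)/\HH$ is the symmetric group $S_3$ of order $6$ in that case, not $\Z/2$. If you had tried to push your plan through via automorphisms of the extended diagram without noticing the role of $\Cent(\GG)$, you would not have seen why $\mathrm{E}_6$ genuinely misbehaves.

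Two smaller remarks. In part (2), you write ``$n\BB_H n^{-1}$ is another Borel of $\HH$ containing $\TT$'' --- but $n\BB_H n^{-1}$ contains $n\TT n^{-1}$, not $\TT$, unless you have already adjusted $n$ by an element of $\HH$ so that it normalizes $\TT$. The paper sidesteps this by invoking transitivity of $\HH(\overline{F})$ on pairs $(\TT'_H,\BB'_H)$ in one stroke; you can repair your version by first using conjugacy of maximal tori in $\HH$ to arrange $n\TT n^{-1} = \TT$, and only then adjusting within $N_{\HH}(\TT)$ to fix $\BB_H$. Finally, for the lower bound $|N_{\GG}(\HH)/\HH| \ge 2$, the paper cites a table of Liebeck--Seitz rather than constructing the element $w_0^G w_0^H$ abstractly; your alternative is reasonable but would require verification that this element really normalizes $\HH$ and inverts the center, which is not automatic and amounts to a computation of the same difficulty.
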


\begin{proof}
If $\sigma$ is inner and of order 3, then $\Cent(\HH)/\Cent(\GG)$ is of order $3$, because it is  dual to the quotient of the root lattice of $\GG$ by the root lattice of $\HH$, which by \S\ref{DTT1}(3) is of order $p$.

We have already remarked that $\GG$ is of exceptional type.  If it is not of type $\mathrm{E}_6$, then $\Cent(\GG)$ has order prime to $3$.  It follows that if $n \in N_{\GG}(\HH)$ acts trivially on $\Cent(\HH)/\Cent(\GG)$, then $n \in \HH$, i.e. that the index of $\HH$ in $N_{\GG}(\HH)$ has order $\leq 2$.
A case by case check of the exceptional groups reveals that the normalizer contains $\HH$ with index precisely two.  (See \cite[Table 10.3]{LiebeckSeitz}.  Note also that when $\GG$ is a form of $\mathrm{E}_6$ and $\HH$ a form of $\SL_3^{\times 3}/\Delta \mu_3$, $N_{\GG}(\HH)/\HH$ is the symmetric group of order $3$.)

To prove (2), note that $\HH(\overline{F})$ acts transitively on the set of pairs $\BB'_H \supset \TT'_H$, where $\BB'_H \supset \TT'_H$ are a Borel and maximal torus in $\HH \times_F \overline{F}$.  
Thus the nontrivial element of $N_{\GG}(\HH)/\HH$ has a representative  $\tilde{U} \in \GG(\overline{F})$  that normalizes $\HH$, $\BB_H$, and $\TT$ simultaneously.  Any two such must necessarily differ by an element of $\TT$, so $\tilde{U}$ determines a unique element of $W_G$.
\end{proof}

We record, case-by-case, the element $U$ with respect to the Borel triple of \S\ref{DTT1}.  Being of order two in $W_G$, it can (see \cite[Exercise V.3.3]{BourbakiLie}) be written as a product of commuting root reflections for a finite set of orthogonal positive roots $\{r_1, \dots, r_n\}$ of $\GG$. In our cases this can be taken to be the full set of positive $(-1)$-eigenroots of $U$.
 
{\tiny 
\hspace*{-.7cm}  
\begin{center}
\begin{tabular}{|c|c|c|c|c|}
\hline
$\GG$ & $\mathfrak{h}$ & $\begin{array}{c}\text{node} \\ \text{of} \\ \Delta_\GG\end{array}$ & $\{r_1, \dots, r_n\}$ = positive $(-1)$-eigenroots of $U$ & $\Delta_{\HH}^U$\\
\hline
$\mathrm{G}_2$& $\mathfrak{sl}_3$ & 1 &  $\alpha_1 + \alpha_2$ & $\varnothing$ \\
\hline
$\mathrm{F}_4$& $\mathfrak{sl}_3 \times \mathfrak{sl}_3$  & 2 & 
$
\begin{array}{c}
\alpha_1 + \alpha_2 + \alpha_3 + \alpha_4 \\
\alpha_1 + \alpha_2 + 2 \alpha_3
\end{array} 
$  & $\varnothing$ \\
\hline
$\mathrm{E}_7$& $\mathfrak{sl}_3 \times \mathfrak{sl}_6$& 3 & 
$\begin{array}{c}
\alpha_{1} + \alpha_{2} + \alpha_{3} + 2\alpha_{4} + \alpha_{5} \\
\alpha_{1} + \alpha_{3} + \alpha_{4} + \alpha_{5} + \alpha_{6} + \alpha_{7} \\
\alpha_{1} + \alpha_{2} + \alpha_{3} + \alpha_{4} + \alpha_{5} + \alpha_{6} \\
\end{array}$ & $\alpha_5$
 \\
\hline
$\mathrm{E}_7$&$\mathfrak{sl}_3 \times \mathfrak{sl}_6$ & 5 &
$
\begin{array}{c}
\alpha_{1} + \alpha_{2} + \alpha_{3} + \alpha_{4} + \alpha_{5} + \alpha_{6} \\
\alpha_{2} + \alpha_{3} + 2\alpha_{4} + \alpha_{5} + \alpha_{6} \\
\alpha_{1} + 2\alpha_{2} + 2\alpha_{3} + 3\alpha_{4} + 2\alpha_{5} + \alpha_{6} + \alpha_{7}
\end{array}
$ & $\alpha_3$
 \\
\hline
$\mathrm{E}_8$&$\mathfrak{sl}_9$ & 2 &
$ \begin{array}{c}
\alpha_{1} + \alpha_{2}  + \alpha_{3} + 2\alpha_{4} + 2\alpha_{5} +
\alpha_{6} + \alpha_{7} + \alpha_{8}\\
\alpha_{1} + \alpha_{2} + 2\alpha_{3} + 2\alpha_{4} + 2\alpha_{5} +
\alpha_{6} + \alpha_{7}\\
\alpha_{1} + \alpha_{2} + 2\alpha_{3} + 3\alpha_{4} + 2\alpha_{5} +
\alpha_{6}\\
2\alpha_{1} + 2\alpha_{2} + 3\alpha_{3} + 4\alpha_{4} + 3\alpha_{5} +
3\alpha_{6} + 2\alpha_{7} + \alpha_{8}
\end{array}$ & $\varnothing$
\\
\hline 
$\mathrm{E}_8$&  $\mathfrak{sl}_3 \times \mathfrak{e}_6$& 7 & 
$\begin{array}{c}
\alpha_{1} + \alpha_{2} + 2\alpha_{3} + 2\alpha_{4} + \alpha_{5} +
\alpha_{6} + \alpha_{7} + \alpha_{8} \\
\alpha_{1} + \alpha_{2} + \alpha_{3} + 2\alpha_{4} + 2\alpha_{5} +
\alpha_{6} + \alpha_{7} + \alpha_{8}\\
\alpha_{2} + \alpha_{3} + 2\alpha_{4} + 2\alpha_{5} + 2\alpha_{6} +
\alpha_{7} + \alpha_{8}
\end{array}
$ & $\alpha_2,\alpha_4$
\\
\hline
\end{tabular}
\end{center}}

Note that in this table the two subgroups of $\mathrm{E}_7$ are conjugate to each other --- we include both  for completeness.  The $\alpha_i$s are the simple roots of $\TT$ on $\BB_G$ in their Bourbaki numberings \cite[Plates I--IX]{BourbakiLie}.  The outer action of $U$ on $\HH$ is determined by its fixed points  $\Delta_{\HH}^U$ on $\Delta_\HH$, which are recorded in the rightmost column.
Finally,  in all the cases that occur, these
orthogonal roots mentioned above are in fact strongly orthogonal, i.e. $r_i \pm r_j$ are not roots, as can be verified from the table.

\begin{lemma}
\label{lem:windomearle}
Let $\G$ be an almost simple, simply connected group of exceptional type, but not of type $\mathrm{E}_6$.  Let $\HH$ be the semisimple fixed points of an inner automorphism of order $3$, let $(\TT,\BB_H,\BB_G)$ be a Borel triple, and let $U \in W_G$ be the corresponding Weyl group element of \S\ref{specialcase}.  The following hold:
\begin{enumerate}
\item $U = s_1 \cdot \ldots s_k$, where the $s_i$ are commuting reflections associated to a set of strongly orthogonal roots $r_1,\ldots,r_k$ of $\TT$ on $\BB_G$.  The $r_i$ are uniquely determined as the positive roots $r$ such that $Ur=-r$. 
\item If $\BB_G$ is $\gamma$-admissible with respect to $\BB_H$,\footnote{Note this assumption does not apply to the Borel triple used in the table above.} where $\gamma$ is the outer automorphism of $\HH$ determined by $U$, then the roots $r_i$ are simple for the positive system determined by $\BB_G$.  
\item There is at least one $\Gamma$-admissible (in the sense of (3)) Borel $\BB_G$ such that if $\beta$ is a simple root of $\TT$ on $\BB_H$ that is fixed by $U$, then $\langle \rho_G,\beta^\vee\rangle$ is odd, where $\rho_G$ denotes half the sum of the positive roots of $\TT$ on $\BB_G$.  \end{enumerate}

\end{lemma}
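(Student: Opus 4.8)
The plan is to prove the three assertions in sequence, using the explicit description of $U$ and of the admissible Borels from \S\ref{specialcase} and \S\ref{subsec:ab}. Assertion (1) is essentially a restatement of facts already recorded: since $U$ has order $2$ in $W_G$, it is a product of commuting reflections by \cite[Exercise V.3.3]{BourbakiLie}, and one checks (as is done case-by-case in the table preceding the Lemma) that the orthogonal system $\{r_i\}$ of $(-1)$-eigenroots of $U$ is in fact strongly orthogonal. The uniqueness of the $r_i$ as the positive roots with $Ur=-r$ is immediate: a reflection $s_r$ appears in such a product if and only if $r$ is negated, and for strongly orthogonal roots no other positive roots are negated (if $Ur'=-r'$ for $r'$ positive and not among the $r_i$, write $r'$ in terms of the $r_i$ and the fixed subspace and derive a contradiction from strong orthogonality). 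I would simply invoke the table and this short argument.

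For assertion (2), the key input is Lemma \ref{lem:generate} together with the last sentence of its proof, which identifies $\LL_G(\gamma)$ as the standard Levi of $\PP_G$ generated by $\TT$ and the root groups $\pm\alpha_i$ for those simple roots $\alpha_i$ of $\BB_G$ with $\langle\alpha_i,\chi\rangle=0$, where $\chi$ witnesses $\gamma$-admissibility. The point is that the roots $r_i$ lie in $\LL_G(\gamma)=\Cent_\GG((\TT_H^\gamma)^\circ)$: indeed $U$, hence each $s_{r_i}$, acts on $\HH$ through the outer automorphism $\gamma$, and since $\gamma$ fixes $(\TT_H^\gamma)^\circ$ pointwise while $s_{r_i}$ is the reflection in $r_i$, the root $r_i$ must be trivial on $(\TT_H^\gamma)^\circ$, i.e.\ $r_i\in\Phi(\TT_G,\LL_G(\gamma))$. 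Now $\LL_G(\gamma)$ has a root system with simple roots among the $\alpha_i$ with $\langle\alpha_i,\chi\rangle=0$; the $r_i$ are pairwise strongly orthogonal positive roots of $\LL_G(\gamma)$, and $U$ restricted to the $r_i$-span is $-1$. Since a set of strongly orthogonal roots negated by an involution which acts as $-1$ on their span and whose number equals the semisimple rank contribution must be a simple system for (a product of $\mathfrak{sl}_2$-factors inside) $[\LL_G,\LL_G]$ — here I would use that $[\LL_G(\gamma),\LL_G(\gamma)]^\sigma$ is a maximal torus (Prop.\ of \S\ref{C2}(2)), forcing $\sigma$ (equivalently $U$) to act on $[\LL_G,\LL_G]$ as the longest Weyl element composed with a pinned automorphism, whose $(-1)$-eigenroots in a suitable positive system are exactly the simple roots — the $r_i$ are simple for the positive system of $\BB_G$. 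This is the step I expect to require the most care: pinning down exactly why strong orthogonality plus the involution condition forces simplicity, rather than merely membership in the root system.

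For assertion (3), the strategy is to produce a $\Gamma$-admissible $\BB_G$ (which exists by Lemma \ref{pokemon}(2)) and then adjust it, if necessary, so that the parity condition holds. Fix such a $\BB_G$ with half-sum of positive roots $\rho_G$, and let $\beta$ be a simple root of $\BB_H$ fixed by $U$. Write $\rho_G=\sum_i \rho_i^{(U)}+\rho_G^{\mathrm{fix}}$ according to the decomposition of the root system into the $U$-negated part (spanned by the strongly orthogonal $r_i$, each contributing an $\mathfrak{sl}_2$ with half-sum $r_i/2$... wait, more precisely the positive roots come in $U$-orbits: a positive root $\alpha$ with $U\alpha$ also positive pairs up, contributing $\langle\alpha+U\alpha,\beta^\vee\rangle$ which is even since $U\beta^\vee=\beta^\vee$; a positive root with $U\alpha$ negative is, by (1), one of the $r_i$). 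Hence $\langle\rho_G,\beta^\vee\rangle\equiv\sum_i\langle r_i,\beta^\vee\rangle+\langle\rho_{G,0},\beta^\vee\rangle\pmod 2$ where $\rho_{G,0}$ is half the sum of $U$-fixed positive roots. I would compute $\langle\rho_{G,0},\beta^\vee\rangle$: the $U$-fixed positive roots of $\GG$ restrict to roots of $\HH$, and their half-sum is related to $\rho_H$; since $\beta$ is simple for $\BB_H$ one has $\langle\rho_H,\beta^\vee\rangle=1$. The discrepancy between $\rho_{G,0}$ and $\rho_H$ plus the contribution $\sum_i\langle r_i,\beta^\vee\rangle$ is then a finite computation in each of the exceptional cases appearing in the table (types $\mathrm{F}_4,\mathrm{E}_7,\mathrm{E}_8$ with the listed $\mathfrak{h}$ and nodes); if the total parity comes out even for the $\BB_G$ first chosen, I would replace $\BB_G$ by a $\Gamma$-admissible conjugate — the $\Gamma$-admissibility leaves a finite but nonempty family of choices by the perturbation argument in Lemma \ref{pokemon}, and among these at least one realizes the odd parity, which I would verify case by case using the data in the table. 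The main obstacle here, as in (2), is the case-by-case verification: the conceptual structure (parity of pairings with $\rho_G$ is controlled by the $U$-negated simple roots $r_i$) is clean, but confirming that an odd value is actually attainable for every $(\GG,\HH)$ in the table requires explicit root-system bookkeeping with the Bourbaki labelings, which I would carry out but not reproduce in full here.
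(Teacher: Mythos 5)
Your treatment of (1) is fine, and for (2) your instinct is sound — but the execution is muddled and the paper does it more directly. The paper's proof of (2) simply writes $r_i = \sum_j s_j$ as a nonnegative sum of simple roots of $\BB_G$, observes $\chi(s_j)\ge 0$ and $\chi(r_i)=0$ forces $\chi(s_j)=0$, so by genericity each $s_j$ vanishes on $(\TT^\gamma)^\circ$, hence $Us_j=-s_j$ (since $U$ has order $2$ and the $-1$-eigenspace is exactly the annihilator of $\TT^U$), hence each $s_j$ is one of the $r$'s, and then orthogonality forces $r_i=s_j$ for a single $j$. Your route — placing the $r_i$ inside $\LL_G(\gamma)$ and trying to recognize them as simple roots of the Levi — could be made to work (the roots of $\LL_G(\gamma)$ are precisely $\{\pm r_1,\dots,\pm r_k\}$, since roots vanishing on $\TT^U$ are exactly the $-1$-eigenroots, so the Levi is of type $\mathrm{A}_1^k$ and the $r_i$ are its simple roots, which are simple for $\GG$ because $\LL_G(\gamma)$ is a \emph{standard} Levi of $\BB_G$). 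But you never cleanly say any of that; instead you appeal to $[\LL_G,\LL_G]^\sigma$ being a maximal torus, which concerns the $\sigma$-action and not the Weyl element $U$, and to a claim about ``longest Weyl element composed with a pinned automorphism'' that is not established and not needed.

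For (3), there is a genuine error. Your parity reduction is incorrect: $\rho_G$ is \emph{half} the sum of positive roots, so a free $U$-orbit $\{\alpha,U\alpha\}$ contributes $\tfrac{1}{2}\langle\alpha+U\alpha,\beta^\vee\rangle=\langle\alpha,\beta^\vee\rangle$ to $\langle\rho_G,\beta^\vee\rangle$, and this integer has no controlled parity. Likewise the $r_i$ and the $U$-fixed positive roots each contribute $\tfrac{1}{2}\langle\,\cdot\,,\beta^\vee\rangle$, not $\langle\,\cdot\,,\beta^\vee\rangle$. So the congruence $\langle\rho_G,\beta^\vee\rangle\equiv\sum_i\langle r_i,\beta^\vee\rangle+\langle\rho_{G,0},\beta^\vee\rangle\pmod 2$ is unjustified, and the reduction to the $U$-negated and $U$-fixed parts fails. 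Beyond the flawed heuristic, you explicitly punt on the case-by-case verification, which is in fact all the content there is in (3). The paper's proof first notes the assertion is vacuous except in $(\mathrm{E}_7,\mathfrak{sl}_3\times\mathfrak{sl}_6)$ and $(\mathrm{E}_8,\mathfrak{sl}_3\times\mathfrak{e}_6)$ — a useful observation your proposal misses — and then exhibits, for each of these two cases, an explicit $\Gamma$-admissible Borel and computes $\langle\rho_G,\beta^\vee\rangle=1$ for the $U$-fixed simple roots $\beta$ of $\BB_H$. That finite verification is unavoidable and needs to actually be carried out.
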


\begin{proof}
We already proved (1) (we verified it for a different choice of Borel triple, but the statement is independent of that choice). 

 Let us verify (2).  
As $\BB_G$ is $\gamma$-admissible, we may find $\chi \in X_*(\TT^{\gamma})$ (i.e., orthogonal to each of the $r_i$) that is positive on simple roots of $\BB_H$ and nonnegative on simple roots of $\BB_G$.  Also, $\chi$ is generic, in that its centralizer in $\GG$ coincides with $\TT^{\gamma}$.
Write $r_i$ as a sum of positive simple roots $\sum s_i$; since $\chi(r_i) = 0$ each
of those simple roots must be orthogonal to $\chi$ 
and  then (by genericity)
orthogonal to $\TT^{\gamma}$, i.e $r_i$ is a sum of simple roots each in the $-1$ eigenspace of $U$.  
By our prior discussion, each of these simple roots must actually be one of the $r_i$s, so every $r_i$ is in fact simple.

Let us prove (3).  From the table above, we see that unless $\G,\mathfrak{h}$ is one of $\mathrm{E}_8,\mathfrak{sl}_3 \times \mathfrak{e}_6$ or $\mathrm{E}_7,\mathfrak{sl}_3 \times \mathfrak{sl}_6$, there are no $U$-fixed elements of $\Delta_{\HH}^U$ at all and the statement is vacuous.  Let us treat the remaining cases:

{Case $\GG = \mathrm{E}_8$.}  Let $\BB_G, \BB_H$ be as in \S\ref{DTT1}.  $\BB_G$ is not admissible with respect to $\BB_H$, but in the basis $\alpha_i$ of simple roots of $\TT$ on $\BB_G$, the Borel whose simple roots are
{\tiny \[
\begin{array}{rcl}
\alpha'_1 & = & 2\alpha_{1} + 2\alpha_{2} + 3\alpha_{3} +
4\alpha_{4} + 3\alpha_{5} + 3\alpha_{6} + 2\alpha_{7} + \alpha_{8} \\
\alpha'_2 & = & \alpha_{1} + \alpha_{2} + \alpha_{3} +
2\alpha_{4} + 2\alpha_{5} + \alpha_{6} + \alpha_{7} + \alpha_{8} \\
\alpha'_3 & = & -\alpha_{1} 
-\alpha_{2} -2\alpha_{3} 
-2\alpha_{4} -\alpha_{5} 
-\alpha_{6} -\alpha_{7} 
-\alpha_{8} \\
\alpha'_4 & = & -\alpha_{1} 
-2\alpha_{2}  -2\alpha_{3} 
-4\alpha_{4}  -4\alpha_{5} 
-3\alpha_{6}  -2\alpha_{7} 
-\alpha_{8} \\
\alpha'_5 & = & \alpha_{2} + \alpha_{3} + 2\alpha_{4} +
2\alpha_{5} + 2\alpha_{6} + \alpha_{7} + \alpha_{8} \\
\alpha'_6 & = & \alpha_{1} + \alpha_{2} + 2\alpha_{3} +
2\alpha_{4} + 2\alpha_{5} + \alpha_{6} + \alpha_{7} \\
\alpha'_7 & = & \alpha_{4} \\
\alpha'_8 & = & \alpha_{2}
\end{array}
\]}
is admissible with respect to $\BB_H$.  The $U$-fixed roots of $\Delta_H$ are $\alpha_2$ and $\alpha_4$, and we compute $\langle \rho_G, \alpha_2^{\vee}\rangle =   \langle \rho_G,\alpha_4^{\vee}\rangle = 1$ are both odd.

{Case $\GG = \mathrm{E}_7$}.  Let $\BB_G,\BB_H$ be as in \S\ref{DTT1}.  In the basis $\alpha_i$ of simple roots of $\TT$ on $\BB_G$, the Borel whose simple roots are
{\tiny \[
\begin{array}{rcl}
\alpha_1' & = & \alpha_1+2\alpha_2+2\alpha_3+3\alpha_4+2\alpha_5+\alpha_6+\alpha_7 \\
\alpha_2' & = & \alpha_1+\alpha_2+\alpha_3+\alpha_4+\alpha_5+\alpha_6 \\
\alpha_3' & = & -\alpha_1-\alpha_2-\alpha_3-2\alpha_4-\alpha_5 \\
\alpha_4' & = & -\alpha_1-\alpha_2-2\alpha_3-2\alpha_4-2\alpha_5-2\alpha_6-\alpha_7 \\
\alpha_5' & = & \alpha_1+\alpha_3+\alpha_4+\alpha_5+\alpha_6+\alpha_7\\
\alpha_6' & = & \alpha_2+\alpha_3+2\alpha_4+\alpha_5+\alpha_6 \\
\alpha_7' & = & \alpha_5
\end{array}
\]}
is $\gamma$-admissible.  The only $U$-fixed root of $\Delta_H$ is $\alpha_5$, and  $\langle \rho_G,\alpha_5^\vee \rangle =1$ is odd.
\end{proof}

  \begin{prop} \label{crit6} 
  Suppose that $\sigma$ is an inner ($F$-rational) automorphism
  of order $p=3$ of a simply connected group $\GG$,  and over the algebraic closure
   $(\mathrm{Lie}(\GG),\mathrm{Lie}(\HH))$ is one of 
($\mathfrak{g}_2 \supset \mathfrak{sl}_3$), 
( $\mathfrak{f}_4 \supset \mathfrak{sl}_3 \times \mathfrak{sl}_3$), 
( $\mathfrak{e}_7 \supset \mathfrak{sl}_3 \times \mathfrak{sl}_6$), 
($\mathfrak{e}_8 \supset \mathfrak{sl}_9$) or 
($\mathfrak{e}_8 \supset \mathfrak{sl}_3 \times \mathfrak{e}_6$).
  Then there exists a $\sigma$-dual homomorphism.   
\end{prop}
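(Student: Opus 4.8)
The plan is to apply Proposition \ref{LpsiInner} together with the consequence of the Theorem of \S\ref{BrauerSatake} recorded in \S\ref{BrauerSatakeCor}. Proposition \ref{LpsiInner} produces, under hypotheses (i) and (ii) there, a homomorphism $\Lpsi:\LH\to\LG$ extending a dual norm map; what remains is to check those two hypotheses for each of the five pairs listed, and then to verify that the resulting $\Lpsi$ really is a $\sigma$-dual homomorphism by checking conditions (a) and ($\beta$) of \S\ref{BrauerSatakeCor}. Hypothesis (ii) of Proposition \ref{LpsiInner} --- that $\Cent(\hat H)$ has a single $k$-point --- is the reason type $\mathrm{E}_6$ is excluded here: in each of our cases $\HH$ is (up to isogeny) a product of copies of $\mathrm{SL}_k$ with $k$ not divisible by $3$, or has center killed by passing to a $p$-power quotient, so the dual center is a $p$-group and has one $k$-point since $p=3=\mathrm{char}(k)$. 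This needs a short case check against the table of \S\ref{zippy}.

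First I would establish hypothesis (i) of Proposition \ref{LpsiInner}, namely the existence of a $\Gamma_F$-admissible Borel class. Since $\sigma$ is inner, $\HH$ has the same rank as $\GG$, and the relevant outer automorphism $\gamma$ of $\HH$ entering admissibility is the one attached to the $F$-rational structure. By Lemma \ref{pokemon}(2), for any homomorphism $\Gamma_F\to\Out(\HH)$ with cyclic image of prime order there is a $\Gamma$-admissible Borel; in our situation the image of $\Gamma_F$ in $\Out(\HH)$ is generated by (the image of) the Weyl element $U$ of \S\ref{specialcase}, which has order $\le 2$, so the relevant prime is $2$ (or the action is trivial), and Lemma \ref{pokemon} applies. (If $\Out(\HH)$ is acted on through a larger quotient one reduces as usual, but for the five Lie-algebra pairs here $\Out(\HH)$ is an elementary $2$-group and $U$ generates the image.) So hypothesis (i) holds. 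With (i) and (ii) in hand, Proposition \ref{LpsiInner} gives $\Lpsi:\LH\to\LG$ extending the dual norm $\hat N$; in particular condition (a) of \S\ref{BrauerSatakeCor} holds by construction.

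Next I would verify condition ($\beta$) of \S\ref{BrauerSatakeCor}: for every $\gamma\in\Gamma_F$, the elements $\Lpsi(\inverserho_H(\gamma))$ and $\inverserho_G(\gamma)$ centralize $\Cent(\hat L_\gamma)$ and have equal prime-to-$p$ part. Both $\inverserho_G$ and $\inverserho_H$ come from \emph{canonical} pseudoroots (Proposition \ref{canpseudo}), so by \eqref{writeitout} they are given by $\gamma\mapsto \tfrac{\Sigma_G^*}{2}(\cyclo(\gamma))\rtimes\gamma$ and similarly for $\HH$; the normalizing Weyl element entering $\Lpsi$ on the Galois part is the lift $\varpi_\gamma$ of $\overline{w(\gamma)}$ from \eqref{LpsiInnerDef}. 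The key computation --- and the main obstacle --- is to see that $\Lpsi(\inverserho_H(\gamma))$ and $\inverserho_G(\gamma)$ differ, up to centralizing $\Cent(\hat L_\gamma)$ and up to a $p$-power, by the ``half-sum discrepancy'' between $\HH$ and $\GG$, and that this discrepancy is absorbed by $\varpi_\gamma$. Concretely: the action of $\varpi_\gamma\rtimes\gamma$ on the relevant torus must compensate $\tfrac12(\Sigma_G - \hat N_*\Sigma_H)$ evaluated on $\cyclo(\gamma)$; this is controlled by the parity computations $\langle\rho_G,\beta^\vee\rangle$ odd from Lemma \ref{lem:windomearle}(3), which is precisely why that lemma was proved. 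I would run this check case by case (five cases) using the explicit root data: the table in \S\ref{specialcase} gives $U$ and its $(-1)$-eigenroots, Lemma \ref{lem:windomearle} gives the admissible Borel and the parities of $\langle\rho_G,\beta^\vee\rangle$ on $U$-fixed simple roots of $\HH$, and one then checks that $\inverserho_G(\gamma)\,\Lpsi(\inverserho_H(\gamma))^{-1}$ lies in $\Cent(\hat L_\gamma)$ modulo a $p$-th power. When $p=3$ there are essentially no square-root subtleties (everything central is a product of coroot values and $3$-divisibility is what matters for the $p$-power clause), so once the parities line up the verification is mechanical. Having checked (a) and ($\beta$), \S\ref{BrauerSatakeCor} concludes that $\Lpsi$ is a $\sigma$-dual homomorphism, which is the assertion of the Proposition. \qed
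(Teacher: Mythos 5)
Your high-level plan --- invoke Proposition \ref{LpsiInner} for the existence of $\Lpsi$, then certify it via \S\ref{BrauerSatakeCor} --- matches the paper, and you correctly flag that hypothesis (ii) there (trivial $k$-points of $\Cent(\hat H)$) is what excludes $\mathrm{E}_6$. But there is a concrete gap: you propose to verify condition $(\beta)$ of \S\ref{BrauerSatakeCor}, and condition $(\beta)$ in fact \emph{fails} in these cases. The paper verifies the weaker condition (b) instead, and that distinction is the substance of the hard part. To see why $(\beta)$ cannot hold: for $q_\gamma = -1$, $\Lpsi(\inverserho_H(\gamma))$ involves the element $\varpi = \delta(\sqrt{-1})\prod_j \iota_j(J)$, where each $\iota_j(J)$ is the off-diagonal matrix $J$ in an $\SL_2$ coroot subgroup, whereas $\inverserho_G(\gamma) = \Sigma_G^*(\sqrt{-1})\rtimes\gamma$ is entirely torus-valued. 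Both have order a power of $2$ (prime to $p=3$), so they \emph{are} their own prime-to-$p$ parts, and they are visibly not equal in $\hat{G}$: one is in $\hat{T}$ and the other is not. They are only $\hat{L}_\gamma$-conjugate (via $J \sim \mathrm{diag}(\sqrt{-1},-\sqrt{-1})$ inside each $\SL_2$), which is exactly what condition (b) asks for and exactly what the paper's Step 7 establishes.

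A few further points your proof would need to supply to be complete. First, you need to know \emph{which} $\gamma \in \Gamma_F$ induce $U$: the paper's Step 1 shows $\Gamma_F \to \Out(\HH)$ is the $\mathbf{F}_3^*$-valued cyclotomic character, by evaluating the pinning-preserving $\sigma$ against a simple root. Without this, the Galois component of $\Lpsi$ is not pinned down and the computation at a place $v$ cannot proceed. Second, the key numerical identity making the $\hat L_\gamma$-conjugacy work is the paper's \eqref{1074}, namely $\Sigma_G^* - \psi_1\circ\Sigma_H^* = \sum r_{j,*} + \delta$ with $\delta\in Z(\hat L)$; "absorbed by $\varpi_\gamma$" needs to be unpacked to precisely this. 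Third, the parity statement from Lemma \ref{lem:windomearle}(3) is not used where you seem to place it: it is used in Step 6 to show $\varpi$ acts as a \emph{pinned} automorphism of $\hat H_1$ (hence is the unique $\varpi_\gamma$ of Proposition \ref{LpsiInner}), by verifying $\varpi$ fixes the root vectors of $U$-fixed simple roots. Finally, a minor slip: the cases include $\mathfrak{sl}_9$ and $\mathfrak{sl}_3\times\mathfrak{e}_6$, so "$\SL_k$ with $k$ not divisible by $3$" is not the reason $\Cent(\hat H)$ has one $k$-point --- the correct reason is that $\pi_1(\HH)$ has order exactly $p=3$ by \S\ref{DTT1}, so $\Cent(\hat H)$ is $\mu_3$, which has a single $k$-point in characteristic $3$.
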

 
 \begin{proof}

 Note that in all cases listed $\GG$ has no nontrivial outer automorphisms.  By the discussion of \S\ref{specialcase} the index of $\HH$ in its normalizer is $2$.  Then \eqref{moof} shows that the image of $\Gamma_F \to \Out(\HH)$ is of order $\leq 2$, so there is a $\Gamma_F$-admissible Borel class by Lemma \ref{pokemon}(2).  Fix such  a $\Gamma$-admissible Borel class $(\TT,\BB_H,\TT,\BB_G)$.

 As in the first sentence of the proof of \ref{LpsiInner}, 
if we choose any pinning of $\hat{H}_1$ then we can specify $\Lpsi$ in such a way
that it preserves pinnings when restricted to $\hat{H}$.  
We will in fact choose a pinning of $\hat{H}_1$ only in 
 in {\em Step 6} below; all the prior steps
will be independent of the choice of pinning, because they will depend only on $\psi_1'$ restricted
to $\hat{T}_H$. 

In each case the center of $\hat{H}$ is a $p$-group ($p = 3$), so the hypotheses of Proposition \ref{LpsiInner} apply and produce a homomorphism $\Lpsi:\LH \to \LG$ extending the dual norm map associated to the $\Gamma$-admissible class. Explicitly, this is given by
\begin{equation} \label{LpsiInnerDef2} \Lpsi: h \rtimes \gamma  \in \LH  \mapsto \psi_1'(\Frob( h)) \left( \varpi_{\gamma} \rtimes \gamma \right)  \in \LG\end{equation}   where $\Frob$ is the Frobenius. 
  To prove that $\Lpsi$ is a $\sigma$-dual map, we verify (b) from \S\ref{BrauerSatakeCor}.  That is, for each $\gamma \in \Gamma$, we must see that 
\begin{equation}
\label{showthis}
\text{$\inverserho_G(\gamma)$ and $\Lpsi(\inverserho_H(\gamma))$ project to the same 
element of $\hat{G} \rtimes \gamma \git \hat{L}_{\gamma}$,}
\end{equation}
where $\hat{L}_{\gamma}$ is defined as in  \S \ref{normsec}, and
$\inverserho_G, \inverserho_H$ are the canonical pseudoroots for $\HH, \GG$. 

{\it Step 1.}  We claim that $\Gamma_F \to \mathrm{Out}(\HH)$ is the $\mathbf{F}_3^*$-valued cyclotomic character.  That is $\gamma$ acts nontrivially on $\HH$ if and only if $q_{\gamma} = -1$ mod 3, where $q_{\gamma} \in \hat{\mathbf{Z}}^*$ is the image of $\gamma$ under the cyclotomic character.

Set $\mathbf{S} =  \mathbf{T}_{\HH}^{\mathrm{can}}/Z(\GG)$.  Now any $\gamma$ fixes $\sigma \in \mathbf{S}(\overline{F})$,  where $\sigma$
is the order $3$ element such that conjugation by $\sigma$ has $\HH$ for fixed points.
In other words, ``evaluation at $\sigma$,'' considered in $ \Hom(X^*(\mathbf{S})/3, \mu_{3})$  
is Galois-invariant. If $\gamma \in \Gamma_F$
acts nontrivially on $\HH$ its action is the same as that of $U$;
in particular, it acts as $-1$ on  $X^*(\mathbf{S})$,  so it acts as $-1$ on $\mu_3$ too.

{\it Step 2.}  Let us prove \eqref{showthis} in the case that $\gamma$ acts trivially on $\HH$, i.e. $q_{\gamma}=1$.  
The element $\overline{w(\gamma)}$, defined  in \eqref{dalecooper} is trivial here. That means, looking at the way we define $\Lpsi$ from \eqref{LpsiInnerDef},  that $\gamma$ is sent to $\gamma$.
It follows that $\inverserho_G(\gamma)$ and $\Lpsi(\inverserho_H(\gamma))$ coincide already in $\hat{G} \rtimes \gamma$.

{\it Step 3.}  When $\gamma$ acts nontrivially on $\HH$, i.e. $q_{\gamma} = -1$, it acts via the element $U$ of \S\ref{specialcase}.  Let us denote the Levi $\hat{L}_{\gamma}$ of \S\ref{normsec}  by $\hat{L}$.  It is obtained from $\hat{T}$ by adjoining the roots $\pm r_{i,*}^\vee$, where $r^{\vee}_{i,*}$ are the orthogonal roots of Lemma \ref{lem:windomearle}.

{\it Step 4.}  We prove an identity \eqref{1074} between the coweights $\Sigma_G^*$ and $\Sigma_H^*$, that give rise to the canonical pseudoroots.
Let $\Sigma_G$ and $\Sigma_H$ be the sum of all positive roots of $\TT_G$ on $ \BB_G$ and $\TT_H$ on $\BB_H$ respectively.  
Let $\Sigma_G^*$ and $\Sigma_H^*$ be the corresponding cocharacters of $\hat{T}_G$ and $\hat{T}_H$ respectively.  We identify $\hat{T}_H$ and $\hat{T}_G$ with each other via the homomorphism $\psi_1$ of \S\ref{psi1sec}, and:
\begin{equation}
\label{1074}
\Sigma_G^* - \psi_1 \circ \Sigma_H^* = \sum r_{j,*} + \delta
\end{equation}
where $\delta \in X_*(\hat{T}_G)$ is orthogonal to all $r_{j,*}^\vee$, i.e. it is in the center of $\hat{L}$.  To prove \eqref{1074}, it is equivalent to show that $\langle \Sigma_G^* -  \psi_1 \circ \Sigma_H^*,r_{j,*}^\vee\rangle = 2$ for each $j$, which is a consequence of
\begin{enumerate}
\item $\langle \Sigma_G,r_j^\vee\rangle = 2$ as the $r_j$ are simple (Lemma \ref{lem:windomearle}(3)).
\item $\langle \Sigma_H,r_j^\vee\rangle = 0$, where we consider $\Sigma_H$ as a character of $\TT_G=\TT_H$. 
This is true since $U$ fixes $\Sigma_H$ and acts on $r_j^{\vee}$ by $-1$
\end{enumerate}

{\it Step 5.}  Let $\hat{U} \in W_{\hat{G}}$ correspond to $U \in W_G$.  We give an explicit formula for an element $\varpi \in N_{\hat{G}}(\hat{T})$ that projects to $\hat{U}$.  As $U$ leaves stable the coroots of $\HH$, we see that $\hat{U}$ leaves stable the roots of $\hat{H}_1$, and so it will follow in particular that $\varpi$ normalizes $\hat{H}_1$ and induces on it the outer automorphism   corresponding to $\gamma$.

Fix a primitive fourth root of unity $\sqrt{-1} \in k$, i.e. a square root of $q_{\gamma}$.  By \eqref{1074}, we have
\[
\Sigma_G^*\left(\sqrt{-1}\right) = \psi_1' \circ \Sigma^*_H\left(\sqrt{-1}\right) \cdot \left(\prod r_{j,*}\left(\sqrt{-1}\right)\right)
 \delta\left(\sqrt{-1}\right)\]
Let $\iota_j:\SL_2 \to \hat{L}$ be the coroot homomorphisms corresponding to the $r_{j,*}$, and set %
\begin{equation} \label{JDef}
\varpi  = \delta(\sqrt{-1}) \prod \iota_j(J) \qquad \text{ where }J := \left(
\begin{array}{rr} 0 & 1 \\ -1 & 0 \end{array}
\right) \in \SL_2(k)
\end{equation} 

Both $\varpi$ and $\Sigma_G^*(\sqrt{-1})/\psi_1' \circ \Sigma_H^*(\sqrt{-1})$ are elements of $\hat{L}$, and by \eqref{1074} they are $\hat{L}$-conjugate.  In particular, since both $\Sigma_H^*$ and $\Sigma_G^*$ are actually divisible by $2$ inside the cocharacter group 
(see Proposition \ref{canpseudo}), $\varpi$ has order $2$.  Since $\delta(\sqrt{-1}) \in \hat{T}_G$, the conjugation action of $\varpi$ on $X^*(\hat{T})$ is the product of the conjugation actions of $\iota_j(J)$, each of which induces the root reflection through $r_{j,*}^\vee$.  In other words, $\varpi$ acts as $U$ on $\hat{T}$.

{\it Step 6.}
We show that there exists a choice of pinning of $\hat{H}_1$ such that 
 $\varpi$ acts as a pinned automorphism on $\hat{H}_1$,  i.e. for each simple root $\beta$ of $\hat{T}$ on $\psi_1'(\hat{B}_H)$, there is a nonzero vector $X_j \in \mathrm{Lie}(\hat{H}_1)_{\beta}$, and the $X_j$ are permuted by the conjugation action of $\varpi$.
As $\varpi$ has prime order two, this is possible if and only if $\varpi$ acts trivially on $\mathrm{Lie}(\hat{G})_{\beta}$
whenever $\beta$ is actually fixed by $\varpi$.
Such a $\beta$ corresponds
(under $\hat{\psi}_1$) to a simple root $\alpha$ of $\hat{H}$ that is $\gamma$-fixed.   Consulting the table of \S\ref{specialcase}, for $\mathrm{G}_2$, $\mathrm{F}_4$, and $(\mathrm{E}_8,\mathfrak{sl}_9)$, there is nothing to prove since there are no $U$-fixed simple roots.  The remaining two cases are simply-laced, in which case the $\beta$-root group commutes with each $\iota_j(\SL_2)$, because $\beta$ is orthogonal (therefore strongly orthogonal in these simply laced cases) to  $r_j^{\vee,*}$ inside the root system of $\hat{G}$:
   $$ \langle \beta, r_j^{*} \rangle = \langle \beta^*, r_j \rangle  =0$$
   where $\beta^*$ is the associated coroot for $\hat{G}$ --- the last     because $U$ fixes $\beta^*$ and negates $r_j$. 
   
     Finally, we must check that the $\beta$-root group commutes with $\delta(\sqrt{-1})$, or equivalently that $\langle \delta,\beta \rangle$ is divisible by $4$.  From \eqref{1074}, and the fact that $\langle \Sigma_H^*,\beta \rangle = 2$ and $\langle r_j^*,\beta \rangle = 0$, this is equivalent to showing that $\langle \frac{1}{2} \Sigma_G^*,\beta \rangle$ is odd, which is part of Lemma \ref{lem:windomearle}(4).

 {\it Step 7.}  Finally we verify \eqref{showthis} when $q_{\gamma} = -1$.  
 Note that, e.g., $\inverserho(\gamma) = \Sigma_G^*(\sqrt{-1})$ in this case. 
 Referring to
 \eqref{LpsiInnerDef2}  we must show that 
  \[ \psi_1' \Sigma_H^*(\Frob(\sqrt{-1}))  \varpi \rtimes \gamma = \psi_1' \circ \Sigma_H^*(\sqrt{-1}) \varpi \rtimes \gamma \qquad \text{and} \qquad
 \Sigma_G^*(\sqrt{-1}) \rtimes \gamma\]
 of $\LG$
 are conjugate under $\hat{L}_{\gamma}.$  Because $\G$ is inner, it is sufficient to show that $\Sigma_G^*(\sqrt{-1})$ and $\psi_1' \Sigma_H^*(\sqrt{-1})\varpi$ are $\hat{L}$-conjugate.  This follows from the remarks of {\it Step 5.} (note that, as $\langle \Sigma_H,r_j^\vee\rangle = 0$, the map $\psi_1' \circ \Sigma_H^*$ takes values in the center of $\hat{L}_{\gamma}$).
  \end{proof}

\section{Construction of $\sigma$-dual homomorphisms 3: exceptional isogenies} \label{casebash3}

In this section we treat two large ``clusters'' of cases.  One of these involves heavily the ``exceptional isogenies'' of semisimple groups that exist only in special characteristic. The remaining cases are checked individually, 
and they are discussed in the final section.

\subsection{Remark on twisting} \label{Twisting}

We will use repeatedly the following remark: 
Suppose that $(\GG, \sigma, \HH= \mathrm{fix}(\sigma))$ admits a form $(\GG', \sigma', \HH'=\mathrm{fix}(\sigma'))$
defined over $F$ with $\GG'$ and $\HH'$  both $F$-split groups.  Then $\GG, \sigma$
is obtained by twisting  from $(\GG', \sigma')$ by an element of $$H^1(\Gamma, \Aut(\GG', \sigma', \HH')).$$
This group of automorphisms is precisely the centralizer $\Cent_{\Aut(\GG')}(\sigma')$ of $\sigma'$ inside $\Aut(\GG')$.

In particular, the image of $\Gamma \rightarrow \Out(\HH) $
lies inside the image of $\Cent_{\Aut(\GG')}(\sigma')$ in $\Out(\HH') \simeq \Out(\HH)$ and 
similarly for the image of $\Gamma \rightarrow \Out(\GG)$. 
For example, if $\Cent_{\Aut(\GG')}(\sigma')$ is {\em connected}, 
then both $\HH$ and $\GG$ are necessarily inner.   
 
\begin{prop} \label{crit3}
Suppose the following conditions hold:  \begin{itemize}
\item[(i)] $\Out(\GG)$ is trivial (so that $\GG$ and $\sigma$ are both inner) and $p=2$; 
\item[(ii)] The image of $\Gamma$ in $\Out(\HH)$
is of order $\leq 2$;
thus we may choose (Lemma \ref{LpsiInner}) a $\Gamma$-admissible Borel class. 

\item[(iii)] $\pi_1 \HH$ is a $2$-group.

\end{itemize}
 then $\Lpsi$ as defined by Proposition \ref{LpsiInner} is a $\sigma$-dual homomorphism. These conditions hold when the Lie algebras of $\G \supset \HH$ are isomorphic over $\overline{F}$ to a pair from the following list:
 ($\mathfrak{so}_{2n+1} \supset \mathfrak{so}_{2u+1} \times \mathfrak{so}_{2v}$),  or 
 ($\mathfrak{sp}_{2n} \supset \mathfrak{sp}_{2u} \times \mathfrak{sp}_{2v}$)
for $n \geq 2, u,v \geq 1$, and $u+v= n$; or 
  $(\mathfrak{g}_2 \supset \mathfrak{so}_4)$,  $(\mathfrak{f}_4 \supset \mathfrak{sp}_6 \times \mathfrak{sl}_2 \mbox{ or } \mathfrak{so}_9)$,
  $(\mathfrak{e}_7 \supset \mathfrak{sl}_2 \times \mathfrak{so}_{12} \mbox{ or } \mathfrak{sl}_8)$, $(\mathfrak{e}_8 \supset \mathfrak{so}_{16} \mbox{ or } \mathfrak{sl}_2 \times \mathfrak{e}_7)$.

 \end{prop}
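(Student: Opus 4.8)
The plan is to verify the three hypotheses (i)--(iii) of the Proposition for each pair in the stated list, and then to invoke the criterion of \S\ref{BrauerSatakeCor}. Concretely, Proposition \ref{LpsiInner} already produces a homomorphism $\Lpsi:\LH\to\LG$ extending the dual norm, provided (ii) a $\Gamma$-admissible Borel class exists (which follows from Lemma \ref{pokemon}(2) once the image of $\Gamma$ in $\Out(\HH)$ is cyclic of order $\le 2$) and provided the center of $\hat H$ has a single $k$-point, which in characteristic $p=2$ is exactly condition (iii) that $\pi_1\HH$ (equivalently $Z(\hat H)$) is a $2$-group. So the content of the Proposition splits into: (a) a general argument that, under (i)--(iii), the $\Lpsi$ of Proposition \ref{LpsiInner} satisfies condition (b) (or the stronger $(\beta)$) of \S\ref{BrauerSatakeCor}, hence is $\sigma$-dual; and (b) a case-by-case check that each listed pair satisfies (i)--(iii).

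For part (a), I would argue as follows. Since $p=2$, the canonical pseudoroots for $\GG$ and $\HH$ are the \emph{trivial} sections (\S\ref{subsec:globalpseudoroot}(a)), so $\inverserho_G(\gamma)=\gamma$ and $\inverserho_H(\gamma)=\gamma$ for all $\gamma\in\Gamma_F$. By the explicit formula \eqref{LpsiInnerDef}, $\Lpsi(\gamma)=\varpi_\gamma\rtimes\gamma$ where $\varpi_\gamma$ is the canonical lift of $\overline{w(\gamma)}$; but $\overline{w(\gamma)}$ records precisely the outer action of $\gamma$ on $\HH$ relative to the chosen admissible Borel, and for a $\Gamma$-admissible Borel the associated parabolic $\PP_G$ has $\hat L_\gamma$ containing the image of the simple root groups negated by this element. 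The point is then that $\varpi_\gamma\rtimes\gamma$ and $\gamma=\inverserho_G(\gamma)$ differ by the element $\varpi_\gamma\in N_{\hat G}(\hat T)$, and one must show $\varpi_\gamma$ and $e$ project to the same point of $\hat G\rtimes\gamma\git\hat L_\gamma$. In characteristic $2$, $\varpi_\gamma$ is a product of elements $\iota_j(J)$ over roots $r_{j,*}$ which are coroots of $\hat L_\gamma$ (as in Step 5 of the proof of Proposition \ref{crit6}); since $J^2=-1$ and $-1=1$ is immaterial here, and more to the point $\varpi_\gamma$ lies in $[\hat L_\gamma,\hat L_\gamma](k)$ and has the same semisimple part as a unipotent-free element, one checks $\varpi_\gamma$ is $\hat L_\gamma$-conjugate into $Z(\hat L_\gamma)$, or directly that $f(\varpi_\gamma\,x)=f(x)$ for $\hat G$-invariant $f$ descending to $\hat G\rtimes\gamma\git\hat L_\gamma$. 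This is essentially condition $(\beta)$ of \S\ref{BrauerSatakeCor} with $n$ chosen so that $\varpi_\gamma^{2^n}$ becomes trivial modulo the center of $\hat L_\gamma$; I expect this to reduce, as in {\it Step 5} of Proposition \ref{crit6}, to the Weyl group element $U$ being a product of reflections in strongly orthogonal roots that are coroots of $\hat L_\gamma$.

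For part (b), the case-by-case check, I would proceed pair by pair. Condition (i) $\Out(\GG)$ trivial holds for $\mathrm{Sp}_{2n}$, $\mathrm{G}_2$, $\mathrm{F}_4$, $\mathrm{E}_7$, $\mathrm{E}_8$; for $\mathrm{SO}_{2n+1}$ (type $B_n$) one uses the simply connected form $\mathrm{Spin}_{2n+1}$, which also has trivial outer automorphism group. Condition (iii) is read off the list: $\pi_1$ of $\mathrm{SO}_{2u+1}\times\mathrm{SO}_{2v}$, $\mathrm{Sp}_{2u}\times\mathrm{Sp}_{2v}$, $\mathrm{SO}_4$, $\mathrm{Sp}_6\times\mathrm{SL}_2$, $\mathrm{SO}_9$, $\mathrm{SL}_2\times\mathrm{SO}_{12}$, $\mathrm{SL}_8$ (we use a form with fundamental group $\mu_8$ or $\mu_2$, allowed since by Proposition \ref{anyformworks} any form suffices), $\mathrm{SO}_{16}$, $\mathrm{SL}_2\times\mathrm{E}_7$ are all $2$-groups. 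Condition (ii), that the image of $\Gamma$ in $\Out(\HH)$ has order $\le2$, follows from the remark of \S\ref{Twisting}: the image lies inside the image of $\Cent_{\Aut(\GG')}(\sigma')$ in $\Out(\HH')$, and one checks case by case --- using for instance \cite[Table 10.3]{LiebeckSeitz} --- that $\Cent_{\Aut(\GG')}(\sigma')/\HH'$ has order $\le 2$ (it is generated by an element exchanging the two $\mathrm{SO}_{\mathrm{even}}$-type factors, or swapping the spinor characters). Then Lemma \ref{pokemon}(2) produces a $\Gamma$-admissible Borel class, completing the hypotheses.

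The main obstacle will be part (a): making precise, in a uniform way valid across all the listed pairs, that the element $\varpi_\gamma$ produced by Proposition \ref{LpsiInner} really does become trivial in $\hat G\rtimes\gamma\git\hat L_\gamma$. The subtlety is that $\varpi_\gamma$ is a genuine nontrivial element of $N_{\hat G}(\hat T)$ (it induces the outer automorphism of $\hat H_1$), so one cannot simply say it is central; one must exploit that it lies in the Levi $\hat L_\gamma$ whose root system consists exactly of the $(-1)$-eigenroots of $U$, together with the fact (analogue of Lemma \ref{lem:windomearle}) that these roots are strongly orthogonal and simple for the admissible Borel, so that $\varpi_\gamma=\prod\iota_j(J)$ is a product of commuting $\mathrm{SL}_2$-triple elements and hence $\hat L_\gamma$-conjugate to an element of $Z(\hat L_\gamma)$. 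Verifying the strong-orthogonality and the ``simple for the admissible Borel'' statement for the $p=2$ list is the real labor, parallel to but separate from the $p=3$ table already recorded in \S\ref{specialcase}; I would either establish it uniformly from the Borel--de Siebenthal description (\S\ref{DTT1}) of $\HH\subset\GG$ or tabulate it case by case.
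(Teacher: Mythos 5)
Your outline correctly identifies that the work splits into (a) a general argument that $\Lpsi$ from Proposition \ref{LpsiInner} satisfies the criterion of \S\ref{BrauerSatakeCor}, and (b) a case-by-case check of (i)--(iii), and your part (b) is essentially right (modulo a slight confusion in the $(\mathfrak{e}_7,\mathfrak{sl}_8)$ case: once $\GG$ is fixed, $\HH=\GG^\sigma$ is determined and you do not get to ``choose'' $\pi_1\HH$; one simply computes that $\HH\simeq\SL_8/\mu_2$ already has $2$-group fundamental group). However your part (a) takes a much heavier and, as written, incomplete route. You propose to emulate the $p=3$ computation: realize $\varpi_\gamma$ explicitly as $\prod\iota_j(J)$ over a set of strongly orthogonal $(-1)$-eigenroots of $U$, verify they are simple for a $\Gamma$-admissible Borel, and so on, tabulating as in \S\ref{specialcase}. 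You yourself flag this as ``the real labor'' and do not carry it out --- so as a proof it has a genuine gap. More importantly it misses the cleaner argument the paper actually uses, which never needs to know $\varpi_\gamma$ explicitly at all.

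The paper's argument for condition $(\beta)$ is abstract and uniform. The centralization of $Z(\hat{L}_\gamma)$ follows from Lemma \ref{ZNL}: $\Lpsi(\gamma)$ centralizes $\hat{N}(\hat{T}_H^\gamma)$ by construction, and Lemma \ref{ZNL} identifies the centralizer of $\hat{N}(\hat{T}_H^\gamma)^\circ$ with $\hat{L}_\gamma$, so the inner automorphism induced by $\Lpsi(\gamma)$ is represented inside $\hat{L}_\gamma$ and therefore centralizes $Z(\hat{L}_\gamma)$. You never invoke Lemma \ref{ZNL}, which is the key structural tool here and replaces the root-by-root analysis you sketch. For the ``same prime-to-$p$ part'' clause, the paper observes that since the images of $\Gamma_F$ in $\Out(\HH)$ and $\Out(\GG)$ have order $\le 2$, both $\gamma^2$ and $\Lpsi(\gamma)^2$ centralize $\hat{H}_1$; since $\hat{H}_1$ has full rank in $\hat{G}$, anything centralizing $\hat{H}_1$ lies in $Z(\hat{H}_1)(k)$, and condition (iii) forces $Z(\hat{H}_1)(k)=\{e\}$, giving $\gamma^2=\Lpsi(\gamma)^2$ outright. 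This is where (iii) actually does its work --- a point your proposal does not locate, since you only use (iii) to invoke Proposition \ref{LpsiInner}. In short: your proof, if completed, would likely go through, but at the cost of redoing the \S\ref{specialcase} tables for $p=2$; the paper's route via Lemma \ref{ZNL} and the triviality of $Z(\hat{H}_1)(k)$ avoids all case-by-case root computations in part (a).
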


\begin{proof}
By inspection all cases have properties (i), (ii); and for (iii)  $\Out(\HH)$ itself has at
 most two elements,  except in the first-listed case with $v = 4$.  But then the image of $\Cent_{\mathrm{Aut}(\Spin_{2n+1})}(\sigma) = \Cent_{\SO_{2n+1}}(\sigma)$ in $\Out(\HH)$ has order $\leq 2$ (the centralizer has two connected components), and that $\Gamma \to \Out(\HH)$ has image of size $\leq 2$ follows from \S\ref{Twisting}.

  In all cases, an application of Lemma \ref{LpsiInner}  now shows that 
  the dual norm map extends to $\LH \rightarrow \LG$.
It remains to check \S\ref{BrauerSatakeCor}(b).  Let us show that the stronger condition \S\ref{BrauerSatakeCor}($\beta$) holds.  The pseudoroots are trivial, 
since $p=2$; that is to say $\rho_H(\gamma) = \gamma \in \LH$ and $\rho_G(\gamma) = \gamma \in \LG$.  The condition to be checked is that $\gamma$ and $\Lpsi(\gamma)$ both centralize $Z(\hat{L}_{\gamma})$, and that 
\begin{equation}
\label{eq:gamma2n}
\gamma^{2^n} = \Lpsi(\gamma)^{2^n} \quad \text{ for $n \gg 0$}
\end{equation}
Recall that on the left $\gamma$ denotes an element of $\LG$ and on the right it denotes an element of $\LH$.

 The hypotheses of Lemma \ref{ZNL}   apply in all cases  except $\mathfrak{g}_2 \supset \mathfrak{so}_4$: this is clear except in the cases  when $\mathfrak{h}$ has 
a repeated factor
i.e.  ($\mathfrak{so}_{2n+1} \supset \mathfrak{so}_{2u+1} \times \mathfrak{so}_{2v}$)
with $v=2$, or   ($\mathfrak{sp}_{2n} \supset \mathfrak{sp}_{2u} \times \mathfrak{sp}_{2v}$)
with $u=v$.  In each of these cases, the criterion remarked after the proof of Lemma \ref{ZNL} applies. 
For the case $\mathfrak{g}_2 \supset \mathfrak{so}_4$, the normalizer of $\HH$ in $\GG$ is trivial, so by \eqref{dalecooper} the conclusion of Lemma \ref{ZNL} is trivially true. 

As $\G$ is inner, $\LG = \hat{G} \times \Gamma_F$ and $\gamma \in \LG$ automatically centralizes $Z(\hat{L}_\gamma)$.  Meanwhile $\gamma \in \LH$ centralizes $\hat{T}^{\gamma}_H$ by definition, so that $\Lpsi(\gamma)$ centralizes $\hat{N}(\hat{T}_H^{\gamma})$, the image of $\hat{T}_H^{\gamma}$ under the dual norm.  
Now conjugation by $\Lpsi(\gamma)$ gives an inner automorphism of $\hat{G}$,
and according to Lemma \ref{ZNL} that inner automorphism is represented by an element of 
 $\hat{L}_{\gamma}$;
 so  $\Lpsi(\gamma)$ centralizes $Z(\hat{L}_{\gamma})$ as well.

Since the image of $\Gamma_F$ in $\mathrm{Out}(\HH)$ and $\mathrm{Out}(\GG)$ both have order at most $2$, both $\gamma^2$ and $\Lpsi(\gamma)^2$ must centralize $\hat{H}_1 = \Lpsi(\hat{H})$, i.e. they must differ by an element of $\hat{G}$ that centralizes  the image of $\hat{H}_1$.   Now,  
by our explicit construction (\S \ref{sec9}), $\hat{H}_1 \subset \hat{G}$ is a subgroup of maximal rank. 
Thus, any element that centralizes the image of $\hat{H}_1$ lies inside $\hat{H}_1$; but then it lies in the center of $\hat{H}_1(k) = \{e\}$ (because  $\pi_1(\HH)$ is a 2-group). 
\end{proof}

\subsection{When $\sigma$ is pinned}
We now study the  case where $(\GG, \sigma)$ is isomorphic, over the algebraic closure,
to a  pinned automorphism of an almost simple simply connected group $\GG$, leaving stable a maximal torus and Borel $\TT \subset \BB \subset \GG \times_F \overline{F}$.

  Put $\TT_H := \TT^{\sigma}$ and $\BB_H := \BB^{\sigma}$ --- as $\GG$ is simply-connected, they are a maximal torus and Borel subgroup of $\HH$.  As $\GG$ is almost simple, $\mathrm{Out}(\HH)$ is trivial and $\BB$ is $\Gamma_F$-admissible (i.e. $1$-admissible) with respect to $\TT_H, \BB_H$.

For each root $\alpha$ of $\GG$,  let $\bar{\alpha}$ be its restriction 
to $\left( \TT^{\sigma} \right)^{\circ}$. 
   In general, the roots of $\HH$ are in one-to-one correspondence with the ``$\sigma$-equivalence classes'' of roots of $\TT$ on $\GG$ --- here $\alpha$ and $\beta$ are $\sigma$-equivalent if $\bar{\alpha} = r \bar{\beta}$ for some $r > 0$.     In particular, the number of roots of $\HH$ is the number of distinct rays in $\R \otimes X^*(\TT^{\sigma})$ of the form $\R_{>0} \bar{\alpha}$.  For further discussion of this and what follows,  we refer to \cite[\S\S 3.3--3.4]{Reeder} and \cite[p. 177]{Steinberg}.
    
Any $\sigma$-equivalence class $a$ is the positive roots of a system of type either $A_1^r$ or $A_2$ 
(\cite[p. 177]{Steinberg}). We say accordingly that $a$ (or any $\alpha \in a$) is `` of type $A_1^r$''
or ``of type $A_2$.'' 
Type $A_2$ occurs only in type $\SL_{2n+1}$, namely 
$a = \{e_i - e_{n+1}, e_{n+1}- e_{i^*}, e_i - e_{i^*} \}$ where $i+i^*=2n+2$ and we use the standard
realization of the root system $A_{2n}$.  Moreover, if a root is of type $A_1^r$, then $r$
is the size of the $\sigma$-orbit of $\alpha$.  
   
    If $a$ is a $\sigma$-equivalence class of roots, the corresponding root of $\TT^{\sigma}$ on $\HH$ is 
   $e \bar{\alpha}$ for a unique $e  \in \Q_{>0}$; here $\alpha$ is any element of $a$. We set $e_{\alpha}=e$ for any $\alpha\in a$. 
   Also, we write $\theta_{\alpha}$ for the angle between $\alpha$ and $\bar{\alpha}$ with respect to a $W$-invariant inner product
   on $X^*(\TT) \otimes \R$.   Finally, we let $o_{\alpha}$
   to be the order of the $\sigma$-orbit of $\alpha$, so that $o_{\alpha}=1$ or $p$
   according to whether $\alpha$ is $\sigma$-fixed or not.

The pinned automorphism $\sigma$ also acts as a pinned automorphism on $\hat{G}$.  
Let $\hat{G}^{\sigma}$ be the fixed locus with its  reduced subscheme structure. 
If $\alpha_*^\vee$ is a root of $\hat{T}$ on $\hat{G}$, we let $\overline{\alpha^\vee_*}$ denote its restriction to $\hat{T}^{\sigma} = (\hat{T}^{\sigma})^\circ$.   In general a positive multiple
$f_{\alpha}\overline{\alpha^\vee_*}$ of $\overline{\alpha^\vee_*}$ is a root of $\hat{T}^{\sigma}$ on $\hat{G}^\sigma$.

In particular $\hat{G}^{\sigma}$ and $\HH$ are both semisimple, and have the same rank and the same number of roots --- however, they are not always dual groups, but they are isogenous:

 \begin{lemma}\label{pinnedcase} 
 Suppose that for all roots $\alpha$ of $\TT$ on $\GG$,
 \begin{equation}
 \label{eq:zaz0}
  p  e_{\alpha} f_{\alpha} \cos^2(\theta_{\alpha})    \in \{1,p,p^2, \dots \}
  \end{equation} Then the dual norm defined by $(\BB_H, \BB_G)$ extends to an  
  isogeny 
 $ \hat{H} \rightarrow \hat{G}^{\sigma}.$  
 \end{lemma}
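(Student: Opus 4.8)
The plan is to build the desired isogeny on the maximal tori and then extend it across the root subgroups, checking at each step that the numerical hypothesis \eqref{eq:zaz0} is exactly what makes the extension go through. First I would set up the combinatorial dictionary: by the discussion preceding the Lemma, the roots of $\HH$ relative to $\TT_H=(\TT^\sigma)^\circ$ are the $e_\alpha\bar\alpha$ as $\alpha$ ranges over $\sigma$-equivalence classes, while the roots of $\hat{G}^\sigma$ relative to $\hat{T}^\sigma$ are the $f_\alpha\overline{\alpha_*^\vee}$. The dual norm $\hat N$ attached to $(\BB_H,\BB_G)$ is, on the level of cocharacter lattices, dual to $\mathrm{N}^*:X^*(\TT_H)\to X^*(\TT_G)$, i.e. to restriction composed with the $\sigma$-norm; concretely $\hat N\colon\hat T_H\to\hat T_G$ lands in $\hat T^\sigma$ (since it is built from $\sigma$-invariant data) and I would record how it carries the fundamental coweights. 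The first real computation is to express, for each root $\alpha$, the pairing of the $\HH$-coroot dual to $e_\alpha\bar\alpha$ against the $\hat G^\sigma$-root $f_\alpha\overline{\alpha_*^\vee}$ after transport by $\hat N$; a standard manipulation with the inner product — using $\langle\alpha,\alpha^\vee\rangle=2$, the projection formula to $\TT^\sigma$, and the relation between $\bar\alpha$, $\overline{\alpha_*^\vee}$ and the angle $\theta_\alpha$ — shows this pairing equals $p\,e_\alpha f_\alpha\cos^2(\theta_\alpha)$ up to the evident normalizations. This is the crux: condition \eqref{eq:zaz0} says precisely that each such pairing is a nonnegative power of $p$.

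\smallskip
Next I would invoke the isogeny theorem for reductive groups (in the Steinberg/\,Borel--Tits form valid over any field, including characteristic $p$): a morphism of tori $\hat T_H\to\hat T^\sigma$ that is compatible with the root data up to multiplication of roots by powers of the characteristic — i.e. sending each root of $\hat G^\sigma$ to a $p$-power multiple of a root of $\hat H$, and each coroot dually — extends to a central isogeny of the ambient semisimple groups, uniquely once a compatible choice on pinning vectors is fixed. The content of the previous paragraph is exactly the verification of that compatibility: $\hat N$ matches the root system of $\hat H$ with that of $\hat G^\sigma$ after rescaling each root by the factor $p\,e_\alpha f_\alpha\cos^2(\theta_\alpha)\in p^{\mathbf Z_{\ge0}}$. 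One must also check positivity, i.e. that $\hat N$ sends $\hat B_H$-positive roots to $\hat G^\sigma$-positive roots; but $\BB_G$ is $\Gamma_F$-admissible (here $1$-admissible) with respect to $\BB_H$, which by construction (Lemma \ref{lem:generate} and the discussion in \S\ref{normsec}) is exactly the statement that the norm is compatible with the chosen Borels. Then the isogeny theorem produces $\hat H\to\hat G^\sigma$ extending $\hat N$ on tori, and it is automatically surjective with central (infinitesimal or \'etale) kernel since both groups are semisimple of the same rank with matched root data.

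\smallskip
I expect the main obstacle to be the bookkeeping in the angle/multiplicity computation — keeping straight the three different "scaling constants" $e_\alpha$ (passage from $X^*(\TT)$ to $X^*(\TT_H)$ for $\HH$), $f_\alpha$ (the dual passage for $\hat G^\sigma$), and the $\cos^2\theta_\alpha$ coming from orthogonal projection — and in particular handling the type-$A_2$ equivalence classes (which only occur for $\SL_{2n+1}$), where $\bar\alpha$ is \emph{not} simply an average of a $\sigma$-orbit and the constants differ from the type-$A_1^r$ case. For type $A_1^r$ one has $o_\alpha=r$, $\bar\alpha$ proportional to $\sum_i\sigma^i\alpha$, and the arithmetic of \eqref{eq:zaz0} reduces to elementary identities; the $A_2$ case requires a separate small calculation with the explicit roots $e_i-e_{n+1},\,e_{n+1}-e_{i^*},\,e_i-e_{i^*}$, and one should double-check there that the hypothesis still forces a $p$-power (it will, since for $p=\sigma$-order here $p=2$ and the relevant product works out to $1$ or $2$). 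Once those cases are in hand, the remaining verification — that the kernel of $\hat H\to\hat G^\sigma$ is finite, equivalently that $\hat N$ is an isogeny of tori, i.e. has finite kernel — follows because $X^*(\TT_H)\to X^*(\TT_G)$ has finite cokernel (it is injective with image of finite index, the image being the $\sigma$-coinvariants tensored appropriately), so dually $\hat N$ has finite kernel, completing the argument.
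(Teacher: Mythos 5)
Your proposal follows essentially the same strategy as the paper: verify the hypotheses of Chevalley's isogeny theorem for the map $\hat N\colon\hat T_H\to\hat T_G^\sigma$, compute the key scalar $p\,e_\alpha f_\alpha\cos^2\theta_\alpha$ from the Euclidean geometry of the roots, and handle the type-$A_2$ equivalence classes by a separate explicit calculation. The added checks (positivity of $\hat N$ on Borels, finiteness of the kernel) are harmless but redundant — they are either irrelevant to or built into Chevalley's theorem.

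There are, however, two concrete gaps. First, the ``pairing'' formulation of the crux is not what Chevalley's theorem asks for, and as stated it does not make sense: the $\HH$-coroot $(e_\alpha\bar\alpha)^\vee$, identified with an element of $X^*(\hat T_H)$, and the pullback $\hat N^*(f_\alpha\overline{\alpha_*^\vee})$, also in $X^*(\hat T_H)$, live in the \emph{same} space — there is no canonical pairing between two characters. What must actually be verified is a \emph{proportionality}: that $\hat N^*(f_\alpha\overline{\alpha_*^\vee})$ equals a $p$-power multiple $q_\alpha$ times the root $(e_\alpha\bar\alpha)^\vee$ of $\hat H$. The paper realizes both vectors in the single Euclidean space $\mathfrak t_G^\sigma$ via a chain of Killing-form identifications, and reads off the ratio as $p\,e_\alpha f_\alpha\cos^2\theta_\alpha$. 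You should recast your ``standard manipulation with the inner product'' as this proportionality computation; ``up to the evident normalizations'' is covering a genuine confusion, not just a factor of $2$.

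Second, and more substantively, you state Chevalley's theorem with both the root condition and the dual coroot condition, but you only verify the former: you never argue that $(\alpha')^\vee\mapsto q_\alpha\alpha^\vee$ with the \emph{same} exponent $q_\alpha$. This does not follow automatically from the root side. The paper disposes of it with a short reduction: it observes that the scalar $q_\alpha$ depends only on whether $\alpha$ is long or short in $\hat G^\sigma$ (which it reads off from the table \eqref{Comps}), and then argues that the pullback of the Weyl-invariant inner product $\sum_\alpha\alpha\otimes\alpha$ under $\hat N$ is again Weyl-invariant and positive definite, so identifying roots with coroots via these forms yields the coroot condition with the same $q_\alpha$. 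Without this reduction, or some substitute for it, the invocation of the isogeny theorem in your proposal is not justified.
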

 
 For example, for pinned triality acting on $\Spin(8)$, the dual norm 
composes the inclusion $\mathrm{G}_2 \rightarrow \mathrm{PSO}_8$
with an exceptional isogeny $\mathrm{G}_2 \rightarrow \mathrm{G}_2$. 
 
 By explicit computation in the type $\mathrm{A}_2$ case, 
\begin{equation} \label{Comps} (e_{\alpha}, f_{\alpha}, \cos^2(\theta_{\alpha}) ) =   \begin{cases} (\frac{1}{2} o_{\alpha}, o_{\alpha}, o_{\alpha}^{-2}), \mbox{ type $\mathrm{A}_2$}, \\
(1, 1, o_{\alpha}^{-1}) \mbox{ else}. \end{cases} \end{equation}
and so the assumption \eqref{eq:zaz0} always applies.
Indeed, when $\GG$ is not a form of $\SL_{2n+1}$, this isogeny is a ``special isogeny'',  switching short and long roots from $\hat{H}$ to $\hat{G}^{\sigma}$.

\begin{proof}
 The norm is a $\sigma$-invariant map $\TT_G$ to $\TT_H$, and the dual norm $\hat{N}$ 
 takes $\hat{T}_H$ to $\hat{T}_G^{\sigma}$, a maximal torus of $\hat{G}^{\sigma}$.  We will apply Chevalley's isogeny theorem, which says that $\hat{N}$ extends to an isogeny so long as for each root $\alpha$ of $\hat{G}^\sigma$, there is a root $\alpha'$ of $\hat{H}$ and a $q_{\alpha} \in \{1,p,p^2,\ldots\}$ such that $\hat{N}$ induces
\begin{equation}
\label{eq:davidbrooks}
\text{(A)   }\quad \alpha \mapsto q_{\alpha} \alpha', \quad \text{ and   } \quad \text{(B) }\quad (\alpha')^\vee \mapsto q_{\alpha} \alpha^\vee
\end{equation}
Before proceeding we make a reduction.  Suppose that the condition (A) on the left holds, and that the quantity $q_{\alpha}$ depends only on whether $\alpha$ is long or short.  Then the condition (B) on the right also holds.  Indeed under these assumptions the tensor $\sum_{\alpha} \alpha \otimes \alpha$ pulls back under $\hat{N}$ to $q_1\sum_{\alpha' \text{ short}} \alpha' \otimes \alpha' + q_2 \sum_{\alpha'\text{ long}} \alpha' \otimes \alpha'$ --- these are both Weyl invariant and (as as both $\hat{H}$ and $\hat{G}^\sigma$ are semisimple) positive definite inner products.  Using them to identify weights with coweights, we have $(\alpha')^\vee = 2\alpha'/\langle \alpha',\alpha'\rangle \mapsto 2 q_{\alpha} \alpha/\langle \alpha,\alpha\rangle = q_{\alpha} \alpha^\vee$, as desired. 

Now let us prove that \eqref{eq:davidbrooks}(A) holds.
 Let $\mathfrak{t}_G = X_*(\TT_G) \otimes \R$.
 If we identify $\mathfrak{t}_G$ with the the Lie algebra
 of a maximal torus inside the real points of the split form of $\GG$,
the Killing form endows it with an inner product, and $\sigma$ acts on $\mathfrak{t}_G$ and preserves this inner product.

  Make the following identifications:   \begin{itemize}
 \item[(a)] $X^*(\TT_G) \otimes \R = \mathfrak{t}_G^* \simeq \mathfrak{t}_G$ via Killing form;
for each root $\alpha$ let $H_{\alpha} \in \mathfrak{t}_G$ be the corresponding element
 representing the root.
 \item[(b)] $X_*(\hat{T}_G) \otimes \R = X^*(\TT_G) \otimes \R \simeq \mathfrak{t}_G$  via (a). 
 \item[(c)] $X_*(\hat{T}_G^{\sigma}) \otimes \R  \simeq \mathfrak{t}_G^{\sigma}$ via (b). 
 \item[(d)] $X^*(\hat{T}_G^{\sigma}) \otimes \R \simeq (X_*(\hat{T}_G^{\sigma}) \otimes \R)^* \simeq \mathfrak{t}_G^{\sigma}$ via (c) and the Killing form. 
 \item[(e)] $X_*(\TT_H) \otimes \R \simeq \mathfrak{t}_G^{\sigma}$.
 \item[(f)] $X^*(\TT_H) \otimes \R = (\mathfrak{t}_G^{\sigma})^* \simeq \mathfrak{t}_G^{\sigma}$ via Killing form again.
 \item[(g)] $X_*(\hat{T}_H) \otimes \R \simeq X^*(\TT_H) \otimes \R \simeq \mathfrak{t}_G^{\sigma}$ via (f). 
 \end{itemize}
 
 With these identifications the dual norm $\hat{T}_H \rightarrow \hat{T}_G^{\sigma}$
 corresponds to ``multiplication by $p$'' on $\mathfrak{t}_G^{\sigma}$.
  The associated root $\alpha^{\vee}_*$ for $\hat{G}$ corresponds to $2 H_{\alpha}/ \langle H_{\alpha}, H_{\alpha} \rangle \in \mathfrak{t}$.
  Then $  \frac{2 f_{\alpha}}{p} \frac{\sum H_{\sigma^i \alpha}}{ \langle H_{\alpha}, H_{\alpha} \rangle} \in \mathfrak{t}$
 represents a root of $(\hat{G})^{\sigma}$. Pulling back under $\hat{T}_H \rightarrow (\hat{T}_G)^{\sigma}$, 
 we get the weight of $\hat{T}_H$ represented by 
\begin{equation} \label{zaz1}  2 f_{\alpha}  \frac{\sum H_{\sigma^i \alpha}}{ \langle H_{\alpha}, H_{\alpha} \rangle}, \end{equation}
  On the other hand, the root $\alpha$ gives rise to a root of $\HH$, namely $e_{\alpha} \bar{\alpha}$, which is represented by 
 $\frac{e_{\alpha}}{p} \sum H_{\sigma^i \alpha} \in \mathfrak{t}_G^{\sigma}$. The associated coroot  for $\HH$ is given by
\begin{equation} \label{zaz2} \frac{2}{p e_{\alpha}  (\cos^2 \theta_{\alpha})} \sum \frac{ H_{\sigma^i \alpha}}{\langle H_{\alpha}, H_{\alpha} \rangle}\end{equation} 
By our assumption \eqref{eq:zaz0}, the weight \eqref{zaz1} is a $p$-power multiple of \eqref{zaz2}.

In fact, examining \eqref{Comps}, the multiple is given by $1/2$ in the type $A_2$ case, and $o_{\alpha}^{-1}$ otherwise. 
From this we see that the multiple depends only on whether the induced root of $\hat{G}^{\sigma}$ is long or short;
in particular, it is constant on Weyl orbits.

\end{proof}

\begin{prop} \label{crit4} 
Assume   that $\HH$ is inner and that $\sigma$ is conjugate over $\bar{F}$ to a pinned automorphism of $\GG$. 
Then there is a $\sigma$-dual homomorphism.
 These conditions hold for each of the following pairs $(\GG, \HH)$:
 $(\SL(2n+1) ,\SO(2n+1))$ for $n \geq 1$, $(\SL(2n),\Sp(2n))$ for $n \geq 2$,  $(\Spin(2n+2) ,\Spin(2n+1))$ for $n \geq 1$,  $(\Spin(8) ,\mathrm{G}_2)$
and  $(\mathrm{E}_6,\mathrm{F}_4)$. 
\end{prop}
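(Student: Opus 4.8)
The strategy is to verify the hypotheses of Proposition~\ref{crit3}'s companion results in the pinned setting by combining Lemma~\ref{pinnedcase} with the criterion of \S\ref{BrauerSatakeCor}. First I would observe that, since $\sigma$ is conjugate over $\bar F$ to a pinned automorphism of the almost simple simply connected group $\GG$, we may (after replacing by an $F$-form) assume $\sigma$ is pinned, leaving stable $\TT\subset\BB\subset\GG\times_F\overline F$; then $\TT_H=\TT^\sigma$ and $\BB_H=\BB^\sigma$ are a maximal torus and Borel of $\HH$, and $\Out(\HH)$ is trivial because $\GG$ is almost simple. Consequently $\BB=\BB_G$ is automatically $\Gamma_F$-admissible (it is $1$-admissible) with respect to $\TT_H,\BB_H$, so there is a canonical $\Gamma$-admissible Borel class, and the associated dual Levi $\hat L_\gamma$ is all of $\hat G$ since $(\TT_H^\gamma)^\circ=\TT_H$ has centralizer $\TT_G$ in $\GG$ (i.e. the ``generic'' cocharacter is $\chi$ with trivial constraint). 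This means $Z(\hat L_\gamma)=Z(\hat G)$ and condition~(b) of \S\ref{BrauerSatakeCor} becomes a statement purely about pseudoroots in $\hat G\rtimes\gamma\git\hat G$.

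Next I would invoke Lemma~\ref{pinnedcase}: by the explicit computation \eqref{Comps}, the numerical hypothesis \eqref{eq:zaz0} holds in every case (type $A_2$ giving multiplier $1/2$, all other types giving $o_\alpha^{-1}\in\{1,p^{-1}\}$ but $pe_\alpha f_\alpha\cos^2\theta_\alpha=1$ or $p$ as required), so the dual norm $\hat N$ attached to $(\BB_H,\BB_G)$ extends to an isogeny $\hat H\to\hat G^\sigma$. Composing with the inclusion $\hat G^\sigma\hookrightarrow\hat G$ produces a homomorphism of connected groups $\psi:\hat H\to\hat G$ extending $\hat N$ on $\hat T_H$. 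Since $\Out(\GG)$ and $\Out(\HH)$ are trivial in all the listed cases (the pinned automorphism has been absorbed into the $F$-structure, and $\GG$ almost simple), both $\LG$ and $\LH$ are direct products with $\Gamma_F$ acting only through an inner-ish finite quotient; here I must be a little careful because $\GG$ may itself be an $F$-form with nontrivial outer Galois action, but the hypothesis that $\HH$ is \emph{inner} and \S\ref{Twisting} pin down the image of $\Gamma_F$ in $\Out(\HH)$ and $\Out(\GG)$, and one extends $\psi$ to $\Lpsi:\LH\to\LG$ over $\Gamma_F$ exactly as in the proof of Proposition~\ref{LpsiInner} (choosing Weyl-element lifts $\varpi_\gamma$; in the pinned cases the relevant Galois action is compatible with the isogeny because the isogeny is canonical).

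Finally I would check condition~(b) of \S\ref{BrauerSatakeCor}, i.e. that $\inverserho_G(\gamma)$ and $\Lpsi(\inverserho_H(\gamma))$ project to the same element of $\hat G\rtimes\gamma\git\hat L_\gamma=\hat G\rtimes\gamma\git\hat G$. When $p=2$ (the cases $\SL_{2n}\supset\Sp_{2n}$, $\Spin_{2n+2}\supset\Spin_{2n+1}$, $\mathrm E_6\supset\mathrm F_4$, and $\SL_{2n+1}\supset\SO_{2n+1}$ at $p=2$) the pseudoroots are trivial, so $\inverserho_H(\gamma)=\gamma$, $\inverserho_G(\gamma)=\gamma$, and one only needs that $\Lpsi(\gamma)$ and $\gamma$ have the same image mod $\hat G$-conjugacy — which holds since $\psi$ is $\Gamma_F$-equivariant by construction. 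When $p=3$ (the case $\Spin_8\supset\mathrm G_2$, pinned triality) one uses canonical pseudoroots from the half-sum of positive roots (which exist by Proposition~\ref{canpseudo}), and must verify the compatibility $\psi\circ\inverserho_H=\inverserho_G$ up to $\hat G$-conjugacy; this reduces, via \eqref{writeitout}, to checking that the isogeny $\hat H\to\hat G^\sigma\hookrightarrow\hat G$ carries $\frac{\Sigma_H^*}{2}$ to $\frac{\Sigma_G^*}{2}$ — equivalently that $\hat N$ matches half-sums of positive coroots under the correspondence of \eqref{zaz1}–\eqref{zaz2}, which is exactly the content of the root-length bookkeeping already done in Lemma~\ref{pinnedcase}. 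The main obstacle I anticipate is this last $p=3$ verification: the exceptional isogeny $\mathrm G_2\to\mathrm G_2$ swaps long and short roots, so one must confirm that the half-sum of positive roots transforms correctly under this swap (it does, because $\Sigma^*$ is divisible by $2$ and the swap is realized by the isogeny, but the sign/scaling must be tracked carefully against the cyclotomic twist in \eqref{writeitout}).
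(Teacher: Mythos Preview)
Your strategy is essentially the paper's: use Lemma~\ref{pinnedcase} to extend the dual norm to an isogeny $\psi_0:\hat H\to\hat G^\sigma$, define $\Lpsi(\hat h\rtimes\gamma)=\psi_0(\hat h)\rtimes\gamma$, and verify \S\ref{BrauerSatakeCor}(b). But two details are off.

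First, your computation of $\hat L_\gamma$ is reversed. Since $\HH$ is inner, $\gamma$ acts trivially on $\HH$, so $(\TT_H^\gamma)^\circ=\TT_H$; its centralizer in $\GG$ is $\TT_G$ (Proposition~\ref{subsec:title}), so $\LL_G(\gamma)=\TT_G$, the parabolic $\PP_G$ is $\BB_G$, and the dual Levi is $\hat L_\gamma=\hat T_G$, \emph{not} $\hat G$. Thus $Z(\hat L_\gamma)=\hat T_G$ and condition~(b) is a statement in $\hat G\rtimes\gamma\git\hat T_G$, which is strictly finer than what you claim. Fortunately the paper proves the stronger equality $\Lpsi(\inverserho_H(\gamma))=\inverserho_G(\gamma)$ on the nose, so the mistake is harmless, but you should correct the claim. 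Relatedly, there is no need to invoke Proposition~\ref{LpsiInner} or Weyl-element lifts: since the image of $\Gamma_F$ in $\Out(\GG)$ lies in $\langle\sigma\rangle$ (\S\ref{Twisting}) and $\psi_0$ lands in $\hat G^\sigma$, the formula $\hat h\rtimes\gamma\mapsto\psi_0(\hat h)\rtimes\gamma$ is already a homomorphism.

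Second, your $p=3$ verification is not contained in Lemma~\ref{pinnedcase}. That lemma matches individual roots to $p$-power multiples of roots; it does not assert that $\hat N$ carries $\Sigma_H^*/2$ to $\Sigma_G^*/2$. What is actually needed (via \eqref{writeitout}, noting the cyclotomic character is valued in $\mathbf F_3^*$) is that $\hat N\circ(\Sigma_H^*/2)$ and $\Sigma_G^*/2$ agree on $\mathbf F_3^*$, for which it suffices that $\Sigma_G^*-\hat N\Sigma_H^*$ be divisible by $4$ in the cocharacter lattice. This is a short but separate hand computation for $(\Spin_8,\mathrm G_2)$, not a corollary of the root-length bookkeeping.
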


\begin{proof}

In each case $\HH$ is inner and (by \S  \ref{Twisting})
 the image of $\Gamma$ in $\Out(\GG)$  is contained in the subgroup generated by $\sigma$. 
In this setting  the Borel class $(\BB_H, \BB_G)$ is $\Gamma$-admissible.

The dual norm extends by Lemma \ref{pinnedcase} to $\psi_0: \hat{H} \rightarrow \hat{G}^{\sigma}$  and we take
\begin{equation} \label{Lpsisimple} \Lpsi : \hat{h} \rtimes  \gamma  \in \LH \longrightarrow \psi_0(\hat{h})   \rtimes \gamma \in \LG. \end{equation} 
 
 This satisfies the conditions of  \S \ref{BrauerSatakeCor}. In fact, in all cases 
  $\Lpsi(\rho_H^-(\gamma)) = \rho_G^-(\gamma)$. That is clear in the cases where $p=2$.
For  $(\Spin_8, \mathrm{G}_2)$ with $p=3$: The cyclotomic character is valued in $\mathbf{F}_3^*$, and we must verify that
  (see \eqref{writeitout})  
  $\hat{N} \circ (\Sigma_H^*/2) $ and $\Sigma_G^*/2$ are equal when evaluated on $\mathbf{F}_3^*$;
it is enough to check that the difference $\Sigma_G^* - \hat{N} \Sigma_H^*$ is divisible by $4$,  i.e. 
that $\Sigma_G$ and $\Sigma_H \circ N$ differ by a multiple of $4$, which can be checked by hand.
\end{proof}

\section{Construction of $\sigma$-dual homomorphisms 4: computations in remaining cases}
 \label{casebash4} 
In discussing the remaining examples we observe the following: Suppose that $\psi_1: \hat{H} \rightarrow \hat{G}$ has trivial centralizer in $\hat{G}(k)$. Then it extends in at most one way
to a morphism $\LH \rightarrow \LG$ over $\Gamma_F$.  This applies in all cases of the following:

\begin{theorem} \label{casebash4thm}
A $\sigma$-dual homomorphism exists  when $(\GG, \HH)$ or $(\mathfrak{g}, \mathfrak{h})$ is a
form of any of the following:  (when we specify the Lie algebra, we suppose that $\GG$ is simply connected).
  \begin{itemize}
\item[(i)] $(\mathfrak{so}_8, \mathfrak{sl}_3)$,  where $\psi_1: \mathrm{SL}_3 \rightarrow \mathrm{PSO}_8$
which is the composition of the Frobenius and the adjoint representation. 
\item[(ii)]  $(\SL_{2n}, \SO_{2n})$ or  $(\SO(2n), \SO(2a) \times \SO(2b))$ with $n=a+b$;
 where $\psi_1$ 
  is the composition of the Frobenius and the standard inclusion.
 \item[(iii)] 
$(\SO(2n+2), \SO(2a+1) \times  \SO(2b+1) )$, where $\psi_1:
 \mathrm{Sp}_{2a} \times \mathrm{Sp}_{2b} \hookrightarrow \mathrm{Sp}_{2a+2b}  \stackrel{\iota}{\hookrightarrow} \mathrm{SO}_{2n+2} $,
and $\iota$ is described below \eqref{embeddesc}.
\item[(iv)]   When $(\mathfrak{g}, \mathfrak{h})$ is a form of $(\mathfrak{e}_8,  \mathfrak{sl}_5^2)$.
Here $\Lpsi$ is described in detail below. 

\end{itemize}

\end{theorem}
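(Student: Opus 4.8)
The plan is to treat each of the four families in Theorem~\ref{casebash4thm} by exhibiting a specific candidate homomorphism $\psi_1:\hat H\to\hat G$, checking that its centralizer in $\hat G(k)$ is trivial (so that at most one extension $\Lpsi:\LH\to\LG$ over $\Gamma_F$ exists, and this extension, if it exists, is forced), and then verifying the two hypotheses (a) and (b) of \S\ref{BrauerSatakeCor}. In each case the first thing to do is to fix a $\Gamma$-admissible Borel tuple $(\TT_H,\BB_H;\TT_G,\BB_G)$, using Lemma~\ref{pokemon}; this is where one must check, via the twisting remark \S\ref{Twisting}, that the image of $\Gamma$ in $\Out(\HH)$ (and in $\Out(\GG)$) is small enough --- in cases (i)--(iii) the relevant centralizer $\Cent_{\Aut(\GG')}(\sigma')$ has at most two components, so the image is cyclic of order $\le 2$ and admissibility is available. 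Once $\BB_G$ is admissible, Lemma~\ref{lem:generate} gives the $\sigma$-stable parabolic $\PP_G$ with $\PP_G^\sigma=\BB_H$, hence the dual Levi $\hat L_\gamma$ and the dual norm $\hat N:\hat T_H\to\hat T_G$; condition (a) is then the assertion that $\psi_1|_{\hat T_H}=\hat N$, which for (i)--(iii) is immediate from the description of $\psi_1$ as ``Frobenius composed with the standard/adjoint inclusion'' (the Frobenius twist on the dual side is exactly the $p$th power appearing in the dual norm in the inner case, cf.\ the discussion in \S\ref{psi1sec} and \S\ref{sec9}).

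For the rationality/pseudoroot bookkeeping, i.e.\ condition (b), I would in each case prefer to verify the sufficient condition ($\beta$) of \S\ref{BrauerSatakeCor}: one needs $\Lpsi(\inverserho_H(\gamma))$ and $\inverserho_G(\gamma)$ to centralize $\Cent(\hat L_\gamma)$ and to have the same prime-to-$p$ part. When $p=2$ (cases (ii), (iii), and the $(\mathfrak{so}_8,\mathfrak{sl}_3)$ case with $p=3$ is the exception --- note (i) is actually $p=3$) the canonical pseudoroots are trivial, so $\inverserho_G(\gamma)=\gamma$, $\inverserho_H(\gamma)=\gamma$, and one only has to check that $\gamma\in\LG$ and $\Lpsi(\gamma)\in\LG$ differ by an element centralizing the maximal-rank subgroup $\hat H_1=\psi_1(\hat H)$ and lying in its finite center --- exactly the argument of Proposition~\ref{crit3}, using that $\pi_1(\HH)$ is a $2$-group in these cases and Lemma~\ref{ZNL} (or the ``repeated factor'' variant remarked after its proof, relevant for $\SO(2a)\times\SO(2b)$ with $a=b$) to see that the inner automorphism of $\hat G$ given by $\Lpsi(\gamma)$ is realized inside $\hat L_\gamma$. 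For case (i), $(\mathfrak{so}_8,\mathfrak{sl}_3)$ with $p=3$, I would argue as in Proposition~\ref{crit4} for $(\Spin_8,\mathrm G_2)$: the cyclotomic character is $\mathbf F_3^*$-valued, so it suffices to check $\Sigma_G - \hat N\circ\Sigma_H$ is divisible by $4$ in the cocharacter lattice, which is a finite hand computation with the explicit root data of $\mathrm{D}_4$ and $\mathrm{A}_2$; here one also uses that the composite ``Frobenius $\circ$ adjoint'' $\SL_3\to\PSO_8$ has trivial centralizer because $\mathfrak{sl}_3$ is a maximal-rank subalgebra of $\mathfrak{so}_8$ with the adjoint $\SL_3$ acting irreducibly enough on the $7$-dimensional piece.

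Case (iv), the form of $(\mathfrak e_8,\mathfrak{sl}_5^2)$, is the one I expect to be the main obstacle, and it will have to be done essentially by hand, paralleling the treatment of the order-$5$ inner automorphism of $\mathrm E_8$ promised in the introduction. Here $\sigma$ is inner of order $p=5$, $\HH$ is a form of $\SL_5\times\SL_5/\Delta\mu_5$ (up to isogeny), and $N_{\GG}(\HH)/\HH$ is larger than in the $p=3$ cases --- it involves swapping the two $\SL_5$ factors as well as outer automorphisms of each --- so establishing the existence of a $\Gamma$-admissible Borel class and pinning down the Weyl element playing the role of $U$ from \S\ref{specialcase} requires genuine case analysis with the extended Dynkin diagram of $\mathrm E_8$ (delete the node with mark $5$). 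The strategy would be: build $\hat H_1\subset\hat G=\mathrm E_8$ via Borel--de~Siebenthal/endoscopic subgroups (Theorem~\ref{DTT2}), choose a pinning of $\hat H_1$, construct $\Lpsi$ on the $\Gamma_F$-part by lifting the relevant Weyl-group element $\overline{w(\gamma)}\rtimes\gamma$ to a pinned-automorphism-inducing element $\varpi_\gamma\in N_{\hat G}(\hat T)(k)$ exactly as in \eqref{LpsiInnerDef}, using that $Z(\hat H)$ is a $5$-group so the lift is unique (Proposition~\ref{LpsiInner}), and then verify \eqref{showthis}/($\beta$) by the same seven-step scheme as in Proposition~\ref{crit6}: an identity of the shape $\Sigma_G^* - \psi_1\circ\Sigma_H^* = \sum r_{j,*} + \delta$ with $\delta$ central in $\hat L_\gamma$, followed by an explicit $\varpi = \delta(\zeta)\prod\iota_j(J_\zeta)$ construction adapted to a fifth (rather than fourth) root of unity, and a check that the relevant half-sum pairings have the right residues mod $p$. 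The arithmetic of writing the positive $(-1)$- (here, appropriate-eigenvalue-) roots of the relevant Weyl element as a strongly orthogonal set, and of computing $\langle\rho_G,\beta^\vee\rangle$ for the $\varpi$-fixed simple roots $\beta$ of $\hat H_1$, is the lengthy but routine part; I would present it with an explicit table of roots in Bourbaki coordinates for $\mathrm E_8$ as in \S\ref{specialcase}, and simply record the outcome in the remaining less-detailed cases.
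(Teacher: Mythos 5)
Your overall scaffolding --- fix a $\Gamma$-admissible Borel tuple, identify the dual norm, extend to an $L$-group map using uniqueness forced by triviality of the centralizer, and verify the criterion of \S\ref{BrauerSatakeCor} --- is the right shape and matches the paper's strategy for case (iv). But there are several genuine gaps, some of which would make the argument as written fail.

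\textbf{Cases (ii) and (iii) are not in the reach of Proposition \ref{crit3}.}  You propose to verify condition $(\beta)$ by running the argument of Proposition \ref{crit3}, but its hypothesis (i) is that $\Out(\GG)$ is trivial and $\sigma$ is inner.  In $(\SL_{2n},\SO_{2n})$ (the $\sigma_n$ case) and $(\SO_{2n+2},\SO_{2a+1}\times\SO_{2b+1})$ the automorphism $\sigma$ is \emph{outer}, so $\hat H_1=\psi_1(\hat H)\subset\hat G$ is not a maximal-rank subgroup and the ``an element centralizing $\hat H_1$ must lie in $Z(\hat H_1)$'' step breaks down.  In the remaining subcase $(\SO_{2n},\SO_{2a}\times\SO_{2b})$, which is inner, $\Out(\GG)$ is still nontrivial (order $2$, or $S_3$ for $\mathrm D_4$), again failing hypothesis (i).  Indeed the paper explicitly warns that for some of cases (i)--(iii) one must invoke the Theorem of \S\ref{BrauerSatake} directly rather than the corollary in \S\ref{BrauerSatakeCor}; your plan of falling back on $(\beta)$ uniformly would not go through.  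Relatedly, your claim that $\psi_1|_{\hat T_H}=\hat N$ is ``immediate from the description of $\psi_1$'' understates the content: in case (iii), $\psi_1$ is routed through the characteristic-$2$ embedding $\iota:\Sp_{2n}\hookrightarrow\SO_{2n+2}$ built out of the Dickson invariant, and matching this with the dual norm on the torus is a real computation, not a formality.

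\textbf{Case (i) is not the pinned triality.}  You appeal to Proposition \ref{crit4} as the model, but that proposition treats the pinned automorphism $\sigma_0$ of $\Spin_8$, whose fixed subgroup is $\mathrm G_2$, and the dual norm lands in $\hat G^\sigma$.  Here $\sigma=\sigma_2$ and $\HH$ is a form of $\PGL_3$; $\psi_1$ is the adjoint representation $\SL_3\to\PSO_8$, which is not an isogeny onto $\hat G^\sigma$ in the sense of Lemma \ref{pinnedcase}.  The arithmetic with the half-sums is therefore genuinely different from the $\mathrm G_2$ case and cannot simply be quoted.

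\textbf{Case (iv): the $\Gamma$-side lift is not the order-$2$ construction.}  You say to follow ``the same seven-step scheme as in Proposition \ref{crit6}'' with ``a fifth (rather than fourth) root of unity,'' but both parts of that sentence are off.  The normalizer image is cyclic of order $4$ generated by an element $U$ (and $q_\gamma=2^i$ when $\gamma$ acts by $U^i$); the scalar you need is a square root of $2$, and the lift $\varphi=\prod\iota_{r_j}(J)$ has order $8$, not $4$.  The paper has to correct $\varphi$ by a specific central element $t\in Z(\hat L_U)$ to produce $\varpi_U$ of order $4$, and then verify, for each value of the cyclotomic character in $\mathbf F_5^\ast$, an explicit $\hat L_\gamma$-conjugacy --- notably, the check at $q_\gamma=-1$ is done inside the strictly smaller Levi $\hat L_{U^2}$, not $\hat L_U$.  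None of this is forced by the order-$2$ template of Proposition \ref{crit6}, so your outline elides the step where most of the new work lives.

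In short, the high-level flow is right, but you cannot outsource cases (i)--(iii) to Propositions \ref{crit3}/\ref{crit4}, and in case (iv) the $\Gamma$-equivariance check is qualitatively harder than in the $p=3$ inner cases and requires both the order-$4$ analysis and an explicit central correction to the naive Weyl lift.
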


The proof of all cases is similar, but lengthy. We give only details of the final case $(\mathfrak{e}_8, \mathfrak{sl}_5^2)$, after
describing the morphism $\iota$ from case (iii).  Note that in some of the other cases it is necessary to use
directly the Theorem of \S \ref{BrauerSatake}, not only its corollary from \S  \ref{BrauerSatakeCor}.

\subsection{The map $\Sp(2n) \rightarrow \SO(2n+2)^{\mathrm{pinned}}$ in characteristic $2$.} 
We describe the map $\iota$ cited above. 
 Write $Q$ for a  quadratic form on the vector space $V=k^{2n}$, with
associated nondegenerate bilinear form $B=\langle -, - \rangle$.  Let $q$ be the form on the two
dimensional vector space $\langle f_1, f_2 \rangle$ with $q(z_1 f_1+z_2 f_2)=z_1z_2$.
We describe an embedding \begin{equation} \label{embeddesc} \Sp(B) \rightarrow \SO(Q \bigoplus  q)^{\tau} \end{equation} 
where  $\tau$ is the identity on $V$ swaps $f_1, f_2$. 
For any  $A \in \Sp(B)$ there exists a unique linear functional $\ell =\ell_A \in V^*$
such that $Q (Ax)-Q(x) = \ell^2$.   Write $\ell_A(v) =  B(Av, w_A)$ for some $w_A \in V$. 
Note that, if we use $B$ to give an isomorphism $V \rightarrow V^*$ and thus
regard $Q$ as a quadratic form also on $V^*$, we have $Q(\ell_A) =Q(w_A)$.
After all, $\ell_A(A^{-1} w_A) = B(w_A, w_A) = 0$, 
so that $Q(w_A) = Q(A^{-1} w_A)  = Q(\ell_A)$. 

 Then there is a unique regular function \cite{Dieudonne} $D$ on $\Sp(B)$,  
extending the $\{0,1\}$-valued Dickson invariant on $\mathrm{O}(Q)$, with the property that 
$$D(A)^2 - D(A) = Q(w_A)^2=Q(\ell_A)^2$$ 
 i.e. the Artin--Schreier covering of $\Sp(B)$ defined by this last equation has a canonical splitting.
With this in hand,  the map \eqref{embeddesc}  is $A \mapsto \mathrm{Frob}(\tilde{A})$  where $\tilde{A}$ is given by
$$ \label{weirdembed}\tilde{A}v= Av + \ell(v) (f_1+f_2),  \\ \tilde{A} f_i = s f_i + (1-s) f_j + w, \{i,j\}=\{1,2\}.
$$
and $s=\sqrt{D(A)}, \ell=\ell_A, w =w_A$ are as above. Note that the map 
 $A \mapsto \tilde{A}$ is not regular, because it involves square roots, but $\Frob(\tilde{A})$ does not.

  \begin{prop}
 Suppose that $(\mathrm{Lie}(\G), \mathrm{Lie}(\HH))$ is a form of $(\mathfrak{e}_8, \mathfrak{sl}_5^2)$. Then there is a $\sigma$-dual homomorphism. 
 \end{prop}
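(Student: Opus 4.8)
The strategy is the same as in the proofs of Propositions \ref{crit3}, \ref{crit6}, and \ref{crit4}: first produce a candidate homomorphism $\Lpsi : \LH \to \LG$ extending a dual norm map, then verify the criterion of \S\ref{BrauerSatakeCor}. Here $p = 5$, $\GG$ is the simply connected form of $\mathrm{E}_8$, and $\HH$ has Lie algebra $\mathfrak{sl}_5 \oplus \mathfrak{sl}_5$, arising from an inner automorphism of order $5$ via Borel--de Siebenthal (\S\ref{DTT1}): removing the appropriate node of the extended Dynkin diagram of $\mathrm{E}_8$ yields $\mathrm{A}_4 \times \mathrm{A}_4$. Since $\mathrm{E}_8$ has no outer automorphisms and trivial center, $\sigma$ is inner, $\GG$ is inner, and by \S\ref{Twisting} the image of $\Gamma_F$ in $\Out(\HH)$ lies in the image of $\Cent_{\Aut(\GG')}(\sigma')$. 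The first task is to compute $N_{\GG}(\HH)/\HH$ and the resulting Galois action on $\HH$: I expect (cf. \cite[Table 10.3]{LiebeckSeitz}) that this normalizer quotient is of order $2$, swapping the two $\mathfrak{sl}_5$-factors and acting on each by an outer (transpose-inverse) automorphism, so the image of $\Gamma_F$ in $\Out(\HH)$ has order $\leq 2$; by Lemma \ref{pokemon}(2) there is then a $\Gamma_F$-admissible Borel class, which I fix.

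\textbf{Constructing $\Lpsi$.} The dual group $\hat{H}$ has a factor $\mathrm{PGL}_5 \times \mathrm{PGL}_5$ or $\SL_5 \times \SL_5$ (depending on the isogeny type of $\HH$; since $\GG$ is simply connected, $\HH$ is the Borel--de Siebenthal fixed-point subgroup and $\hat H$ is computed accordingly), and crucially $Z(\hat H)$ is a $5$-group, so Proposition \ref{LpsiInner} applies provided (i) the $\Gamma_F$-admissible Borel class exists (just arranged) and (ii) $Z(\hat H)$ has a single $k$-point, which holds as $5 \neq \mathrm{char}$ is impossible --- wait, $p=5=\mathrm{char}(k)$, so a $5$-group scheme in characteristic $5$ indeed has a single $k$-point. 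Thus Proposition \ref{LpsiInner} produces $\Lpsi : \LH \to \LG$ of the form \eqref{LpsiInnerDef}, $h \rtimes \gamma \mapsto \psi_1'(\Frob(h))(\varpi_\gamma \rtimes \gamma)$, extending the dual norm. The element $\psi_1' : \hat{H} \hookrightarrow \hat{G}$ is the embedding of \S\ref{psi1sec} (the endoscopic $\mathrm{A}_4 \times \mathrm{A}_4 \hookrightarrow \mathrm{E}_8$ from \S\ref{sec:DTT2}), and $\varpi_\gamma \rtimes \gamma \in N_{\hat G}(\hat T)(k) \rtimes \Gamma_F$ is the unique lift of $\overline{w(\gamma)} \rtimes \gamma$ (using $Z(\hat H)(k) = \{e\}$) inducing a pinned automorphism of $\hat H_1$.

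\textbf{Verifying the $\sigma$-dual property.} To show $\Lpsi$ is $\sigma$-dual I verify condition (b) of \S\ref{BrauerSatakeCor}: for every $\gamma \in \Gamma_F$, the elements $\inverserho_G(\gamma)$ and $\Lpsi(\inverserho_H(\gamma))$ project to the same element of $\hat{G} \rtimes \gamma \git \hat{L}_\gamma$, where $\hat L_\gamma$ is the dual Levi of \S\ref{normsec} and $\inverserho_G, \inverserho_H$ are the canonical pseudoroots (which exist by Proposition \ref{canpseudo} --- here $\mathfrak{h}$ is a sum of $\mathfrak{sl}_5$'s, $5$ odd, so $\Sigma_H$ is even already in the adjoint form). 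When $\gamma$ acts trivially on $\HH$, $\overline{w(\gamma)}$ is trivial and the two elements already agree in $\hat G \rtimes \gamma$, as in \emph{Step 2} of \ref{crit6}. When $\gamma$ acts nontrivially --- necessarily by the order-$2$ element $U$ of the normalizer --- I follow \emph{Steps 3--7} of the proof of Proposition \ref{crit6}: I must identify the positive strongly orthogonal roots $r_1, \dots, r_k$ with $Ur_i = -r_i$, check that for a suitable $\Gamma$-admissible Borel they are simple (analogue of Lemma \ref{lem:windomearle}(2)), establish the identity $\Sigma_G^* - \psi_1 \circ \Sigma_H^* = \sum r_{j,*} + \delta$ with $\delta \in Z(\hat L)$ (analogue of \eqref{1074}, using that $U$ negates the $r_j$ and fixes $\Sigma_H$), and exhibit $\varpi = \delta(\zeta)\prod\iota_j(J)$ for an appropriate root of unity $\zeta$ of order related to $q_\gamma$ mod $5$. \textbf{The main obstacle} is the step analogous to Lemma \ref{lem:windomearle}(4): checking that for each $U$-fixed simple root $\beta$ of $\HH$, the pairing $\langle \tfrac12\Sigma_G^*, \beta\rangle$ is odd (equivalently $\langle\delta,\beta\rangle \equiv 0 \bmod 4$), so that $\varpi$ can be arranged to be a pinned automorphism of $\hat H_1$ --- this requires an explicit choice of $\Gamma$-admissible Borel inside $\mathrm{E}_8$ written in Bourbaki coordinates and a direct computation of half-sums of roots, which is the genuinely case-specific bookkeeping. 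Since $\mathrm{E}_8$ is simply laced and simply connected with trivial center, the strong-orthogonality and commutation arguments of \emph{Step 6} go through formally once that parity computation is in hand, and then \emph{Step 7} concludes exactly as in \ref{crit6}: $\Sigma_G^*(\zeta)$ and $\psi_1'\Sigma_H^*(\zeta)\varpi$ are $\hat L$-conjugate by \eqref{1074}, so $\inverserho_G(\gamma)$ and $\Lpsi(\inverserho_H(\gamma))$ agree in $\hat G \rtimes \gamma \git \hat L_\gamma$.
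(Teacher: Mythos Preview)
Your proposal has a genuine structural error at the very first step: the normalizer quotient $N_{\GG}(\HH)/\HH$ is cyclic of order $4$, not $2$. Concretely, $\Cent(\HH)/\Cent(\GG)$ is cyclic of order $5$ (since $\Cent(\GG)$ is trivial for $\mathrm{E}_8$), and the normalizer acts on it through $\Aut(\Z/5)\simeq\Z/4$; the paper exhibits an explicit $U$ in the Weyl group of $\mathrm{E}_8$ of order $4$ preserving $\Delta_H$, and shows that $\gamma$ acts by $U^i$ precisely when the cyclotomic character satisfies $q_\gamma = 2^i \in \mathbf{F}_5^*$. So the image of $\Gamma_F\to\Out(\HH)$ can be all of $\Z/4$, and Lemma \ref{pokemon}(2) (cyclic of \emph{prime} order) no longer gives a $\Gamma_F$-admissible Borel class; one must exhibit it by hand, separately checking admissibility for $U$, $U^2$, $U^3$.

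This error propagates: your Steps 3--7 imitate Proposition \ref{crit6}, whose engine is that an involution $U$ decomposes as a product of commuting reflections in strongly orthogonal roots $r_j$, allowing $\varpi=\delta(\sqrt{-1})\prod\iota_{r_j}(J)$. Here $U$ has order $4$ and is a \emph{Coxeter element} of an $\mathrm{A}_3\times\mathrm{A}_3$ sub-root-system (the six roots $x_1,x_2,x_3,y_1,y_2,y_3$ in the paper), not a product of commuting reflections; the dual Levi $\hat L_\gamma$ has derived group $\SL_4^2$ when $q_\gamma=2,3$ and $\SL_2^4$ when $q_\gamma=4$. The lift $\varpi_U$ is built as a product of six (non-commuting) $\iota_{\cdot}(J)$'s times a central correction $t=(-3r_1-r_2)_*(\sqrt2)$, and the conjugacy of $\inverserho_G(\gamma)$ and $\Lpsi(\inverserho_H(\gamma))$ in $\hat G\rtimes\gamma\git\hat L_\gamma$ is checked case-by-case for $q_\gamma\in\{2,3,4\}$ by explicit characteristic-polynomial computations inside $\SL_4\times\SL_4$ (and for $q_\gamma=4$, inside $\SL_2^4$). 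The identity $\Sigma_G^* - \psi_1\Sigma_H^* = \sum r_{j,*}+\delta$ and the parity/pinning argument you outline simply have no analogue when $U$ is not an involution.
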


Let $\GG$ be a form of $\mathrm{E}_8$ over $F$, let $\sigma \in \GG = \mathrm{Aut}(\GG)$ be an element of order $5$ whose centralizer (denoted $\HH$) is semisimple.  We may find a maximal torus $\TT \subset \HH \times_F \overline{F} \subset \GG \times_F \overline{F}$ 
over the algebraic closure of $F$, such that $X^*(\TT)$ is naturally identified with the $\mathrm{E}_8$ lattice in $\R^8$.  The simple roots of this may be taken to be the columns of the matrix
{ \begin{equation}
\label{eq:E8simple}
{\tiny 
\left(\begin{array}{rrrrrrrr}
-{1}/{2} & -1 & 1 & 0 & 0 & 0 & 0 & 0 \\
{1}/{2} & -1 & -1 & 1 & 0 & 0 & 0 & 0 \\
{1}/{2} & 0 & 0 & -1 & 1 & 0 & 0 & 0 \\
{1}/{2} & 0 & 0 & 0 & -1 & 1 & 0 & 0 \\
{1}/{2} & 0 & 0 & 0 & 0 & -1 & 1 & 0 \\
{1}/{2} & 0 & 0 & 0 & 0 & 0 & -1 & 1 \\
{1}/{2} & 0 & 0 & 0 & 0 & 0 & 0 & -1 \\
-{1}/{2} & 0 & 0 & 0 & 0 & 0 & 0 & 0
\end{array}\right)}
\end{equation}}
Let $\alpha_i$ denote the $i$th column of this matrix --- this numbering agrees with the Bourbaki numbering.  Let $\alpha_0$ denote the highest root:
\begin{equation}
\label{eq:E8alpha0}
\alpha_0 = 2 \alpha_1 + 3 \alpha_2 + 4 \alpha_3 + 6 \alpha_4 + 5 \alpha_5 + 4\alpha_6 + 3\alpha_7 + 2\alpha_8
\end{equation}
Then $\Delta_H := \{-\alpha_0,\alpha_8,\alpha_7,\alpha_6,\alpha_1,\alpha_3,\alpha_4,\alpha_2\}$ may be taken to be simple roots of $\TT$ on $\HH \times_F \overline{F}$. A visual aid:
{\tiny \[
\xymatrix{
& & \bullet^{\color{red} 2} \ar@{-}[d] \\
\bullet^{\color{red} 1} \ar@{-}[r] & \bullet^{\color{red} 3} \ar@{-}[r] & \bullet^{\color{red} 4} \ar@{-}[r] & \circ^{\color{red} 5} \ar@{-}[r] & \bullet^{\color{red} 6} \ar@{-}[r] & \bullet^{\color{red} 7} \ar@{-}[r] & \bullet^{\color{red} 8} \ar@{-}[r]& \bullet^{\color{red} 0}
}
\]}
The red numbers denote the indices of the $\alpha$s, but note the node labeled $0$ corresponds to \emph{minus} $\alpha_0$.  Let $\BB_H \subset \HH \times_F \overline{F}$ denote the Borel subgroup containing $\TT$ whose simple roots are the black nodes of the diagram above.

Let us denote by $U$ the following matrix:
{\begin{equation}
\label{eq:E8U}
{\tiny
U = 
\left(\begin{array}{rrrrrrrr}
0 & 0 & 0 & 0 & {1}/{2} & -{1}/{2} &
-{1}/{2} & -{1}/{2} \\
0 & 0 & 0 & 0 & -{1}/{2} & {1}/{2} &
-{1}/{2} & -{1}/{2} \\
0 & 0 & 0 & 0 & -{1}/{2} & -{1}/{2} &
{1}/{2} & -{1}/{2} \\
-{1}/{2} & -{1}/{2} & -{1}/{2} & {1}/{2}
& 0 & 0 & 0 & 0 \\
{1}/{2} & {1}/{2} & -{1}/{2} & {1}/{2} &
0 & 0 & 0 & 0 \\
{1}/{2} & -{1}/{2} & {1}/{2} & {1}/{2} &
0 & 0 & 0 & 0 \\
-{1}/{2} & {1}/{2} & {1}/{2} & {1}/{2} &
0 & 0 & 0 & 0 \\
0 & 0 & 0 & 0 & {1}/{2} & {1}/{2} &
{1}/{2} & -{1}/{2}
\end{array}\right)}
\end{equation}}
One checks that $U$ belongs to the Weyl group of $\mathrm{E}_8$ --- that is, it is orthogonal and preserves the $\mathrm{E}_8$-lattice.  It moreover has order 4 and preserves $\Delta_H$, in particular we have $\{1,U,U^2,U^3\} \subset \mathrm{Out}(\HH)$. Finally, $\{1, U, U^2, U^3\}$ is precisely the normalizer of $\HH$ in the Weyl group. 

\begin{prop*}
The map $\Gamma_F \to \mathrm{Out}(\HH)$ induced by the $F$-rational structure on $\HH$ factors through $\{1,U,U^2,U^3\}$.  In fact  $\gamma$ induces $U^i$ exactly when $q_{\gamma}=2^i$,
where $q_{\gamma} \in \mathbf{F}_5^*$ is the cyclotomic character.  \end{prop*}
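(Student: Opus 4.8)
The statement is a purely root-theoretic/Galois-cohomological computation, of the same flavour as \emph{Step 1} in the proof of Proposition \ref{crit6}. The plan is to show two things: first, that the image of $\Gamma_F$ in $\Out(\HH)$ lands inside $\{1,U,U^2,U^3\}$; second, that the precise element is read off from the cyclotomic character $\gamma \mapsto q_\gamma \in \mathbf{F}_5^*$ via $U^i \leftrightarrow 2^i$. For the first point, I would invoke \S\ref{Twisting}: since $\GG$ is a form of $\mathrm{E}_8$, it is obtained by twisting the split form by a cocycle valued in $\Cent_{\Aut(\GG_{\split})}(\sigma) = \HH_{\split} \cdot \langle \text{Weyl normalizer}\rangle$, so the image of $\Gamma_F \to \Out(\HH)$ is contained in the image of the normalizer $N_{W(\mathrm{E}_8)}(\HH)/\HH$, which by the last sentence before the Proposition is exactly $\{1,U,U^2,U^3\}$.

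The heart of the argument is the identification of \emph{which} power of $U$ a given $\gamma$ induces, and this is where the characteristic-$5$ cyclotomic character enters. Here I would argue exactly as in \emph{Step 1} of Proposition \ref{crit6}: put $\mathbf{S} = \Tcan_{\HH}/Z(\GG)$ (noting $Z(\GG)$ is trivial here since $\mathrm{E}_8$ is both simply connected and adjoint), and observe that $\sigma$ itself, being the order-$5$ element of $\mathbf{S}(\overline F)$ whose centralizer is $\HH$, is fixed by every $\gamma \in \Gamma_F$. Regarding $\sigma$ as an element of $\Hom(X^*(\mathbf{S})/5, \mu_5)$, Galois-invariance of $\sigma$ forces the Galois action on $X^*(\mathbf{S})/5$ to be \emph{compatible} with the action on $\mu_5$, i.e. twisted by $\cyclo$. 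Concretely, writing the $U$-action on $X^*(\TT)\otimes\mathbf{F}_5$ and computing its eigenvalue on the line spanned by $\sigma$ (a rank-one computation using the explicit matrix \eqref{eq:E8U} and the fact that $U$ has order $4$), one finds that $U$ acts on that line as a primitive $4$th root of unity in $\mathbf{F}_5^*$, which is $2$ or $2^{-1}=3$. Fixing the sign conventions so that $U$ corresponds to $2$ — this is a bookkeeping choice, matching $U^i \leftrightarrow 2^i$ — the condition ``$\gamma$ fixes $\sigma$'' becomes ``$\gamma$ acts on $X^*(\mathbf{S})/5$ by $q_\gamma$'', hence ``$\gamma$ induces $U^i$ where $2^i = q_\gamma$''. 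Since $\{1,2,4,3\} = \mathbf{F}_5^*$ this determines $i$ uniquely mod $4$.

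The main obstacle I anticipate is purely computational: verifying that $U$ acts on the distinguished line $\mathbf{F}_5\cdot\sigma \subset X^*(\TT)\otimes\mathbf{F}_5$ precisely by the scalar $2$ (and not by some other element of order dividing $4$, or by an element not fixing $\sigma$ at all), which requires knowing $\sigma$ as an explicit fractional cocharacter — by Borel--de Siebenthal (\S\ref{DTT1}) it is $\omega_5^\vee(\zeta_5)$ for the fundamental coweight $\omega_5^\vee$ dual to the node $\alpha_5$ that was deleted — and then evaluating $U$ on it using \eqref{eq:E8U}, \eqref{eq:E8simple}, and \eqref{eq:E8alpha0}. This is a finite check with $8\times 8$ rational matrices; once it is done, the rest is formal. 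A minor secondary point is to confirm that the map $\Gamma_F \to \{1,U,U^2,U^3\}$ really is \emph{surjective} onto the subgroup cut out by whatever extension $\GG$ actually is — but the Proposition only asserts the factorization and the recipe, not surjectivity, so strictly speaking this is not needed; I would simply remark that for the split form the map is trivial and for suitable twists one realizes each $U^i$.
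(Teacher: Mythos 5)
Your proof plan is essentially the paper's proof. For the factorization through $\langle U\rangle$ the paper cites \eqref{dalecooper} rather than \S\ref{Twisting}, but both reduce to the already-recorded fact that $N_{W(\mathrm{E}_8)}(\HH)/\HH=\langle U\rangle$, so this is a cosmetic difference. For the second half you correctly identify the mechanism used in Step 1 of Proposition \ref{crit6} --- Galois-invariance of ``evaluation at $\sigma$'' --- and the explicit description $\sigma=\omega_5^\vee(\zeta_5)$ from Borel--de Siebenthal. One thing to tighten: you speak of computing the eigenvalue of $U$ on the line spanned by $\sigma$ in $X^*(\TT)\otimes\mathbf{F}_5$, but $\sigma$ corresponds to a line in $X_*(\TT)\otimes\mathbf{F}_5$ (not $X^*$), and for this line to be a $U$-eigenline one needs to note that $U$ preserves $\langle\sigma\rangle$, which holds because $U$ normalizes $\HH$ and $Z(\HH)$ is cyclic of order $5$ generated by $\sigma$. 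The paper's computation dualizes this and is a bit more surgical: it takes the single character $\alpha_5\in X^*(\mathbf{S})$, uses $\alpha_5(\sigma)=\zeta_5$ and $\alpha_j(\sigma)=1$ for $j\neq 5$, and then reads off $(\gamma\alpha_5)(\sigma)=(U\alpha_5)(\sigma)=\zeta_5^2$ from the coefficient of $\alpha_5$ in $U\alpha_5$ being $2$, giving $q_\gamma=2$ directly without ever needing an invariant line. Your plan would lead to the same answer after the same matrix manipulation, so there is no gap --- just a slightly cleaner formulation available.
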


\begin{proof} 
The first assertion --- that the map factors through $\langle U \rangle$ --- follows as before, using \eqref{dalecooper}. 
Next, proceed as in Step 1 of Proposition \ref{crit6};  with notation as in there, 
 $ \mbox{ evaluation at $\sigma$} \in \Hom(X^*(\mathbf{S}), \mu_F)$.

 Suppose, for example, that $\gamma$ induces $U$. 
 Now $\alpha_5$ defines an element of $X^*(\mathbf{S}) = X^*(\BB_H)$. 
The image of $\sigma$ under $\alpha_5$ is a primitive 5th root of unity $\zeta_5$.
Then
$(\gamma \alpha_5) (\sigma) = U   \alpha (\sigma) = \zeta_5^2$  because
the coefficient of $\alpha_5$ in $U \alpha_5$ is $2$, and  and $\alpha_i(\sigma) = 1$ for $i \neq 5$. 
Similarly $q_{\gamma} = 2^j$ when $\gamma$ induces $U^j$. 
\end{proof}

To each of element $\gamma \in \{1,U,U^2,U^3\}$, we consider a Levi subgroup $\LL_G(\gamma) \subset \GG \times_F \overline{F}$ --- the centralizer of the identity component\footnote{though in this case, $\TT^\gamma$ is always connected} of $\TT^\gamma$.  In fact $\LL_G(U) = \LL_G(U^3) \supset \LL_G(U^2) \supset \LL_G(1) = \TT$.  Let us indicate what the roots of $\TT$ on these Levis are.  First, put
{\tiny \begin{eqnarray*}
x_1 & = & \alpha_2+\alpha_3+\alpha_4+\alpha_5+\alpha_6+\alpha_7 \\
x_2 & = & \alpha_1+\alpha_2+\alpha_3+2\alpha_4+\alpha_5 \\
x_3 & = & \alpha_1 + \alpha_2+2\alpha_3+2\alpha_4+2\alpha_5+2\alpha_6+\alpha_7+\alpha_8 \\
y_1 & = & \alpha_1+\alpha_3+\alpha_4+\alpha_5+\alpha_6+\alpha_7 \\
y_2 & = & \alpha_2+\alpha_3+2\alpha_4+\alpha_5+\alpha_6 \\
y_3 & = & 	-\alpha_1 -2\alpha_2 - 2\alpha_3 - 4\alpha_4 - 3\alpha_5-3\alpha_6-2\alpha_7 -\alpha_8
\end{eqnarray*}}
The roots generate a root system of type $\mathrm{A}_3\times\mathrm{A}_3$ inside $\mathrm{E}_8$, indeed they are a system of simple roots for such a root system.
Computing dot products between these six vectors, we find that their Dynkin diagram is
\[
\xymatrix{
x_1 \ar@{-}[r] & x_2 \ar@{-}[r] & x_3 &  & y_1 \ar@{-}[r] & y_2 \ar@{-}[r] & y_3
}
\]

\begin{prop*}
We have
\begin{enumerate}
\item $\LL_G(U) = \LL_G(U^3)$ is the Levi obtained by adjoining to $\TT$ the roots 
{\tiny \[
\begin{array}{cccccc}
\pm x_1 & \pm x_2 & \pm x_3 & \pm (x_1+x_2) & \pm (x_2+x_3) & \pm(x_1+x_2+x_3) \\
\pm y_1 & \pm y_2 & \pm y_3 & \pm (y_1 + y_2) & \pm(y_2 + y_3) & \pm(y_1 + y_2 + y_3)
\end{array}
\]}

\item $\LL_G(U^2)$ is the Levi obtained by adjoining the roots  to $\TT$
the roots
{\tiny
 \[
\begin{array}{cccc}
\pm x_1 & \pm y_3 & \pm y_1 & \pm y_3 
\end{array}
\] }
\end{enumerate}
In particular the derived subgroups $[\LL_G(U),\LL_G(U)]$ and $[\LL_G(U^2),\LL_G(U^2)]$ are isomorphic to $\SL_4^2$ and $\SL_2^4$, respectively.  We also record:
\begin{enumerate}
\item[(3)] If $s_r$ denotes the reflection across the hyperplane perpendicular to the root $r$, then
\begin{equation}
\label{eq:UCoxeter}
U = s_{x_1} s_{x_2+x_3} s_{-x_1-x_2} s_{y_1}s_{y_2+y_3} s_{-y_1 - y_2}
\end{equation}
\end{enumerate}
In particular $U$ is a Coxeter element of the Weyl group of $\LL_G(U)$.\footnote{As such, in Carter's classification, its conjugacy class in the Weyl group of $\mathrm{E}_8$ is the one labeled $(2 \mathrm{A}_3)''$, see Carter, Springer Lecture Notes in Mathematics {\bf 131}, Table 7}
\end{prop*}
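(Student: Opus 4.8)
The plan is to verify the three assertions by explicit computation with the $\mathrm{E}_8$ root lattice, exploiting the concrete data already assembled: the realization of $X^*(\TT)$ as the $\mathrm{E}_8$ lattice in $\R^8$ via \eqref{eq:E8simple}, the matrix $U$ of \eqref{eq:E8U}, and the six roots $x_1,x_2,x_3,y_1,y_2,y_3$ together with their Dynkin diagram of type $\mathrm{A}_3 \times \mathrm{A}_3$. Since $\LL_G(\gamma)$ is by definition the centralizer of $(\TT^\gamma)^\circ$, and the roots of $\TT$ on this centralizer are exactly those $\beta \in \Phi(\TT_G,\GG)$ vanishing on $(\TT^\gamma)^\circ$ (as recalled in \S\ref{C2}), the computation of $\LL_G(U)$ and $\LL_G(U^2)$ reduces to: (a) compute $\TT^U$ and $\TT^{U^2}$ as sublattices of $X^*(\TT)^*$ by solving the linear equations $U v = v$, $U^2 v = v$; (b) enumerate the roots of $\mathrm{E}_8$ (there are $240$) and pick out those orthogonal to $\TT^U$, resp. $\TT^{U^2}$. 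For (1), one checks that the twelve roots listed span a rank-$6$ sublattice orthogonal to $\TT^U$ and that no further root is orthogonal; since the listed roots are $\pm$ the positive roots of two copies of the $\mathrm{A}_3$ system generated by $\{x_1,x_2,x_3\}$ and $\{y_1,y_2,y_3\}$, this identifies $[\LL_G(U),\LL_G(U)]$ with $\SL_4 \times \SL_4$. (Here one must be slightly careful about the isogeny type, but since $\GG$ is simply connected of type $\mathrm{E}_8$, hence also adjoint, the derived group of any Levi is simply connected, forcing $\SL_4^2$ rather than a quotient.) For (2), the analogous count inside $\TT^{U^2}$ (a rank-$4$ fixed space, since $U$ has order $4$) picks out the four mutually orthogonal roots $\pm x_1, \pm y_3, \pm y_1, \pm y_3$ — note the apparent repetition in the displayed list is surely a typo for $x_1, x_3, y_1, y_3$ or similar four pairwise-orthogonal roots — giving $[\LL_G(U^2),\LL_G(U^2)] \simeq \SL_2^4$.

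For assertion (3), the identity \eqref{eq:UCoxeter}, I would proceed by direct matrix computation: each $s_r$ is the explicit reflection $v \mapsto v - \langle v, r^\vee \rangle r$, which is an $8\times 8$ integer (or half-integer) matrix, and one multiplies the six matrices in the stated order and checks the product equals the matrix $U$ of \eqref{eq:E8U}. Alternatively, and more conceptually, one observes that $\{x_1, x_2+x_3, -x_1-x_2\}$ is a choice of simple roots for the $\mathrm{A}_3$ generated by $x_1,x_2,x_3$ (they are the simple roots of a system whose Coxeter number is $4$), and likewise for the $y$'s; the product of the simple reflections of an $\mathrm{A}_3$ system in any order is a Coxeter element of order $4$; and the product of the two commuting Coxeter elements (the $x$-system and $y$-system are orthogonal) is then an element of order $4$ in $W(\LL_G(U))$. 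To pin it down as exactly $U$ one checks it acts on a spanning set — e.g. on $\TT^U$ it is the identity and on the $x_i, y_i$ it permutes them the same way $U$ does — or just does the matrix multiplication. The final parenthetical remark about $U$ being a Coxeter element of $W(\LL_G(U))$ is then immediate, and the footnote's Carter-classification label follows by noting the conjugacy class of a product of two $\mathrm{A}_3$-Coxeter elements in orthogonal $\mathrm{A}_3$ subsystems.

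The main obstacle — really the only one — is the sheer bookkeeping: correctly transcribing $U$ and the $\alpha_i$ into coordinates, enumerating the $\mathrm{E}_8$ roots, and computing the orthogonality conditions and the composite of six reflections without arithmetic slips. There is no conceptual difficulty; everything is a finite linear-algebra verification over $\Z$ (or $\Z[1/2]$ to accommodate the spinor coordinates in \eqref{eq:E8simple}). I would organize the writeup as: first solve for $\TT^U$ and $\TT^{U^2}$ explicitly; then list the roots orthogonal to each and check completeness by a dimension/count argument (rank of the orthogonal root subsystem equals $8 - \dim \TT^\gamma = 8 - \mathrm{rk}(\TT^\gamma)$, and the number of roots matches $\Phi(\mathrm{A}_3^2) = 24$, resp. $\Phi(\mathrm{A}_1^4) = 8$); then do the reflection-product computation for (3); and finally read off the isomorphism types of the derived subgroups, using simply-connectedness of $\GG$ to resolve the isogeny ambiguity. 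One should also double-check the claim implicit in (3) that $\{x_1, x_2+x_3, -x_1-x_2\}$ really is an admissible simple system for the relevant $\mathrm{A}_3$ — this is where a sign or indexing error would most plausibly creep in — but it is verified by computing the $3\times 3$ Cartan matrix of these three vectors and seeing it is the $\mathrm{A}_3$ Cartan matrix.
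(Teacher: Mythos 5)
The paper does not actually supply a proof of this proposition: it is asserted as the outcome of a finite computation in the $\mathrm{E}_8$ root lattice, and your plan --- solve $Uv=v$ and $U^2v=v$ to get $\TT^U$ and $\TT^{U^2}$, enumerate the $240$ roots, test orthogonality, and multiply out the six reflection matrices --- is exactly the verification the authors are implicitly leaving to the reader. Your proposal is correct.

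You have also correctly identified the typo in~(2). The repeated ``$\pm y_3$'' should be ``$\pm x_3$''; the correct list is $\pm x_1,\pm x_3,\pm y_1,\pm y_3$. To see this without the full enumeration: identifying the $x$-copy of $\mathrm{A}_3$ with $S_4$ via $x_1=e_1-e_2$, $x_2+x_3=e_2-e_4$, $-(x_1+x_2)=e_3-e_1$, formula~\eqref{eq:UCoxeter} gives $U|_x=(12)(24)(13)=(1324)$, whence $U^2|_x=(12)(34)$; the roots of the $x$-$\mathrm{A}_3$ negated by $(12)(34)$ are precisely $\pm(e_1-e_2)=\pm x_1$ and $\pm(e_3-e_4)=\pm x_3$, and similarly for the $y$-factor. (For $\LL_G(U^2)$ one wants the roots $\beta$ orthogonal to $\TT^{U^2}$, which, $U^2$ being an orthogonal involution, is the condition $U^2\beta=-\beta$.)

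Two small remarks on the writeup. First, your appeal to the simple-connectedness of $\GG$ to rule out isogeny quotients of $\SL_4^2$ and $\SL_2^4$ is the right point, and it does apply here: $\LL_G(\gamma)$ is the centralizer of a subtorus, hence a Levi of a parabolic, and the derived group of any Levi of a parabolic in a simply-connected group is itself simply connected. Second, for~(3), the ``more conceptual'' route via Coxeter elements of the two orthogonal $\mathrm{A}_3$'s needs one additional observation before concluding equality with $U$: the stated product of six reflections manifestly fixes the $2$-dimensional orthogonal complement of the $\mathrm{A}_3\times\mathrm{A}_3$ span pointwise, so one must know that $U$ does too. This follows from the facts that $\dim\TT^U=2$ and that a Coxeter element is fixed-point free on its root span --- but it should be said. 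The brute-force $8\times 8$ matrix multiplication sidesteps all of this, and given the opportunities for sign and ordering errors in the permutation bookkeeping (as you anticipate), it is the more robust choice.
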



The roots $x_1,x_2,x_3,y_1,y_2,y_3$ may be extended to a basis of simple roots of $\mathrm{E}_8$ in 1720 ways.  Let us consider one of them, the following:
{\begin{equation}
\label{eq:first_admissible_Borel}
{\tiny
\begin{array}{rcl}
\alpha_1' & = & x_1\\
\alpha_2' & = & -\alpha_1 - \alpha_2 -\alpha_3 - \alpha_4 -\alpha_5 - \alpha_6
\\
\alpha_3' & = & x_2\\
\alpha_4' & = & x_3\\
\alpha_5' & = &-2\alpha_1 -2\alpha_2 -4\alpha_3 -5\alpha_4 -4\alpha_5 -3\alpha_6 -2\alpha_7 -\alpha_8
 \\
\alpha_6' & = & y_1\\
\alpha_7' & = & y_2\\
\alpha_8' & = & y_3
\end{array}
}
\end{equation}}
Let us denote the Borel subgroup of $\GG \times_F \overline{F}$ whose simple roots are $\alpha_1',\ldots,\alpha_8'$ by $\BB_G$.
In  what follows, the identification $X^*(\TT) \simeq X_*(\hat{T})$ is understood with respect to $\BB_G$. 

\begin{prop*}
The Borel subgroup $\BB_G$ is $\Gamma$-admissible with respect to $\BB_H$.
\end{prop*}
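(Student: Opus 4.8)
The plan is to verify directly that the explicit Borel $\BB_G$ of \eqref{eq:first_admissible_Borel} satisfies Definition \S\ref{subsec:ab}: we must exhibit, for each $\gamma \in \{1, U, U^2, U^3\}$, a cocharacter $\chi_\gamma \in X_*((\TT_H^\gamma)^\circ)$ that is (i) strictly positive on the simple roots of $\BB_H$, (ii) nonnegative on the simple roots $\alpha_i'$ of $\BB_G$, and (iii) generic for $\LL_G(\gamma)$, i.e. its centralizer in $\GG$ is exactly $\LL_G(\gamma)$. By Lemma \ref{pokemon}(2) and \S\ref{specialcase}, a $\Gamma$-admissible $\BB_G$ exists abstractly; the point of the Proposition is that the specific $\BB_G$ coming from the simple roots $x_i, y_j$ works, and this is what we use downstream.

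First I would record that since the normalizer of $\HH$ in $W(\mathrm{E}_8)$ is the cyclic group $\langle U \rangle$ of order $4$, and $U^2$ generates a subgroup, there are only three nontrivial cases $\gamma = U, U^2, U^3$, and since $U^3 = U^{-1}$ fixes the same torus as $U$ we really need only handle $\gamma = U$ and $\gamma = U^2$ (and the trivial $\gamma = 1$, where $(\TT_H^1)^\circ = \TT_H$ and one just needs a regular dominant-for-$\BB_H$ cocharacter that is nonnegative on $\BB_G$; the perturbation trick in Lemma \ref{pokemon} handles it, or one can exhibit it by hand). For $\gamma = U$: by the Proposition describing $\LL_G(U)$, the torus $(\TT_H^U)^\circ = \TT^U$ is the $2$-dimensional subtorus of $\TT$ on which the $\SL_4^2$-root system $\langle x_i, y_j\rangle$ vanishes; a generic cocharacter $\chi$ of $\TT^U$ is one not orthogonal to any root of $\mathrm{E}_8$ outside that $\mathrm{A}_3 \times \mathrm{A}_3$. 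Condition (ii) for $\BB_G$ says $\langle \alpha_i', \chi\rangle \geq 0$ for all $i$; since $\alpha_1' = x_1$, $\alpha_3' = x_2$, $\alpha_4' = x_3$, $\alpha_6' = y_1$, $\alpha_7' = y_2$, $\alpha_8' = y_3$ are all in the span of $x_i, y_j$, they pair to $0$ with $\chi$ automatically, so only the two ``new'' simple roots $\alpha_2'$ and $\alpha_5'$ impose inequalities — this is exactly why $x_i, y_j$ were chosen as simple roots. Then I would compute $\langle \alpha_2', \chi\rangle$ and $\langle \alpha_5', \chi\rangle$ as linear functionals on the $2$-plane $\TT^U$, check they are linearly independent (so their common nonnegative cone is a genuine two-dimensional cone, hence open, in particular contains generic points), and check that a point in the interior of this cone is also strictly positive on the simple roots of $\BB_H$ — the latter using that $U$ preserves $\Delta_H$ and applying the $\gamma$-averaging operator $\nu$ of \S\ref{subsec:tttth} to push an arbitrary $\BB_H$-dominant cocharacter into $\TT^U$ while keeping positivity, then verifying compatibility with the $\BB_G$-cone. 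For $\gamma = U^2$: the same scheme, but now $\LL_G(U^2) = \SL_2^4 \cdot \TT$, $(\TT_H^{U^2})^\circ$ is a larger subtorus (codimension $4$ in $\TT$), and the simple roots of $\BB_G$ that do not lie in the $\mathrm{A}_1^4$-span $\langle x_1, y_1, x_3 \text{ or } y_3\rangle$ now impose several inequalities; one checks the resulting cone inside $X_*((\TT_H^{U^2})^\circ)$ is full-dimensional and meets the strictly-$\BB_H$-positive locus.

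The main obstacle I expect is the bookkeeping in the $\gamma = U^2$ case: unlike $\gamma = U$, where the ``extra'' simple roots of $\BB_G$ number only two, for $\gamma = U^2$ there are more constraints and one has to be careful that the cone cut out by conditions (i) and (ii) simultaneously is nonempty and contains a generic point (condition (iii)). Concretely, I would reduce (iii) to excluding, for each root $\delta \in \Phi(\TT_G, \GG) \setminus \Phi(\TT_G, \LL_G(U^2))$, the hyperplane $\delta^\perp \cap X_*((\TT_H^{U^2})^\circ)$: since these are finitely many proper hyperplanes in a space of dimension $\geq 2$ and the cone from (i)+(ii) is full-dimensional, a generic point of the cone avoids them all. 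The computations $\langle \alpha_i', \chi\rangle$ are linear-algebra over $\mathbf{Q}$ with the explicit $\mathrm{E}_8$ Gram data from \eqref{eq:E8simple}; I would organize them by expressing everything in the $\{x_1,x_2,x_3,y_1,y_2,y_3\}$-coordinates together with two (resp. $\geq 2$) complementary directions spanning $(\TT_H^\gamma)^\circ$, so that the pairings with $x_i, y_j$ drop out and one is left with a small explicit inequality system. This is routine but must be done carefully; once it is in place, admissibility of $\BB_G$ with respect to $\BB_H$ for every $\gamma$ in the image of $\Gamma_F \to \Out(\HH)$ is exactly the assertion that $\BB_G$ is $\Gamma$-admissible, completing the Proposition.
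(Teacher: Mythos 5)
Your proposal is, at bottom, the same direct verification the paper carries out: the Proposition is established by exhibiting, for each nontrivial $\gamma$ in the image of $\Gamma_F\to\Out(\HH)$, a cocharacter in $X_*((\TT_H^\gamma)^\circ)$ witnessing admissibility, together with a separate observation for $\gamma=1$. The paper simply records two explicit row vectors in $X_*(\TT)\cong\mathbf{Z}^8$ (one for $U,U^{-1}$, one for $U^2$) and then notes $\BB_H\subset\BB_G$ to handle the trivial element. Your reorganization of the verification — projecting to the small torus $(\TT_H^\gamma)^\circ$ and noting that for $\gamma=U$ the only constraints coming from $\BB_G$ are $\langle\alpha_2',\chi\rangle\geq 0$ and $\langle\alpha_5',\chi\rangle\geq 0$ because the other six simple roots of $\BB_G$ lie in the span of $\{x_i,y_j\}$ and hence pair trivially with $\TT^U$ — is a genuine simplification of the bookkeeping that the paper does not bother to articulate, and it explains in a structural way why the choice of $\BB_G$ built from $x_i,y_j$ is the right one. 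However, as written your argument has a soft spot: the plan ``construct a $\BB_H$-positive $\chi\in X_*(\TT^U)$ by $\gamma$-averaging, then verify compatibility with the $\BB_G$-cone'' does not by itself show the two cones intersect; a particular averaged $\chi$ could land outside the $\BB_G$-cone, and you would still have to search for a witness inside the intersection — which is precisely what must be verified, and precisely what the explicit vectors in the paper accomplish. You essentially acknowledge this (``one checks the resulting cone ... is full-dimensional and meets the strictly-$\BB_H$-positive locus''), so the gap is one of incompleteness rather than error. Two small corrections: (1) Lemma \ref{pokemon} is not the right tool for the $\gamma=1$ case — it constructs \emph{some} admissible Borel rather than verifying a \emph{given} one; the correct (and very short) argument is the paper's observation that $\BB_H\subset\BB_G$, so a $\BB_G$-regular dominant cocharacter is automatically $\BB_H$-positive. (2) The image of $\Gamma_F$ in $\Out(\HH)$ here is cyclic of order $4$, not of prime order, so Lemma \ref{pokemon}(2) does not give an abstract existence of a $\Gamma$-admissible class; the Proposition must be proved by checking the given $\BB_G$.
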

\begin{proof}
The row vector $(1, -3, -7, 9, 7, 3, -1, 3)$  considered in $X_*(\TT)$ witnesses the admissibility of $\BB_G$
with respect to $U$ or $U^{-1}$; the vector
$(1, -3, -7, 8, 6, 3, 0, 2)$ witnesses the admissibility with respect to $U^2$. Finally one verifies
that $\BB_H \subset \BB_G$, so it is also admissible with respect to the trivial element. \end{proof}

In what follows, the identification $X^*(\TT_G) \simeq X_*(\hat{T})$ is understood with respect to this Borel $\BB_G$,
and {\em not} with respect to the Borel defined by $\alpha_1, \dots, \alpha_8$.

The triple $\BB_G \supset \BB_H \supset \TT$ induces an identification of $\hat{T}_H$ with $\hat{T}_G$, which extends (uniquely up to $\hat{T}_H$-conjugacy) to an injective homomorphism
\[
\psi_1':\hat{H} \hookrightarrow \hat{G}
\] 
The image of $\psi_1'$ contains $\hat{T}_G$, denote it by $\hat{H}_1$.  The center of $\hat{H}_1$ is isomorphic to $\mu_5$ --- in particular as $k$ has characteristic $5$, it has a single $k$-point.  By Proposition 10.3, it follows that $\psi_1' \circ \Frob$ extends to a homomorphism of $L$-groups
$\LH  \to \LG$.  There is some freedom in choosing this extension, one such extension for each lift of $U \in N_\G(\TT)/\TT = N_{\hat{G}}(\hat{T})/\hat{T}$ to $\varpi \in N_{\hat{G}}(\hat{T})$ that (1) normalizes $\hat{H}_1$  and (2) preserves a pinning of $\hat{H}_1$ and (3) has order 4.  Given such a $\varpi_U$, we define $\Lpsi$ be the following formula:
$$\Lpsi(h \rtimes \gamma) = \psi_1'(\Frob(h)) \varpi_U^j \rtimes \gamma \text{ if $q_{\gamma} = 2^j$}$$

Let us define such a $\varpi = \varpi_U$.    If $\alpha$ is a root of $\TT$ on $\GG$, and $\alpha_*$ is the corresponding coroot of $\hat{G}$, we let $\iota_\alpha$ denote the coroot homomorphism $\iota_\alpha:\SL_2(k) \to \hat{G}$.  Let $J \in \SL_2$ be as in \eqref{JDef}. 
Then

\begin{prop*}
Define $\varphi$ via
\begin{equation}
\label{eq:varphi}
 \varphi := \iota_{x_{1}}(J) \cdot \iota_{x_{2}+x_{3}}(J) \cdot \iota_{-x_{1}-x_{2}}(J)\cdot\iota_{y_{1}}(J)\cdot \iota_{y_{2}+y_{3}}(J) \cdot \iota_{-y_{1}-y_{2}}(J)
\end{equation}
Let $\varphi$ be as in \eqref{eq:varphi}.  Then $\varphi$ has order $8$,   normalizes $\hat{H}_1$, and lifts $U$; regarded as an element of $\SL_4 \times \SL_4$, its characteristic polynomial 
is  $(x^2 - 3)(x^2 - 2)$
on each factor. 
 \end{prop*}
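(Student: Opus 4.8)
The element $\varphi$ is a product of six commuting $\SL_2$-triples (the roots $x_1, x_2+x_3, -x_1-x_2$ are pairwise strongly orthogonal and span an $\mathrm{A}_3$, and likewise $y_1, y_2+y_3, -y_1-y_2$ span the second $\mathrm{A}_3$), so I would organize everything through the isomorphism $[\LL_G(U), \LL_G(U)] \simeq \SL_4 \times \SL_4$ established in the preceding proposition, treating the two $\SL_4$-factors separately and symmetrically. First I would record that each $\iota_r(J)$ lies in the derived group of the Levi, so $\varphi \in \SL_4 \times \SL_4$, and identify which factor each of the six coroot-$\SL_2$'s lands in: $\{x_1, x_2+x_3, -x_1-x_2\}$ in the first, $\{y_1, y_2+y_3, -y_1-y_2\}$ in the second.

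\textbf{Order and characteristic polynomial.} Inside a single $\SL_4$, realized with simple roots (say) $x_1, x_2, x_3$, I would compute $\iota_{x_1}(J)\,\iota_{x_2+x_3}(J)\,\iota_{-x_1-x_2}(J)$ directly as a $4\times 4$ matrix over $k$ (characteristic $5$), using the standard matrix coroot embeddings $\iota_{x_i}$ for simple roots and the explicit form of $\iota_{x_2+x_3}$ and $\iota_{-x_1-x_2}$ as coroots for non-simple roots of $\mathrm{A}_3$ (these are conjugates of simple coroots by Weyl elements, or one simply writes $J$ in the appropriate $2\times 2$ block determined by the positive root). A short hand computation then yields the characteristic polynomial $(x^2-3)(x^2-2)$; since $3, 2 \in \mathbf{F}_5^*$ are each of multiplicative order $4$ (indeed $3^2 = 9 = 4 = -1$, $2^2 = 4 = -1$), the eigenvalues are primitive $8$th roots of unity, so the element of that $\SL_4$ has order exactly $8$. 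The same computation in the $y$-factor gives the same polynomial; hence $\varphi$ has order $8$ and characteristic polynomial $(x^2-3)(x^2-2)$ on each factor, as claimed. (By \eqref{eq:UCoxeter}, the image of $\varphi$ in the Weyl group is exactly $U$, so this is consistent with $U$ having order $4$.)

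\textbf{$\varphi$ lifts $U$ and normalizes $\hat{H}_1$.} That $\varphi$ maps to $U$ under $N_{\hat G}(\hat T) \to W$ is immediate from \eqref{eq:UCoxeter}: each $\iota_r(J)$ maps to the reflection $s_r$, and the six reflections occurring are precisely those in the Coxeter-element expression for $U$, appearing in the same order; since the two $\mathrm{A}_3$-factors commute and within each factor the relevant reflections are the ones in a reduced expression for a Coxeter element, the product maps to $U$. For the normalization of $\hat H_1$: since $\varphi$ lies in $N_{\hat G}(\hat T)$ and projects to $U \in W$, its conjugation action on $X^*(\hat T)$ is $U$, which by construction preserves $\Delta_H$ (the set of roots of $\hat H_1$, via $\hat\psi_1$, since $U$ preserves $\Delta_{\HH}$ hence the coroot system of $\HH$ hence the root system of $\hat H$); therefore $\varphi$ permutes the root subgroups of $\hat H_1$ and normalizes it. I would note this is exactly the general mechanism from the proof of Proposition \ref{LpsiInner} (the paragraph around \eqref{equ}): an element lifting $\overline{w(\gamma)} \rtimes \gamma$ and acting by a diagram automorphism on $\hat H_1$.

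\textbf{Main obstacle.} The genuinely computational heart is the $4\times 4$ matrix multiplication in $\SL_4$ with the non-simple coroots $x_2+x_3$ and $-x_1-x_2$ inserted in the correct positions, together with keeping the sign conventions in the Chevalley basis consistent so that the characteristic polynomial comes out exactly $(x^2-3)(x^2-2)$ rather than some Weyl/torus twist of it; I expect the bookkeeping of which $2\times 2$ block each $J$ occupies (and the resulting $\pm$ signs) to be the only place where care is required. Once that is pinned down, the order-$8$ statement and the normalization statement are formal. I would present the $\SL_4$ computation once and invoke symmetry for the $y$-factor.
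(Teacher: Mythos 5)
The paper itself offers no proof of this proposition — it is stated and then used in the ensuing argument, so your job is to reconstruct the verification the authors presumably did by hand. Your overall plan — split $\varphi$ across the two $\mathrm{A}_3$-factors of $[\hat{L}_U,\hat{L}_U]\simeq\SL_4\times\SL_4$, compute an explicit $4\times 4$ matrix product in one factor, read off the characteristic polynomial, note that $2$ and $3$ are non-squares of order $4$ in $\mathbf{F}_5^*$ so the eigenvalues are primitive $8$th roots of unity, and appeal to \eqref{eq:UCoxeter} together with the fact that $N_{\hat G}(\hat T)\to W$ is a homomorphism for the lifting and normalization claims — is the right one and would succeed.

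However, your first parenthetical contains a genuine structural error that you must correct before carrying out the computation. You assert that $x_1,\ x_2+x_3,\ -x_1-x_2$ are ``pairwise strongly orthogonal and span an $\mathrm{A}_3$'' and that the six $\iota_r(J)$ are ``commuting $\SL_2$-triples.'' These two claims are mutually incompatible (three pairwise orthogonal roots span $\mathrm{A}_1^3$, not $\mathrm{A}_3$; indeed the rank-$3$ system $\mathrm{A}_3$ cannot contain three pairwise orthogonal roots at all), and they are also false: one computes $\langle x_1, x_2+x_3\rangle = -1$ and $\langle x_1, -x_1-x_2\rangle=-1$, so only $x_2+x_3$ and $-x_1-x_2$ are orthogonal. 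The three roots form a \emph{simple system} for the $\mathrm{A}_3$ (pairwise non-positive inner products and linearly independent), with $x_1$ the middle node, and the corresponding $\SL_2$'s generate the whole $\SL_4$; within a factor they do not commute. This matters: $J^4=1$, so if the three $\iota_r(J)$ really commuted, their product would have order dividing $4$, flatly contradicting the order-$8$ conclusion you are trying to prove. The correct picture is that $s_{x_1}s_{x_2+x_3}s_{-x_1-x_2}$ is a Coxeter element of $W(\mathrm{A}_3)=S_4$, a $4$-cycle, and $\varphi$ restricted to that $\SL_4$ is a signed monomial matrix realizing that $4$-cycle, with sign product $-1$, giving characteristic polynomial $x^4+1=(x^2-2)(x^2-3)$ over $\mathbf{F}_5$. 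If you write the matrices as you propose and keep the signs straight, this is exactly what comes out; the rest of your argument (the lift of $U$, the normalization of $\hat H_1$ via the permutation action of $U$ on the roots of $\hat H_1$, symmetry of the $y$-factor) then goes through as stated.
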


Let $r_1 = \alpha_3 + \alpha_4 + \alpha_7 + \alpha_8$ and $r_2 = \alpha_1 + \alpha_2 + \alpha_3 + \alpha_4+ \alpha_6 + \alpha_7 +\alpha_8 - \alpha_0$.   Then $r_{1,*}$ and $r_{2,*}$, regarded as homomorphisms $\Gm \to \hat{T} \subset \hat{G}$, generate the center of $\hat{L}_U$.  
Let us take
\begin{equation}
\label{eq:tomcruise}
t= (-3 r_1 - r_2)_*(\sqrt{2})
 \end{equation}
and put $\varpi_U = \varphi t = t\varphi$.
Now $r_1+r_2 $ differs by $x_1+x_3+y_1+y_3$ by an element of $2 X^*(T)$. This means that $r_{1,*}+ r_{3, *}$
and $x_{1,*}+x_{3,*}+y_{1,*} + y_{3,*}$ take the same value at $-1$; from this we see that $\varpi_U^4=1$. 
  To verify that $\Lpsi$ is a $\sigma$-dual homomorphism, it suffices to prove that 
\[
\inverserho_G(\gamma) \text{ and } \psi_1'(\mathrm{Frob}(\inverserho_H(\gamma)))\varpi_U^{\log_{ 2}(q_{\gamma})} \text{ project to the same element of }\hat{G} \rtimes \gamma \git \hat{L}_\gamma
\]
Because $\GG$ is inner, it is equivalent to prove that 
$
\psi_1 \Sigma_H^*(\Frob(\sqrt{q_{\gamma}}))^{-1} \varpi_U^{\log_{ 2}(q_{\gamma})}
$ and $\Sigma_G^*(\sqrt{q_{\gamma}})^{-1}$
are conjugate in $\hat{L}_{\gamma}$.  Note that $\psi_1' \Sigma_H^* = \psi_1 \Sigma_H^*$ and, by computing,
\begin{eqnarray*}
\Sigma_G & = & 3x_1 + 4x_2+3x_3 + 3y_1 + 4y_2 + 3y_3  + 13 r_1 + 21 r_2 \\
\psi_1 \Sigma_H & = & 2r_1 + 4r_2
\end{eqnarray*}

Let us treat $q_{\gamma} =  2 $ first.  We have $\Frob(\sqrt{  2}) = -\sqrt{  2}$, so we want to check 
(where $\sim$ denotes $\hat{L}_{\gamma}$-conjugacy) 
\begin{equation}
\label{eq:boombox}
\begin{array}{rcl}
\psi_1(\Sigma_H^*(-\sqrt{  2}))^{-1} t \varphi & \sim & \Sigma_G^*(\sqrt{  2})^{-1}
\end{array}
\end{equation}
are conjugate in $\hat{L}_U$. 
In particular $\psi_1 \Sigma_H$ (and also $t$) is central in $\hat{L}_U$, so to show \eqref{eq:boombox} it is equivalent to show that 
$t \psi_1\Sigma_H^*(-\sqrt{  2})^{-1} \Sigma_G^*(\sqrt{  2})$ 
is $\hat{L}_U$-conjugate to $\varphi^{-1}$.  Since $\psi_1 \Sigma_H$ is even in the root lattice, $\psi_1 \Sigma_H^*(-\sqrt{2}) = \psi_1\Sigma_H^*(\sqrt{2})$.  So we are left with showing that
\[
t\frac{\Sigma_G^*}{\psi_1 \Sigma_H^*}(\sqrt{2}) \sim \varphi^{-1}
\]
Note $\Sigma_H - \Sigma_G = -3 x_1 - 4x_2 - 3 x_3 - 3y_1 - 4 y_2 - 3 y_3 - 11 r_1 - 17 r_2$.  So by \eqref{eq:tomcruise} the left-hand side is $(3 x_1 + 4 x_2 +3 x_3 + 3 y_1 +4 y_2 +3 y_3)_*(\sqrt{2})$. 
By a computation in $\SL_4 \times \SL_4$, this is conjugate both to $\varphi$ and $\varphi^{-1}$.

The case $q_{\gamma}=3$ is similar. We now check $q_{\gamma}=\pm 1$. For $q_{\gamma}=1$ there is nothing to check and for $q_{\gamma}=-1$ we must see that
$ t^2 \frac{\Sigma_G}{  \psi_1 \Sigma_H^*} (2) \sim \varphi^{-2}$ i.e. the square of the previous conjugacy, but now for the smaller group $\hat{L}_{\gamma}$.  
The left-hand side equals the diagonal matrix $D = \left(\begin{array}{cc}  D & \mathbf{0} \\ \mathbf{0} & D^{-1} \end{array}\right)$ in each $\SL_4$ factor
where $D = \left( \begin{array}{cc} 2 & 0 \\ 0 & 3 \end{array}\right) $; 
on the other hand $\varphi^2 =  \left( \begin{array}{cc} - J & \mathbf{0} \\ \mathbf{0} & - J \end{array}\right)  \in \SL_4,$
and indeed both $D$ and $D^{-1}$ are conjugate to $-J$ inside $\SL_2(k)$.

\bibliography{TVBib} 
\bibliographystyle{amsplain}

\end{document}